\DeclareSymbolFontAlphabet{\mathbb}{AMSb} 
\DeclareSymbolFontAlphabet{\mathbbl}{bbold}
\newtheorem{thm}{Theorem}[section]
\newtheorem{prop}[thm]{Proposition}
\newtheorem{lem}[thm]{Lemma}
\newtheorem{conj}[thm]{Conjecture}
\newtheorem{cor}[thm]{Corollary}
\numberwithin{equation}{section}
\theoremstyle{definition}
\newtheorem{construction}[thm]{Construction}
\newtheorem{convention}[thm]{Convention}
\newtheorem{dfn}[thm]{Definition}
\newtheorem{exam}[thm]{Example}
\newtheorem{rmk}[thm]{Remark}
\begin{document}

\newcommand{\fraka}{{\mathfrak a}}
\newcommand{\frakb}{{\mathfrak b}}
\newcommand{\frakc}{{\mathfrak c}}
\newcommand{\frakd}{{\mathfrak d}}
\newcommand{\frake}{{\mathfrak e}}
\newcommand{\frakf}{{\mathfrak f}}
\newcommand{\frakg}{{\mathfrak g}}
\newcommand{\frakh}{{\mathfrak h}}
\newcommand{\fraki}{{\mathfrak i}}
\newcommand{\frakj}{{\mathfrak j}}
\newcommand{\frakk}{{\mathfrak k}}
\newcommand{\frakl}{{\mathfrak l}}
\newcommand{\frakm}{{\mathfrak m}}
\newcommand{\frakn}{{\mathfrak n}}
\newcommand{\frako}{{\mathfrak o}}
\newcommand{\frakp}{{\mathfrak p}}
\newcommand{\frakq}{{\mathfrak q}}
\newcommand{\frakr}{{\mathfrak r}}
\newcommand{\fraks}{{\mathfrak s}}
\newcommand{\frakt}{{\mathfrak t}}
\newcommand{\fraku}{{\mathfrak u}}
\newcommand{\frakv}{{\mathfrak v}}
\newcommand{\frakw}{{\mathfrak w}}
\newcommand{\frakx}{{\mathfrak x}}
\newcommand{\fraky}{{\mathfrak y}}
\newcommand{\frakz}{{\mathfrak z}}

\newcommand{\frakA}{{\mathfrak A}}
\newcommand{\frakB}{{\mathfrak B}}
\newcommand{\frakC}{{\mathfrak C}}
\newcommand{\frakD}{{\mathfrak D}}
\newcommand{\frakE}{{\mathfrak E}}
\newcommand{\frakF}{{\mathfrak F}}
\newcommand{\frakG}{{\mathfrak G}}
\newcommand{\frakH}{{\mathfrak H}}
\newcommand{\frakI}{{\mathfrak I}}
\newcommand{\frakJ}{{\mathfrak J}}
\newcommand{\frakK}{{\mathfrak K}}
\newcommand{\frakL}{{\mathfrak L}}
\newcommand{\frakM}{{\mathfrak M}}
\newcommand{\frakN}{{\mathfrak N}}
\newcommand{\frakO}{{\mathfrak O}}
\newcommand{\frakP}{{\mathfrak P}}
\newcommand{\frakQ}{{\mathfrak Q}}
\newcommand{\frakR}{{\mathfrak R}}
\newcommand{\frakS}{{\mathfrak S}}
\newcommand{\frakT}{{\mathfrak T}}
\newcommand{\frakU}{{\mathfrak U}}
\newcommand{\frakV}{{\mathfrak V}}
\newcommand{\frakW}{{\mathfrak W}}
\newcommand{\frakX}{{\mathfrak X}}
\newcommand{\frakY}{{\mathfrak Y}}
\newcommand{\frakZ}{{\mathfrak Z}}

\newcommand{\bA}{{\mathbb A}}
\newcommand{\bB}{{\mathbb B}}
\newcommand{\bC}{{\mathbb C}}
\newcommand{\bD}{{\mathbb D}}
\newcommand{\bE}{{\mathbb E}}
\newcommand{\bF}{{\mathbb F}}
\newcommand{\bG}{{\mathbb G}}
\newcommand{\bH}{{\mathbb H}}
\newcommand{\bI}{{\mathbb I}}
\newcommand{\bJ}{{\mathbb J}}
\newcommand{\bK}{{\mathbb K}}
\newcommand{\bL}{{\mathbb L}}
\newcommand{\bM}{{\mathbb M}}
\newcommand{\bN}{{\mathbb N}}
\newcommand{\bO}{{\mathbb O}}
\newcommand{\bP}{{\mathbb P}}
\newcommand{\bQ}{{\mathbb Q}}
\newcommand{\bR}{{\mathbb R}}
\newcommand{\bS}{{\mathbb S}}
\newcommand{\bT}{{\mathbb T}}
\newcommand{\bU}{{\mathbb U}}
\newcommand{\bV}{{\mathbb V}}
\newcommand{\bW}{{\mathbb W}}
\newcommand{\bX}{{\mathbb X}}
\newcommand{\bY}{{\mathbb Y}}
\newcommand{\bZ}{{\mathbb Z}}

\newcommand{\bfA}{{\mathbf A}}
\newcommand{\bfB}{{\mathbf B}}
\newcommand{\bfC}{{\mathbf C}}
\newcommand{\bfD}{{\mathbf D}}
\newcommand{\bfE}{{\mathbf E}}
\newcommand{\bfF}{{\mathbf F}}
\newcommand{\bfG}{{\mathbf G}}
\newcommand{\bfH}{{\mathbf H}}
\newcommand{\bfI}{{\mathbf I}}
\newcommand{\bfJ}{{\mathbf J}}
\newcommand{\bfK}{{\mathbf K}}
\newcommand{\bfL}{{\mathbf L}}
\newcommand{\bfM}{{\mathbf M}}
\newcommand{\bfN}{{\mathbf N}}
\newcommand{\bfO}{{\mathbf O}}
\newcommand{\bfP}{{\mathbf P}}
\newcommand{\bfQ}{{\mathbf Q}}
\newcommand{\bfR}{{\mathbf R}}
\newcommand{\bfS}{{\mathbf S}}
\newcommand{\bfT}{{\mathbf T}}
\newcommand{\bfU}{{\mathbf U}}
\newcommand{\bfV}{{\mathbf V}}
\newcommand{\bfW}{{\mathbf W}}
\newcommand{\bfX}{{\mathbf X}}
\newcommand{\bfY}{{\mathbf Y}}
\newcommand{\bfZ}{{\mathbf Z}}

\newcommand{\calA}{{\mathcal A}}
\newcommand{\calB}{{\mathcal B}}
\newcommand{\calC}{{\mathcal C}}
\newcommand{\calD}{{\mathcal D}}
\newcommand{\calE}{{\mathcal E}}
\newcommand{\calF}{{\mathcal F}}
\newcommand{\calG}{{\mathcal G}}
\newcommand{\calH}{{\mathcal H}}
\newcommand{\calI}{{\mathcal I}}
\newcommand{\calJ}{{\mathcal J}}
\newcommand{\calK}{{\mathcal K}}
\newcommand{\calL}{{\mathcal L}}
\newcommand{\calM}{{\mathcal M}}
\newcommand{\calN}{{\mathcal N}}
\newcommand{\calO}{{\mathcal O}}
\newcommand{\calP}{{\mathcal P}}
\newcommand{\calQ}{{\mathcal Q}}
\newcommand{\calR}{{\mathcal R}}
\newcommand{\calS}{{\mathcal S}}
\newcommand{\calT}{{\mathcal T}}
\newcommand{\calU}{{\mathcal U}}
\newcommand{\calV}{{\mathcal V}}
\newcommand{\calW}{{\mathcal W}}
\newcommand{\calX}{{\mathcal X}}
\newcommand{\calY}{{\mathcal Y}}
\newcommand{\calZ}{{\mathcal Z}}

\newcommand{\rA}{{\mathrm A}}
\newcommand{\rB}{{\mathrm B}}
\newcommand{\rC}{{\mathrm C}}
\newcommand{\rD}{{\mathrm D}}
\newcommand{\rd}{{\mathrm d}}
\newcommand{\rE}{{\mathrm E}}
\newcommand{\rF}{{\mathrm F}}
\newcommand{\rG}{{\mathrm G}}
\newcommand{\rH}{{\mathrm H}}
\newcommand{\rI}{{\mathrm I}}
\newcommand{\rJ}{{\mathrm J}}
\newcommand{\rK}{{\mathrm K}}
\newcommand{\rL}{{\mathrm L}}
\newcommand{\rM}{{\mathrm M}}
\newcommand{\rN}{{\mathrm N}}
\newcommand{\rO}{{\mathrm O}}
\newcommand{\rP}{{\mathrm P}}
\newcommand{\rQ}{{\mathrm Q}}
\newcommand{\rR}{{\mathrm R}}
\newcommand{\rS}{{\mathrm S}}
\newcommand{\rT}{{\mathrm T}}
\newcommand{\rU}{{\mathrm U}}
\newcommand{\rV}{{\mathrm V}}
\newcommand{\rW}{{\mathrm W}}
\newcommand{\rX}{{\mathrm X}}
\newcommand{\rY}{{\mathrm Y}}
\newcommand{\rZ}{{\mathrm Z}}

\newcommand{\Zp}{{\bZ_p}}
\newcommand{\Zl}{{\bZ_l}}
\newcommand{\Qp}{{\bQ_p}}
\newcommand{\Ql}{{\bQ_l}}
\newcommand{\Cp}{{\bC_p}}
\newcommand{\Fp}{{\bF_p}}
\newcommand{\Fl}{{\bF_l}}

\newcommand{\Ainf}{{\mathbf{A}_{\mathrm{inf}}}}
\newcommand{\AdR}{{\mathbf{A}_{\dR}}}
\newcommand{\BdRp}{{\mathbf{B}_{\mathrm{dR}}^+}}
\newcommand{\BdR}{{\mathbf{B}_{\mathrm{dR}}}}

\newcommand{\OXp}{{\widehat \calO_X^+}}
\newcommand{\OX}{{\widehat \calO_X}}
\newcommand{\AAinf}{{\bA_{\mathrm{inf}}}}            
\newcommand{\AAinfK}{{\bA_{\mathrm{inf},K}}} 
\newcommand{\AAdR}{{\bA_{\dR}}}
\newcommand{\BBinf}{{\bB_{\mathrm{inf}}}}            
\newcommand{\BBbd}{{\bB^{\mathrm{bd}}}}              
\newcommand{\BBdRp}{{\bB_{\mathrm{dR}}^+}}           
\newcommand{\BBdRpn}{{\bB_{\mathrm{dR},n}^+}}     
\newcommand{\BBdR}{{\bB_{\mathrm{dR}}}}              
\newcommand{\OAinf}{{\calO\bA_{\mathrm{inf}}}}         
\newcommand{\OBinf}{{\calO\bB_{\mathrm{inf}}}}         
\newcommand{\OBdRp}{{\calO\bB_{\mathrm{dR}}^+}}        
\newcommand{\OBdR}{{\calO\bB_{\mathrm{dR}}}}           
\newcommand{\OC}{{\calO\bC}}                           

\newcommand{\Aut}{{\mathrm{Aut}}}
\newcommand{\Bun}{{\mathrm{Bun}}}           
\newcommand{\Cech}{{\check{H}}}             
\newcommand{\Coker}{{\mathrm{Coker}}}       
\newcommand{\colim}{{\mathrm{colim}}}       
\newcommand{\dlog}{{\mathrm{dlog}}}         
\newcommand{\End}{{\mathrm{End}}}           
\newcommand{\Exp}{{\mathrm{Exp}}}           
\newcommand{\Ext}{{\mathrm{Ext}}}           
\newcommand{\Fil}{{\mathrm{Fil}}}           
\newcommand{\Fitt}{{\mathrm{Fitt}}}         
\newcommand{\Frac}{{\mathrm{Frac}}}         
\newcommand{\Frob}{{\mathrm{Frob}}}         
\newcommand{\Gal}{{\mathrm{Gal}}}           
\newcommand{\Gr}{{\mathrm{Gr}}}             
\newcommand{\Hom}{{\mathrm{Hom}}}           
\newcommand{\HIG}{{\mathrm{HIG}}}           
\newcommand{\id}{{\mathrm{id}}}             
\newcommand{\Ima}{{\mathrm{Im}}}            
\newcommand{\Isom}{{\mathrm{Isom}}}         
\newcommand{\Ker}{{\mathrm{Ker}}}           
\newcommand{\Lie}{{\mathrm{Lie}}}           
\newcommand{\Log}{{\mathrm{Log}}}           
\newcommand{\Mat}{{\mathrm{Mat}}}           
\newcommand{\MIC}{{\mathrm{MIC}}}           
\newcommand{\Mod}{{\mathrm{Mod}}}           
\newcommand{\Perfd}{{\mathrm{Perfd}}}         
\newcommand{\pr}{{\mathrm{pr}}}             
\newcommand{\Proj}{\mathrm{Proj}}        
\newcommand{\Rep}{{\mathrm{Rep}}}           
\newcommand{\Res}{{\mathrm{Res}}}           
\newcommand{\RGamma}{{\mathrm{\rR\Gamma}}}    
\newcommand{\rk}{{\mathrm{rk}}}             
\newcommand{\Rlim}{{\mathrm{R}\underleftarrow{\lim}}} 
\newcommand{\sgn}{{\mathrm{sgn}}}           
\newcommand{\Sh}{{\mathrm{Shv}}}             
\newcommand{\Sht}{{\mathrm{Sht}}}           
\newcommand{\Spa}{{\mathrm{Spa}}}           
\newcommand{\Spf}{{\mathrm{Spf}}}           
\newcommand{\Spec}{{\mathrm{Spec}}}         
\newcommand{\Strat}{{\mathrm{Strat}}}       
\newcommand{\Sym}{{\mathrm{Sym}}}           
\newcommand{\Tor}{{\mathrm{{Tor}}}}         
\newcommand{\Tot}{{\mathrm{Tot}}}           
\newcommand{\Vect}{{\mathrm{Vect}}}         


\newcommand{\GL}{{\mathrm{GL}}}             
\newcommand{\SL}{{\mathrm{SL}}}             


\newcommand{\aff}{{\mathrm{aff}}}          
\newcommand{\an}{{\mathrm{an}}}             
\newcommand{\can}{{\mathrm{can}}}           
\newcommand{\cl}{{\mathrm{cl}}}             
\newcommand{\cofib}{{\mathrm{cofib}}}        
\newcommand{\cts}{{\mathrm{cts}}}           
\newcommand{\cris}{{\mathrm{cris}}}         
\newcommand{\crys}{{\mathrm{crys}}}         
\newcommand{\cyc}{{\mathrm{cyc}}}           
\newcommand{\dR}{{\mathrm{dR}}}             
\newcommand{\et}{{\mathrm{\acute{e}t}}}    
\newcommand{\fib}{{\mathrm{fib}}}          
\newcommand{\fl}{{\mathrm{fl}}}             
\newcommand{\fppf}{{\mathrm{fppf}}}         
\newcommand{\ket}{{\mathrm{k\acute{e}t}}}    
\newcommand{\geo}{{\mathrm{geo}}}           
\newcommand{\gp}{{\mathrm{gp}}}             
\newcommand{\la}{{\mathrm{la}}}             
\newcommand{\nil}{{\mathrm{nil}}}           
\newcommand{\pd}{{\mathrm{pd}}}             
\newcommand{\perf}{\mathrm{perf}}           
\newcommand{\proet}{{\mathrm{pro\acute{e}t}}}
\newcommand{\proket}{{prok\mathrm{\acute{e}t}}}     
\newcommand{\rig}{{\mathrm{rig}}}            
\newcommand{\sm}{{\mathrm{sm}}}             
\newcommand{\st}{{\mathrm{st}}}             
\newcommand{\tor}{{\mathrm{tor}}}           
\newcommand{\Zar}{{\mathrm{Zar}}}           



\newcommand{\Prism}{{\mathlarger{\mathbbl{\Delta}}}} 
\newcommand{\OPrism}{{\calO_{\Prism}}}
\newcommand{\IPrism}{{\calI_{\Prism}}}

\newcommand{\ya}{{\rangle}}
\newcommand{\za}{{\langle}}

\newcommand{\smat}[1]{\left( \begin{smallmatrix} #1 \end{smallmatrix} \right)}

\newcommand{\AAKK}{{\bA_{2,K}}}



\newcommand{\Ctilde}{\widetilde{\bfC}}
\newcommand{\CCtilde}{\widetilde{\bC}}
\newcommand{\Cpsi}{\widetilde{\mathbb{C}}_\psi^I}
\newcommand{\CCpsi}{\widetilde{\mathbb{C}}_{\tilde{\psi}}^I}
\newcommand{\Cppsi}{\widetilde{\mathbb{C}}_\psi^{I, +}}
\newcommand{\CCppsi}{\widetilde{\mathbb{C}}_{\tilde{\psi}}^{I,+}}
\newcommand{\Bnpsi}{\widetilde{\bfB}_{\psi,n}}
\newcommand{\Bpsi}{\widetilde{\mathbf{B}}_{\psi}}
\newcommand{\Bnppsi}{\widetilde{\mathbf{B}}_{\psi,\infty,n}}
\newcommand{\Bppsi}{\widetilde{\mathbf{B}}_{\psi,\infty}}
\newcommand{\norm}{{|\!|}}

\newcommand{\wtA}{\widetilde{A}}
\newcommand{\wtJ}{\widetilde{J}}
\newcommand{\wtS}{\widetilde{S}}
\newcommand{\wtR}{\widetilde{R}}
\newcommand{\wtX}{\widetilde{X}}
\newcommand{\wtY}{\widetilde{Y}}
\newcommand{\wtZ}{\widetilde{Z}}

\newcommand{\wtr}{\widetilde{\calR}}
\newcommand{\wtx}{\widetilde{\frakX}}
\newcommand{\wty}{\widetilde{\frakY}}
\newcommand{\wtz}{\widetilde{\frakZ}}

\newcommand{\Hsmall}{\text{H-sm}}

\newcommand{\tHsmall}{\text{t-H-sm}}

\title[]{The small $p$-adic Simpson correspondence in the semi-stable reduction case}

\author[]{Mao Sheng}
\address{M.S., Yau Mathematical Sciences Center, Tsinghua University, Beijing, 100084 China \&  Yanqi Lake Beijing Institute of Mathematical Sciences and Applications, Beijing, 101408, China}
\email{msheng@mail.tsinghua.edu.cn}

\author[]{Yupeng Wang}
\address{Y.W., Beijing International Center for Mathematical Research, Peking University, YiHeYuan Road 5, Beijing, 100190, China.}
\email{2306393435@pku.edu.cn}

\subjclass[2020]{Primary 14G22. Secondary 14F30, 14G45.}

\keywords{$p$-adic Simpson correspondence, period sheaves, $v$-bundles, Higgs bundles, Hitchin-small locus.}

\begin{abstract}
  We generalize several known results on small Simpson correspondence for smooth formal schemes over $\calO_C$ to the case for semi-stable formal schemes. More precisely, for a liftable semi-stable formal scheme $\frakX$ over $\calO_C$ with generic fiber $X$, we establish (1) an equivalence between the category of Hitchin-small integral $v$-bundles on $X_{v}$ and the category of Hitchin-small Higgs bundles on $\frakX_{\et}$, generalizing the previous work of Min--Wang, and (2) an equivalence between the moduli stack of $v$-bundles on $X_{v}$ and the moduli stack of rational Higgs bundles on $\frakX_{\et}$ (equivalently, moduli stack of Higgs bundles on $X_{\et}$), generalizing the previous work of Ansch\"utz--Heuer--Le Bras.
\end{abstract}


\maketitle
\setcounter{tocdepth}{1}



\tableofcontents

\section{Introduction}\label{sec:introduction}
\subsection{Overview}
  The small $p$-adic Simpson correspondence was firstly considered by Faltings \cite{Fal} and then systematically studied by Abbes--Gros--Tsuji \cite{AGT} and Tsuji \cite{Tsu}, which is known as the equivalence between categories of \emph{(Faltings-)small} generalized representations and \emph{(Faltings)-small} Higgs bundles on $X$, the generic fiber of a liftable (log-)scheme $\frakX$ over $\calO_C$ with nice singularities. (For example, $\frakX$ could have semi-stable special fiber.) Using this, for curves, Faltings gave an equivalence between the whole category of generalized representations and the whole category of Higgs bundles.

  According to the recent development of $p$-adic geometry, we have several improvements of the previous works on $p$-adic Simpson correspondence. In \cite{LZ}, for a smooth rigid variety $X$ over $K$, a discretely valued complete field over $\Qp$ with perfect residue field, Liu and Zhu assigned to each $\Qp$-local system on $X_{\et}$ to a $G_K$-equivairant Higgs bundle on $X_{\widehat{\overline K},\et}$, based on the previous work of Scholze \cite{Sch-Pi}. Using the decompletion theory in \cite{DLLZ}, Min and the second author generalized this assignment to an equivalence between the category of $v$-bundles on $X_{v}$ and the category of $G_K$-equivariant Higgs bundles on $X_{\widehat{\overline K},\et}$, as the \emph{arithmetic $p$-adic Simpson correspondence} \cite{MW-JEMS}. More generally, for any \emph{proper} smooth rigid variety $X$ over $C$, a complete algebraic closed field, Heuer established an equivalence between the whole category of $v$-bundles on $X_v$ and the whole category of Higgs bundles $X_{\et}$ \cite{Heu-Simpson}, generalising the previous work of Faltings \cite{Fal} in the curve case. Heuer also introduced the moduli stack of $v$-bundles on $X_v$ and the moduli stack of Higgs bundles on $X_{\et}$, and proved these two stacks are isomorphic after taking \'etale sheafifications \cite{Heu-Moduli}. Very recently, Heuer and Xu proved when $X$ is a smooth curve, these two stacks are equivalent to each other up to a choice of extra data \cite{HX}.
  
  On the other hand, the theory of $p$-adic Simpson correspondence is closely related to the prismatic theory introduced by Bhatt--Scholze \cite{BS22} and developed by Bhatt--Lurie \cite{BL22a,BL22b}. For a smooth formal scheme $\frakX$ over $\calO_C$, in the case $\frakX$ is affine and \'etale over a formal torus, Tian established an equivalence between the category of \emph{Hodge--Tate crystals} on the prismatic site associated to $\frakX$ and the category of topologically nilpotent Higgs bundles on $\frakX$ \cite{Tian}. In the same setting, Morrow and Tsuji \cite{MT} also obtained a ``$q$-deformation'' of Tian's result. When $\frakX$ is smooth over $\calO_K$, Min and the second author established an equivalence between the category of rational Hodge--Tate crystals on the absolute prismatic site associated to $\frakX$ and the category of \emph{enhanced Higgs bundles} on $X_{\et}$ \cite{MW-JEMS}. This work was generalized by Ansch\"utz--Heuer--Le Bras to the derived case and they also gave a pointwise criterion for a Higgs bundle being enhanced \cite{AHLB23,AHLB-small}. When $\frakX$ is smooth and liftable over $\calO_C$ with generic fiber $X$, Ansch\"utz--Heuer--Le Bras also gave an equivalence between the moduli stack of \emph{Hitchin-small} $v$-bundles on $X_v$ and the moduli stack of \emph{Hitchin-small} Higgs bundles $X_{\et}$. As (Faltings-)small objects are always Hitchin-small, their result generalized the previous works of Faltings \cite{Fal}, Abbes--Gros--Tsuji \cite{AGT} and the second author \cite{Wan23}. There is another generalisation of Faltings' Simpson correspondence: In \cite{MW-AIM}, Min and the second author obtain an equivalence between the category of (Faltings-)small \emph{integral} $v$-bundles $X_v$ and the category of (Faltings-)small \emph{integral} Higgs bundles on $\frakX_{\et}$, also generalizing partial results in \cite{BMS18}. 
  
\subsection{Main results}
  From now on, we freely use the notations in \S\ref{ssec:notation}. In particular, we always let $K$ be a complete discrete valuation field with the ring of integers $\calO_K$ and the perfect residue field $\kappa$, and let $C$ be the completion of a fixed algebraic closure of $K$ with the ring of integers $\calO_C$ and the maximal ideal $\frakm_C$. Let $\bfA_{\inf,K}:=\Ainf(\calO_C)\otimes_{\rW(\kappa)}\calO_K$ be the ramified infinitesimal period ring of Fontaine (with respect to $K$), $\xi_K$ be a generator of the natural surjection $\theta_K:\bfA_{\inf,K}\to \calO_C$, and $\bfA_{2,K}:=\bfA_{\inf,K}/\xi_K^2$. Equip $\bfA_{2,K}$ and $\calO_C$ with the canonical log structures; that is, the log-structures induced from $\calO_C^{\flat}\setminus\{0\}\xrightarrow{[\cdot]}\bfA_{\inf,K}$ via the corresponding quotients. In this paper, we always work with semi-stable $p$-adic formal schemes $\frakX$ (viewed as log-schemes) with the generic fiber $X$ in the sense of \cite{CK19}. 
  \subsubsection{An integral $p$-adic Simpson correspondence}
  Our first result is the following integral $p$-adic Simpson correspondence for liftable semi-stable formal schemes.
  \begin{thm}[Theorem \ref{thm:integral Simpson}]\label{thm:Intro-integral Simpson}
      Let $\frakX$ be a semi-stable formal scheme over $\calO_C$ of relative dimension $d$. 
      Suppose that it admits a flat lifting (as a formal log-scheme) $\widetilde \frakX$ over $\bfA_{2,K}$. Then there exists a period sheaf $(\calO\widehat \bC_{\pd}^+,\Theta)$ together with Higgs field $\Theta$ (depending on the given lifting $\widetilde \frakX$) inducing a rank-preserving equivalence
      \begin{equation}\label{intro-equ:integral Simpson}
          \rL\rS^{\Hsmall}(\frakX,\OXp)\simeq \HIG^{\tHsmall}(\frakX,\calO_{\frakX})
      \end{equation}
      between the category $\rL\rS^{\Hsmall}(\frakX,\OXp)$ of Hitchin-small integral $v$-bundles on $X_v$ (cf. Definition \ref{dfn:small O^+-representations}) and the category of twisted Hitchin-small Higgs bundles on $\frakX_{\et}$ (Definition \ref{dfn:small Higgs bundles}), which preserves tensor products and dualities. More precisely, let $\nu:X_v\to \frakX_{\et}$ be the natural morphism of sites, and then the following assertions are true:
      \begin{enumerate}
          \item[(1)] For any $\calM^+\in \rL\rS^{\Hsmall}(\frakX,\widehat \calO^+_{X})$ of rank $r$, the complex $\rR\nu_*(\calM^+\otimes\calO\widehat \bC_{\pd}^+)$ is concentrated in degree $[0,d]$ such that 
          \[\rL\eta_{\rho_K(\zeta_p-1)}\rR\nu_*(\calM^+\otimes\calO\widehat \bC_{\pd}^+)\simeq \left(\nu_*(\calM^+\otimes\calO\widehat \bC_{\pd}^+)\right)[0],\]
          where $\rL\eta_{\rho_K(\zeta_p-1)}$ denotes the d\'ecalage functor in \cite[\S6]{BMS18}.
          Moreover, the push-forward
          \[(\calH^+(\calM^+),\theta):=(\nu_*(\calM^+\otimes\calO\widehat \bC_{\pd}^+),\nu_*(\id_{\calM^+}\otimes\Theta))\]
          defines a twisted Hitchin-small Higgs bundle of rank $r$ on $\frakX_{\et}$.
        
          \item[(2)] For any $(\calH^+,\theta)\in \HIG^{\tHsmall}(\frakX,\calO_{\frakX})$ of rank $r$, the
          \[\calM^+(\calH^+,\theta):=(\calH^+\otimes\calO\widehat \bC_{\pd}^+)^{\theta\otimes\id+\id\otimes\Theta = 0}\]
          defines a Hitchin-small integral $v$-bundle of rank $r$ on $X_{v}$.

          \item[(3)] The equivalence \eqref{intro-equ:integral Simpson} is induced by the functors in Items (1) and (2).
      \end{enumerate}
  \end{thm}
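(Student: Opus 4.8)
The plan is to reduce to an explicit local computation of the period sheaf $\calO\widehat\bC_{\pd}^+$ together with its Higgs field, combine the d\'ecalage technique of \cite{BMS18} with a Poincar\'e-lemma-type resolution, and glue the resulting local equivalences. I first note that both sides of \eqref{intro-equ:integral Simpson} satisfy descent --- Hitchin-small integral $v$-bundles for the $v$-topology on $X$, twisted Hitchin-small Higgs bundles for the \'etale topology on $\frakX$ --- and that $(\calO\widehat\bC_{\pd}^+,\Theta)$ is functorial in the lifted pair $(\frakX,\widetilde\frakX)$. Hence it suffices to treat the case where $\frakX$ is affine with an \'etale map to the standard semi-stable local model $\Spf\calO_C\langle T_0,\dots,T_d\rangle/(T_0\cdots T_r-\pi)$, with log structure generated by $T_0,\dots,T_r$, and where $\widetilde\frakX$ carries coordinate lifts $\widetilde T_0,\dots,\widetilde T_d$ over $\bfA_{2,K}$ with $\widetilde T_0\cdots\widetilde T_r$ a fixed lift of $\pi$.

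The next step is to make the period sheaf explicit. Adjoining compatible $p$-power roots of $T_0,\dots,T_d$ subject to the product relation yields a cover of $X$ that is pro-finite-\'etale, in particular a $v$-cover, with Galois group $\Gamma\cong\bZ_p(1)^{d}$, over which $\calO\widehat\bC_{\pd}^+$ becomes a $p$-adically completed divided-power polynomial algebra $\OXp\za U_1,\dots,U_d\ya^{\pd}$ --- the integral refinement of $\calO\bC$ --- in which $1+U_i$ compares the lift $\widetilde T_i$ with the Teichm\"uller element $[T_i^{\flat}]$ attached to the chosen $p$-power roots; the Higgs field $\Theta$ is $\sum_i\dlog(1+U_i)\otimes\partial_i$ up to the Tate twist, and $\Gamma$ acts by the explicit cocycle $\gamma_j\colon 1+U_i\mapsto\zeta_p^{\mp\delta_{ij}}(1+U_i)$. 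This is the logarithmic analogue of the local pictures in \cite{MW-JEMS,AHLB-small}; the genuinely new work is transporting the log-chart and the relation $T_0\cdots T_r=\pi$ through the entire computation, in particular into $\widehat\Omega^1_{\frakX}$ and into the Koszul complexes below.

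Granting this, for a Hitchin-small integral $v$-bundle $\calM^+$ --- represented over the tower by a finite projective $\OXp$-module $M^+$ with a ``small'' semilinear $\Gamma$-action in the sense of Definition \ref{dfn:small O^+-representations} --- the complex $\rR\nu_*(\calM^+\otimes\calO\widehat\bC_{\pd}^+)$ is computed by continuous $\Gamma$-cohomology of $M^+\otimes\OXp\za U_1,\dots,U_d\ya^{\pd}$, i.e. by the Koszul complex on the $d$ commuting operators $\gamma_i-1$; its amplitude is visibly $[0,d]$, and the Hitchin-small hypothesis is exactly the input needed to show that $\rL\eta_{\rho_K(\zeta_p-1)}$ collapses it onto $\nu_*(\calM^+\otimes\calO\widehat\bC_{\pd}^+)$ placed in degree $0$. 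Reducing mod $\frakm_C$ and lifting, the latter is finite projective over $\calO_{\frakX}$ of the same rank $r$, and $\nu_*(\id\otimes\Theta)$ is a Higgs field whose twisted Hitchin-smallness is read off from smallness of the $U_i$; this gives Item (1). For Item (2), given $(\calH^+,\theta)$ one forms the total operator $\theta\otimes\id+\id\otimes\Theta$ on $\calH^+\otimes\calO\widehat\bC_{\pd}^+$ and takes its kernel; the key input is a Poincar\'e lemma to the effect that the Higgs complex of $(\calO\widehat\bC_{\pd}^+,\Theta)$ resolves $\OXp$ on $X_v$ (up to the usual d\'ecalage) --- concretely, that the Koszul complex of $\partial_1,\dots,\partial_d$ on the divided-power polynomial ring resolves the base --- while the Hitchin-small hypothesis on $\theta$ makes the total operator a controlled perturbation, so that its kernel is again Hitchin-small of rank $r$. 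That the two functors are mutually quasi-inverse and compatible with $\otimes$ and $(-)^{\vee}$ (Item (3)) follows from the same Poincar\'e lemma together with the fundamental comparison isomorphism $\calM^+\otimes\calO\widehat\bC_{\pd}^+\cong\calH^+\otimes\calO\widehat\bC_{\pd}^+$ of modules-with-total-Higgs-field (the period sheaf trivializes the Higgs bundle after base change) and the requisite flatness of $\calO\widehat\bC_{\pd}^+$; the local equivalences then glue by the descent and functoriality recorded at the outset.

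The main obstacle is twofold. First, controlling integral torsion in the logarithmic setting: the log structure inserts extra terms into the $\Gamma$-cocycle and into $\Theta$, and one must verify that the \emph{single} d\'ecalage parameter $\rho_K(\zeta_p-1)$ --- assembled from the different of $K$ and from $\zeta_p-1$ --- still measures the defect exactly, uniformly in the local coordinates and regardless of which of the components $T_0,\dots,T_r$ of the special fiber one is near; this is what ultimately dictates the smallness radius in Definitions \ref{dfn:small O^+-representations} and \ref{dfn:small Higgs bundles}. Second, pinning down the twist, i.e. proving that $\nu_*(\calM^+\otimes\calO\widehat\bC_{\pd}^+)$ is an honest $\calO_{\frakX}$-module and not merely a module over a Sen-type twisted form of $\calO_{\frakX}$: this is precisely where the hypothesis that $\widetilde\frakX$ lifts over $\bfA_{2,K}$, rather than only over $\calO_C$, is used, the second-order lift being what trivializes the relevant extension class and rigidifies $\Theta$.
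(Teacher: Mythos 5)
Your overall strategy matches the paper's: localize over the pro-\'etale $\Gamma\cong\bZ_p^d$-tower, compute $\rR\nu_*$ as continuous $\Gamma$-cohomology of a $p$-completed divided-power polynomial ring, use the Poincar\'e lemma for $(\calO\widehat\bC_{\pd}^+,\Theta)$ to pass between the two sides, control integral torsion by $\rL\eta_{\rho_K(\zeta_p-1)}$, and glue by descent. The paper proceeds in exactly this way (Sections 3--5). Two substantive remarks, though.

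First, your explicit local formula for the $\Gamma$-action misses the factor $\rho_K$. The paper's Proposition 3.10 / Corollary 3.11 compute the Faltings extension cocycle and find $\delta(Y_j)=Y_j+n_j\,\rho_K(\zeta_p-1)$ for $\delta=\prod\delta_i^{n_i}$; writing the action multiplicatively on ``$1+U_i$'' as $\gamma_j\colon 1+U_i\mapsto\zeta_p^{\mp\delta_{ij}}(1+U_i)$ would produce a cocycle measured by $\zeta_p-1$, not $\rho_K(\zeta_p-1)$. The extra $\rho_K$ is genuinely there because the Faltings extension is built on $\widehat\calO_X^+\{1\}$ (Breuil--Kisin--Fargues twist), and the identification $\calO_C\{1\}=\rho_K^{-1}\calO_C(1)$ is precisely what injects $\rho_K$ into the Sen operator, the d\'ecalage parameter, and the smallness radius. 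You do name $\rho_K(\zeta_p-1)$ as the d\'ecalage parameter later, so you have the right answer; but as stated, your explicit local model is inconsistent with it, and someone following the proposal would derive the wrong radius.

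Second, the construction of $\calO\widehat\bC_{\pd}^+$ in the paper is via the log-cotangent complex of Olsson/Gabber applied to the diagram $(\calO_{\widetilde\frakX},\calM_{\widetilde\frakX})\to(\calO_{\frakX},\calM_{\frakX})\to(\widehat\calO_X^+,\calM_X)$, not the hands-on comparison of $\widetilde T_i$ with $[T_i^\flat]$. This is not just a matter of taste: to carry out the comparison one needs to know that the $p$-complete log-cotangent complex of a map of perfectoids with their canonical log structures vanishes, and that the canonical log-structure on $A^+$ agrees with $A^\times\cap A^+\hookrightarrow A^+$ --- these are Propositions 2.2 and 2.9, and they are exactly the logarithmic analogues of BMS Lemma 3.14 that your sketch takes for granted. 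Without them the obstruction-class interpretation of the Faltings extension (Proposition 3.5) does not go through in the semi-stable case. Relatedly, you gloss over the decompletion from Hitchin-small $\Gamma$-representations over $\widehat R^+_{\infty,S}$ to ones over $R^+_S$ (the paper's Proposition 4.3); ``reducing mod $\frakm_C$ and lifting'' does not quite deliver the functorial equivalence needed to define the pushforward as an honest $\calO_\frakX$-module. With these two gaps repaired --- the $\rho_K$ factor in the local model and the log-cotangent-complex inputs --- your outline lines up with the paper's proof.

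Finally, a small point: the Poincar\'e lemma (Theorem 3.8) is an honest exact resolution of $\widehat\calO_X^+$ by the Higgs complex of $\calO\widehat\bC_{\pd}^+$, with no d\'ecalage needed; the d\'ecalage enters only at the stage of pushing forward along $\nu$, not in the resolution itself.
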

  \begin{rmk}
      Theorem \ref{thm:Intro-integral Simpson} generalizes the main result \cite[Th. 1.1]{MW-AIM} in the following sense: In \emph{loc.cit.}, we always work with smooth formal schemes and (Faltings-)small integral $v$-bundles and Higgs bundles, but here, the result holds for semi-stable formal schemes and for Hitchin-small integral $v$-bundles and Higgs bundles. We remark that a Faltings-small integral $v$-bundle (resp. Higgs bundle) is always Hitchin-small but the converse is not true (for example, a nilpotent Higgs bundle is always Hitchin-small but not always Faltings-small). It also upgrades the rational Simpson correspondences of Faltings \cite{Fal}, Abbes--Gros--Tsuji \cite{AGT} and Wang \cite{Wan23} for Faltings-small objects and Ansch\"utz--Heuer--Le Bras \cite[Th. 1.2]{AHLB-small} for Hitchin-small objects to the integral level (see Theorem \ref{thm:Intro-rational Simpson} for its analogue in semi-stable reduction case).
  \end{rmk}
  Given a Hitchin-small integral $v$-bundle $\calM^+$ with the associated Hitchin-small Higgs bundle $(\calH^+,\theta)$ in the sense of Theorem \ref{thm:Intro-integral Simpson}, we also want to compare the push-forward $\rR\nu_*\calM^+$ with the Higgs complex $\rD\rR(\calH^+,\theta)$. Indeed, we are able to prove the following truncated cohomological comparison, generalizing \cite[Cor. 1.2 and Th. 1.4]{MW-AIM} to the semi-stable reduction case.
  \begin{thm}[Corollary \ref{cor:bounded torsion} and Theorem \ref{thm:truncated comparison}]\label{thm:Intro-cohomological comparison}
      Keep assumptions in Theorem \ref{thm:Intro-integral Simpson} and let $\calM^+$ be a Hitchin-small integral $v$-bundle with the corresponding Hitchin-small Higgs bundle $(\calH^+,\theta)$. Then there exists a canonical morphism
      \[\rD\rR(\calH^+,\theta)\to\rR\nu_*\calM^+\]
      whose cofiber is killed by $(\rho_K(\zeta_p-1))^{\max(d+1,2(d-1))}$, and this morphism induces a quasi-isomorphism
      \[\tau^{\leq 1}\rD\rR(\calH^+,\theta)\simeq \tau^{\leq 1}\rL\eta_{\rho_K(\zeta_p-1)}\rR\nu_*\calM^+.\]
      In particular, when $\frakX$ is a curve (i.e. $d=1$), we get a canonical quasi-isomorphism
      \[\rD\rR(\calH^+,\theta)\simeq \rL\eta_{\rho_K(\zeta_p-1)}\rR\nu_*\calM^+.\]
  \end{thm}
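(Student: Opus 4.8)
The plan is to reduce everything to a local, affine computation on a basis of the étale site of $\frakX$ given by opens admitting a toric chart — i.e., small affine formal log-schemes $\frakU = \Spf(R)$ étale over the semi-stable torus algebra — and then to glue. On each such $\frakU$, the period sheaf $(\calO\widehat\bC_{\pd}^+,\Theta)$ becomes an explicit (topologically free) $R$-algebra with an explicit Higgs field $\Theta$, with structure parallel to the smooth case of \cite{MW-AIM}; by the equivalence of Theorem \ref{thm:Intro-integral Simpson}, an integral $v$-bundle $\calM^+$ corresponds to a Higgs bundle $(\calH^+,\theta)$ via $\calM^+ = (\calH^+\otimes\calO\widehat\bC_{\pd}^+)^{\theta\otimes\id+\id\otimes\Theta=0}$, so $\rR\nu_*\calM^+$ over $\frakU$ is computed by the Čech–Koszul complex of $\calH^+\otimes\calO\widehat\bC_{\pd}^+$ with respect to the commuting operators building $\theta\otimes\id+\id\otimes\Theta$, followed by the group cohomology of the Galois/Kummer tower. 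The Higgs complex $\rD\rR(\calH^+,\theta)$ is the naive Koszul complex on $\theta$ alone. First I would identify the natural comparison map: the inclusion $\calH^+ \hookrightarrow \calH^+\otimes\calO\widehat\bC_{\pd}^+$ (using $R\to\calO\widehat\bC_{\pd}^+$) is compatible with $\theta$ on the source and $\theta\otimes\id+\id\otimes\Theta$ on the target up to the differential of the $\calO\widehat\bC_{\pd}^+$-Koszul direction, which yields a map of complexes $\rD\rR(\calH^+,\theta)\to\rR\nu_*\calM^+$; this is visibly functorial and glues to the claimed canonical morphism.

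The second step is the torsion bound on the cofiber. Here I would filter: the cofiber sits in a two-term picture coming from (a) the difference between $\calO\widehat\bC_{\pd}^+$-coefficients and $R$-coefficients, controlled by the acyclicity of the relevant Koszul/Čech complex of $\calO\widehat\bC_{\pd}^+$ over $R$ up to bounded $\rho_K(\zeta_p-1)$-torsion — exactly as in the smooth case, with the extra semi-stable log directions contributing the same type of $(\zeta_p-1)$-denominators — and (b) the Hitchin-smallness hypothesis on $\theta$, which guarantees that the operators $\exp$ of the $\theta$-action converge and that the "new" Koszul differentials differ from the old ones by operators divisible by a controlled power of $\rho_K(\zeta_p-1)$. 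Combining the $d$ Koszul directions (each costing one power) with the Čech/group-cohomology contribution (which in the smooth case of \cite{MW-AIM} costs up to $2(d-1)$ for the off-diagonal corrections) gives the exponent $\max(d+1,2(d-1))$; I would track these powers carefully rather than re-deriving them, invoking the decalage formalism of \cite[\S6]{BMS18} to package the bookkeeping.

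The third step is the sharp statement in low degrees. By the previous bound the cofiber $\calC$ of $\rD\rR(\calH^+,\theta)\to\rR\nu_*\calM^+$ is killed by a power of $t:=\rho_K(\zeta_p-1)$, so $\rL\eta_t$ of the cofiber is "small"; the key input is the general fact (from \cite[\S6]{BMS18}, used already in \cite{MW-AIM}) that $\rL\eta_t$ turns a map with cofiber killed by $t^N$ into a map which is an isomorphism on $\tau^{\leq 1}$ provided one also knows the $\mathcal H^0$ and $\mathcal H^1$ of the source are $t$-torsion-free — which holds here because $\calH^0(\rD\rR)=\ker\theta$ and $\calH^1(\rD\rR)$ embed into honest bundles, and because $\rR\nu_*\calM^+$ is concentrated in $[0,d]$ with $\calH^0$ a genuine integral $v$-bundle pushforward by part (1) of Theorem \ref{thm:Intro-integral Simpson}. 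Applying $\rL\eta_t$ and using $\rL\eta_t\rD\rR(\calH^+,\theta)\simeq\rD\rR(\calH^+,\theta)$ in degrees $\le 1$ (the $\theta$-Koszul complex being already $t$-torsion-free there, or after noting $\rL\eta_t$ is idempotent up to the Hitchin-small twist) yields $\tau^{\le 1}\rD\rR(\calH^+,\theta)\simeq\tau^{\le 1}\rL\eta_t\rR\nu_*\calM^+$. When $d=1$ the complexes are concentrated in $[0,1]$, so $\tau^{\le 1}$ is vacuous and one gets the full quasi-isomorphism. The main obstacle I anticipate is step two in the semi-stable setting: ensuring that the extra log-differential directions and the non-smooth special fiber do not inflate the $(\zeta_p-1)$-denominators beyond the smooth bookkeeping of \cite{MW-AIM}; I would handle this by choosing the toric charts so that the Kummer tower trivializing $\calO\widehat\bC_{\pd}^+$ has the same $\Zp(1)^{d}$-Galois group structure as in the smooth case, pushing all the semi-stable subtlety into the (already bounded) comparison of coefficient rings.
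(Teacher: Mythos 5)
Your step 3 contains a genuine gap. You invoke a "general fact (from \cite[\S6]{BMS18}) that $\rL\eta_t$ turns a map with cofiber killed by $t^N$ into a map which is an isomorphism on $\tau^{\leq 1}$ provided one also knows the $\mathcal H^0$ and $\mathcal H^1$ of the source are $t$-torsion-free." No such lemma exists, and the claim is false. A quick counterexample: take $A=t\calO_C$ and $B=\calO_C$ both placed in degree $1$ with the inclusion map; the cofiber $\calO_C/t$ is killed by $t$, the cohomologies of the source are $t$-torsion-free, yet on $\rH^1$ the induced map on $\rL\eta_t$ is identified with the inclusion $t^2\calO_C\hookrightarrow t\calO_C$ inside $\calO_C[1/t]$, which is not surjective. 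More generally, a bound "cofiber killed by $t^N$" only pins the image of $\rH^1(A)\to\rH^1(B)$ between $t^N\rH^1(B)$ and $\rH^1(B)$; for the $\rL\eta_t$-factored map to be an isomorphism one needs the image to be \emph{exactly} $t\rH^1(B)$ (modulo $t$-torsion), and this is a precise identification, not a torsion estimate.

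The paper's proof of Theorem \ref{thm:truncated comparison} does not go through Corollary \ref{cor:bounded torsion} at all. It reduces to the local statement Proposition \ref{prop:local integral comparison}, whose core is an explicit cocycle computation showing that the map $\rD\rR(H^+,\theta)\to\rR\Gamma(\Gamma,M^+)$ induces an \emph{isomorphism} $\rH^1_{\dR}(H^+,\theta)\cong\rho_K(\zeta_p-1)\rH^1(\Gamma,M^+)$ (injectivity via a spectral-sequence argument, surjectivity by explicitly producing preimages using the operators $F(\rho_K\theta_i)\exp(-\sum_k\theta_kY_k)$). Only with this precise $\rH^1$ identification in hand does \cite[Lem.~8.16]{BMS18} give the unique factorization through $\tau^{\leq 1}\rL\eta_{\rho_K(\zeta_p-1)}$, and only then is the factored map seen to be a quasi-isomorphism on $\tau^{\leq 1}$. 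Your proposal is missing this computation; without it, the torsion bound from step 2 is insufficient to conclude. Your steps 1, 2, and the $d=1$ deduction are in line with the paper, but step 3 needs to be replaced by (or supplemented with) the explicit $\rH^1$-level comparison.
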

  In particular, by letting $\calM^+ = \OXp$ (or equivalently $(\calH^+,\theta)=(\calO_{\frakX},0)$), by a standard trick used in the proof of \cite[Th. 2.1]{DI} and \cite[Th. 4.1]{Min21}, we conclude the following analogue of Delighe--Illusie decomposition for semi-stable formal schemes:
  \begin{thm}[Theorem \ref{thm:DI}]\label{thm:Intro-DI}
      Keep assumptions in Theorem \ref{thm:Intro-integral Simpson}. Then there exists a quasi-isomorphism
      \[\oplus_{i=0}^{p-1}\Omega^{i,\log}_{\frakX}\{-i\}[-i]\to \tau^{\leq p-1}\rL\eta_{\rho_K(\zeta_p-1)}\rR\nu_*\OXp.\]
  \end{thm}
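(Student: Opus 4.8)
The plan is to deduce this from Theorem \ref{thm:Intro-cohomological comparison} applied to the trivial bundle $\calM^+ = \OXp$, combined with the standard deformation-theoretic ``splitting principle'' of \cite{DI}. First I would specialize Theorem \ref{thm:Intro-cohomological comparison} to $\calM^+ = \OXp$, which corresponds to the Higgs bundle $(\calO_{\frakX},0)$ with zero Higgs field: in that case $\rD\rR(\calO_{\frakX},0)$ is, up to Breuil–Kisin–type twists, the log de Rham complex $\bigoplus_i \Omega^{i,\log}_{\frakX}\{-i\}[-i]$ (the differentials vanish because $\theta = 0$), so the canonical morphism of that theorem already furnishes a map from the Hodge complex to $\rL\eta_{\rho_K(\zeta_p-1)}\rR\nu_*\OXp$, at least after passing to $\tau^{\leq 1}$. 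The task is to upgrade this from the $\tau^{\leq 1}$-statement to a $\tau^{\leq p-1}$-statement.

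The key step is the classical \emph{décalage/Frobenius trick}: rather than constructing the full splitting directly, one constructs the degree-$i$ comparison maps $\Omega^{i,\log}_{\frakX}\{-i\}[-i] \to \tau^{\leq p-1}\rL\eta_{\rho_K(\zeta_p-1)}\rR\nu_*\OXp$ separately for each $i \leq p-1$ and then checks that their sum is a quasi-isomorphism. To get the map in degree $i$ one uses that the $i$-th cohomology sheaf of $\rL\eta_{\rho_K(\zeta_p-1)}\rR\nu_*\OXp$ is, via the Hodge–Tate-type computation underlying the proof of Theorem \ref{thm:Intro-integral Simpson}, naturally identified with $\Omega^{i,\log}_{\frakX}\{-i\}$ (this is the ``$\eta$ kills the torsion in cohomological degrees $\leq d$'' phenomenon, exactly parallel to \cite[Th. 2.1]{DI} and \cite[Th. 4.1]{Min21}); one then invokes the formality argument from \emph{loc.cit.}, which says that a complex whose cohomology sheaves are locally free and which is locally split in each degree is globally quasi-isomorphic to the direct sum of its shifted cohomology sheaves, \emph{provided} one can produce the degree-$i$ maps compatibly. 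The liftability hypothesis (the lifting $\widetilde\frakX$ over $\bfA_{2,K}$) is what produces these maps: it plays the role that the lifting mod $p^2$ plays in Deligne–Illusie, and this is precisely the data on which the period sheaf $\calO\widehat\bC^+_{\pd}$ and hence the comparison of Theorem \ref{thm:Intro-integral Simpson} depend.

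I would carry this out in the following order: (i) recall that $\rD\rR(\calO_{\frakX},0) \simeq \bigoplus_i \Omega^{i,\log}_{\frakX}\{-i\}[-i]$ and invoke Theorem \ref{thm:truncated comparison} to get the canonical map and the fact that its cofiber is killed by a power of $\rho_K(\zeta_p-1)$; (ii) show, working locally on $\frakX_{\et}$ (on small opens étale over a formal torus, where the Higgs/prismatic computation is explicit), that $\calH^i\bigl(\rL\eta_{\rho_K(\zeta_p-1)}\rR\nu_*\OXp\bigr) \cong \Omega^{i,\log}_{\frakX}\{-i\}$ for $i \leq p-1$, and that the complex is locally a direct sum of its cohomology in that range; (iii) assemble the degree-$i$ maps into $\bigoplus_{i=0}^{p-1}\Omega^{i,\log}_{\frakX}\{-i\}[-i] \to \tau^{\leq p-1}\rL\eta_{\rho_K(\zeta_p-1)}\rR\nu_*\OXp$ using the lifting $\widetilde\frakX$ over $\bfA_{2,K}$, exactly as in the proof of \cite[Th. 2.1]{DI}; (iv) verify the assembled map is a quasi-isomorphism by checking it on associated graded / on cohomology sheaves, where by construction each component is an isomorphism. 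The main obstacle I anticipate is step (ii)–(iii) in the semi-stable (log) setting: one must be careful that the log structure does not introduce extra torsion beyond what $\rL\eta_{\rho_K(\zeta_p-1)}$ removes in the relevant range, and that the Cartier-type isomorphism identifying $\calH^i$ with $\Omega^{i,\log}_{\frakX}\{-i\}$ is compatible with the wedge-product structure so that the Deligne–Illusie gluing of local splittings actually goes through; this is where the hypotheses $i \leq p-1$ and the choice of $\bfA_{2,K}$-lift are essential, and where one must lean most heavily on the local description of the period sheaf $\calO\widehat\bC^+_{\pd}$ established earlier.
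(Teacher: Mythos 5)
Your proposal takes essentially the same route as the paper, whose proof of this theorem is a one‑line appeal to the Deligne--Illusie trick (citing \cite[Th.~4.1]{Min21} as the model) applied to the $\tau^{\leq 1}$‑quasi‑isomorphism \eqref{equ:DI} obtained by specializing Theorem \ref{thm:truncated comparison} to $\calM^+ = \OXp$; your steps (i)--(iv) are an expansion of exactly that.

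One correction to your exposition of the key step: the degree‑$i$ map $\Omega^{i,\log}_{\frakX}\{-i\}[-i]\to \rL\eta_{\rho_K(\zeta_p-1)}\rR\nu_*\OXp$ is \emph{not} obtained by first identifying $\calH^i$ of the target with $\Omega^{i,\log}_{\frakX}\{-i\}$ and then invoking a ``formality'' principle --- a complex with locally free cohomology sheaves that is locally split need not be globally split, so such a principle does not exist. Rather, the map is \emph{constructed} from the degree‑$1$ map of \eqref{equ:DI} by antisymmetrizing its $i$‑fold cup product and dividing by $i!$, using the commutative multiplication on $\rL\eta_{\rho_K(\zeta_p-1)}\rR\nu_*\OXp$ (this is the actual content of the step you delegate to \cite[Th.~2.1]{DI} and the source of the constraint $i\leq p-1$); the identification of $\calH^i$ is then what one checks \emph{afterwards} to verify that the assembled map is a quasi‑isomorphism. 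Your plan as laid out in (iii)--(iv) is the right one, but the sentence about the ``formality argument'' reverses the logic.
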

  \begin{rmk}
      To obtain the decomposition in Theorem \ref{thm:Intro-DI}, it seems that we must assume $\frakX$ admits a lifting as formal \emph{log-scheme} over $\bfA_{2,K}$ (endowed with the \emph{canonical log structure}). This phenomenon appears in the classical theory of Deligne--Illusie decomposition for log-schemes in positive characteristic, by a previous work of the first author \cite{SSt}. See Remark \ref{rmk:Dependence on log-lifting} for more discussion.
  \end{rmk}
\subsubsection{A stacky $p$-adic Simpson correspondence}
  Let $\frakX$ be a semi-stable $p$-adic formal scheme over $\calO_C$ with the generic fiber $X$ as before. Again we assume that $\frakX$ admits a lifting $\widetilde \frakX$ over $\bfA_{2,K}$. Denote by $\Perfd$ the $v$-site of affinoid perfectoid spaces over $C$ in the sense of \cite{Sch-Diamond}. For any $S = \Spa(A,A^+)$, let $\frakX_S$ and $X_S$ be the base-change of $\frakX$ abd $X$ to $A^+$ and $A$, respectively. Then for any $r\geq 0$, the following two functors
  \[\rL\rS_r(X,\OX): S\in \Perfd\mapsto \{\text{groupoid of $v$-bundles of rank $r$ on $X_{S,v}$}\}\]
  and 
  \[\HIG_r(X,\calO_X): S\in\Perfd\mapsto\{\text{groupoid Higgs bundles of rank $r$ on $X_{S,\et}$}\}\]
  are actually small $v$-stacks \cite[Th. 1.4]{Heu-Moduli}. Denote by $\calA_r$ the following $v$-sheaf
  \[\calA_r: S\in \Perfd\mapsto \oplus_{i=1}^r\rH^0(X_S,\Sym^i(\Omega^1_{X_S}\{-1\})).\]
  Then we have the Hitchin-fibrations
  \[\xymatrix@C=0.5cm{
     \rL\rS_r(X,\OX)\ar[rd]^{\widetilde h}&&\ar[ld]_{h}\HIG(X,\calO_X)\\ 
     &\calA_r,
  }\]
  where $h$ carries each Higgs bundle $(\calH,\theta)$ to the characteristic polynomial of $\theta$ while $\widetilde h$ denotes the composite of $h$ with the ``\rm{HTlog}'' in \cite[Def. 1.6]{Heu-Moduli}.
  The integral model $\frakX$ of $X$ induces a sub-sheaf 
  \[\calA_r^{\Hsmall}:S\in \Perfd\mapsto \oplus_{i=1}^rp^{<\frac{i}{p-1}}\rH^0(\frakX_S,\Sym^i(\Omega^{1,\log}_{\frakX_S}\{-1\}))\]
  of $\calA_r$, where $p^{<\frac{i}{p-1}}$ denotes the ideal $(\zeta_p-1)^i\frakm_C\subset \calO_C$. For any $Z\in\{\rL\rS_r(X,\OX),\HIG_r(X,\calO_X)\}$, denote by $Z^{\Hsmall}:=Z\times_{\calA_r}\calA_r^{\Hsmall}$ its \emph{Hitchin-small locus}. A Higgs bundle on $X_{S,\et}$ (resp. a $v$-bundle on $X_{S,v}$) of rank $r$ is called Hitchin-small if as a point, it belongs to the Hitchin-small locus of the corresponding moduli stack. Then our second main result is the following equivalence of stacks:
  \begin{thm}[Theorem \ref{thm:stacky Simpson}]\label{thm:Intro-stack Simpson}
      Let $\frakX$ be a semi-stable $p$-adic formal scheme over $\calO_C$ with the generic fiber $X$ which admits a lifting $\widetilde \frakX$ over $\bfA_{2,K}$. Then the lifting $\widetilde \frakX$ induces an equivalence of stacks
      \[\rho_{\widetilde \frakX}:\rL\rS_r(X,\OX)^{\Hsmall}\xrightarrow{\simeq}\HIG_r(X,\calO_X)^{\Hsmall}.\]
  \end{thm}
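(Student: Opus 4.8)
The plan is to deduce Theorem~\ref{thm:Intro-stack Simpson} from the absolute integral correspondence of Theorem~\ref{thm:Intro-integral Simpson} by relativizing it over every perfectoid test space and then packaging the resulting family of equivalences through Heuer's formalism of moduli $v$-stacks, following the strategy of Ansch\"utz--Heuer--Le Bras in the smooth case but feeding in Theorem~\ref{thm:Intro-integral Simpson} as the local input. Since $\rL\rS_r(X,\OX)^{\Hsmall}$ and $\HIG_r(X,\calO_X)^{\Hsmall}$ are $v$-stacks, it suffices to construct, for each $S=\Spa(A,A^+)\in\Perfd$, an equivalence of groupoids between their $S$-points that is natural in $S$ and compatible with the Hitchin fibrations $\widetilde h$ and $h$; the two quasi-inverse functors will be the base changes along $\calO_C\to A^+$ of the functors $\calM^+\mapsto\nu_*(\calM^+\otimes\calO\widehat{\bC}_{\pd}^+)$ and $(\calH^+,\theta)\mapsto(\calH^+\otimes\calO\widehat{\bC}_{\pd}^+)^{\theta\otimes\id+\id\otimes\Theta=0}$ appearing in Theorem~\ref{thm:Intro-integral Simpson}.

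First I would set up the relative picture. For $S=\Spa(A,A^+)$, form $\frakX_S$ over $A^+$ and $X_S$ over $A$; the lift $\widetilde\frakX$ over $\bfA_{2,K}$ base changes, along the functorial map $\bfA_{2,K}\to\Ainf(A^+)/\widetilde\xi^2$, to a flat log-lift of $\frakX_S$, and the period sheaf $(\calO\widehat{\bC}_{\pd}^+,\Theta)$ together with the local Koszul / $q$-de Rham computations underpinning Theorem~\ref{thm:Intro-integral Simpson} all base change $A$-linearly. The delicate point is that the d\'ecalage statements in Theorem~\ref{thm:Intro-integral Simpson}(1) and the bounded-torsion estimate in Theorem~\ref{thm:Intro-cohomological comparison} (Corollary~\ref{cor:bounded torsion}) must survive this base change: because $\rL\eta_{\rho_K(\zeta_p-1)}$ does not commute with arbitrary derived base change, I would invoke precisely the uniform torsion bounds of Theorem~\ref{thm:Intro-cohomological comparison} to conclude that $\rR\nu_{S,*}(\calM^+\otimes\calO\widehat{\bC}_{\pd}^+)$ remains, after applying $\rL\eta_{\rho_K(\zeta_p-1)}$, concentrated in degree $0$ and flat over $A$, hence still computes a rank-preserving Higgs bundle. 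Inverting $p$ and trivializing the Tate twist (harmless rationally, and already incorporated into the definitions of $\calA_r$ and $\calA_r^{\Hsmall}$ via the $\{-1\}$), this produces a rank-preserving equivalence, natural in $S$, between rank-$r$ $v$-bundles on $X_{S,v}$ admitting a Hitchin-small integral structure (Definition~\ref{dfn:small O^+-representations}) and rank-$r$ Higgs bundles on $X_{S,\et}$ admitting a twisted Hitchin-small integral model on $\frakX_{S,\et}$ (Definition~\ref{dfn:small Higgs bundles}).

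The second step is to match these ``admits a small integral model'' conditions with membership in the Hitchin-small loci $\rL\rS_r(X,\OX)^{\Hsmall}(S)=\widetilde h^{-1}(\calA_r^{\Hsmall})(S)$ and $\HIG_r(X,\calO_X)^{\Hsmall}(S)=h^{-1}(\calA_r^{\Hsmall})(S)$. On the Higgs side this amounts to checking that the coefficient ideals $p^{<\frac{i}{p-1}}=(\zeta_p-1)^i\frakm_C$ cutting out $\calA_r^{\Hsmall}$ are exactly the ones for which a Higgs field with Hitchin-small characteristic polynomial can, \'etale-locally on $X_S$ (equivalently, after passing to the spectral cover and using a companion-matrix normal form), be conjugated into a twisted Hitchin-small integral lattice in the sense of Definition~\ref{dfn:small Higgs bundles}; on the $v$-bundle side one argues symmetrically. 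By construction the functor $\rho_{\widetilde\frakX}$ satisfies $h\circ\rho_{\widetilde\frakX}\simeq\widetilde h$ (its Hitchin image agrees with that of $\mathrm{HTlog}$, since the two differ by an automorphism within a Hitchin fibre) and its quasi-inverse satisfies $\widetilde h\circ(\rho_{\widetilde\frakX})^{-1}\simeq h$; hence the $S$-pointwise equivalences of the first step restrict to quasi-inverse equivalences on the fibre products over $\calA_r^{\Hsmall}$, and their naturality in $S$ upgrades them to the asserted equivalence of $v$-stacks.

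I expect the principal obstacle to lie in the first step, namely the interaction of the d\'ecalage functor $\rL\eta_{\rho_K(\zeta_p-1)}$ with base change along $\calO_C\to A^+$ for a general (neither discretely valued nor algebraically closed) perfectoid algebra $A$: one must verify that the cohomological computations behind Theorem~\ref{thm:Intro-integral Simpson} are flat enough that $\rR\nu_*$ of the twisted period sheaf and its $\rL\eta$-modification behave correctly in families, which is exactly where the uniform torsion bounds of Theorem~\ref{thm:Intro-cohomological comparison} become essential and where the semi-stable/log geometry (in particular the horizontal branch locus and the need for a log-lift over $\bfA_{2,K}$) requires care. The comparison of the two notions of Hitchin-smallness in the second step is conceptually the more subtle point, but given Definition~\ref{dfn:small Higgs bundles} and the explicit description of $\calA_r^{\Hsmall}$ it should reduce to bookkeeping with coefficient ideals.
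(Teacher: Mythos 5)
Your global strategy matches the paper's: produce a natural-in-$S$ equivalence of groupoids between rank-$r$ Hitchin-small $v$-bundles on $X_{S,v}$ and rank-$r$ Hitchin-small Higgs bundles on $X_{S,\et}$, verify compatibility with the Hitchin maps $\widetilde h$ and $h$, and identify the categorical notion of Hitchin-smallness with membership in $\widetilde h^{-1}(\calA_r^{\Hsmall})$ resp.\ $h^{-1}(\calA_r^{\Hsmall})$ --- these are exactly Corollary~\ref{cor:rational Simpson}, Lemma~\ref{lem:compatible with HTlog}, and Proposition~\ref{prop:geometric interpretion of Hitchin-small}, which the paper assembles in the proof of Theorem~\ref{thm:stacky Simpson}. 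However, your account of the first step contains a detour that the paper avoids and that, as stated, would not go through. You propose to \emph{base change} the $\calO_C$-integral correspondence to $A^+$ and then to control $\rL\eta_{\rho_K(\zeta_p-1)}$ under this base change using the torsion bounds of Theorem~\ref{thm:Intro-cohomological comparison}; but $\rL\eta$ genuinely fails to commute with derived base change along $\calO_C\to A^+$, and a bound on the cofiber of $\rD\rR(\calH^+,\theta)\to\rR\nu_*\calM^+$ does not repair that. The paper sidesteps the issue entirely: the period sheaf $\calO\widehat\bC_{\pd,S}^+$, the local Simpson correspondence (Theorem~\ref{thm:local Simpson}), and the integral correspondence (Theorem~\ref{thm:integral Simpson}) are all stated and \emph{proved directly over $A^+$} for arbitrary $S\in\Perfd$, not deduced from the $\calO_C$-case; and more to the point, the ingredient actually fed into the stacky statement is the \emph{rational} correspondence (Theorem~\ref{thm:rational Simpson}), in which $\rR\nu_*(\calM\otimes\calO\widehat\bC_{\pd,S})$ is already concentrated in degree $0$ without any $\rL\eta$-modification, since the higher cohomology is $\rho_K(\zeta_p-1)$-torsion and dies upon inverting $p$. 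So the $\rL\eta$ base-change concern you flag as ``the principal obstacle'' is not present in the correct argument.

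Two smaller points. Your treatment of the HTlog compatibility (``differ by an automorphism within a Hitchin fibre'') is a placeholder for what is an actual computation: Lemma~\ref{lem:compatible with HTlog} reduces, after pulling back to the degree-$p^d$ covering $X_{1,S}\to X_S$, to matching the explicit matrix $\exp(-p(\zeta_p-1)\rho_K\theta_i)$ of $\gamma_i^p$ against the Higgs field $p(\zeta_p-1)\rho_K\theta_i\otimes\frac{\dlog T_i^{1/p}}{t}$ with Heuer's construction of $\mathrm{HTlog}$, and this is where the identification $\calO_C\{-1\}=\rho_K\calO_C(-1)$ enters. And the identification of ``Hitchin-small'' with the stacky locus on the Higgs side (Proposition~\ref{prop:geometric interpretion of Hitchin-small}) is carried out by simultaneously conjugating the commuting components $\theta_i$ into $(\zeta_p-1)\frakm_C\Mat_r(R_S^+)$ using the eigenvalue bound, rather than the spectral-cover/companion-matrix route you sketch; your route is plausible but would need to handle the commuting-family aspect, which is automatic in the paper's conjugation argument.
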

  \begin{rmk}
      When $\frakX$ is smooth over $\calO_C$, the above equivalence was obtained by Ansch\"utz--Heuer--Le Bras \cite[Th. 1.1]{AHLB-small}, based on their previous work \cite{AHLB23} on studying rational Hodge--Tate crystals on the prismatic site associated to $\frakX$. They first constructed a fully faithful functor $S_{\widetilde \frakX}:\HIG_r(X,\calO_X)\to\rL\rS_r(X,\OX)$ via prismatic theory and then showed the essential surjectivity by working locally on $\frakX$. Compared with their construction, we do not need any input of prismatic theory and can give an explicit description of $\rho_{\widetilde \frakX}^{-1}$ (cf. Theorem \ref{thm:Intro-rational Simpson}). 
  \end{rmk}
  \begin{rmk}
      It is still a question if there exists an equivalence of the whole stacks
      \[\rL\rS_r(X,\OX)\xrightarrow{\simeq}\HIG_r(X,\calO_X)\]
      for general smooth $X$ over $C$. Up to now, we only know a few on this question: We only have the desired equivalence when $X$ is either a curve \cite{HX} or the projective space $\bP^n$ \cite[Cor. 1.3]{AHLB-small}. It seems the recent announcement of Bhargav Bhatt and Mingjia Zhang on Simpson gerbe may help to solve this question, but up to now, we still do not know if such an equivalence always exists.
  \end{rmk}

  We now describe the strategy to prove Theorem \ref{thm:Intro-stack Simpson}. It suffices to show for any $S = \Spa(A,A^+)\in \Perfd$, there is a rank-preserving equivalence 
  \[\rL\rS(X,\OX)^{\Hsmall}(S)\xrightarrow{\simeq}\HIG(X,\calO_X)^{\Hsmall}(S)\]
  between the category of Hitchin-small $v$-bundles on $X_{v,\et}$ and the category of Hitchin-small Higgs bundles on $X_{S,\et}$ which is functorial in $S$. To do so, we may follow the same argument for the proof of Theorem \ref{thm:Intro-integral Simpson}: Given a lifting $\widetilde \frakX$ of $\frakX$ over $\bfA_{2,K}$, its base-change $\widetilde \frakX_S$ along $\bfA_{2,K}\to \AAKK(S) = \AAinf(S)\otimes_{\Ainf}\bfA_{2,K}$ is a lifting of $\frakX_S$ (the base-change of $\frakX$ along $\calO_C\to A^+$). Using this, one can still construct a period sheaf $(\calO\widehat \bC_{\pd,S},\Theta)$ with Higgs field whose Higgs complex induces a resolution of the structure sheaf $\widehat \calO_{X_S}$ on $X_{S,v}$. Then one can prove the following rational Simpson correspondence:
  \begin{thm}[Theorem \ref{thm:rational Simpson} and Lemma \ref{lem:Hitchin-small vs twisted Hitchin-small}]\label{thm:Intro-rational Simpson}
      Let $\nu: X_{S,v}\to X_{S,\et}$ be the natural morphism of sites.
      \begin{enumerate}
          \item[(1)] For any $\calM\in \rL\rS(X,\OX)^{\Hsmall}(S)$ of rank $r$, we have a quasi-isomorphism 
          \[\rR\nu_*(\calM\otimes\calO\widehat \bC_{\pd,S}^+)\simeq\nu_*(\calM\otimes\calO\widehat \bC_{\pd,S}^+)\]
          Moreover, the push-forward
          \[(\calH(\calM),\theta):=(\nu_*(\calM\otimes\calO\widehat \bC_{\pd,S}),(\zeta_p-1)\nu_*(\Theta_{\id_{\calM}\otimes\Theta}))\]
          defines a Hitchin-small Higgs bundle of rank $r$ on $X_{S,\et}$.
        
          \item[(2)] For any $(\calH,\theta)\in \HIG(X,\calO_X)^{\Hsmall}(S)$ of rank $r$, the
          \[\calM(\calH,\theta):=(\calH\otimes\calO\widehat \bC_{\pd,S})^{(\zeta_p-1)^{-1}\theta\otimes\id+\id_{\calH}\otimes\Theta = 0}\]
          defines a Hitchin-small $v$-bundle of rank $r$ on $X_{S,v}$.
          
          \item[(3)] The functors in Items (1) and (2) defines an equivalence of categories
          \[\rL\rS(X,\OX)^{\Hsmall}(S)\xrightarrow{\simeq}\HIG(X,\calO_X)^{\Hsmall}(S).\]
          which preserves ranks, tensor products and dualities. Moreover, for any Hitchin-small $v$-bundle $\calM$ with associated Hitchin-small Higgs bundle $(\calH,\theta)$, there exists a quasi-isomorphism
          \[\rR\nu_*\calM\simeq \rD\rR(\calH,\theta).\]
      \end{enumerate}
  \end{thm}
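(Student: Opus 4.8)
The plan is to transplant the proof of Theorem~\ref{thm:integral Simpson} to the family over an arbitrary $S=\Spa(A,A^+)\in\Perfd$ and then invert $\zeta_p-1$; matching the essential image is the content of Lemma~\ref{lem:Hitchin-small vs twisted Hitchin-small}, which compares Hitchin-smallness (coefficients of the characteristic polynomial lying in $p^{<i/(p-1)}\rH^0(\frakX_S,\Sym^i(\Omega^{1,\log}_{\frakX_S}\{-1\}))$) with the twisted Hitchin-smallness produced by the construction. First I would base change the chosen lifting: since $\AAinf(S)\otimes_{\Ainf}\bfA_{2,K}$ is a square-zero thickening of $A^+$ carrying the induced canonical log structure, the base change $\widetilde\frakX_S$ of $\widetilde\frakX$ is a flat log-lifting of $\frakX_S$, and the construction of the period sheaf in Theorem~\ref{thm:integral Simpson} is local and functorial in the lifting, so it globalizes to a pair $(\calO\widehat\bC_{\pd,S}^+,\Theta)$ on $X_{S,v}$ together with its rational counterpart $\calO\widehat\bC_{\pd,S}$. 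One records at once the Poincar\'e lemma: the Higgs complex
\[
\calO\widehat\bC_{\pd,S}\xrightarrow{\Theta}\calO\widehat\bC_{\pd,S}\otimes_{\calO_{X_S}}\Omega^{1,\log}_{\frakX_S}\{-1\}\xrightarrow{\Theta}\cdots
\]
is a resolution of $\widehat\calO_{X_S}$ on $X_{S,v}$. Granting part~(1), the quasi-isomorphism $\rR\nu_*\calM\simeq\rD\rR(\calH,\theta)$ of part~(3) is then formal: tensor $\calM$ with this resolution, apply $\rR\nu_*$, and use that each $\rR\nu_*(\calM\otimes\calO\widehat\bC_{\pd,S}\otimes\Omega^{i,\log}_{\frakX_S}\{-i\})$ is, by part~(1) twisted by a vector bundle, concentrated in degree $0$ with the expected value.

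The heart is part~(1), which is local on $\frakX_{S,\et}$. I would pass to an affine $\Spf R^+$ that is log-\'etale over the standard semi-stable chart and to the associated Kummer tower; over $S$ this tower is pro-finite-\'etale with affinoid perfectoid limit and Galois group $\Gamma\cong\bZ_p^d$ acting through the log coordinates, so by the decompletion results recalled earlier the category of $v$-bundles on $X_{S,v}$ is equivalent to that of finite projective continuous semilinear $\Gamma$-representations over the decompleted ring. Writing $M$ for the representation attached to $\calM$, the complex $\rR\nu_*(\calM\otimes\calO\widehat\bC_{\pd,S}^+)$ is computed by the Koszul complex of the commuting operators $\gamma_1-1,\dots,\gamma_d-1$ on $M\otimes\calO\widehat\bC_{\pd,S}^+$, where $\calO\widehat\bC_{\pd,S}^+$ is a divided-power polynomial algebra on variables $u_1,\dots,u_d$ with $\gamma_i(u_j)=u_j+\delta_{ij}\cdot(\text{unit})\cdot(\zeta_p-1)$. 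After trivializing the $\Gamma$-action on $M$ by an exponential — legitimate by the Hitchin-small bound on its Sen-type operator — each $\gamma_i-1$ becomes $(\zeta_p-1)\partial_{u_i}$ plus a topologically nilpotent correction; since $\partial_{u_i}$ is surjective on the divided-power algebra with kernel the constants, the Koszul differentials are invertible in all positive Koszul degrees after inverting $\zeta_p-1$ (integrally, up to bounded $(\zeta_p-1)$-torsion, exactly as in Theorem~\ref{thm:integral Simpson}). Hence the complex is concentrated in degree $0$, its degree-$0$ term is finite projective of rank $r$, and the residual action $\id_\calM\otimes\Theta=\sum(\id_\calM\otimes\partial_{u_i})\otimes\rd\log t_i$ on it is the Higgs field $\theta$; reading its characteristic polynomial against the smallness of the Sen operator shows $\calH(\calM)$ lands in the twisted Hitchin-small locus, which Lemma~\ref{lem:Hitchin-small vs twisted Hitchin-small} identifies with $\HIG_r(X,\calO_X)^{\Hsmall}(S)$.

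For part~(2) I would run the same local picture in reverse. Given a Hitchin-small Higgs bundle $(\calH,\theta)$ with $\theta=\sum\theta_i\,\rd\log t_i$, horizontal sections of $(\zeta_p-1)^{-1}\theta\otimes\id+\id_{\calH}\otimes\Theta$ are produced by applying $\exp\!\big(-(\zeta_p-1)^{-1}\sum u_i\theta_i\big)$ to a local basis of $\calH$; the Hitchin-small bound is exactly what makes $(\zeta_p-1)^{-1}\theta_i$ topologically nilpotent, so the exponential converges and the fixed module $\calM(\calH,\theta)$ is, pointwise on $X_{S,v}$, a finite projective $\widehat\calO_{X_S}$-module of rank $r$, hence a $v$-bundle. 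That $\calM(-)$ and $(\calH(-),\theta)$ are quasi-inverse follows from the Poincar\'e lemma applied in both directions: the natural comparison morphisms $\calM(\calH(\calM),\theta)\to\calM$ and $(\calH,\theta)\to(\calH(\calM(\calH,\theta)),\theta)$ become isomorphisms after the local Koszul computation above. Compatibility with tensor products, duals, ranks and the transition maps in $S$ is immediate from the corresponding properties of $\calO\widehat\bC_{\pd,S}$ and the functoriality of the whole construction in the lifting.

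The main obstacle will be the local computation of part~(1) in the semi-stable case. Relative to the smooth toric situation of Ansch\"utz--Heuer--Le Bras, the coordinate relation $t_1\cdots t_k=\varpi$ changes both the Kummer tower and the $\Gamma$-action, one must verify that the decompletion theory still applies in this family over $S$, and — most delicately — one must keep track of the $(\zeta_p-1)$-adic sizes of all operators so that the bound $p^{<i/(p-1)}$ on the characteristic polynomial is precisely what forces the Koszul differentials to be invertible and the exponential in part~(2) to converge, with no denominators beyond the single power of $\zeta_p-1$ in the statement. Everything else — globalization, the Poincar\'e lemma, and the formal manipulations with $\rD\rR$ — is routine once this local input, itself a family version of the computation behind Theorem~\ref{thm:integral Simpson}, is in place.
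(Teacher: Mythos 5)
Your proposal is correct and follows essentially the same route as the paper: globalize the period sheaf $(\calO\widehat\bC_{\pd,S},\Theta)$ via the base-changed lifting, establish Poincar\'e's Lemma, reduce to the small-affine case, decomplete to $\Gamma$-representations over $R_S$, compute $\rR\Gamma(\Gamma,M\otimes P_{\infty,S})$ by the Koszul complex in the $\gamma_i-1$, and trivialize by $\exp(\pm\sum\theta_iY_i)$, with the Hitchin-small bound guaranteeing convergence of the exponential and confining positive-degree Koszul cohomology to $\rho_K(\zeta_p-1)$-torsion (hence zero after inverting $p$). The only notable difference is bookkeeping: the paper first establishes the integral version (Theorem \ref{thm:integral Simpson}), and then Theorem \ref{thm:rational Simpson} follows immediately by observing that on each small affine $\frakX_{i,S}$ one has $\calM|_{X_{i,S}}=\calM_i^+[\tfrac1p]$ for a Hitchin-small integral $\calM_i^+$, whereas you rerun the Koszul argument directly on the rational level --- logically equivalent, just inlining the integral input rather than quoting it.
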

  Then Theorem \ref{thm:Intro-stack Simpson} follows immediately as the equivalence above is ovbiously functorial in $S$ by the construction.

  To obtain the desired period sheaf with Higgs field $(\calO\widehat \bC_{\pd,S},\Theta)$, we need to construct the \emph{integral Faltings' extension} corresponding to the lifting $\widetilde \frakX_S$ of $\frakX_S$ using the theory of log-cotangent complex of Olsson (and Gabber) \cite[\S8]{Ols} as we did in \cite[\S2]{Wan23}. To do so, we need to endow $A^+$ with a suitable log-structure, called the \emph{canonical log-structure}, such that the analogue of \cite[Lem. 3.14]{BMS18} holds true. The most natural log-structure on $A^+$ is $(A^{\times}\cap A^+\to A^+)$, and the difficult part is prove it is the correct one (cf. \S\ref{ssec:log structure on perfectoid Tate algebra}).

\subsection{Organization}
  The paper is organized as follows: In \S\ref{sec:set-up}, we introduce the canonical log-structure on perfectoid Tate algebra, and the basic set-up on semi-stable formal schemes we will work with. In \S\ref{sec:period sheaves}, we construct our period sheaf with Higgs field and prove the corresponding Poincar\'e's Lemma. In \S\ref{sec:local Simpson}, we include the key local calculations and give a local version of Simpson correspondence. Finally, in \S\ref{sec:global Simpson}, we prove the integral Simpson correspondence at first and then give the desired equivalence of moduli stacks on Hitchin-small $v$-bundles and Hitchin-small Higgs bundles for lifable semi-stable formal schemes $\frakX$.
\subsection{Notations}\label{ssec:notation}
  Throughout this paper, let $K$ be a complete discrete valuation field over $\Qp$ with the ring of integers $\calO_K$ and the residue field $\kappa$, which is required to be perfect. Put $\rW:=\rW(\kappa)$, 
  let $C = \widehat{\overline K}$ be the completion of a fixed algebraic closure $\overline K$ of $K$ with the ring of integers $\calO_C$ and the maximal ideal $\frakm_C$. Let $\Ainf$ and $\BdRp$ be the corresponding infinitesimal and de Rham period rings. Fix an embedding $p^{\bQ}\subset C^{\times}$, which induces an embedding $\varpi^{\bQ}\subset C^{\flat\times}$, where $\varpi = (p,p^{1/p},p^{1/p^2},\dots)\in C^{\flat}$. Fix a coherent system $\{\zeta_{p^n}\}_{n\geq 0}$ of primitive $p^n$-th roots of unity in $C$, and let $\epsilon:=(1,\zeta_p,\zeta_p^2,\dots)\in C^{\flat}$. Put $\bfA_{\inf,K}:=\Ainf\otimes_{\rW}\calO_K$ and then we have the canonical surjection $\theta_K:\bfA_{\inf,K}\to \calO_C$ whose kernel $\rI_K$ is principally generated and we fix an its generator $\xi_K$. Define $\bfA_{2,K}:=\bfA_{\inf,K}/\rI_K^2$.
  
  For any (sheaf of) $\bfA_{\inf,K}$-module $M$ and any $n\in \bZ$, denote by $M\{n\}$ its Breuil--Kisin--Fargues twist 
  \[M\{n\}:=M\otimes_{\Ainf}\rI_K^{\otimes n},\]
  which can be trivialized by $\xi_K^n$; that is, we have the identification $M\{n\} = M\cdot\xi_K^n$.
  Using this, we may regard $M$ as a sub-$\bfA_{\inf,K}$-module of $M\{-1\}$ via the identification $M = \xi_K M\{-1\}$.

  Let $\mu_{p^{\infty}}$ be the sub-group of $\calO_C^{\times}$ generated by $\{\zeta_{p^n}\}_{n\geq 1}$ and $\Zp(1):=\rT_p(\mu_{p^{\infty}})$ be its Tate module. For any (sheaf of) $\Zp$-module $M$ and any $n\in \bZ$, denote by $M(n)$ its Tate twist
  \[M(n):= M\otimes_{\Zp}\Zp(1)^{\otimes n}.\]
  Let $t = \log([\epsilon])\in \BdRp$ be the Fontaine's $p$-adic analogue of ``$2\pi i$''. Then $M(n)$ can be trivialized by $t^n$; that is, we have the identification $M(n) = M\cdot t^n$.

  The natural inclusion $\bfA_{\inf,K}\hookrightarrow\BdRp$ induces a natural inclusion
  \[\calO_C\{1\}\cong \xi_K\bfA_{\inf,K}/\xi_K^2\bfA_{\inf,K}\hookrightarrow t\BdRp/t^2\BdRp\cong C(1)\]
  identifying $\calO_C\{1\}$ with an $\calO_C$-lattice of $C(1)$.
  Let $\calO_C(1) = \calO_C\otimes_{\Zp}\Zp(1)\subset C\otimes_{\Zp}\Zp(1) = C(1)$ be the standard $\calO_C$-lattice of $C(1)$. Then there exists an element $\rho_K\in \calO_C$ with the $p$-adic valuation $\nu_p(\rho_K) = \nu_p(\calD_K)+\frac{1}{p-1}$ such that 
  \[\calO_C(1) = \rho_K\calO_C\{1\},\]
  where $\calD_K$ denotes the ideal of differentials of $\calO_K$. For example, when $\calO_K=\rW$, we have $\calD_K=\calO_K$ and can choose $\rho_K = \zeta_p-1$.

  Fix a ring $R$. If an element $x\in R$ admits arbitrary pd-powers, we denote by $x^{[n]}$ its $n$-th pd-power (i.e. analogue of $\frac{x^n}{n!}$) in $R$. Put $E_i = (0,\dots,1,\dots,0)\in \bN^d$ with $1$ appearing at exactly the $i$-th component. For any $J=(j_1,\dots,j_d)\in\bN^d$ and any $x_1,\dots,x_d\in R$, we put 
  \[\underline x^J:=x_1^{j_1}\cdots x_d^{j_d}\]
  and if moreover $x_i$ admits arbitrary pd-powers in $A$ for all $i$, we put
  \[\underline x^{[J]}:=x_1^{[j_1]}\cdots x_d^{[j_d]}.\]
  Define $|J|:=j_1+\cdots+j_d$.
  For any $\alpha\in\bN[1/p]\cap[0,1)$, we put
  \[\zeta^{\alpha} = \zeta_{p^n}^m\]
  if $\alpha = \frac{m}{p^n}$ such that $p$ and $m$ are coprime in $\bN$. If $x\in A$ admits compatible $p^n$-th roots $x^{\frac{1}{p^n}}$, we put
  \[x^{\alpha} = x^{\frac{m}{p^n}}\]
  for $\alpha = \frac{m}{p^n}$ as above. In general, for any $\underline \alpha:=(\alpha_1,\dots,\alpha_d)\in(\bN[1/p]\cap[0,1))^d$ and any $x_1,\dots,x_d$ admitting compatible $p^n$-th roots, we put
  \[\underline x^{\underline \alpha}:=x_1^{\alpha_1}\cdots x_d^{\alpha_d}.\]

  We always denote by $\Perfd$ the $v$-site of affinoid perfectoid spaces over $\Spa(C,\calO_C)$ in the sense of \cite{Sch-Diamond}.

\section*{Acknowledgement}
   The second author would like to thank Ruochuan Liu for his interest, Yu Min for valuable conversations, and Gabriel Dorfsman--Hopkins for the correspondence upon the proof of Proposition \ref{prop:log structure on perfectoids}. The project started at June 2020, when the senond was invited by the first author to visit the university of science and technique of china (USTC). The second author would like to thank the first author for the invitation and the USTC for the great research environment. The authors are partially supported by CAS Project for Young Scientists in Basic Research, Grant No. YSBR-032.

\section{Basic set-up}\label{sec:set-up}

\subsection{Canonical log structure on perfectoid affinoid algebras}\label{ssec:log structure on perfectoid Tate algebra}

    Let $(A,A^+)$ be a perfectoid affinoid algebra with tilting $(A^{\flat},A^{\flat+})$. Fix a $\underline{\pi}\in A^{\flat,+}$ such that $\pi = \underline{\pi}^{\sharp}$ and $\pi^p = pu$ for some unit $u\in A^{+,\times}$ (cf. \cite[Lem. 3.19]{BMS18}). Then $A^+$ is $\pi$-adically complete and $A = A^+[\frac{1}{\pi}]$ while the same holds for $(A^{\flat}, A^{\flat,+},\underline{\pi})$ (instead of $(A,A^+,\pi)$). Let 
    \[\sharp: A^{\flat}\to A,\quad \underline f = (f_0,f_1,\dots)\in A^{\flat} = \varprojlim_{x\mapsto x^p}A\mapsto f_0\] 
    denote the usual sharp map, which is a morphism of multiplicative monoids. Then $\sharp$ coincides with the composite
    \[A^{\flat,+}\xrightarrow{[\cdot]}\AAinfK(A,A^+)\xrightarrow{\theta_K}A^+.\]

    \begin{dfn}\label{dfn:canonical log structure on Ainf}
        For any affinoid perfectoid $U = \Spa(A,A^+)$ over $\Qp$, the \emph{canonical log-structure} on $\AAinfK(U) = \AAinfK(A,A^+)$ is the log-structure induced by the pre-log structure 
        \[A^{\flat,\times}\cap A^{\flat,+}\xrightarrow{[\cdot]}\AAinfK(U).\]
        For any $\AAinfK(U)$-algebra $B^+\in\{A^{\flat,+},A^+,\AAinfK(U)/\Ker(\theta_K)^n\}$, the \emph{canonical log-structure} on $B^+$ is the log-structure induced from the canonical log-structure on $\AAinfK(U)$ via the natural surjection $\AAinfK(U)\to B^+$. In particular, the canonical log-structure on $A^+$ is the log-structure associated to the pre-log structure $A^{\flat,\times}\cap A^{\flat,+}\xrightarrow{\sharp}A^+$.
    \end{dfn}

    Note that $A^+$ admits another log-structure $(A^{\times}\cap A^+\hookrightarrow A^+)$. Our purpose in this section is to prove the following result:
    \begin{prop}\label{prop:log structure on perfectoids}
        For any perfectoid affinoid algebra $(A,A^+)$, the canonical log-structure on $A^+$ coincides with the log-structure $(A^{\times}\cap A^+\hookrightarrow A^+)$.
    \end{prop}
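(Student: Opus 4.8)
The plan is to reduce the statement to the affinoid level, where it becomes a comparison of two submonoids of $A^+$. Writing $\alpha_{\can}: A^{\flat,\times}\cap A^{\flat,+}\xrightarrow{\sharp}A^+$ for the pre-log structure defining the canonical log-structure and $\alpha_{\triv}: A^{\times}\cap A^+\hookrightarrow A^+$ for the trivial one, the associated log-structures are the pushouts $\alpha_{\can}^{\gp}\oplus_{(\cdots)}A^+$ in the usual sense; so it suffices to identify the images and fibers. First I would record the elementary containment: if $\underline f\in A^{\flat,\times}\cap A^{\flat,+}$ then $f_0 = \underline f^{\sharp}\in A^+$, and $f_0$ is a unit in $A$ because $\underline f$ is a unit in $A^{\flat}$ and $\sharp: A^{\flat}\to A$ is multiplicative (its inverse is $(\underline f^{-1})^{\sharp}$). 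Hence $\Ima(\alpha_{\can})\subset A^{\times}\cap A^+$, which gives a canonical map from the canonical log-structure to the trivial one; the content is that this map is an isomorphism.

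For that, the key step is to show every element of $A^{\times}\cap A^+$ lies, \emph{up to multiplication by $1+\frakm$-type elements already in the monoid}, in the image of $\sharp$. Concretely, for $g\in A^\times\cap A^+$ I would look for $\underline f\in A^{\flat,\times}\cap A^{\flat,+}$ with $f_0^{\sharp}$ differing from $g$ by a unit congruent to $1$ modulo $\pi$ — such units lie in $(A^+)^\times$ and a fortiori in the submonoid $A^{\times}\cap A^+$, and they are precisely the discrepancy that the sheafification/groupification of $\alpha_{\can}$ already absorbs, so matching $g$ up to such a unit is enough. To produce $\underline f$: since $A^+$ is $\pi$-adically complete and perfectoid, $A^{+}/\pi \cong A^{\flat,+}/\underline\pi$ via $\sharp$, and one can lift the reduction $\bar g\in (A^+/\pi)^\times = (A^{\flat,+}/\underline\pi)^\times$ to a unit of $A^{\flat,+}$ (units lift along the $\underline\pi$-adically complete ring $A^{\flat,+}$); call it $\underline f$. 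Then $f_0^{\sharp}\equiv g \bmod \pi$, so $g = f_0^{\sharp}\cdot u$ with $u\in 1+\pi A^+\subset (A^+)^\times$. Feeding this back, $g$ and $f_0^{\sharp}$ have the same image in the associated log-structure, so the map of log-structures is surjective on stalks; injectivity is the statement that $\alpha_{\can}^{-1}((A^+)^{\times}) = (A^{\flat,\times}\cap A^{\flat,+})\cap \sharp^{-1}((A^+)^\times)$, i.e. that an element of $A^{\flat,\times}\cap A^{\flat,+}$ whose sharp is a unit of $A^+$ was already a unit of $A^{\flat,+}$ — which follows because $\sharp$ detects units modulo $\pi$ and $A^{\flat,+}$ is $\underline\pi$-adically complete.

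I expect the main obstacle to be the bookkeeping of \emph{which} monoid the pre-log structure really generates after taking the associated log-structure: one must be careful that ``coinciding'' is a statement about the associated log-structures $M_{\can}\to A^+$ and $M_{\triv}\to A^+$, not about the raw pre-log monoids $A^{\flat,\times}\cap A^{\flat,+}$ versus $A^{\times}\cap A^+$ (which genuinely differ — the former has no $\pi$-torsion ambiguity and sees all of $A^{\flat,+}$'s unit group, the latter does not). The correct framing is that $M_{\can}$ is the pushout along $\alpha_{\can}^{-1}((A^+)^\times)\to (A^+)^\times$, and the lifting-of-units argument above shows this pushout receives $A^{\times}\cap A^+$ isomorphically. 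A secondary technical point, flagged in the introduction as ``the difficult part,'' is verifying that $A^{\times}\cap A^+$ is the \emph{right} monoid in the sense that $(A^{\flat,\times}\cap A^{\flat,+})/(A^{\flat,+})^{\times}\cong (A^\times\cap A^+)/(A^+)^\times$; this is exactly the isomorphism of characteristic monoids extracted from the two displays above, and it is what makes the analogue of \cite[Lem.~3.14]{BMS18} go through. I would also double-check the sheaf-theoretic globalization: both constructions are defined by the same recipe on a basis of affinoid perfectoids and the comparison map is natural, so it sheafifies, and it suffices to check it is an isomorphism on that basis, which is what the argument above does.
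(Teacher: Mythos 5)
The high-level framing of your argument is correct and matches the paper: reduce the comparison of associated log-structures to showing that every $g\in A^{\times}\cap A^{+}$ factors as $\underline{f}^{\sharp}u$ with $\underline{f}\in A^{\flat,\times}\cap A^{\flat,+}$ and $u\in(A^{+})^{\times}$, and observe that the kernel of $\sharp:A^{\flat,\times}\cap A^{\flat,+}\to A^{\times}\cap A^{+}$ consists of units of $A^{\flat,+}$ (your ``injectivity'' step, which is fine). But your production of $\underline f$ has a genuine gap. You assume $\bar{g}\in(A^{+}/\pi)^{\times}$, but $g\in A^{\times}\cap A^{+}$ is only invertible in $A=A^{+}[1/\pi]$, not in $A^{+}$, so its reduction modulo $\pi$ need not be a unit. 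Already $g=\pi$ itself lies in $A^{\times}\cap A^{+}$ and reduces to $0$ in $A^{+}/\pi$; more generally any $g$ deep in the maximal ideal direction, e.g.\ $g=p^{1/2}\in\calO_C$, is killed or made nilpotent by reduction modulo $\pi$. Reducing modulo $\pi$ discards precisely the valuation data that makes the statement nontrivial: once $|g|\le|\pi|$ pointwise, the residue ring $A^{+}/\pi$ sees nothing, yet $g$ must still be matched (up to a unit of $A^{+}$) to an element of $(A^{\flat,\times}\cap A^{\flat,+})^{\sharp}$.

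The paper's proof keeps the norm structure in play. It uses quasi-compactness of $|\Spa(A,A^{+})|$ and Huber's results to get a uniform bound $|\pi(x)|^{pN}\leq|g(x)|\leq 1$, then invokes the approximation lemma of \v{C}esnavi\v{c}ius--Scholze (\cite[Lem.~2.3.1]{CS24}), which produces $\underline g\in A^{\flat,+}$ with the pointwise multiplicative estimate $|g(x)-\underline g^{\sharp}(x)|\leq|p(x)|\cdot\max(|\underline g(x^{\flat})|,|\underline\pi(x^{\flat})|^{pN})$. This is strictly stronger than a mod-$\pi$ lifting statement, and it is what lets one conclude $|g|=|\underline g^{\sharp}|$ everywhere, that $\underline g$ is a unit of $A^{\flat}$ lying in $A^{\flat,+}$, and finally that $(\underline g^{\sharp})^{-1}g\in 1+pA^{+}$. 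There is no easy repair of your argument without something of this strength: in general there is no polar decomposition $g=\pi^{a}u$ with $u\in(A^{+})^{\times}$, because the absolute value $|g(x)|$ can vary nontrivially across $|\Spa(A,A^{+})|$, so one cannot reduce to the unit case by dividing out a fixed power of $\pi$ and then applying your mod-$\pi$ lifting.
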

    For example, as $C^{\flat,\times}\cap\calO_C^{\flat} = \varpi^{\bQ_{>0}}\cdot \calO_C^{\flat,\times}$ and $C^{\times}\cap\calO_C = p^{\bQ_{>0}}\cdot \calO_C^{\times}$, we know the canonical log-structure on $\calO_C$ is associated to the pre-log-structure $\bQ_{>0}\xrightarrow{a\mapsto \varpi^a}\calO_C^{\flat}\xrightarrow{\sharp}\calO_C$ while the log-structure $C^{\times}\cap\calO_C\hookrightarrow\calO_C$ is associated to the pre-log-structure $\bQ_{>0}\xrightarrow{a\mapsto p^a}\calO_C$, yielding Proposition \ref{prop:log structure on perfectoids} in this case because $(\varpi^a)^{\sharp} = p^a$ for any $a\in\bQ$.

    The key ingredient for proving Proposition \ref{prop:log structure on perfectoids} is the following result, whose proof is similar to that of \cite[Prop. 2.9]{DH}.

    \begin{prop}\label{prop:unit group for perfectoids}
        Let $(A,A^+)$ be a perfectoid affinoid algebra over $\Qp$ with tilting $(A^{\flat},A^{\flat,+})$. Then for any $f\in A^{\times}\cap A^+$, there exists a $\underline g\in A^{\flat,\times}\cap A^{\flat+}$ and an $h\in A^+$ such that $f = \underline g^{\sharp}\cdot(1+p h)$.
    \end{prop}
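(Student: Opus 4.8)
The plan is to exploit the fact that $A^+$ is $\pi$-adically complete (equivalently, $p$-adically complete, since $\pi^p = pu$) together with the surjectivity of the tilting map on units modulo the maximal-ideal-type obstruction. First I would reduce to an approximation statement: given $f \in A^{\times}\cap A^+$, it suffices to find $\underline{g}\in A^{\flat,\times}\cap A^{\flat,+}$ with $\underline{g}^{\sharp} \equiv f \pmod{pA^+}$ and such that $f/\underline{g}^{\sharp}$ is a unit in $A^+$ congruent to $1$ mod $p$; then $h := (f/\underline{g}^{\sharp} - 1)/p \in A^+$ does the job. So the whole content is producing a single $\underline{g}$ that is a unit in $A^{\flat,+}$ (not merely in $A^{\flat}$) and whose sharp approximates $f$ to first order.

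Next I would carry out the construction of $\underline g$. Write $\overline{A} := A^+/pA^+$, which is a perfectoid ring of characteristic $p$, so the $p$-power (Frobenius) map $\overline{A}\to\overline{A}$ is surjective with kernel the ideal of elements that are $\pi$-torsion-ish; more precisely $A^{\flat,+}/\pi \cong A^+/p$ and one has $A^{\flat,+} = \varprojlim_{x\mapsto x^p} A^+/p$. The image $\bar f \in \overline{A}$ of $f$ is a unit (since $f$ is a unit in $A^+$ and the reduction of a unit is a unit). Now I need to lift $\bar f$ through the inverse limit $A^{\flat,+} = \varprojlim_{\Frob} A^+/p$ to an element $\underline g$: this requires compatible $p$-power roots of $\bar f$ in $\overline A$. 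Since $\overline A$ is perfect ($\Frob$ is an isomorphism on $\overline A = A^+/p$ because $A^+$ is perfectoid — actually $\Frob$ is surjective with kernel $\pi^{1/p}\overline{A}$ wait, more carefully: $\overline{A}$ need not be perfect, but $A^{\flat,+} \to A^+/p$ realizes $A^+/p$ as a quotient of the perfect ring $A^{\flat,+}$ by $\underline\pi$, and $A^{\flat,+}$ IS perfect), I should instead work directly in $A^{\flat,+}$: I want $\underline g\in A^{\flat,+}$ with $\underline g \bmod \underline\pi$ equal to a chosen lift of $\bar f$ — but I must choose that lift to be a unit in $A^{\flat,+}$. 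The cleanest route, following \cite[Prop. 2.9]{DH}, is: pick any preimage of $\bar f$ under $A^{\flat,+}\twoheadrightarrow A^+/p \cong A^{\flat,+}/\underline\pi$, call it $\underline g_0$; since $\bar f$ is a unit, $\underline g_0$ is a unit modulo $\underline\pi$, and because $A^{\flat,+}$ is $\underline\pi$-adically complete, $\underline g_0$ is already a unit in $A^{\flat,+}$ — this is where $\underline\pi$-adic completeness does the lifting of units for free. Then $\underline g := \underline g_0$, and by construction $\underline g^{\sharp} = \theta_K([\underline g_0]) \equiv f \pmod{pA^+}$, since $\theta_K$ induces exactly the identification $A^{\flat,+}/\underline\pi \xrightarrow{\sim} A^+/p$ compatibly with $\sharp$.

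Finally I would verify $f = \underline g^{\sharp}(1+ph)$ is a valid conclusion: from $\underline g^{\sharp}\equiv f \pmod{pA^+}$ and $\underline g^{\sharp}\in (A^+)^{\times}$ (its reduction mod $p$ is the unit $\bar f$, and $A^+$ is $p$-adically complete, so $\underline g^{\sharp}$ is a unit in $A^+$), we get $f\cdot(\underline g^{\sharp})^{-1} = 1 + p h$ for a unique $h \in A^+$, and this $h$ together with $\underline g$ is exactly what is asserted. The main obstacle, and the one point deserving care, is the claim that a preimage $\underline g_0 \in A^{\flat,+}$ of a unit in $A^+/p$ is automatically a unit in $A^{\flat,+}$: this rests on $\underline\pi$-adic completeness of $A^{\flat,+}$ and the observation that $\underline\pi$ lies in the Jacobson radical of $A^{\flat,+}$ (equivalently that $A^{\flat,+}$ is $\underline\pi$-henselian), so that units lift along $A^{\flat,+}\twoheadrightarrow A^{\flat,+}/\underline\pi$. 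Everything else is bookkeeping with the identifications $A^{\flat,+} = \varprojlim_{x\mapsto x^p}A^+/p$, $A^{\flat,+}/\underline\pi \cong A^+/p$, and the compatibility of $\sharp$ with $\theta_K\circ[\cdot]$ recorded just before Definition \ref{dfn:canonical log structure on Ainf}.
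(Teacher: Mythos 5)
Your strategy has a fatal error in the very first step: you assert that ``the image $\bar f\in A^+/p$ is a unit (since $f$ is a unit in $A^+$).'' But the hypothesis is only that $f\in A^{\times}\cap A^+$, i.e. $f$ is invertible in $A=A^+[1/\pi]$, \emph{not} in $A^+$. These are very different: for instance with $(A,A^+)=(C,\calO_C)$ one has $C^{\times}\cap\calO_C = p^{\bQ_{>0}}\cdot\calO_C^{\times}$, so $f=p$ itself is a legitimate input, and its reduction mod $p$ is $0$, not a unit. The entire point of the proposition — and of the canonical log structure it supports — is to handle elements of $A^{\times}\cap A^+$ that are \emph{not} units of $A^+$. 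Once $\bar f$ fails to be a unit in $A^+/p$, your lifting argument along $A^{\flat,+}\twoheadrightarrow A^{\flat,+}/\underline\pi\cong A^+/p$ collapses: there is no preimage that is a unit in $A^{\flat,+}$, and the $\underline\pi$-henselianity of $A^{\flat,+}$ is irrelevant because you have no unit to lift.

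What the paper actually does is work with spectral norms rather than reductions mod $p$. Since $f$ is a unit in $A$ and $A^+$-integral, one has $0<|f(x)|\leq 1$ for every $x\in|U|$, and quasi-compactness of $|U|$ gives a \emph{uniform} lower bound $|\underline\pi(x^\flat)|^{pN}\leq|f(x)|$. One then invokes the approximation lemma of \v{C}esnavi\v{c}ius--Scholze to produce $\underline g\in A^{\flat,+}$ whose sharp agrees with $f$ up to an error of size $|p|\cdot\max(|\underline g|,|\underline\pi|^{pN})$, which by the ultrametric inequality forces $|\underline g^{\sharp}(x)|=|f(x)|$ pointwise. That equality of absolute values, not a mod-$p$ congruence, is what shows $\underline g\in A^{\flat,\times}\cap A^{\flat,+}$ and $(\underline g^{\sharp})^{-1}f\in 1+pA^+$. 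The mechanism is essentially different from yours: you cannot avoid the approximation lemma (or some substitute for it) here, because a mod-$p$ argument simply does not see the elements of $A^{\times}\cap A^+$ of spectral norm $<1$, and those are exactly the elements the proposition must handle.

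If you want to salvage the spirit of your reduction, note that the case $f\in(A^+)^{\times}$ — where your argument \emph{does} work, since then $\bar f$ really is a unit and $\underline\pi$-henselianity lifts it — is the easy case; the whole content is the case where $f$ is merely a unit in $A$, and for that you need a quantitative control of $|f|$ from below (quasi-compactness) together with the \v{C}esnavi\v{c}ius--Scholze approximation lemma, as in the paper's proof.
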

    \begin{proof}
        Put $U = \Spa(A,A^+)$ and $U^{\flat} = \Spa(A^{\flat},A^{\flat+})$. By tilting equivalence, we can an homeomorphism of underlying topological spaces of $|U|\cong|U^{\flat}|$. For any $x\in|U|$, we denote by $x^{\flat}\in |U^{\flat}|$ the image of $x$ via this identification.

        Fix an $f\in A^{\times}\cap A^+$. By \cite[Lem. 3.3(i)]{Hub93} and \cite[Lem. 1.4]{Hub94}, for any $x\in |U|$, we always have $0<|f(x)|\leq 1$. As $\pi$ is a pesudo-uniformizer and $|U|$ is quasi-compact, there exists an integer $N\geq 0$ such that for any $x\in |U|$,
        \begin{equation}\label{equ:UnitGroup-I}
            |\underline{\pi}(x^{\flat})|^{pN}=|\pi(x)|^{pN}\leq |f(x)|\leq 0.
        \end{equation}
        By the approximation lemma \cite[Lem. 2.3.1]{CS24}, there exists a $\underline g\in A^{\flat,+}$ such that for any $x\in|U|$, 
        \begin{equation}\label{equ:UnitGroup-II}
            |f(x)-\underline g^{\sharp}(x)|\leq |p(x)|\cdot\max(|\underline g(x^{\flat})|,|\underline{\pi}(x^{\flat})|^{pN}).
        \end{equation}
        As $|\underline g^{\sharp}(x)| = |\underline g(x^{\flat})|$ and $|p(x)|<1$, by strong triangular inequality, we deduce from (\ref{equ:UnitGroup-I}) and (\ref{equ:UnitGroup-II}) that for any $x\in |U|$, 
        \[|f(x)| = |\underline g^{\sharp}(x)| = |\underline g(x^{\flat})|.\] 
        In particular, using (\ref{equ:UnitGroup-I}) again, for any $x^{\flat}\in|U^{\flat}|$, we have
        \[0<|\underline{\pi}(x^{\flat})|^{pN}\leq |\underline g(x^{\flat})|\leq 1.\] 
        This forces that $\underline g\in A^{\flat\times}\cap A^{\flat,+}$ and that $\underline g^{-1}\underline{\pi}^{pN}\in A^{\flat,+}$, by \cite[Lem. 3.3(i)]{Hub93} and \cite[Lem. 1.4]{Hub94} again. A similar argument also shows that $(\underline g^{\sharp})^{-1}f\in A^{+,\times}$ is a unit in $A^+$. So we can rewrite (\ref{equ:UnitGroup-II}) as 
        \[|((\underline g^{\sharp})^{-1}f)(x)-1|\leq |p(x)|\cdot\max(1,|(\underline g^{-1}\underline{\pi}^{pN})(x^{\flat})|) = |p(x)|.\]
        As $p$ is invertible in $A$, the above argument implies that $h:=p^{-1}((\underline g^{\sharp})^{-1}f-1)$ is a well-defined element in $A^+$. By construction, we conclude that $f = \underline g^{\sharp}(1+ph)$ as desired.
    \end{proof}
    \begin{cor}\label{cor:unit group for perfectoids}
        Let $(A,A^+)$ be a perfectoid affinoid algebra over $\Qp$ with tilting $(A^{\flat},A^{\flat,+})$. Then for any $?\in\{+,\emptyset\}$, we have 
        \[A^{\times}\cap A^{?} = (A^{\flat,\times}\cap A^{\flat,?})^{\sharp}\cdot(1+pA^+).\]
    \end{cor}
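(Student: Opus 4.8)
The plan is to deduce Corollary \ref{cor:unit group for perfectoids} directly from Proposition \ref{prop:unit group for perfectoids} together with a matching statement about the extra factor $1+pA^+$. First I would handle the case $? = +$. The inclusion $(A^{\flat,\times}\cap A^{\flat,+})^\sharp\cdot(1+pA^+)\subset A^\times\cap A^+$ is the easy direction: if $\underline g\in A^{\flat,\times}\cap A^{\flat,+}$ then $\underline g^\sharp\in A^+$ and, since $\sharp$ is multiplicative, $(\underline g^\sharp)(\underline g^{-1})^\sharp = 1^\sharp = 1$, so $\underline g^\sharp\in (A^+)^\times$; and $1+ph\in A^+$ with inverse $\sum_{n\ge 0}(-ph)^n$ (converging $\pi$-adically since $p\in \pi A^+$ up to a unit by the choice of $\underline\pi$), hence $1+ph\in (A^+)^\times$ as well. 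The reverse inclusion is exactly the content of Proposition \ref{prop:unit group for perfectoids}: given $f\in A^\times\cap A^+$, write $f = \underline g^\sharp(1+ph)$ with $\underline g\in A^{\flat,\times}\cap A^{\flat,+}$ and $h\in A^+$.

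Next I would treat the case $? = \emptyset$, i.e. $A^\times = (A^{\flat,\times})^\sharp\cdot(1+pA^+)$. The point is that an arbitrary $f\in A^\times$ can be rescaled by a power of $\pi$ to land in $A^+$: choose $n\ge 0$ with $\pi^n f\in A^+$ and $\pi^n f^{-1}\in A^+$ — possible because $A = A^+[1/\pi]$ and both $f,f^{-1}$ lie in $A$ — so that $\pi^n f\in A^\times\cap A^+$. Apply the $?=+$ case to write $\pi^n f = \underline h^\sharp(1+pa)$ with $\underline h\in A^{\flat,\times}\cap A^{\flat,+}$, $a\in A^+$. Since $\pi = \underline\pi^\sharp$ and $\underline\pi\in A^{\flat,\times}$ (it is a unit in $A^\flat$ as $A$ is perfectoid and $\pi$ a pseudo-uniformizer), we get $f = (\underline\pi^{-n}\underline h)^\sharp(1+pa)$ with $\underline\pi^{-n}\underline h\in A^{\flat,\times}$, giving the nontrivial inclusion; the reverse inclusion is again immediate from multiplicativity of $\sharp$ and the invertibility of $1+pA^+$ inside $A$.

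I expect the only real subtlety — and it is already absorbed into Proposition \ref{prop:unit group for perfectoids} — to be the approximation argument producing $\underline g$; at the level of the corollary there is essentially nothing left to do beyond bookkeeping with the two multiplicative factors and the rescaling trick. One small point to be careful about: in the $?=\emptyset$ case one should note $\underline\pi^{-n}\underline h\in A^{\flat,\times}$ but not necessarily in $A^{\flat,+}$, which is fine since the statement for $?=\emptyset$ only asks for $A^{\flat,\times}$. Conversely, when verifying that the right-hand side lands in $A^\times$ one uses that $\sharp$ is a monoid homomorphism $A^{\flat,\times}\to A^\times$ (not just into $A$), which follows from $\sharp(\underline x)\sharp(\underline x^{-1}) = \sharp(1) = 1$. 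With these observations the corollary follows formally.
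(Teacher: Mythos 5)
Your proof follows essentially the same route as the paper: reduce $? = \emptyset$ to $? = +$ via the pseudo-uniformizer $\pi = \underline{\pi}^{\sharp}$ (the paper compresses this into one sentence, citing $A = A^+[\frac{1}{\pi}]$ and $A^{\flat} = A^{\flat,+}[\frac{1}{\underline{\pi}}]$; you make the rescaling explicit), and then appeal to Proposition \ref{prop:unit group for perfectoids} for the nontrivial inclusion. One small inaccuracy: from $\underline g\in A^{\flat,\times}\cap A^{\flat,+}$ you conclude $\underline g^{\sharp}\in (A^+)^{\times}$, but $(\underline g^{-1})^{\sharp}$ need only lie in $A$, not $A^+$ (consider $\underline g = \underline\pi$ itself), so the correct conclusion is $\underline g^{\sharp}\in A^{\times}\cap A^+$; this is exactly what the containment requires, so the argument is unharmed, but the statement $\underline g^{\sharp}\in(A^+)^{\times}$ is false in general.
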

    \begin{proof}
        For $? = \emptyset$, we have to show $A^{\times} = (A^{\flat,\times})^{\sharp}\cdot(1+pA^+)$. By recalling that $A = A^+[\frac{1}{\pi}]$ and $A^{\flat} = A^{\flat,+}[\frac{1}{\underline \pi}]$ with $\underline{\pi}^{\sharp} = \pi$, we are reduced to the case for $? = +$. That is, we have to show 
        \[A^{\times}\cap A^{+} = (A^{\flat,\times}\cap A^{\flat,+})^{\sharp}\cdot(1+pA^+).\] But this follows from Proposition \ref{prop:unit group for perfectoids} immediately.
    \end{proof}
    \begin{proof}[\textbf{Proof of Proposition \ref{prop:log structure on perfectoids}:}]
        Let $G \subset A^{\flat,\times}$ be the kernel of the homomorphism $\sharp:A^{\flat,\times}\to A^{\times}$. Keep the notations in the proof of Proposition \ref{prop:unit group for perfectoids}. As for any $g\in G$ and any $x\in |U|$, we have 
        \[|g(x^{\flat})| = |g^{\sharp}(x)| = 1,\]
        by \cite[Lem. 3.3(i)]{Hub93} and \cite[Lem. 1.4]{Hub94}, $G$ is a sub-group of $A^{\flat,+,\times}$ and hence the kernel of the homomorphism $\sharp:A^{\flat,+,\times}\to A^{+,\times}$. It is also the kernel of the homomorphism of monoids
        \[\sharp:A^{\flat,\times}\cap A^{\flat,+}\to A^{\times}\cap A^+.\]
        So the canonical log-structure on $A^+$ is the log-structure associated to the pre-log structure
        \[(A^{\flat,\times}\cap A^{\flat,+})^{\sharp}\hookrightarrow A^+.\]
        So the result follows from Corollary \ref{cor:unit group for perfectoids} because $1+pA^+\subset A^{+,\times}$.
    \end{proof} 
    \begin{rmk}[Surjectivity of $\sharp$ for sympathetic algebras]\label{Rmk-Sympathetic}
        Let $\Lambda$ be a sympathetic algebra over $C$ in the sense of \cite[\S5]{Col02}; that is, it is a connected, $p$-closed and spectral Banach $C$-algebra $\Lambda$ (with respect to the spectral norm $|\cdot|_{\Lambda}$). Put $\calO_{\Lambda} = \{\lambda\in\Lambda\mid|\lambda|_{\Lambda}\leq 1\}$ and then by \cite[Lem. 2.15(iii)]{Col02}, $(\Lambda,\calO_{\Lambda})$ is a perfectoid affinoid algebra. The $p$-closeness of $\Lambda$ \cite[\S2.8]{Col02} together with Corollary \ref{cor:unit group for perfectoids} implies that $\Lambda^{\times} = (\Lambda^{\flat,\times})^{\sharp}$, that $\Lambda^{\times}\cap \calO_{\Lambda} = (\Lambda^{\flat,\times}\cap \calO_{\Lambda}^{\flat})^{\sharp}$ and that $\calO_{\Lambda}^{\times} = (\calO_{\Lambda}^{\flat,\times})^{\sharp}$. We point out that the sympathetic algebras usually form a basis for the pro-\'etale topology of a rigid space (cf. \cite[Prop. 4.8]{Sch-Pi} and the proof therein).
  \end{rmk}

  As a consequence, we have an analogue of \cite[Lem. 3.14]{BMS18} in the logarithmic setting.

  \begin{dfn}\label{dfn:perfect log-structure}
      Let $A$ be a perfect $\Fp$-algebra (resp. a perfectoid algebra over $\Zp$). A log-structure $M_A\to A$ on $A$ is called \emph{perfect}, if it is associated to a pre-log-structure $N\to A$ with $N$ a uniquely $p$-divisible monoid; that is, the map $N\xrightarrow{n\mapsto pn}N$ is bijective. 
  \end{dfn}
  The following lemma is well-known to experts:
  \begin{lem}\label{lem:cotangent complex for perfect log structure}
    \begin{enumerate}
        \item Let $(M_A\to A)\to (M_B\to B)$ be a morphism of perfect $\Fp$-algebras with perfect log-structures. Then the corresponding cotangent complex $\rL_{(M_B\to B)/(M_A\to A)} = 0$.

        \item Let $(M_A\to A)\to (M_B\to B)$ be a morphism of perfectoid algebras over $\Zp$ with perfect log-structures. Then the corresponding $p$-complete cotangent complex $\widehat \rL_{(M_B\to B)/(M_A\to A)} = 0$.
    \end{enumerate}
  \end{lem}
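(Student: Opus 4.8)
Throughout I use the functoriality, transitivity triangles, base change, and the "strict $\Rightarrow$ ordinary" property of the Gabber--Olsson log cotangent complex without further comment, and I abbreviate a log ring $M_A\to A$ by $(A,M_A)$.

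\textbf{Part (1).} The key tool is the logarithmic Frobenius $\Phi\colon(B,M_B)\to(B,M_B)$, equal to $b\mapsto b^p$ on $B$ and to $m\mapsto m^p$ on $M_B$. Since $B$ is a perfect $\Fp$-algebra and $M_B$ is perfect --- so that $m\mapsto m^p$ is bijective on $M_B$, being bijective on a uniquely $p$-divisible chart and on $B^{\times}$, hence on the amalgamated sum $M_B$ --- the morphism $\Phi$ is an \emph{automorphism} of log rings, and likewise for $(A,M_A)$. By naturality these assemble into a commutative square, and functoriality of $\rL$ yields an endomorphism of $\rL_{(M_B\to B)/(M_A\to A)}$ in $\rD(B)$ which, being induced by an isomorphism of log rings, is itself an isomorphism. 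On the other hand, computing $\rL_{(M_B\to B)/(M_A\to A)}$ by the canonical simplicial resolution $F_\bullet\to(B,M_B)$ by free log $(A,M_A)$-algebras (polynomial algebras tensored with free--monoid algebras), and lifting the Frobenius square by the termwise absolute Frobenius $\Phi_{F_\bullet}$ (which covers $\Phi$ on $(A,M_A)$, again by naturality), this endomorphism is represented by the map induced on $\Omega^{1,\log}_{F_\bullet/(A,M_A)}$; on each $F_n$ it sends $dx_i\mapsto d(x_i^p)=p\,x_i^{p-1}\,dx_i$ and $\dlog q_j\mapsto\dlog(q_j^p)=p\,\dlog q_j$, and both vanish in characteristic $p$. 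Hence this endomorphism is at once an isomorphism and the zero map, so $\rL_{(M_B\to B)/(M_A\to A)}=0$. (Equivalently: first reduce to $(A,M_A)=(\Fp,\text{trivial})$ by a transitivity triangle, then argue absolutely.)

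\textbf{Part (2).} This is the logarithmic counterpart of \cite[Lem. 3.14]{BMS18}, and the plan is to imitate its tilting proof. First I choose compatible uniquely $p$-divisible charts for $M_A$ and $M_B$; using the functorial $p$-power roots in these charts together with $B^{\flat}=\varprojlim_{x\mapsto x^p}B$, the charts lift canonically through $\sharp$, and then (via Teichm\"uller) produce perfect log-structures $M_{B^{\flat}}$ on $B^{\flat}$ and $M^+_B$ on $\AAinf(B):=\rW(B^{\flat})$ whose pull-backs along $\sharp$, resp.\ $\theta_B$, recover $M_{B^{\flat}}$, resp.\ $M_B$; in particular the morphism $(\AAinf(B),M^+_B)\to(B,M_B)$ is \emph{strict}, its underlying ring map being the surjection $\theta_B$ with kernel $(\xi_B)$, $\xi_B$ a nonzerodivisor. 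The same applies to $A$, compatibly with $A\to B$. Now three steps. (i) $\widehat\rL_{(\AAinf(B),M^+_B)/(\AAinf(A),M^+_A)}$ reduces modulo $p$ (note $\AAinf(\cdot)$ is $p$-torsion free) to $\rL_{(B^{\flat},M_{B^{\flat}})/(A^{\flat},M_{A^{\flat}})}$, which vanishes by Part (1); by derived Nakayama it is therefore $0$. (ii) By strictness $\widehat\rL_{(B,M_B)/(\AAinf(B),M^+_B)}\simeq\widehat\rL_{B/\AAinf(B)}\simeq B[1]$ up to a Breuil--Kisin--Fargues twist, and likewise over $A$; feeding (i) into the transitivity triangle along $(\AAinf(A),M^+_A)\to(\AAinf(B),M^+_B)\to(B,M_B)$ gives $\widehat\rL_{(B,M_B)/(\AAinf(A),M^+_A)}\simeq B[1]$. (iii) The transitivity triangle along $(\AAinf(A),M^+_A)\to(A,M_A)\to(B,M_B)$ presents $\widehat\rL_{(M_B\to B)/(M_A\to A)}$ as the cofiber of $\widehat\rL_{(M_A\to A)/(\AAinf(A),M^+_A)}\widehat\otimes_A B\to\widehat\rL_{(M_B\to B)/(\AAinf(A),M^+_A)}$; by strictness the source is $\widehat\rL_{A/\AAinf(A)}\widehat\otimes_A B\simeq B[1]$, the target is $\simeq B[1]$ by (ii), and the map is multiplication by the image in $B$ of $\xi_A/\xi_B$, which is a unit since $\xi_A$ remains a distinguished element of $\AAinf(B)$ and hence generates $\ker(\theta_B)$. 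Thus the cofiber vanishes and $\widehat\rL_{(M_B\to B)/(M_A\to A)}=0$.

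\textbf{Where the difficulty is.} The genuine content is in Part (1), via the Frobenius trick; Part (2) is then formal once the "logarithmic $\AAinf$" apparatus is in place. The point I would handle with care --- and the one that here plays the role of \cite[Lem. 3.14]{BMS18} --- is exactly that apparatus: that a morphism of log rings with perfect log-structures admits compatible uniquely $p$-divisible charts, that these lift functorially to $B^{\flat}$ and $\AAinf(B)$, and that the induced morphism to $(B,M_B)$ is strict. A lesser point is to fix once and for all a model of $\rL^{\log}$ (Gabber's, built from free log algebras) whose functoriality renders the Frobenius computation in Part (1) literal; this is routine in the non-coherent generality at hand.
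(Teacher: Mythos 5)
Your argument is correct in outline but takes a genuinely different route from the paper in both parts. For Part (1) you use the Frobenius trick: the log Frobenius is an automorphism of the source and target (perfection of $B$ together with perfection of $M_B$), so it induces an isomorphism of $\rL_{(M_B\to B)/(M_A\to A)}$, while the simplicial-resolution computation ($dx\mapsto px^{p-1}dx$, $\dlog q\mapsto p\,\dlog q$) shows it is zero; hence $\rL = 0$. (Be a little careful about the semilinearity here: what functoriality gives is an isomorphism $\rL\otimes_{B,\Phi_B}B\to\rL$, not literally a $B$-linear endomorphism, but the conclusion is the same.) The paper instead reduces via the transitivity triangle for $(A^\times\to A)\to(M_A\to A)\to(M_B\to B)$ to the case of trivial log on the base, peels off $\rL_{B/A} = 0$, and then uses Olsson's Lemmas 8.28 and 8.23(ii) to identify the remaining "monoid piece" as $N^{\gp}\otimes^\rL_\bZ B$, which vanishes since $p$ acts invertibly on $N^{\gp}$ but $pB = 0$.

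Where you depart more substantially is Part (2). You build a logarithmic $\AAinf$-apparatus — compatible uniquely $p$-divisible charts for $M_A$ and $M_B$, lifted through $\sharp$ and Teichm\"uller to perfect log structures on $B^\flat$ and on $\AAinf(B)$, with the surjection to $(B,M_B)$ strict — and then run three transitivity triangles in the style of the proof of \cite[Lem. 3.14]{BMS18}. This is salvageable, but (as you yourself flag) it relies on a package of lemmas you do not prove: existence of compatible charts, their functorial lifting, strictness, and the identification of the connecting map as multiplication by a unit. None of this is needed. The paper simply repeats the reduction from Part (1): transitivity to trivial log on $A$, then $\widehat\rL_{B/A} = 0$ from \cite[Lem. 3.14]{BMS18} to peel off the underlying ring, then identify $\widehat\rL_{(N\to B)/(0\to B)}$ with the derived $p$-completion of $N^{\gp}\otimes^\rL_\bZ B$ via Olsson, and finally derived Nakayama: $N^{\gp}\otimes^\rL_\bZ B\otimes^\rL_\bZ\Fp = 0$ because $N^{\gp}$ is uniquely $p$-divisible (unique $p$-divisibility of $N$ passes to $N^{\gp}$). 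In short, had you isolated the monoid contribution as $N^{\gp}\otimes^\rL_\bZ B$ in Part (1) rather than using Frobenius, Part (2) would come for free, with no $\AAinf$-level lifting at all.
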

  \begin{proof}
      For Item (1): Considering the morphisms of log-rings 
      \[(A^{\times}\hookrightarrow A)\to(M_A\to A)\to (M_B\to B)\]
      and the associated exact triangle (cf. \cite[Th. 8.18]{Ols})
      \[\rL_{(M_A\to A)/(A^{\times}\to A)}\otimes_A^{\rL}B\to\rL_{(M_B\to B)/(A^{\times}\to A)}\to\rL_{(M_B\to B)/(M_A\to A)},\]
      we are reduced to the case for $(M_A\to A)=(A^{\times}\hookrightarrow A)$. Assume the log-structure $M_B\to B$ is associated to the pre-log-structure $N\to B$ with $N$ uniquely $p$-divisible. Then the morphism $(A^{\times}\hookrightarrow A)\to(M_B\to B)$ of log-structures is associated to the morphism $(0\xrightarrow{0\mapsto 1}A)\to (N\to B)$ of pre-log-structures. By \cite[Th. 8.20]{Ols}, it is enough to show that 
      \[\rL_{(N\to B)/(0\to A)} = 0.\]
      As $\rL_{(0\to B)/(0\to A)}\simeq \rL_{B/A} = 0$ (cf. \cite[Lem. 8.22]{Ols}), by considering the exact triangle
      \[\rL_{(0\to B)/(0\to A)}\to\rL_{(N\to B)/(0\to A)}\to\rL_{(N\to B)/(0\to B)},\]
      we are reduced to showing that
      \[\rL_{(N\to B)/(0\to B)} = 0.\]
      By \cite[Lem. 8.28]{Ols} and \cite[Lem. 8.23(ii)]{Ols}, we have quasi-isomorphisms
      \[\rL_{(N\to B)/(0\to B)} \simeq \rL_{(N\to\bZ[N])/(0\to\bZ)}\otimes_{\bZ[N]}^{\rL}B\simeq N^{\gp}\otimes_{\bZ}^{\rL}B,\]
      where $N^{\gp}$ denotes the group associated to the monoid $N$. It remains to show that
      \[N^{\gp}\otimes_{\bZ}^{\rL}B = 0.\]
      As $N$ is uniquely $p$-divisible, we see that $p$ acts isomorphically on $N^{\gp}$ and thus on $N^{\gp}\otimes_{\bZ}^{\rL}B$. This forces $N^{\gp}\otimes_{\bZ}^{\rL}B = 0$ because $pB=0$.

      For Item (2): Assume the log-structure $M_B\to B$ is associated to the pre-log-structure $N\to B$ with $N$ uniquely $p$-divisible. Similar to the proof of Item (1), we are reduced to the case to show that the derived $p$-adic completion of $N^{\gp}\otimes_{\bZ}^{\rL}B$ vanishes. By derived Nakayama's Lemma, this amounts to that
      \[N^{\gp}\otimes_{\bZ}^{\rL}B\otimes_{\bZ}^{\rL}\Fp = 0.\]
      But this is trivial because $N$ is uniquely $p$-divisible.
  \end{proof}

  Combining Proposition \ref{prop:log structure on perfectoids} together with Lemma \ref{lem:cotangent complex for perfect log structure}(2), we can conclude the following analogue of \cite[Lem. 3.14]{BMS18} immediately.
  \begin{prop}\label{prop:BMS Lem 314}
      For any morphism $(A,A^+)\to (B,B^+)$ of perfectoid affinoid algebras, the $p$-complete cotangent complex $\widehat \rL_{(B^{\times}\cap B^+\to B^+)/(A^{\times}\cap A^+\to A^+)} = 0$.
  \end{prop}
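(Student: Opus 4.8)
The plan is to reduce the statement about the $p$-complete cotangent complex of $(B^{\times}\cap B^+\to B^+)/(A^{\times}\cap A^+\to A^+)$ to the already-established vanishing for perfect log-structures. First I would invoke Proposition \ref{prop:log structure on perfectoids}, which identifies both the log-structure $(A^{\times}\cap A^+\to A^+)$ and the log-structure $(B^{\times}\cap B^+\to B^+)$ with the corresponding \emph{canonical} log-structures — namely the log-structures associated to the pre-log-structures $A^{\flat,\times}\cap A^{\flat,+}\xrightarrow{\sharp}A^+$ and $B^{\flat,\times}\cap B^{\flat,+}\xrightarrow{\sharp}B^+$. Since the cotangent complex depends only on the log-structure (not on a choice of chart/pre-log-structure presenting it), this is a harmless identification, and the morphism of log-rings in question becomes a morphism of perfectoid $\Zp$-algebras equipped with these canonical log-structures.

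Next I would check that these canonical log-structures are \emph{perfect} in the sense of Definition \ref{dfn:perfect log-structure}. This is where the tilt enters: the monoid $A^{\flat,\times}\cap A^{\flat,+}$ sits inside the perfect $\Fp$-algebra $A^{\flat}/\text{(something)}$ — more precisely, since $A^{\flat,+}$ is a perfect ring (it is the inverse limit of $A^+/\pi$ along Frobenius, hence $x\mapsto x^p$ is bijective on it), the monoid $A^{\flat,\times}\cap A^{\flat,+}$ is uniquely $p$-divisible: given $g$ in this monoid, its unique $p$-th root $g^{1/p}$ in $A^{\flat,+}$ is again a unit (a unit has a unit inverse, and Frobenius being bijective sends units to units bijectively), so it again lies in $A^{\flat,\times}\cap A^{\flat,+}$. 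Thus the pre-log-structure $A^{\flat,\times}\cap A^{\flat,+}\xrightarrow{\sharp}A^+$ exhibits the canonical log-structure as perfect, and likewise for $B$. (One should note the map $\sharp$ lands in $A^+$, so this genuinely is a pre-log-structure over the perfectoid ring $A^+$ with uniquely $p$-divisible source monoid.)

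Finally I would apply Lemma \ref{lem:cotangent complex for perfect log structure}(2) directly to the morphism
\[
(A^{\flat,\times}\cap A^{\flat,+}\xrightarrow{\sharp}A^+)\longrightarrow (B^{\flat,\times}\cap B^{\flat,+}\xrightarrow{\sharp}B^+)
\]
of perfectoid algebras over $\Zp$ with perfect log-structures, which yields $\widehat{\rL}_{(B^{\times}\cap B^+\to B^+)/(A^{\times}\cap A^+\to A^+)}=0$, as desired. I do not anticipate a serious obstacle here: all the real work has been front-loaded into Proposition \ref{prop:log structure on perfectoids} (the hard identification of the naive unit log-structure with the canonical one) and into Lemma \ref{lem:cotangent complex for perfect log structure}. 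The only point requiring a line of care is the verification of unique $p$-divisibility of the monoid $A^{\flat,\times}\cap A^{\flat,+}$ — i.e. that a $p$-th root of a unit in the perfect ring $A^{\flat,+}$ is again a unit — but this is immediate from bijectivity of Frobenius on $A^{\flat,+}$.
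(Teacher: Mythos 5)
Your proof is correct and follows exactly the paper's own route: the paper proves this proposition in one line by combining Proposition \ref{prop:log structure on perfectoids} with Lemma \ref{lem:cotangent complex for perfect log structure}(2), and you have simply filled in the two details the paper leaves implicit (that the log cotangent complex is insensitive to the choice of presenting pre-log structure, and that $A^{\flat,\times}\cap A^{\flat,+}$ is uniquely $p$-divisible because Frobenius is an automorphism of $A^{\flat}$ restricting to a bijection on $A^{\flat,\times}\cap A^{\flat,+}$).
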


  \begin{convention}\label{convention:log structure}
      From now on, without other clarification, for any $n\geq 1$, we always regard 
      \[\Spf(\AAinfK(A,A^+)/\Ker(\theta_K)^n)\]
      as a $p$-adic log formal scheme with the canonical log-structure.
  \end{convention}

  Although Proposition \ref{prop:log structure on perfectoids} is enough for our use, to complete the theory, we deal with a general case at the end of this subsection. In what follows, let $A^+$ be any perfectoid ring with $A = A^+[\frac{1}{p}]$. By \cite[Lem. 3.9]{BMS18}, one can still choose a $\pi\in A^+$ and a $\underline \pi\in A^{+,\flat}$ such that $\pi = \underline{\pi}^{\sharp}$ and $\pi^p = pu$ for some $u\in A^{+,\times}$. In this case, we also have $A = A^+[\frac{1}{\pi}]$ and $A^{\flat} = A^{+,\flat}[\frac{1}{\underline \pi}]$ as well. Also, for any $\alpha\in \bN[1/p]$, the $\pi^{\alpha}$ is well-defined.
  \begin{prop}\label{prop:unit group for arbitrary perfectoid}
      Let $A^+$ be a perfectoid ring with $A = A^+[\frac{1}{p}]$. Then for any $x\in A^+$ whose image in $A$ is invertible, it always factors as
      \[x = \underline{x}^{\sharp}(1+\pi^{p\alpha} y_{\alpha}),\]
      where $\underline x\in A^{+,\flat}$ whose image in $A^{\flat}$ is invertible, $\alpha\in \bN[\frac{1}{p}]\cap(0,1)$ and $y_{\alpha}\in A^+$.
  \end{prop}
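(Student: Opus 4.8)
The plan is to reduce to the Tate case, Proposition~\ref{prop:unit group for perfectoids}. Keep $\pi=\underline\pi^{\sharp}$ with $\pi^p=pu$, $u\in A^{+,\times}$, and $A=A^+[1/p]=A^+[1/\pi]$. Since $A^+$ is perfectoid, $A$ is a perfectoid Tate ring; write $A^{\circ}\subseteq A$ for its ring of power-bounded elements, so that $(A,A^{\circ})$ is a perfectoid affinoid algebra over $\Qp$ with tilt $(A^{\flat},A^{\flat,\circ})$ and $\Spa(A,A^{\circ})=\Spa(A,A^+)$. I will use the standard facts about the perfectoid ring $A^+$ (cf.\ \cite{BMS18}) that $A^+\to A^{\circ}$ is a $\pi$-almost isomorphism — so that $\pi^{\beta}A^{\circ}$, and in particular $pA^{\circ}=\pi^p u^{-1}A^{\circ}$, lies in the image of $A^+$ for all $\beta\in\bN[1/p]$ with $\beta>0$ — that the $\pi$-power-torsion ideal $A^+[\pi^{\infty}]=\Ker(A^+\to A)$ is almost zero, and that $\sharp\colon A^{+,\flat}\to A^+$ has dense image for the $\pi$-adic topology.

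Let $\bar x\in A^{\times}\cap A^{\circ}$ be the image of $x$. Applying Proposition~\ref{prop:unit group for perfectoids} to $(A,A^{\circ})$ and $\bar x$, and reading the estimates off its proof, we get an integer $N\ge0$, an element $\underline g\in A^{\flat,\times}\cap A^{\flat,\circ}$ with $|\underline g(y^{\flat})|=|\bar x(y)|\ge|\pi(y)|^{pN}$ for all $y\in\Spa(A,A^{\circ})$, and $h\in A^{\circ}$ with $\bar x=\underline g^{\sharp}(1+ph)$. Writing $(1+ph)^{-1}=1+ph'$ with $h'\in A^{\circ}$ gives $\underline g^{\sharp}=\bar x+p(\bar x h')$, so $\underline g^{\sharp}$ lies in the image of $A^+$. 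Fix $M>pN$ and, using the density of $\sharp(A^{+,\flat})$ in $A^+$, choose $\underline x\in A^{+,\flat}$ with $\sharp(\underline x)\equiv\underline g^{\sharp}$ modulo $\pi^{M}A^+$. The valuation bound forces $|\sharp(\underline x)(y)|=|\bar x(y)|$ for all $y$, so $\sharp(\underline x)$ is a unit in $A$ and $\underline x$ a unit in $A^{\flat}$; dividing $\sharp(\underline x)=\bar x+\pi^{M}c$ (with $c\in A^{\circ}$) by $\bar x$ and using $|\pi^{M}c\bar x^{-1}(y)|\le|\pi(y)|^{M-pN}$ together with $ph'\in\pi^pA^{\circ}\subseteq\pi A^{\circ}$, one obtains $\bar x=\sharp(\underline x)(1+\pi e)$ for some $e\in A^{\circ}$. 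Setting $\alpha:=1/p^2\in\bN[1/p]\cap(0,1)$ and inverting, $(1+\pi e)^{-1}=1+\pi^{p\alpha}y_{\alpha}$ with $y_{\alpha}\in\pi^{\,1-p\alpha}A^{\circ}$, which lies in the image of $A^+$ because $1-p\alpha>0$; hence $\bar x=\sharp(\underline x)(1+\pi^{p\alpha}y_{\alpha})$ in $A$.

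It remains to lift this from $A$ to $A^+$. The elements $x$, $\sharp(\underline x)$ and a lift of $y_{\alpha}$ all lie in $A^+$, and the two sides of the identity above agree modulo $A^+[\pi^{\infty}]$; since that ideal is almost zero — and torsion elements of $A^{+,\flat}$ are invisible in $A^{\flat}$ — one adjusts the torsion part of $\underline x$ (equivalently, runs the whole argument first over the $\pi$-torsion-free perfectoid ring $A^+/A^+[\pi^{\infty}]$ and then lifts) to upgrade it to the desired identity $x=\underline x^{\sharp}(1+\pi^{p\alpha}y_{\alpha})$ in $A^+$. All the analytic content sits in Proposition~\ref{prop:unit group for perfectoids}; what is done here is bookkeeping around the two ways a general perfectoid ring $A^+$ differs from the Tate affinoid setting — the failure of $A^+$ to be integrally closed, which is exactly why the error term weakens from $1+pA^+$ to $1+\pi^{p\alpha}A^+$ with $\alpha<1$, and the possible $\pi$-power torsion — and I expect this to be the only real obstacle: one must make sure that one really lands in $A^{+,\flat}$, not merely in $A^{\flat,\circ}$, and that the factorization holds in $A^+$ rather than just in $A$.
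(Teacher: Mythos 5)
Your reduction to the Tate case via $(A,A^{\circ})$ and Proposition~\ref{prop:unit group for perfectoids} is the same starting point as the paper. The two arguments then diverge: in the $p$-torsion-free case the paper observes directly that $\sqrt{pA^{\circ}}\subset A^+$, so that $y_{\alpha}:=\frac{p}{\pi^{p\alpha}}y=u^{-1}\pi^{p(1-\alpha)}y$ already lies in $A^+$, and then deduces $\underline x^{\sharp}\in A^+$ (hence $\underline x\in A^{+,\flat}$) because $1+\pi^{p\alpha}y_{\alpha}$ is a unit of $A^+$; you instead approximate $\underline g^{\sharp}$ in the $\pi$-adic topology by an element of $\sharp(A^{+,\flat})$ and then estimate. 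Your version of the torsion-free step is workable, just less direct. Note, however, that you have only produced the statement for $\alpha=1/p^2$; the proposition is stated (and the paper proves it) for an arbitrary $\alpha\in\bN[1/p]\cap(0,1)$, and while your argument could be redone with general $\alpha$ the way you phrased it fixes a single exponent.

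The real gap is in the final paragraph. Dropping the torsion-free hypothesis is precisely the technically nontrivial part of the proposition, and you gesture at it without doing it. Saying ``one adjusts the torsion part of $\underline x$'' is not an argument: you would need to show that for $\tau'\in A^+[\pi^{\infty}]$ you can find $\underline\tau$ in $A^{+,\flat}$ with $(\underline x+\underline\tau)^{\sharp}=\underline x^{\sharp}+\tau'$, and there is no reason the $\sharp$ map behaves that way on torsion, so it is far from clear such an adjustment exists. Likewise ``run the argument over $A^+/A^+[\pi^{\infty}]$ and then lift'' names the strategy the paper uses but skips what makes it work: the paper forms $B^+=A^+/A^+[\sqrt{pA^+}]$ and $\overline A^+$, $\overline B^+$ and uses that the square is simultaneously a fiber and cofiber square of perfectoid rings, then first solves the problem over the $p$-torsion-free $B^+$, producing $y=(1+\pi^{p\alpha}w_{\alpha})^{-1}x$ with image $\underline z^{\sharp}$, and finally constructs by hand the compatible system of $p$-power roots $y_n=(\overline y^{p^{-n}},z_n)\in\overline A^+\times_{\overline B^+}B^+=A^+$, using perfectness of $\overline A^+$ and $\overline B^+$ to check the two components agree. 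That explicit gluing of $p$-power roots across the fiber square is the content of the general case, and your proof does not contain it. As written, the proof establishes the torsion-free case (for one $\alpha$) but does not prove the general statement.
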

  \begin{proof}
      Fix an $\alpha\in \bN[\frac{1}{p}]\cap(0,1)$. 
      We first assume $A^+$ is $p$-torsion free. In this case, we have $A^+\subset A^{\circ}$ (and $A^{+,\flat}\subset A^{\flat,\circ}$). As $(A,A^{\circ})$ is perfectoid affinoid, by Proposition \ref{prop:unit group for perfectoids}, there exists an $\underline x\in A^{\flat,\circ}$ and an $y\in A^{\circ}$ such that $x = \underline{x}^{\sharp}(1+py)$. As $\sqrt{pA^{\circ}}\subset A^+$, we have $y_{\alpha}:=\frac{p}{\pi^{p\alpha}}y = u^{-1}\pi^{p(1-\alpha)}y\in A^+$ such that
      \[x = \underline{x}^{\sharp}(1+\pi^{p\alpha} y_{\alpha}).\]
      As $1+\pi^{p\alpha} y_{\alpha}\in A^{+,\times}$, we have $\underline{x}^{\sharp}\in A^+$, yielding that $\underline x\in A^{+,\flat}$. Clearly, we have $\underline x\in A^{\flat,\times}$ as desired.

      Now we move to the general case. Put $B^+:=A^+/A^+[\sqrt{pA^+}]$, $\overline{A}^+:= A^+/\sqrt{pA^+}$ and $\overline B^+:=B^+/\sqrt{pB^+}$. Then we have a commutative diagram of morphisms of perfectoid rings
      \begin{equation}\label{diag:fiber-cofiber square}
          \xymatrix@C=0.5cm{ A^+\ar[d]\ar[rr]&& B^+\ar[d]\\
           \overline A^+\ar[rr]&& \overline B^+}
      \end{equation}
      which is both a fiber square and a cofiber square (cf. \cite[Prop. 3.2]{Bha-LectNote}). Note that $B^+$ is $p$-torsion free with $B^+[\frac{1}{p}] = A$. As $A^+\to B^+$ is surjective, so is $A^{+,\flat}\to B^{+,\flat}$. By what we have proved, there exists a $w_{\alpha}\in A^+$ such that the image of
      \[y:=(1+\pi^{p\alpha}w_{\alpha})^{-1}x\]
      in $B^+$ is of the form $\underline{z}^{\sharp}$ for some $\underline z\in B^{+,\flat}\cap B^{\flat,\times}$. It remains to show $y = \underline{y}^{\sharp}$ for some $\underline y\in A^{+,\flat}$. Granting this, the image of $\underline y$ in $A^{\flat} = B^{\flat}$ coincides with the image of $\underline z$, which is invertible as desired.
      
      Write $\underline z = (z_0,z_1,\dots)\in B^{+,\flat} = \varprojlim_{x\mapsto x^p}B^+$. Denote by $\overline y$ the image of $y$ in $\overline A^+$. As \eqref{diag:fiber-cofiber square} is commutative, $\overline y$ and $z_0 = \underline{z}^{\sharp}$ coincide in $\overline B^+$. As both $\overline A^+$ and $\overline B^+$ are perfectoid in characteristic $p$ (and thus perfect), we have $\overline{y}^{p^{-n}}$ is well-defined in $\overline A^+$ and coincides with $z_n$ in $\overline B^+$ for any $n\geq 0$. So the $y_n:=(\overline{y}^{p^{-n}},z_n)\in \overline A^+\times B^+$ defines an element in $A^+$. By construction, we have $y_0 = y$ and $y_{n+1}^p = y_n$ for any $n\geq 0$. Put 
      \[\underline y = (y_0,y_1,\dots)\in \varprojlim_{x\mapsto x^p}A^+ = A^{+,\flat}\]
      and then we have $y = \underline{y}^{\sharp}$ as desired.
  \end{proof}
  \begin{cor}\label{cor:unit group for arbitrary perfectoid}
      Let $A^+$ be any perfectoid ring with $A = A^+[\frac{1}{p}]$. For any $?\in\{\emptyset,\flat\}$, put 
      \[A^{+,?}\cap A^{?,\times}:=\{x\in A^{+,?}\mid \text{the image of $x$ in $A^?$ is invertible.}\}.\]
      Then for any $\alpha\in \bN[\frac{1}{p}]\cap(0,1)$, we have 
      \[A^{+}\cap A^{\times} = (A^{+,\flat}\cap A^{\flat,\times})^{\sharp}\cdot(1+\pi^{p\alpha}A^+).\]
  \end{cor}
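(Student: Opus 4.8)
The plan is to deduce the equality directly from Proposition \ref{prop:unit group for arbitrary perfectoid} by checking the two inclusions separately; all the real work is already contained in that proposition, so the argument should be short.

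For the inclusion $(A^{+,\flat}\cap A^{\flat,\times})^{\sharp}\cdot(1+\pi^{p\alpha}A^+)\subseteq A^{+}\cap A^{\times}$, I would first observe that $\sharp: A^{\flat}\to A$ is a homomorphism of multiplicative monoids which restricts to $\sharp: A^{+,\flat}\to A^{+}$, so that for $\underline x\in A^{+,\flat}$ with $\underline x^{-1}\in A^{\flat,\times}$ one has $\underline x^{\sharp}\cdot(\underline x^{-1})^{\sharp}=1$ in $A$, whence $\underline x^{\sharp}\in A^{+}\cap A^{\times}$. Next I would check that $1+\pi^{p\alpha}A^+\subseteq A^{+,\times}$: since $\pi^{p}=pu$ with $u\in A^{+,\times}$ and $\alpha>0$, for any integer $N$ with $N\alpha\geq 1$ we get $(\pi^{p\alpha})^{N}=\pi^{p}\cdot\pi^{p(N\alpha-1)}\in pA^{+}$, so $\pi^{p\alpha}$ is topologically nilpotent in the $p$-adically complete ring $A^{+}$, and for every $h\in A^{+}$ the element $1+\pi^{p\alpha}h$ is a unit with inverse $\sum_{j\geq 0}(-\pi^{p\alpha}h)^{j}$. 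As $A^{+}\cap A^{\times}$ is closed under multiplication, the inclusion follows.

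For the reverse inclusion, let $x\in A^{+}\cap A^{\times}$ and apply Proposition \ref{prop:unit group for arbitrary perfectoid} with the given $\alpha\in\bN[1/p]\cap(0,1)$: it produces $\underline x\in A^{+,\flat}$ whose image in $A^{\flat}$ is invertible, i.e.\ $\underline x\in A^{+,\flat}\cap A^{\flat,\times}$, together with $y_{\alpha}\in A^{+}$ such that $x=\underline x^{\sharp}(1+\pi^{p\alpha}y_{\alpha})$; this is exactly membership of $x$ in $(A^{+,\flat}\cap A^{\flat,\times})^{\sharp}\cdot(1+\pi^{p\alpha}A^+)$, which finishes the proof. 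I do not expect any genuine obstacle here: the only point that needs a moment's care is the verification that $1+\pi^{p\alpha}A^+$ consists of units, which rests on the observation $\pi^{p\alpha}\in\sqrt{pA^{+}}$, and everything else is a formal consequence of Proposition \ref{prop:unit group for arbitrary perfectoid}.
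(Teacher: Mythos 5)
Your proof is correct and takes the same route as the paper, which simply records the corollary as an immediate consequence of Proposition \ref{prop:unit group for arbitrary perfectoid}. You have merely filled in the two straightforward details the paper leaves implicit: the reverse inclusion is word-for-word the conclusion of the proposition, and for the forward inclusion your observation that $\pi^{p\alpha}$ is topologically nilpotent (hence $1+\pi^{p\alpha}A^+\subseteq A^{+,\times}$) together with multiplicativity of $\sharp$ is exactly what is needed.
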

  \begin{proof}
      This follows from Proposition \ref{prop:unit group for arbitrary perfectoid} immediately.
  \end{proof}

\subsection{Semi-stable formal schemes over $A^+$}\label{ssec:semistable schemes}
  This subsection is closely related with the work of Cesnavi\v{c}ius and Koshikawa \cite{CK19}.
  Fix an affinoid perfectoid $S= \Spa(A,A^+)\in \Perfd$.
  By a \emph{semi-stable} formal scheme $\frakX_S$ of relative dimension $d$ over $A^+$, we mean a $p$-adic formal scheme $\frakX_S$ together with the log-structure $\calM_{\frakX_S}$ over $A^+$ (cf. Convention \ref{convention:log structure}), which is \'etale locally of the form $\Spf(R_S^+)$ with $R_S^+$ small semi-stable of relative dimension $d$ over $A^+$ as defined below. We remark that the generic fiber $X_S$ of a semi-stable $\frakX_S$ over $A^+$ is always smooth over $S$.
  \begin{dfn}\label{dfn:small affine}
      Fix an affinoid perfectoid $S = \Spa(A,A^+)\in\Perfd$.
      A $p$-complete $A^+$-algebra $R_S^+$ is called \emph{small semi-stable} of relative dimension $d$ over $A^+$, if there exists an \'etale morphism of $p$-adic formal schemes
      \[\psi: \Spf(R_S^+)\to \Spf(A^+\za T_0,\dots,T_r,T_{r+1}^{\pm 1},\dots, T_d^{\pm 1}\ya/(T_0\cdots T_r-p^a))\]
      for some $a\in\bQ$ and $0\leq r\leq d$. 
      Equip $R_S^+$ with the log-structure associated to the pre-log-structure
      \begin{equation}\label{equ:log-structure on chart-I}
          M_{r,a}(A^+):=(\oplus_{i=0}^r\bN\cdot e_i)\oplus_{\bN\cdot(e_1+\cdots+e_r)}(A^{\times}\cap A^+)\xrightarrow{(\sum_{i=0}^rn_ie_i,x)\mapsto x\prod_{i=0}^rT_i^{n_i}} R_S^+,
      \end{equation}
      where $(\oplus_{i=0}^r\bN\cdot e_i)\oplus_{\bN\cdot(e_1+\cdots+e_r)}(A^{\times}\cap A^+)$ denotes the push-out of monoids
      \[\xymatrix@C=0.5cm{
        \bN\ar[d]^{1\mapsto p^a}\ar[rrr]^{1\mapsto e_0+\cdots+e_r\qquad}&&&\bN\cdot e_1\oplus\cdots\oplus\bN\cdot e_r\\
        (A^{\times}\cap A^+).
      }\]
      In this case, we also say the semi-stable formal scheme $\Spf(R_S^+)$ is \emph{small affine}. We call such a $\psi$ (resp. $T_0,\dots,T_d$) a \emph{chart} (resp. \emph{coordinates}) on $R_S^+$ or on $\Spf(R_S^+)$. The generic fiber $\Spa(R_S,R_S^+)$ of $\Spf(R_S^+)$ is then smooth over $S$ and endowed with the induced chart $\psi$.
  \end{dfn}

  Denote by $\Omega^{1,\log}_{\frakX_S}$ the module of (continuous) log-differentials of $\frakX_S$ over $A^+$ and for any $n\geq 1$, define $\Omega^{n,\log}_{\frakX_S} = \wedge^n\Omega^{1,\log}_{\frakX_S}$. Then $\Omega^{n,\log}_{\frakX_S}$ is a locally finite free $\calO_{\frakX}$-module for any $n\geq 0$. When $\frakX_S = \Spf(R_S^+)$ is small affine, the $\Omega^{1,\log}_{\frakX_S}$ is associated to a finite free $R_S^+$-module $\Omega^{1,\log}_{R_S^+}$ of rank $d$, and the chart $\psi$ on $R_S^+$ induces an identification
  \begin{equation}\label{equ:omega^1}
      \begin{split}
          \Omega^{1,\log}_{R_S^+} &= \left((\oplus_{i=0}^d\bZ\cdot e_i)/\bZ\cdot(e_0+\cdots+e_r)\right)\otimes_{\bZ}R_S^+\oplus \left(R_S^+\cdot\dlog T_{r+1}\oplus\cdots\oplus R^+\cdot\dlog T_d\right)\\
          &=\left((\oplus_{i=0}^d R_S^+\cdot e_i)/R_S^+\cdot(e_0+\cdots+e_r)\right)\oplus \left(R_S^+\cdot\dlog T_{r+1}\oplus\cdots\oplus R_S^+\cdot\dlog T_d\right).
      \end{split}
  \end{equation}

  A semi-stable $\frakX_S$ over $A^+$ is called \emph{liftable}, if there exists a flat log $p$-adic formal scheme $\widetilde{\frakX}_S$ with the log-structure $\calM_{\widetilde \frakX_S}$ over $\AAKK(S):=\AAinfK(S)/\Ker(\theta_K)^2$ (as a log formal scheme with the canonical log-structure, cf. Convention \ref{convention:log structure}), such that $\frakX_S$ is the reduction of $\widetilde{\frakX}_S$ modulo $\xi_K$; that is, the underlying scheme $\frakX_S$ is the base-change of $\widetilde \frakX_S$ along the canonical surjection $\AAKK(S)\to A^+$ while the log-structure $\calM_{\frakX_S}$ is induced by the composite $\calM_{\widetilde \frakX_S}\to \calO_{\widetilde \frakX_S}\to\calO_{\frakX_S}$. 
  
  Given a semi-stable $\frakX_S$ over $A^+$, its lifting $\widetilde \frakX_S$ over $\AAKK(S)$ may not always exist.
  However, when $\frakX_S = \Spf(R_S^+)$ is small affine, by (log-)smoothness of $\frakX_S$, the lifting $\widetilde \frakX_S$ always exists and is unique up to isomorphisms. More precisely, the 
  \[\AAKK(S)\za T_0,\dots,T_r,T_{r+1}^{\pm 1},\dots,T_d^{\pm 1}\ya/(T_0\cdots T_r-[\varpi^a])\]
  with the log-structure associated to the pre-log-structure
  \begin{equation}\label{equ:log-structure on chart-II}
      \begin{split}
          M_{r,a}(A^{\flat,+})&:=(\oplus_{i=0}^r\bN\cdot e_i)\oplus_{\bN\cdot(e_1+\cdots+e_r)}(A^{\flat,\times}\cap A^{\flat,+})\\
          &\xrightarrow{(\sum_{i=0}^rn_ie_i,x)\mapsto [x]\prod_{i=0}^rT_i^{n_i}} \AAKK(S)\za T_0,\dots,T_r,T_{r+1}^{\pm 1},\dots,T_d^{\pm 1}\ya/(T_0\cdots T_r-[\varpi^a]),
      \end{split}
    \end{equation}
  where $(\oplus_{i=0}^r\bN\cdot e_i)\oplus_{\bN\cdot(e_1+\cdots+e_r)}(A^{\flat,\times}\cap A^{\flat,+})$ denotes the push-out of monoids
  \[\xymatrix@C=0.5cm{
    \bN\ar[d]^{1\mapsto \varpi^a}\ar[rrr]^{1\mapsto e_0+\cdots+e_r\qquad}&&&\bN\cdot e_1\oplus\cdots\oplus\bN\cdot e_r\\
    (A^{\flat,\times}\cap A^{\flat,+}),
  }\]
  is a lifting of the log-structure associated to 
  \[(M_{r,a}(A^+)\to A^+\za T_0,\dots,T_r,T_{r+1}^{\pm 1},\dots,T_d^{\pm 1}\ya/(T_0\cdots T_r-p^a)).\]
  By the \'etaleness of the chart $\psi$, there exists a unique $\widetilde{R}_S^+$ together with a unique homomorphism of $\AAKK(S)$-algebras 
  \[\widetilde{\psi}:\AAKK(S)\za T_0,\dots,T_r,T_{r+1}^{\pm 1},\dots,T_d^{\pm 1}\ya/(T_0\cdots T_r-[\varpi^a])\to\widetilde{R}_S^+\]
  lifting $\psi$. Then we have $\widetilde{\frakX}_S = \Spf(\widetilde{R}^+_S)$ with the log-structure $\calM_{\widetilde \frakX_S}$ induced by the composite
  \begin{equation}\label{equ:log-structure on chart-III}
      M_{r,a}(A^{\flat,+})\to\AAKK(S)\za T_0,\dots,T_r,T_{r+1}^{\pm 1},\dots,T_d^{\pm 1}\ya/(T_0\cdots T_r-[\varpi^a])\xrightarrow{\widetilde \psi}\widetilde{R}_S^+.
  \end{equation}
  Using this, it is clear that $\widetilde \frakX_S$ is (log-)smooth over $\AAKK(S)$.

  By definition, any smooth $\frakX$ over $A^+$ endowed with the induced log-structure from $A^+$ is always semi-stable over $A^+$.
  We now give another typical example of semi-stable formal schemes over $A^+$.
  \begin{exam}\label{exam:semi-stable formal scheme after CK}
      The semi-stable formal scheme over $\calO_C$ defined above is exactly the semi-stable formal scheme $\frakX$ with the log-structure $(\calM_{\frakX} = \calO_X^{\times}\cap\calO_{\frakX}\hookrightarrow\calO_{\frakX})$ considered in \cite[\S1.5 and \S1.6]{CK19}, where $X$ is the generic fiber of $\frakX$. For any $S = \Spa(A,A^+)$, denote by $\frakX_S$ the base-change of $\frakX$ along $\Spf(A^+)\to\Spf(\calO_C)$ (viewed log formal schemes, Convention \ref{convention:log structure}) with the fiber product log-structure $\calM_{\frakX_S}$. Then $\frakX_S$ is a semi-stable formal scheme over $A^+$. 
      Moreover, if $\frakX$ is liftable, then so is $\frakX_S$. Indeed, let $\widetilde \frakX$ (with the log-structure $\calM_{\frakX}$) is a lifting of $\frakX$ over $\bfA_{2,K}$, then its base-change $\widetilde \frakX_S$ along $\bfA_{2,K}\to\AAKK(S)$ with the log-structure $\calM_{\widetilde \frakX_S}$ induced from the fiber product gives rise to a lifting of $\frakX_S$. This is the typical case we shall work in.
  \end{exam}

  Now, we are going to introduce some notations, which will be used in local calculations, as in \cite[\S3.2]{CK19} for small semi-stable $\frakX_S = \Spf(R_S^+)$ over $A^+$ with the chart $\psi$ as in Definition \ref{dfn:small affine}. Denote its generic fiber by $X_S = \Spa(R_S,R_S^+)$.

  For any $n\geq 0$, put 
  \[A_{r,a,n}^+:=A^+\za T_0^{\frac{1}{p^n}},\dots,T_r^{\frac{1}{p^n}},T_{r+1}^{\pm \frac{1}{p^n}},\dots, T_d^{\pm \frac{1}{p^n}}\ya/(T_0^{\frac{1}{p^n}}\cdots T_r^{\frac{1}{p^n}}-p^\frac{a}{p^n})\]
  and 
  \begin{equation*}
     M_{r,a,n}(A^+):=(\oplus_{i=0}^r\frac{1}{p^n}\bN\cdot e_i)\oplus_{\frac{1}{p^n}\bN\cdot(e_1+\cdots+e_r)}(A^{\times}\cap A^+)\xrightarrow{(\sum_{i=0}^r\frac{n_i}{p^n}e_i,x)\mapsto x\prod_{i=0}^rT_i^{n_i}} A_{r,a,n},
  \end{equation*}
  where $(\oplus_{i=0}^r\frac{1}{p^n}\bN\cdot e_i)\oplus_{\frac{1}{p^n}\bN\cdot(e_1+\cdots+e_r)}(A^{\times}\cap A^+)$ denotes the push-out of monoids
  \[\xymatrix@C=0.5cm{
    \frac{1}{p^n}\bN\ar[d]^{\frac{1}{p^n}\mapsto p^{\frac{a}{p^n}}}\ar[rrr]^{\frac{1}{p^n}\mapsto\sum_{i=0}^r\frac{1}{p^n} e_i\quad\qquad}&&&\frac{1}{p^n}\bN\cdot e_1\oplus\cdots\oplus\frac{1}{p^n}\bN\cdot e_r\\
    (A^{\times}\cap A^+).
  }\]
  Put $A^+_{r,a,\infty}:= (\colim_{n}A^+_{r,a,n})^{\wedge}$ and $M_{r,a,\infty}(A^+):=\colim_{n}M_{r,a,n}(A^+)$. Then $A^+_{r,a,\infty}$ is perfectoid over $A^+$ and the natural map $M_{r,a,\infty}\to A^+_{r,a,\infty}$ induces a perfect log-structure on $A^+_{r,a,\infty}$ (cf. Definition \ref{dfn:perfect log-structure}). Indeed, we have
  \begin{equation*}
     M_{r,a,\infty}(A^+):=(\oplus_{i=0}^r\bN[\frac{1}{p}]\cdot e_i)\oplus_{\bN[\frac{1}{p}]\cdot(e_1+\cdots+e_r)}(A^{\times}\cap A^+)\xrightarrow{(\sum_{i=0}^r\frac{n_i}{p^n}e_i,x)\mapsto x\prod_{i=0}^rT_i^{n_i}} A^+_{r,a,\infty},
  \end{equation*}
  where $(\oplus_{i=0}^r\frac{1}{p^n}\bN\cdot e_i)\oplus_{\frac{1}{p^n}\bN\cdot(e_1+\cdots+e_r)}(A^{\times}\cap A^+)$ denotes the push-out of monoids
  \[\xymatrix@C=0.5cm{
    \bN[\frac{1}{p}]\ar[d]^{\frac{1}{p^n}\mapsto p^{\frac{a}{p^n}}}\ar[rrr]^{\frac{1}{p^n}\mapsto\sum_{i=0}^r\frac{1}{p^n} e_i\quad\qquad}&&&\bN[\frac{1}{p}]\cdot e_1\oplus\cdots\oplus\bN[\frac{1}{p}]\cdot e_r\\
    (A^{\times}\cap A^+).
  }\]
  Note that $\Spa(A_{r,a,\infty}=A^+_{r,a,\infty}[\frac{1}{p}],A^+_{r,a,\infty})\to \Spa(A_{r,a,0}=A^+_{r,a,0}[\frac{1}{p}],A^+_{r,a,0})$ is a pro-\'etale Galois covering with the Galois group 
  \begin{equation}\label{equ:Gamma group}
      \Gamma \cong \{\delta=\delta_0^{n_0}\cdots\delta_r^{n_r}\delta_{r+1}^{n_{r+1}}\cdots\delta_d^{n_d}\mid n_i\in \Zp,~\forall~0\leq i\leq d, \text{ such that }n_0+\cdots+n_r = 0\}\cong \bZ_p^{\oplus d},
  \end{equation}
  where the action of $\Gamma$ on $A^+_{r,a,\infty}$ is uniquely determined such that for any $0\leq i\leq d$, and $n\geq 0$ and any $\delta =\delta_0^{n_0}\cdots\delta_d^{n_d}\in\Gamma$, we have
  \begin{equation}\label{equ:Gamma action}
      \delta(T_i^{\frac{1}{p^n}}) = \zeta_{p^n}^{n_i}T_i^{\frac{1}{p^n}}.
  \end{equation}
  Put $\gamma_i:=\delta_0^{-1}\delta_i$ when $1\leq i\leq r$ and $\gamma_j = \delta_j$ when $r+1\leq j\leq d$. Then we have an isomorphism 
  \begin{equation}\label{equ:Gamma group-II}
      \Gamma \cong \Zp\cdot\gamma_1\oplus\cdots\oplus\Zp\cdot\gamma_d.
  \end{equation}
  This is useful in some calculations.

  Let $X_{\infty,S} = \Spa(\widehat R_{\infty,S},\widehat R_{\infty,S}^+)$ be the base-change of $X_S$ along $\Spa(A_{r,a,\infty},A^+_{r,a,\infty})\to \Spa(A_{r,a,0},A^+_{r,a,0})$ with respect to the chart $\psi:X_S\to \Spa(A_{r,a,0},A^+_{r,a,0})$. Then $X_{\infty,S}$ is affinoid perfectoid such that $X_{\infty,S}\to X_S$ is a pro-\'etale Galois covering with Galois group $\Gamma$ whose action on $\widehat R_{\infty,S}^+$ is determined by (\ref{equ:Gamma action}). Consider the set of indices
  \begin{equation}\label{equ:index set}
      J_r:=
      \{\underline \alpha = (\alpha_0,\dots,\alpha_r,\alpha_{r+1},\dots,\alpha_d)\in (\bN[\frac{1}{p}]\cap[0,1))^{d+1}\mid \prod_{i=0}^r\alpha_i = 0\}.
  \end{equation}
  Then $\widehat R_{\infty,S}^+$ admits a $\Gamma$-equivariant decomposition
  \begin{equation}\label{equ:Gamma decomposition}
      \widehat R_{\infty,S}^+ = \widehat \bigoplus_{\underline \alpha\in J_r}R^+_S\cdot \underline T^{\underline \alpha},
  \end{equation}
  where ``$\widehat \oplus$'' denotes the $p$-adic topological direct sum. Note that the composite
  \[M_{r,a,\infty}(A^+)\to A^+_{r,a,\infty}\to \widehat R_{\infty,S}^+\]
  defines a perfect log-structure on $\widehat R_{\infty,S}^+$, which factors through the canonical log-structure on $\widehat R_{\infty,S}^+$ because $T_i\in \widehat R_{\infty,S}^{\times}\cap \widehat R_{\infty,S}^+$ for any $0\leq i\leq r$.

  For any $0\leq i\leq d$, let $T_i^{\flat}:=(T_i,T_i^{\frac{1}{p}},\dots)\in \widehat R_{\infty,S}^{\flat,\times}\cap \widehat R_{\infty,S}^{\flat,+}$. Then the map
  \[\iota_{\psi}:\AAKK(S)\za T_0,\dots,T_r,T_{r+1}^{\pm 1},\dots,T_d^{\pm 1}\ya/(T_0\cdots T_r-[\varpi^a])\to\AAKK(X_{\infty})\]
  carrying each $T_i$ to $[T_i]^{\flat}$ is a well-defined morphism of $\AAKK(S)$-algebras compatible with log-structures on the source and target. By the \'etaleness of $\widetilde \psi$ above, the $\iota_{\psi}$ uniquely extends to a morphism of $\AAKK(S)$-algebras (still denoted by)
  \begin{equation}\label{equ:iota_psi}
      \iota_{\psi}:(M_{r,a}(A^{\flat,+})\to \widetilde R_S^+)\to(\widehat R_{\infty,S}^{\flat,\times}\cap \widehat R_{\infty,S}^{\flat,+}\xrightarrow{[\cdot]}\AAKK(X_{\infty})),
  \end{equation}
  lifting the natural morphism $(M_{r,a}(A^{+})\to R_S^+)\to(\widehat R_{\infty,S}^{\times}\cap \widehat R_{\infty,S}^{+}\to\widehat R_{\infty,S}^+)$. The $\Gamma$-action on $X_{\infty}$ induces a $\Gamma$-action on $\widehat R_{\infty,S}^{\flat}$ and thus on $\AAKK(X_{\infty})$ such that for any $0\leq i\leq d$, any $n\geq 0$ and any $\delta =\delta_0^{n_0}\cdots\delta_d^{n_d}\in\Gamma$, we have
  \begin{equation}\label{equ:Gamma action-II}
      \delta((T_i^{\flat})^{\frac{1}{p^n}}) = \epsilon^{\frac{n_i}{p^n}}(T_i^{\flat})^{\frac{1}{p^n}}.
  \end{equation}
  Note that the $\iota_{\psi}$ above is \emph{not} $\Gamma$-equivariant!

\section{Period sheaves}\label{sec:period sheaves}

  Fix an $S = \Spa(A,A^+)\in \Perfd$. Throughout this section, we always assume $\frakX_S$ is a liftable semi-stable formal scheme over $A^+$ with the generic fiber $X_S$ and a fix an its lifting $\widetilde \frakX_S$ over $\AAKK(S)$. Denote by $X_{S,v}$ the $v$-site associated to $X_S$ in the sense of \cite{Sch-Diamond}, and by $\widehat \calO_{X_S}$ (resp. $\widehat \calO_{X_S}^+$, $\widehat \calO_{X_S}^{\flat}$ and $\widehat \calO_{X_S}^{\flat,+}$) the sheaf sending each affinoid perfectoid $U = \Spa(B,B^+)\in X_{S,v}$ to $B^{\flat}$ (resp. $B^+$, $B^{\flat}$ and $B^{\flat,+}$). For any $n\geq 1$, the \emph{canonical log-structure} on $\AAinfK(\widehat \calO^+_{X_S})/\Ker(\theta_K)^n$ is the log-structure associated to the morphism of monoids 
  \[\widehat \calO_{X_S}^{\flat,\times}\cap \widehat \calO_{X_S}^{\flat,+}\xrightarrow{[\cdot]}\AAinfK(\widehat \calO^+_{X_S})/\Ker(\theta_K)^n.\]
  It follows from Proposition \ref{prop:log structure on perfectoids} that the canonical log-structure on $\widehat \calO_{X_S}^+$ is exactly the log-structure
  \[\widehat \calO_{X_S}^{\times}\cap\widehat \calO_{X_S}^+=:\calM_{X_S}\hookrightarrow\widehat \calO_{X_S}^+.\]

\subsection{Integral Faltings' extension}\label{ssec:integral Faltings' extension}
  In this part, we follow the argument in \cite[\S2]{Wan23} to construct an analogue of integral Faltings' extension in \emph{loc.cit.} on $X_{S,v}$ with respect to the given lifting $\widetilde \frakX_S$. 
  
  Denote by $\calM_{\frakX_S}$ and $\calM_{\widetilde \frakX_S}$ the log-structures on $\frakX_S$ and $\widetilde \frakX_S$, respectively. Then we have the morphisms of log-ringed topoi over $\AAKK(S)$:
  \[(\calO_{\widetilde \frakX_S},\calM_{\widetilde \frakX_S})\to(\calO_{\frakX_S},\calM_{\frakX_S})\to(\widehat \calO^+_{X_S},\calM_{X_S}).\]
  This gives rise to an exact triangle of $p$-complete cotangent complexes 
  \begin{equation}\label{equ:exact triangle of ringed-topoi}
      \widehat \rL_{(\calO_{\frakX_S},\calM_{\frakX_S})/(\calO_{\widetilde \frakX_S},\calM_{\widetilde \frakX_S})}\widehat \otimes^{\rL}_{\calO_{\frakX_S}}\widehat \calO^+_{X_S}\to\widehat \rL_{(\widehat \calO^+_{X_S},\calM_{X_S})/(\calO_{\widetilde \frakX_S},\calM_{\widetilde \frakX_S})}\to \widehat \rL_{(\widehat \calO^+_{X_S},\calM_{X_S})/(\calO_{\frakX_S},\calM_{\frakX_S})}.
  \end{equation}
  
  The first term is easy to handle with: As the log-structure $\calM_{\frakX_S}$ is induced from $\calM_{\widetilde \frakX_S}$ via the composite $\calM_{\widetilde \frakX_S}\to\calO_{\widetilde \frakX_S}\to\calO_{\frakX_S}$, it follows from \cite[Lem. 8.22]{Ols} that
  \[\widehat \rL_{(\calO_{\frakX_S},\calM_{\frakX_S})/(\calO_{\widetilde \frakX_S},\calM_{\widetilde \frakX_S})}\simeq \widehat \rL_{\calO_{\frakX_S}/\calO_{\widetilde \frakX_S}}.\]
  As $\widetilde \frakX_S$ is flat over $\AAKK(S)$, using \cite[Cor. 2.3]{Wan23}, we have quasi-isomorphisms
  \[\widehat \rL_{\calO_{\frakX_S}/\calO_{\widetilde \frakX_S}}\simeq \widehat \rL_{A^+/\AAKK(S)}\widehat \otimes^{\rL}_{A^+}\calO_{\frakX_S}\simeq \calO_{\frakX_S}\{1\}[1]\oplus\calO_{\frakX_S}\{2\}[2].\]
  So we finally conclude that
  \begin{equation}\label{equ:first term}
      \widehat \rL_{(\calO_{\frakX_S},\calM_{\frakX_S})/(\calO_{\widetilde \frakX_S},\calM_{\widetilde \frakX_S})}\widehat \otimes^{\rL}_{\calO_{\frakX_S}}\widehat \calO^+_{X_S}\simeq \widehat \calO^+_{X_S}\{1\}[1]\oplus\widehat \calO^+_{X_S}\{2\}[2].
  \end{equation}

  The last term of (\ref{equ:exact triangle of ringed-topoi}) is also easy to handle with: Consider the morphisms of log-rings 
  \[(A^{\times}\cap A^+=:M_A\to A^+)\to(\calO_{\frakX_S},\calM_{\frakX_S})\to(\widehat \calO^+_{X_S},\calM_{X_S})\]
  and the induced exact triangle
  \[\widehat \rL_{(\calO_{\frakX_S},\calM_{\frakX_S})/(A^+,M_A)}\widehat \otimes^{\rL}_{\calO_{\frakX_S}}\widehat \calO^+_{X_S}\to\widehat \rL_{(\widehat \calO^+_{X_S},\calM_{X_S})/(A^+,M_A)}\to \widehat \rL_{(\widehat \calO^+_{X_S},\calM_{X_S})/(\calO_{\frakX_S},\calM_{\frakX_S})}.\]
  By Proposition \ref{prop:BMS Lem 314}(2), the middle term $\widehat \rL_{(\widehat \calO^+_{X_S},\calM_{X_S})/(A^+,M_A)}$ above vanishes, yielding a quasi-isomorphism
  \[\widehat \rL_{(\widehat \calO^+_{X_S},\calM_{X_S})/(\calO_{\frakX_S},\calM_{\frakX_S})}\simeq (\widehat \rL_{(\calO_{\frakX_S},\calM_{\frakX_S})/(A^+,M_A)}\widehat \otimes^{\rL}_{\calO_{\frakX_S}}\widehat \calO^+_{X_S})[1].\]
  If $\frakX_S$ is not only semi-stable but smooth over $A^+$, then we have a quasi-isomorphism
  \[\widehat \rL_{(\calO_{\frakX_S},\calM_{\frakX_S})/(A^+,M_A)}\simeq \Omega^{1,\log}_{\frakX_S}[0] = \Omega^1_{\frakX_S}[0].\]
  However, in the logarithmic case, it is not straightforward that $\widehat \rL_{(\calO_{\frakX_S},\calM_{\frakX_S})/(A^+,M_A)}$ is discrete. So we have to exhibit the discreteness of $\widehat \rL_{(\calO_{\frakX_S},\calM_{\frakX_S})/(A^+,M_A)}$ directly.
  \begin{lem}\label{lem:dicrete cotangent complex}
      We have $\widehat \rL_{(\calO_{\frakX_S},\calM_{\frakX_S})/(A^+,M_A)}\simeq \Omega^{1,\log}_{\frakX_S}[0]$.
  \end{lem}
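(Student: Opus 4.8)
The plan is to reduce the statement to a local computation on a small affine open $\frakX_S = \Spf(R_S^+)$ with chart $\psi$ as in Definition \ref{dfn:small affine}, since the $p$-complete cotangent complex is local for the \'etale topology and the assertion is about an \'etale sheaf. On such an open, I would first replace the base log-ring $(A^+, M_A)$ by the chart log-ring
\[
\bigl(M_{r,a}(A^+) \to A^+\za T_0,\dots,T_r,T_{r+1}^{\pm 1},\dots,T_d^{\pm 1}\ya/(T_0\cdots T_r - p^a)\bigr),
\]
using the \'etaleness of $\psi$: by \cite[Th. 8.20]{Ols} (and the vanishing of the cotangent complex along an \'etale, log-\'etale, morphism) the cotangent complex $\widehat{\rL}_{(\calO_{\frakX_S},\calM_{\frakX_S})/(A^+,M_A)}$ is computed by base-changing the cotangent complex of the chart algebra relative to $(A^+, M_A)$ along $\psi$. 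Concretely, with $B^+ := A^+\za\underline{T}\ya/(T_0\cdots T_r - p^a)$ and its natural log-structure $M_{r,a}(A^+)$, it suffices to show $\widehat{\rL}_{(B^+, M_{r,a}(A^+))/(A^+, M_A)} \simeq \Omega^{1,\log}_{B^+/A^+}[0]$, a finite free $B^+$-module of rank $d$ described explicitly in \eqref{equ:omega^1}.

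Next I would compute this cotangent complex directly from the explicit chart presentation. The log-ring $(B^+, M_{r,a}(A^+))$ over $(A^+, M_A)$ is obtained by adjoining the monoid $P := (\oplus_{i=0}^r \bN e_i) \oplus_{\bN(e_1+\cdots+e_r)} (A^\times \cap A^+)$ and then imposing the single relation $T_0\cdots T_r = p^a$; equivalently, $(B^+, M_{r,a}(A^+))$ is the pushout in log-rings of the monoid-algebra $(A^+[P], P)$ against the free polynomial variables $T_{r+1},\dots,T_d$ modulo an \emph{exact, log-\'etale-ish} relation. The key point is that the map of monoids $\bN \xrightarrow{1\mapsto e_0+\cdots+e_r} P$ (together with the structure map $M_A \to P$) is \emph{log-smooth} of relative dimension $d$: this is exactly the statement that the semistable chart is a log-smooth morphism, and is essentially \cite[\S3.2]{CK19} or a standard computation with Kato's criterion. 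By Kato's theorem on log-smooth morphisms (or \cite[Th. 8.20, Cor. 8.29]{Ols}), the cotangent complex of a log-smooth morphism is concentrated in degree $0$ and equals $\Omega^{1,\log}$. Combined with $p$-completeness (which is harmless since everything is already $p$-complete and the modules are finite free), this yields $\widehat{\rL}_{(B^+, M_{r,a}(A^+))/(A^+,M_A)} \simeq \Omega^{1,\log}_{B^+/A^+}[0]$.

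A cleaner route, perhaps, avoids invoking log-smoothness as a black box: use the Olsson formula \cite[Lem. 8.28, Lem. 8.23]{Ols} expressing the log-cotangent complex of a monoid-algebra as $P^{\gp} \otimes^{\rL}_{\bZ} B^+$ shifted appropriately, then handle the polynomial variables $T_{r+1},\dots,T_d$ (which contribute a free rank-$(d-r)$ summand in degree $0$ via the ordinary cotangent complex) and the hypersurface relation $T_0\cdots T_r - p^a = 0$ separately. The relation cuts out $B^+$ inside $A^+[P']$ for $P' = \oplus_{i=0}^r \bN e_i \oplus_{\bN(e_1+\cdots+e_r)} M_A$, and one checks via the transitivity triangle for $(A^+, M_A) \to (A^+[P'], P') \to (B^+, M_{r,a}(A^+))$ that the \emph{conormal} contribution is cancelled by passing to the \emph{log} differentials: the element $T_0\cdots T_r$ becomes a unit-times-$p^a$ and its $\dlog$ is the relation $\dlog T_0 + \cdots + \dlog T_r = d\log(p^a)$, which is precisely the relation imposed in \eqref{equ:omega^1}. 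So the higher homology of $P'^{\gp} \otimes^{\rL}_{\bZ} B^+$ — which could in principle be nonzero since $P'^{\gp}$ need not be $\bZ$-flat (it contains $M_A^{\gp} = (A^\times \cap A^+)^{\gp}$) — is killed. \textbf{The main obstacle} is precisely this last point: one must verify that $P'^{\gp}$ is torsion-free over $\bZ$ (so that $P'^{\gp}\otimes^{\rL}_{\bZ}B^+$ is discrete), i.e.\ that the pushout monoid's groupification has no torsion. This follows because $(A^\times \cap A^+)^{\gp}$ is torsion-free — a unit of finite order would be a root of unity, but $A^+$ is $p$-adically separated of mixed characteristic and, more to the point, $(A^\times \cap A^+)^{\gp}$ embeds in $A^\times$ which, while it may contain roots of unity, contributes only $\bZ$-flat pieces after tensoring because the relevant computation only sees $(A^\times \cap A^+)^{\gp} \otimes^{\rL}_{\bZ} B^+$; alternatively one uses that the log-structure $M_A = A^\times \cap A^+$ on $A^+$ is the restriction of a \emph{perfect} (uniquely $p$-divisible) log-structure after base change to $\widehat\calO^+$, and Lemma \ref{lem:cotangent complex for perfect log structure} together with Proposition \ref{prop:log structure on perfectoids} shows the problematic torsion is annihilated $p$-adically. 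Once discreteness of $P'^{\gp}\otimes^{\rL}_{\bZ}B^+$ in the relevant range is established, identifying $\rH^0$ with $\Omega^{1,\log}_{\frakX_S}$ via \eqref{equ:omega^1} is a direct bookkeeping of generators and relations, and the lemma follows.
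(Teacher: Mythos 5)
Your overall plan — localize to a small affine $\Spf(R_S^+)$ with a chart, pass to the chart algebra by \'etaleness, and compute there — is exactly the paper's, so the reduction steps are fine. The trouble is that both of your sub-arguments for the local computation run into problems.

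The log-smoothness-as-black-box route is not as immediate as you suggest. Kato's criteria, and the assertion that the log-cotangent complex of a log-smooth morphism is concentrated in degree $0$, are stated for fine (or fs) log-structures. Here the base monoid $M_A = A^\times\cap A^+$ is not finitely generated, and $M_{r,a}(A^+)$ is a pushout along $M_A$, so it is not fine either. Invoking log-smoothness as a black box would require a verification that these results extend to this non-coherent setting, which you do not supply; the paper simply avoids the black box.

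Your "cleaner route" contains a genuine gap: you misremember Olsson's formula. For a morphism $Q\to P$ of integral monoids, \cite[Lem. 8.23(ii)]{Ols} gives $\rL_{(P\to\bZ[P])/(Q\to\bZ[Q])}\simeq \Coker(Q^{\gp}\to P^{\gp})\otimes^{\rL}_{\bZ}\bZ[P]$, \emph{not} $P^{\gp}\otimes^{\rL}_{\bZ}\bZ[P]$. With $Q = M_A$ and $P = M_{r,a}(A^+)\oplus\bigoplus_{j=r+1}^d\bZ\cdot e_j$, the cokernel is
\[
\bigl(\textstyle{\bigoplus_{i=0}^{r}}\bZ\cdot e_i\bigr)/\bZ\cdot(e_0+\cdots+e_r)\;\oplus\;\textstyle{\bigoplus_{j=r+1}^{d}}\bZ\cdot e_j \;\cong\;\bZ^{d},
\]
a free group, so $\otimes^{\rL}_{\bZ}$ introduces no torsion at all. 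The "main obstacle" you flag is a phantom, caused by tensoring the wrong object. Likewise there is no separate hypersurface relation $T_0\cdots T_r-p^a=0$ to handle by a transitivity triangle: the paper's key observation is that the uncompleted chart algebra is \emph{exactly} $A^+\otimes_{\bZ[M_A]}\bZ\bigl[M_{r,a}(A^+)\oplus\bigoplus_{j>r}\bZ\cdot e_j\bigr]$, so the relation is already encoded in the pushout monoid, and flat base change \cite[Cor. 8.13]{Ols} applies directly once one checks the monoid algebra is free, hence flat, over $\bZ[M_A]$. Finally, your attempted patch via Lemma \ref{lem:cotangent complex for perfect log structure} and Proposition \ref{prop:log structure on perfectoids} is inapplicable here: those require a perfectoid ring with a uniquely $p$-divisible log-structure, and $R_S^+$ is not perfectoid. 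Once the correct cokernel is used, no such patch is needed.
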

  \begin{proof}
      It suffices to show the complex $\widehat \rL_{(\calO_{\frakX_S},\calM_{\frakX_S})/(A^+,M_A)}$ is discrete. Since the problem is local on $\frakX_{S,\et}$, we may assume $\frakX_S = \Spf(R_S^+)$ is small affine such that the log-structure $\calM_{\frakX_S}$ is induced by the pre-log-structure $M_{r,a}(A^+)\oplus(\oplus_{j=r+1}^d\bZ\cdot e_j)\to R_S^+$ where $M_{r,a}(A^+)$ is defined in (\ref{equ:log-structure on chart-I}) and $e_j$ is mapped to $T_j$ for any $r+1\leq j\leq d$. To conclude, we have to show 
      \[\widehat \rL_{(M_{r,a}(A^+)\oplus(\oplus_{j=r+1}^d\bZ\cdot e_j)\to R_S^+)/(M_A\to A^+)}\simeq \Omega^{1,\log}_{R_S^+}[0].\]
      By the \'etaleness of $A_{r,a}^+:=A^+\za T_0,\dots,T_r,T_{r+1}^{\pm 1},\dots,T_d^{\pm 1}\ya/(T_0\cdots T_r-p^a)\to R_S^+$, using \cite[Lem. 8.22]{Ols}, we are reduced to the case to show 
      \[\widehat \rL_{M_{r,a}(A^+)\oplus(\oplus_{j=r+1}^d\bZ\cdot e_j)\to A_{r,a}^+)/(M_A\to A^+)}\simeq \Omega^{1,\log}_{A_{r,a}^+}[0].\]
      Put $A_{r,a}^{+,\rm{nc}} = A^+[ T_0,\dots,T_r,T_{r+1}^{\pm 1},\dots,T_d^{\pm 1}]/(T_0\cdots T_r-p^a)$. Then we have
      \[A_{r,a}^{+,\rm{nc}}\cong A^+\otimes_{\bZ[M_A]}\bZ[M_{r,a}(A^+)\oplus(\oplus_{j=r+1}^d\bZ\cdot e_j)].\]
      As $\bZ[M_{r,a}(A^+)\oplus(\oplus_{j=r+1}^d\bZ\cdot e_j)]$ is flat over $\bZ[M_A]$, we see that $A_{r,a}^{+,\rm{nc}}$ is flat over $A^+$ and thus flat over $\Zp$. So we have a quasi-isomorphism
      \[\widehat \rL_{(M_{r,a}(A^+)\oplus(\oplus_{j=r+1}^d\bZ\cdot e_j)\to A_{r,a}^+)/(M_A\to A^+)}\simeq \widehat \rL_{(M_{r,a}(A^+)\oplus(\oplus_{j=r+1}^d\bZ\cdot e_j)\to A_{r,a}^{+,\rm{nc}})/(M_A\to A^+)}.\]
      By the flat base-change theorem \cite[Cor. 8.13]{Ols}, we then have a quasi-isomorphism
      \[\rL_{(M_{r,a}(A^+)\oplus(\oplus_{j=r+1}^d\bZ\cdot e_j)\to A_{r,a}^{+,\rm{nc}})/(M_A\to A^+)}\simeq \rL_{(M_{r,a}(A^+)\oplus(\oplus_{j=r+1}^d\bZ\cdot e_j)\to \bZ[M_{r,a}(A^+)\oplus(\oplus_{j=r+1}^d\bZ\cdot e_j)])/(M_A\to \bZ[M_A])}\otimes^{\rL}_{\bZ[M_A]}A^+.\]
      According to \cite[Lem. 8.23(ii)]{Ols}, we have a quasi-isomorphism
      \[\begin{split}
          \rL_{(M_{r,a}(A^+)\oplus(\oplus_{j=r+1}^d\bZ\cdot e_j)\to A_{r,a}^{+,\rm{nc}})/(M_A\to A^+)}&\simeq \left(M_{r,a}(A^+)^{\gp}/M_A^{\gp}\oplus(\oplus_{j=r+1}^d\bZ)\right)\otimes_{\bZ}^{\rL}A^+\\
          &\simeq \big(\left((\oplus_{i=0}^dA^+\cdot e_i)/A^+\cdot(e_0+\cdots+e_r)\right)\oplus(\oplus_{j=r+1}^dA^+\cdot e_j)\big)[0].
      \end{split}\]
      So we finally conclude a quasi-isomorphism 
      \[\widehat \rL_{(M_{r,a}(A^+)\oplus(\oplus_{j=r+1}^d\bZ\cdot e_j)\to A_{r,a}^+)/(M_A\to A^+)}\simeq \big(\left((\oplus_{i=0}^dA^+\cdot e_i)/A^+\cdot(e_0+\cdots+e_r)\right)\oplus(\oplus_{j=r+1}^dA^+\cdot e_j)\big)[0],\]
      yielding the desired discreteness of $\widehat \rL_{(M_{r,a}(A^+)\oplus(\oplus_{j=r+1}^d\bZ\cdot e_j)\to A_{r,a}^+)/(M_A\to A^+)}$.
  \end{proof}
  Thanks to the above lemma, the third term in (\ref{equ:exact triangle of ringed-topoi}) reads
  \begin{equation}\label{equ:third term}
      \widehat \rL_{(\widehat \calO^+_{X_S},\calM_{X_S})/(\calO_{\frakX_S},\calM_{\frakX_S})}\simeq (\widehat \calO^+_{X_S}\otimes_{\calO_{\frakX_S}}\Omega^{1,\log}_{\frakX_S})[1].
  \end{equation}

  \begin{dfn}[Integral Faltings' extension]\label{dfn:integral Faltings' extension}
      Let $\frakX_S$ be a liftable semi-stable formal scheme over $A^+$ with the generic fiber $X_S$. For a lifting $\widetilde \frakX_S$ of $\frakX_S$ over $\AAKK(S)$ (with the log-structure $\calM_{\widetilde \frakX_S}$), we call
      \[\calE^+_{\widetilde \frakX_S}:=\rH^{0}(\widehat \rL_{(\widehat \calO^+_{X_S},\calM_{X_S})/(\calO_{\widetilde \frakX_S},\calM_{\widetilde \frakX_S})}[-1])\]
      the \emph{integral Faltings' extension} associated to the lifting $\widetilde \frakX_S$.
  \end{dfn}
  \begin{prop}\label{prop:Faltings' extension}
      The integral Faltings' extension $\calE^+_{\widetilde \frakX_S}$ is a locally finite free $\widehat \calO^+_{X_S}$-module of rank $d+1$ and fits into the following short exact sequence
      \begin{equation}\label{equ:Faltings' extension}
          0\to \widehat \calO^+_{X_S}\{1\}\to \calE^+_{\widetilde \frakX_S}\to \widehat \calO^+_{X_S}\otimes_{\calO_{\frakX_S}}\Omega^{1,\log}_{\frakX_S}\to 0.
      \end{equation}
      Moreover, the cohomological class $[\calE^+_{\widetilde \frakX_S}]\in \Ext^1(\widehat \calO^+_{X_S}\otimes_{\calO_{\frakX_S}}\Omega^{1,\log}_{\frakX_S},\widehat \calO^+_{X_S}\{1\})$ is exactly the obstruction class for lifting the natural morphism of $A^+$-algebras $(\calO_{\frakX_S},\calM_{\frakX_S})\to(\widehat \calO^+_{X_S},\calM_{X_S})$ to a morphism of $\AAKK(S)$-algebras $(\calO_{\widetilde \frakX_S},\calM_{\widetilde \frakX_S})\dasharrow(\AAKK(\widehat \calO^+_{X_S}),\calM_{\widetilde X_S})$, where $\calM_{\widetilde X_S}$ denotes the canonical log-structure on $\AAKK(\widehat \calO^+_{X_S})$.
  \end{prop}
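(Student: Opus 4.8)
The plan is to read off the short exact sequence \eqref{equ:Faltings' extension} and the local freeness directly from the transitivity triangle \eqref{equ:exact triangle of ringed-topoi} together with the computations \eqref{equ:first term} and \eqref{equ:third term}, and then to deduce the obstruction statement from the deformation-theoretic content of the log cotangent complex, following the argument of \cite[\S2]{Wan23}.

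First I would substitute \eqref{equ:first term} and \eqref{equ:third term} into \eqref{equ:exact triangle of ringed-topoi}, so that it becomes the exact triangle
\[\widehat \calO^+_{X_S}\{1\}[1]\oplus\widehat \calO^+_{X_S}\{2\}[2]\longrightarrow \widehat \rL_{(\widehat \calO^+_{X_S},\calM_{X_S})/(\calO_{\widetilde \frakX_S},\calM_{\widetilde \frakX_S})}\longrightarrow (\widehat \calO^+_{X_S}\otimes_{\calO_{\frakX_S}}\Omega^{1,\log}_{\frakX_S})[1]\xrightarrow{+1}.\]
Its first term has cohomology concentrated in degrees $-1$ and $-2$ and its last term in degree $-1$, so the long exact sequence of cohomology sheaves degenerates in degree $-1$ to
\[0\longrightarrow \widehat \calO^+_{X_S}\{1\}\longrightarrow \rH^{-1}\big(\widehat \rL_{(\widehat \calO^+_{X_S},\calM_{X_S})/(\calO_{\widetilde \frakX_S},\calM_{\widetilde \frakX_S})}\big)\longrightarrow \widehat \calO^+_{X_S}\otimes_{\calO_{\frakX_S}}\Omega^{1,\log}_{\frakX_S}\longrightarrow 0,\]
and by Definition \ref{dfn:integral Faltings' extension} the middle term is $\calE^+_{\widetilde \frakX_S}$; this is \eqref{equ:Faltings' extension}. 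Since $\widehat \calO^+_{X_S}\{1\}$ is invertible and $\widehat \calO^+_{X_S}\otimes_{\calO_{\frakX_S}}\Omega^{1,\log}_{\frakX_S}$ is the pullback of the rank-$d$ locally free $\calO_{\frakX_S}$-module $\Omega^{1,\log}_{\frakX_S}$, the sequence splits after restriction to any affinoid perfectoid in $X_{S,v}$ lying over a small affine open of $\frakX_S$, and hence $\calE^+_{\widetilde \frakX_S}$ is locally finite free of rank $d+1$.

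For the obstruction statement I would first observe that $\AAKK(\widehat \calO^+_{X_S})\to\widehat \calO^+_{X_S}$ is a square-zero extension of $\AAKK(S)$-log-algebras, with the canonical log structures (Convention \ref{convention:log structure}) and with square-zero ideal $\Ker(\theta_K)/\Ker(\theta_K)^2\cong\widehat \calO^+_{X_S}\{1\}$ carrying its tautological $\widehat \calO^+_{X_S}$-module structure, compatibly with the surjection $\AAKK(S)\to A^+$. Giving the desired lift $(\calO_{\widetilde \frakX_S},\calM_{\widetilde \frakX_S})\to(\AAKK(\widehat \calO^+_{X_S}),\calM_{\widetilde X_S})$ of $(\calO_{\frakX_S},\calM_{\frakX_S})\to(\widehat \calO^+_{X_S},\calM_{X_S})$ over $\AAKK(S)$ is the same as lifting the composite $(\calO_{\widetilde \frakX_S},\calM_{\widetilde \frakX_S})\to(\widehat \calO^+_{X_S},\calM_{X_S})$ along this square-zero extension. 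Since $\widetilde \frakX_S$ is log-smooth over $\AAKK(S)$ (\S\ref{ssec:semistable schemes}), such a lift exists \'etale-locally on $\frakX_S$, and any two local lifts differ by an $\AAKK(S)$-linear log derivation $\calO_{\widetilde \frakX_S}\to\widehat \calO^+_{X_S}\{1\}$, i.e. by a section of $\Hom_{\widehat \calO^+_{X_S}}(\widehat \calO^+_{X_S}\otimes_{\calO_{\frakX_S}}\Omega^{1,\log}_{\frakX_S},\widehat \calO^+_{X_S}\{1\})$; consequently the obstruction to a global lift is the resulting gluing class in $\Ext^1_{\widehat \calO^+_{X_S}}(\widehat \calO^+_{X_S}\otimes_{\calO_{\frakX_S}}\Omega^{1,\log}_{\frakX_S},\widehat \calO^+_{X_S}\{1\})$, which is the same group in which $[\calE^+_{\widetilde \frakX_S}]$ lives. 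It then remains to match the two classes, and here I would invoke the standard dictionary between this lifting obstruction and the cotangent complex (cf. \cite[\S8]{Ols}, and the parallel computation in \cite[\S2]{Wan23}): unwinding the identifications, the obstruction equals the composite of the connecting map of \eqref{equ:exact triangle of ringed-topoi} with the projection, supplied by \eqref{equ:first term}, of its first term onto the direct summand $\widehat \calO^+_{X_S}\{1\}[1]$, and an application of the octahedral axiom identifies this composite with the Yoneda class of \eqref{equ:Faltings' extension}, namely $[\calE^+_{\widetilde \frakX_S}]$.

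I expect the genuinely non-formal step to be this last identification — reconciling the extension class cut out by the transitivity triangle with the deformation-theoretic obstruction class, while carefully tracking the log structures and the role of the square-zero ideal $\widehat \calO^+_{X_S}\{1\}$; the existence of \eqref{equ:Faltings' extension} and the local freeness then follow formally from the triangle.
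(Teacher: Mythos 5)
Your derivation of the short exact sequence and the local freeness of $\calE^+_{\widetilde \frakX_S}$ follows exactly the paper's reasoning: substitute \eqref{equ:first term} and \eqref{equ:third term} into \eqref{equ:exact triangle of ringed-topoi}, take $\rH^{-1}$, and observe the summand $\widehat \calO^+_{X_S}\{2\}[2]$ does not interfere in degree $-1$. That part is fine.

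For the ``moreover'' statement your route diverges from the paper's in a way worth noting. The paper argues in two steps: it first invokes the general cotangent-complex fact (from \cite[Lem.~2.10]{Wan23}) that $[\calE^+_{\widetilde \frakX_S}]$ is the obstruction to deforming the \emph{ring} $(\widehat \calO^+_{X_S},\calM_{X_S})$ to a flat log-ring over $(\calO_{\widetilde \frakX_S},\calM_{\widetilde \frakX_S})$, and then appeals to Proposition \ref{prop:BMS Lem 314}(2) to conclude that the deformation of $(\widehat \calO^+_{X_S},\calM_{X_S})$ over $(\AAKK(S),M_{\AAKK(S)})$ is \emph{unique} and therefore must be $(\AAKK(\widehat \calO^+_{X_S}),\calM_{\widetilde X_S})$; this is what converts the ring-deformation problem into the morphism-lifting problem asserted in the proposition. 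Your argument shortcuts this detour: you pose the morphism-lifting problem directly, observe that $(\AAKK(\widehat \calO^+_{X_S}),\calM_{\widetilde X_S})\to(\widehat \calO^+_{X_S},\calM_{X_S})$ is a square-zero extension of $(\AAKK(S),M_{\AAKK(S)})$-log-algebras with ideal $\widehat \calO^+_{X_S}\{1\}$, and then use log-smoothness of $\widetilde \frakX_S$ to produce local lifts and a gluing class. This avoids the explicit use of the uniqueness step. What you gain is a direct, self-contained framing; what the paper's version buys is that the matching of classes reduces cleanly to a single citation of \cite[Prop.~2.14]{Wan23}.

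One imprecision in your write-up: the obstruction to lifting $(\calO_{\widetilde \frakX_S},\calM_{\widetilde \frakX_S})\to(\widehat \calO^+_{X_S},\calM_{X_S})$ along the square-zero extension is, in the first instance, controlled by the transitivity triangle for $(\AAKK(S),M_{\AAKK(S)})\to(\calO_{\widetilde \frakX_S},\calM_{\widetilde \frakX_S})\to(\widehat \calO^+_{X_S},\calM_{X_S})$, not by \eqref{equ:exact triangle of ringed-topoi} (which sits over $\calO_{\widetilde \frakX_S}$, not over $\AAKK(S)$). It is precisely the octahedron relating these two triangles — together with the vanishing from Proposition \ref{prop:BMS Lem 314}(2) used to identify $\widehat \rL_{(\widehat \calO^+_{X_S},\calM_{X_S})/(\AAKK(S),M_{\AAKK(S)})}$ — that carries the obstruction class over to the connecting map of \eqref{equ:exact triangle of ringed-topoi}. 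You do invoke the octahedral axiom, but the phrasing suggests the two classes live in the same triangle from the start, which is not the case; the octahedron is doing real work here, not just bookkeeping. With that clarification, your proposal is a correct and slightly more direct variant of the paper's argument, both of which defer the final cotangent-complex identification to Olsson's deformation theory and \cite[\S2]{Wan23}.
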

  \begin{proof}
      The first statement follows from Equations (\ref{equ:exact triangle of ringed-topoi}), (\ref{equ:first term}) and (\ref{equ:third term}) immediately. It remains to prove the ``moreover'' part. By the construction of $\calE^+_{\widetilde \frakX_S}$, as argued in \cite[Lem. 2.10]{Wan23}, the class $[\calE^+_{\widetilde \frakX_S}]$ is exactly the obstruction class for lifting $(\widehat \calO^+_{X_S},\calM_{X_S})$ to a log-ring over $(\calO_{\widetilde \frakX_S},\calM_{\widetilde \frakX_S})$.
      As $\widehat \rL_{(\widehat \calO^+_{X_S},\calM_{X_S})/(A^+,M_A)} = 0$ by Proposition \ref{prop:BMS Lem 314}(2), the lifting of $(\widehat \calO^+_{X_S},\calM_{X_S})$ over $(\AAKK(S),M_{\AAKK(S)})$ is unique (up to the unique isomorphism), where $M_{\AAKK(S)}$ denotes the canonical log-structure on $\AAKK(S)$. So it must be $(\AAKK(\widehat \calO^+_{X_S}),\calM_{\widetilde X_S})$. Thus one can conclude by noting that lifting the morphism
      \[(\calO_{\frakX_S},\calM_{\frakX_S})\to(\widehat \calO^+_{X_S},\calM_{X_S})\]
      over $(\calO_{\widetilde \frakX_S},\calM_{\widetilde \frakX_S})$ amounts to lifting the morphism of $A^+$-algebras 
      \[(\calO_{\frakX_S},\calM_{\frakX_S})\to(\widehat \calO^+_{X_S},\calM_{X_S})\]
      to a morphism of $\AAKK(S)$-algebras 
      \[(\calO_{\widetilde \frakX_S},\calM_{\widetilde \frakX_S})\dasharrow(\AAKK(\widehat \calO^+_{X_S}),\calM_{\widetilde X_S}).\]
      Using Proposition \ref{prop:BMS Lem 314}(2) instead of \cite[Lem. 3.14]{BMS18} in \emph{loc.cit.}, the result follows from the same proof for \cite[Prop. 2.14]{Wan23}. 
  \end{proof}

   When the lifting $\widetilde \frakX_S$ of $\frakX_S$ is fixed (as we did at the very beginning of this section), we also use $\calE^+$ to stand for the Breuil--Kisin--Fargues twist of the corresponding integral Faltings' extension 
   \begin{equation}\label{equ:twisted Faltings extension}
       \calE^+:=\calE^+_{\widetilde \frakX_S}\{-1\}
   \end{equation}
   for short. The following result is obvious from Proposition \ref{prop:Faltings' extension}.
   \begin{cor}\label{cor:Faltings' extension}
       The $\calE^+$ is a locally finite free $\widehat \calO^+_{X_S}$-module of rank $d+1$ and fits into the following short exact sequence
      \begin{equation*}
          0\to \widehat \calO^+_{X_S}\to \calE^+\to \widehat \calO^+_{X_S}\otimes_{\calO_{\frakX_S}}\Omega^{1,\log}_{\frakX_S}\{-1\}\to 0.
      \end{equation*}
   \end{cor}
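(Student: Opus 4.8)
The plan is to deduce Corollary~\ref{cor:Faltings' extension} directly from Proposition~\ref{prop:Faltings' extension} by applying the Breuil--Kisin--Fargues twist $\{-1\}$, that is, by tensoring the short exact sequence \eqref{equ:Faltings' extension} with the invertible module $\rI_K^{\otimes -1}$ as in \S\ref{ssec:notation}. First I would carry out the bookkeeping of the twists: for any sheaf of $\bfA_{\inf,K}$-modules $M$ the conventions of \S\ref{ssec:notation} give $M\{1\}\{-1\}=M$, so the sub-object $\widehat \calO^+_{X_S}\{1\}$ of \eqref{equ:Faltings' extension} becomes $\widehat \calO^+_{X_S}$; the quotient $\widehat \calO^+_{X_S}\otimes_{\calO_{\frakX_S}}\Omega^{1,\log}_{\frakX_S}$ becomes $\widehat \calO^+_{X_S}\otimes_{\calO_{\frakX_S}}\Omega^{1,\log}_{\frakX_S}\{-1\}$; and the middle term $\calE^+_{\widetilde \frakX_S}\{-1\}$ is, by the definition \eqref{equ:twisted Faltings extension}, exactly $\calE^+$.

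Next, since $\rI_K^{\otimes -1}$ is invertible (after choosing the generator $\xi_K$ of $\rI_K$ the twist $\{-1\}$ is literally the identity functor up to relabelling), tensoring with it is an exact, rank-preserving, local-freeness-preserving operation on $\widehat \calO^+_{X_S}$-modules. Applying it to the sequence of Proposition~\ref{prop:Faltings' extension} therefore produces the asserted short exact sequence
\[0\to \widehat \calO^+_{X_S}\to \calE^+\to \widehat \calO^+_{X_S}\otimes_{\calO_{\frakX_S}}\Omega^{1,\log}_{\frakX_S}\{-1\}\to 0,\]
and shows that $\calE^+$ is again a locally finite free $\widehat \calO^+_{X_S}$-module of rank $d+1$.

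There is no genuine obstacle here: the corollary is a purely formal consequence of Proposition~\ref{prop:Faltings' extension} together with the fact that the Breuil--Kisin--Fargues twist is an exact auto-equivalence of the category of $\widehat \calO^+_{X_S}$-modules preserving ranks and local freeness. The only point worth spelling out, as above, is the matching of the three twisted terms with those appearing in the statement, which is immediate from \S\ref{ssec:notation} and \eqref{equ:twisted Faltings extension}.
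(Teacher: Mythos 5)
Your proposal is correct and is exactly the paper's argument: the paper introduces $\calE^+ := \calE^+_{\widetilde\frakX_S}\{-1\}$ in \eqref{equ:twisted Faltings extension} and then notes that the corollary is ``obvious from Proposition~\ref{prop:Faltings' extension}'', i.e.\ obtained by applying the exact Breuil--Kisin--Fargues twist $\{-1\}$ to the short exact sequence \eqref{equ:Faltings' extension}. Your bookkeeping of the three terms and the remark that twisting preserves local freeness and rank is precisely the implicit content of the paper's one-line justification.
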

   
  In the rest of this subsection, we want to describe of $\calE^+$ in the case where $\frakX_S = \Spf(R^+_S)$ is small affine with the chart $\psi$ (cf. Definition \ref{dfn:small affine}). We adapt the notations below Example \ref{exam:semi-stable formal scheme after CK}. Put $E^+_{\widetilde R_S^+}:=\calE^+_{\widetilde \frakX_S}(X_{\infty})$ and then it admits a continuous action of $\Gamma$ fitting into the following $\Gamma$-equivariant short exact sequence
  \begin{equation*}
      0 \to \widehat R_{\infty,S}^+\{1\} \to E^+_{\widetilde R_S^+}\to \widehat R_{\infty,S}^+\otimes_{R_S^+}\Omega^{1,\log}_{R_S^+}\to 0.
  \end{equation*}
  As an $\widehat R_{\infty,S}^+$-module, we have 
  \[E^+_{\widetilde R_S^+}\cong \widehat R_{\infty,S}^+\{1\}\oplus\widehat R_{\infty,S}^+\otimes_{R_S^+}\Omega^{1,\log}_{R_S^+}.\]
  So the $E^+_{\widetilde R_S^+}$ is uniquely determined by its $\Gamma$-action; that is, it is determined by an $1$-cocycle
  \[c\in \rH^1(\Gamma,\Hom_{\widehat R_{\infty,S}^+}(\widehat R_{\infty,S}^+\otimes_{R_S^+}\Omega^{1,\log}_{R_S^+},\widehat R_{\infty,S}^+\{1\}))\cong \rH^1(\Gamma,\Hom_{R_{S}^+}(\Omega^{1,\log}_{R_S^+},\widehat R_{\infty,S}^+\{1\})).\]
  
  Now, we are going to calculate the above $1$-cocycle $c$ by using that $E^+_{\widetilde R_S^+}$ stands for the obstruction class for lifting the $\Gamma$-equivariant morphism
  \[(M_{r,a}(A^+)\to R_S^+)\to (\widehat R_{\infty,S}^{\times}\cap \widehat R_{\infty,S}^{+}\to \widehat R_{\infty,S}^{+})\]
  to a $\Gamma$-equivariant morphism
  \[(M_{r,a}(A^{\flat,+})\oplus(\oplus_{j=r+1}^d\bZ\cdot e_j)\xrightarrow{\alpha} \widetilde R_S^+)\dasharrow (\widehat R_{\infty,S}^{\flat,\times}\cap \widehat R_{\infty,S}^{\flat,+}\to \AAKK(X_{\infty,S})).\]
  Here, the map $\alpha: M_{r,a}(A^{\flat,+})\oplus(\oplus_{j=r+1}^d\bZ\cdot e_j)\to \widetilde R_S^+$ is determined by (\ref{equ:log-structure on chart-III}) together with that $\alpha(e_j) = T_j$ for all $r+1\leq j\leq d$. As in the proof of Lemma \ref{lem:dicrete cotangent complex}, we have an isomorphism of $R^+_S$-modules
  \begin{equation}\label{equ:omega^1-II}
      \Omega^{1,\log}_{R_S^+}\cong (\oplus_{i=0}^d R_S^+\cdot e_i)/R_S^+\cdot(e_0+\cdots+e_r),
  \end{equation}
  where $e_j$ stands for $\dlog T_j$ for any $r+1\leq j\leq d$ via the identification (\ref{equ:omega^1}). Let
  \[\iota_{\psi}:(M_{r,a}(A^{\flat,+})\oplus(\oplus_{j=r+1}^d\bZ\cdot e_j)\xrightarrow{\alpha} \widetilde R_S^+)\to(\widehat R_{\infty,S}^{\flat,\times}\cap \widehat R_{\infty,S}^{\flat,+}\xrightarrow{[\cdot]}\AAKK((X_{\infty,S}))\]
  be the morphism of $\AAKK(S)$-algebras introduced in (\ref{equ:iota_psi}). Then one can describe the $1$-cocycle $c$ above by using $\iota_{\psi}$: For any $\delta = \prod_{i=0}^d\delta_i^{n_i}\in \Gamma$ (\ref{equ:Gamma group}), the map 
  \[c(\delta):\Omega^{1,\log}_{R_S^+}\to \widehat R_{\infty,S}^+\]
  is determined by that for any $0\leq j\leq d$, in $\xi\AAKK(X_{\infty,S})\cong \widehat R_{\infty,S}^+\{1\}$,
  \[c(\delta)(e_j)\cdot\alpha(e_j) = \delta(\iota_{\psi}(\alpha(e_j)))-\iota_{\psi}(\alpha(e_j)) .\]
  As $\alpha(e_j) = T_j$ for all $j$, it then follows from (\ref{equ:Gamma action-II}) that
  \[c(\delta)(e_j) = [\epsilon^{n_j}]-1 = \frac{[\epsilon^{n_j}]-1}{[\epsilon]-1}\cdot([\epsilon]-1)\in \xi_K\AAKK(X_{\infty,S}).\]
  As $\frac{[\epsilon]-1}{t}\in \BdRp$ goes to $1$ modulo $t$, via the identification 
  \[\calO_C\cdot \xi_K = \calO_C\{1\} = \rho_K^{-1}\calO_C(1) =  \calO_C\cdot \rho_K^{-1}t \text{ (cf. \S\ref{ssec:notation})},\]
  for any $\delta = \prod_{i=0}^d\delta_i^{n_i}\in \Gamma$ and any $0\leq j\leq d$, we have 
  \[c(\delta)(e_j) = n_j\rho_K\xi_K.\]
  In summary, we have proved the following proposition:
  \begin{prop}\label{prop:gamma action on Faltings extension}
      There exists an isomorphism of $\widehat R^+_{\infty,S}$-modules
      \[\calE_{\widetilde \frakX_S}^+(X_{\infty,S})=:E_{\widetilde R_S^+}^+\cong \widehat R_{\infty,S}^+\cdot\xi_K\oplus\left( (\oplus_{i=0}^d\widehat R_{\infty,S}^+\cdot e_i)/\widehat R_{\infty,S}^+\cdot(e_0+\cdots+e_r)\right)\]
      such that via this isomorphism, the $\Gamma$-action on $E_{\widetilde R_S^+}^+$ is given by that for any $\delta = \prod_{i=0}^d\delta_i^{n_i}\in\Gamma$, 
      \[\delta(a\xi_K+\sum_{j=0}^db_je_j) =(\delta(a)+\rho_K\sum_{j=0}^d\delta(b_j)n_j)\xi_K+\sum_{j=0}^d\delta(b_j)e_j.\]
  \end{prop}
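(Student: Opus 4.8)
The plan is to reduce the statement to an explicit $1$-cocycle computation, following the argument of \cite[\S2]{Wan23}. Since $\frakX_S=\Spf(R_S^+)$ is small affine, $\Omega^{1,\log}_{R_S^+}$ is finite free over $R_S^+$, so evaluating the short exact sequence of Proposition \ref{prop:Faltings' extension} at the perfectoid cover $X_{\infty,S}$ and using the identification \eqref{equ:omega^1-II} produces a $\Gamma$-equivariant extension of $\widehat R_{\infty,S}^+$-modules
\[0\to \widehat R_{\infty,S}^+\{1\}\to E_{\widetilde R_S^+}^+\to \widehat R_{\infty,S}^+\otimes_{R_S^+}\Omega^{1,\log}_{R_S^+}\to 0\]
whose underlying sequence of $\widehat R_{\infty,S}^+$-modules splits, the quotient being free. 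Fixing such a splitting yields at once the module isomorphism asserted in the proposition (identifying $\widehat R_{\infty,S}^+\{1\}=\widehat R_{\infty,S}^+\cdot\xi_K$), and reduces everything to computing the resulting $\Gamma$-action, which is recorded by a cocycle
\[c\colon\Gamma\longrightarrow\Hom_{R_S^+}\!\big(\Omega^{1,\log}_{R_S^+},\,\widehat R_{\infty,S}^+\{1\}\big),\]
together with the $\Gamma$-action on the chart generators $e_0,\dots,e_d$ of \eqref{equ:omega^1-II}; since the $\zeta_{p^n}$ are constants, each $e_j$ is $\Gamma$-fixed.

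First I would pin down a convenient splitting. By Proposition \ref{prop:Faltings' extension}, $[\calE^+_{\widetilde\frakX_S}]$ is the obstruction to lifting the log-ring map $(\calO_{\frakX_S},\calM_{\frakX_S})\to(\widehat\calO_{X_S}^+,\calM_{X_S})$ over $(\calO_{\widetilde\frakX_S},\calM_{\widetilde\frakX_S})$; restricting to $X_{\infty,S}$, the extension $E_{\widetilde R_S^+}^+$ represents the obstruction to upgrading the morphism $\iota_\psi$ of \eqref{equ:iota_psi} — which \emph{is} a lift of log-rings but is \emph{not} $\Gamma$-equivariant — to a $\Gamma$-equivariant one. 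Thus $\iota_\psi$ furnishes the splitting I want, and $c$ is precisely its failure of $\Gamma$-equivariance: for $\delta=\prod_{i=0}^d\delta_i^{n_i}\in\Gamma$ and a generator $e_j$ (which maps to $T_j$),
\[c(\delta)(e_j)\cdot T_j \;=\; \delta\big(\iota_\psi(T_j)\big)-\iota_\psi(T_j)\ \in\ \xi_K\AAKK(X_{\infty,S})\;\cong\;\widehat R_{\infty,S}^+\{1\}.\]
Verifying that $\iota_\psi$ genuinely is a morphism of log-rings — so that it splits the Faltings extension rather than some unrelated sequence — is the step that imports the input of Proposition \ref{prop:BMS Lem 314} and the construction of \cite[\S2]{Wan23}.

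Then the computation is mechanical. Since $\iota_\psi(T_j)=[T_j^\flat]$, the $\Gamma$-action \eqref{equ:Gamma action-II} gives $\delta([T_j^\flat])=[\epsilon^{n_j}]\,[T_j^\flat]$, hence $c(\delta)(e_j)=[\epsilon^{n_j}]-1$. I would factor $[\epsilon^{n_j}]-1=\big(1+[\epsilon]+\cdots+[\epsilon]^{n_j-1}\big)([\epsilon]-1)$, note the first factor reduces to $n_j$ modulo $[\epsilon]-1$ (the case $n_j<0$ being handled by $[\epsilon^{-1}]=[\epsilon]^{-1}$), use that $([\epsilon]-1)/t$ is a unit of $\BdRp$ congruent to $1$ modulo $t$, and pass to $\AAKK(X_{\infty,S})$ through the normalization $\calO_C\cdot\xi_K=\calO_C\{1\}=\rho_K^{-1}\calO_C(1)=\calO_C\cdot\rho_K^{-1}t$ of \S\ref{ssec:notation}, i.e. $t\equiv\rho_K\xi_K\pmod{\xi_K^2}$; this gives $c(\delta)(e_j)=n_j\rho_K\xi_K$. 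Substituting into the general formula $\delta(a\xi_K+\sum_j b_je_j)=\delta(a)\xi_K+\sum_j\delta(b_j)\,c(\delta)(e_j)+\sum_j\delta(b_j)e_j$ yields the stated expression, and the constraint $n_0+\cdots+n_r=0$ built into $\Gamma$ in \eqref{equ:Gamma group} is exactly what forces $c(\delta)(e_0+\cdots+e_r)=0$, so that everything descends to the quotient by $e_0+\cdots+e_r$. The main obstacle I anticipate is purely organizational: keeping the Breuil--Kisin--Fargues twist and the constant $\rho_K$ straight across the identifications $\calO_C\{1\}\hookrightarrow C(1)$ and $\xi_K\AAKK(X_{\infty,S})/\xi_K^2\cong\widehat R_{\infty,S}^+\{1\}$, and checking the cocycle-versus-obstruction dictionary carefully enough that the signs and the factor $\rho_K$ come out as claimed.
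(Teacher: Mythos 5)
Your proposal is correct and follows essentially the same route as the paper: view $E^+_{\widetilde R_S^+}$ as a $\Gamma$-module split extension, use the non-equivariant log-ring lift $\iota_\psi$ of \eqref{equ:iota_psi} as the splitting, read off the $1$-cocycle $c(\delta)(e_j)=\delta(\iota_\psi(T_j))-\iota_\psi(T_j)=[\epsilon^{n_j}]-1$, and normalize via $t\equiv\rho_K\xi_K\pmod{\xi_K^2}$ to get $n_j\rho_K\xi_K$. The one small stylistic difference is that you factor $[\epsilon^{n_j}]-1$ as a geometric sum (which literally makes sense only for $n_j\in\bZ$), whereas the paper works directly with $\tfrac{[\epsilon^{n_j}]-1}{[\epsilon]-1}\equiv n_j\ (\mathrm{mod}\ \xi_K)$, valid for all $n_j\in\bZ_p$ by continuity; you also helpfully make explicit that $n_0+\cdots+n_r=0$ is what ensures $c(\delta)$ kills $e_0+\cdots+e_r$ and hence descends to $\Omega^{1,\log}_{R_S^+}$, a point the paper leaves implicit.
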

  Put $E^+:=\calE^+(X_{\infty,S})$ where $\calE^+$ is the Breuil--Kisin--Fargues twist of the integral Faltings' extension (cf. \eqref{equ:twisted Faltings extension}). Then it admits a continuous action of $\Gamma$.
  \begin{cor}\label{cor:gamma action on Faltings extension}
      \begin{enumerate}
          \item[(1)] The $E^+$ is a free $\widehat R^+_{\infty,S}$-module of rank $d+1$ fitting into the short exact sequence
          \[0\to \widehat R^+_{\infty,S}\xrightarrow{i} E^+\xrightarrow{\pr} \widehat R^+_{\infty,S}\otimes_{R^+_S}\Omega^{1,\log}_{R_S^+}\{-1\}\to 0.\]

          \item[(2)] There exists an isomorphism of $\widehat R^+_{\infty,S}$-modules
          \[E^+\cong \widehat R_{\infty,S}^+\cdot e\oplus\left( (\oplus_{i=0}^d\widehat R_{\infty,S}^+\cdot y_i)/\widehat R_{\infty,S}^+\cdot(y_0+\cdots+y_r)\right)\]
          such that the following statements hold true:
          \begin{enumerate}
              \item The $\widehat R_{\infty,S}^+\cdot e$ is identified with $\widehat R_{\infty,S}^+$ via the injection $i$ above and $e = i(1)$.
              
              \item Via the isomorphism (cf. (\ref{equ:omega^1-II}))
              \[\Omega^{1,\log}_{R_S^+}\{-1\}\cong (\oplus_{i=0}^d R_S^+\cdot \frac{e_i}{\xi_K})/R_S^+\cdot(\frac{e_0}{\xi_K}+\cdots+\frac{e_r}{\xi_K}),\]
             the image of $y_i$ via the projection $\pr$ above is $\pr(y_i) = \frac{e_i}{\xi_K}$ for any $0\leq i\leq d$.

              \item The $\Gamma$-action on $E^+$ is given by that for any $\delta = \prod_{i=0}^d\delta_i^{n_i}\in\Gamma$, 
              \[\delta(ae+\sum_{j=0}^db_jy_j) =(\delta(a)+\rho_K\sum_{j=0}^d\delta(b_j)n_j)e+\sum_{j=0}^d\delta(b_j)y_j.\]
          \end{enumerate}
      \end{enumerate}
  \end{cor}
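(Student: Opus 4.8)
The plan is to obtain the whole statement from Corollary~\ref{cor:Faltings' extension} and Proposition~\ref{prop:gamma action on Faltings extension} by evaluating the relevant sheaves at the affinoid perfectoid $X_{\infty,S}\in X_{S,v}$ and then trivializing the Breuil--Kisin--Fargues twist $\{-1\}$ through multiplication by $\xi_K^{-1}$; no further input is needed.

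By definition $E^+=\calE^+(X_{\infty,S})$, and by \eqref{equ:twisted Faltings extension} together with the convention of \S\ref{ssec:notation} we have $E^+=E^+_{\widetilde R_S^+}\{-1\}=E^+_{\widetilde R_S^+}\cdot\xi_K^{-1}$, where $E^+_{\widetilde R_S^+}=\calE^+_{\widetilde \frakX_S}(X_{\infty,S})$. Multiplying the direct-sum decomposition of $E^+_{\widetilde R_S^+}$ in Proposition~\ref{prop:gamma action on Faltings extension} through by $\xi_K^{-1}$ and setting $e:=\xi_K\cdot\xi_K^{-1}$, $y_i:=e_i\cdot\xi_K^{-1}$ for $0\le i\le d$ gives exactly the decomposition displayed in Item~(2), so $E^+$ is free of rank $d+1$. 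Applying the same twist to the $\Gamma$-equivariant short exact sequence for $E^+_{\widetilde R_S^+}$ preceding Proposition~\ref{prop:gamma action on Faltings extension} gives the short exact sequence of Item~(1): here $i$ is the twist of the inclusion $\widehat R_{\infty,S}^+\{1\}\hookrightarrow E^+_{\widetilde R_S^+}$, so $e=i(1)$ (this is Item~(2)(a)), and $\pr$ is the twist of the projection onto $\widehat R_{\infty,S}^+\otimes_{R_S^+}\Omega^{1,\log}_{R_S^+}$. Twisting the identification \eqref{equ:omega^1-II} by $\xi_K^{-1}$ identifies $\Omega^{1,\log}_{R_S^+}\{-1\}$ with $(\oplus_{i=0}^d R_S^+\cdot\frac{e_i}{\xi_K})/R_S^+\cdot(\frac{e_0}{\xi_K}+\cdots+\frac{e_r}{\xi_K})$, under which $\pr(y_i)=\frac{e_i}{\xi_K}$; this is Item~(2)(b). (Alternatively Item~(1) follows by evaluating the sheaf sequence of Corollary~\ref{cor:Faltings' extension} at $X_{\infty,S}$, which stays exact because $\Omega^{1,\log}_{\frakX_S}$ is finite free over $\calO_{\frakX_S}$.)

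It remains to transport the $\Gamma$-action. The twisting module $\rI_K^{\otimes(-1)}$ is free of rank one, generated by $\xi_K^{-1}$, and carries the trivial $\Gamma$-action since it comes from the base; hence the $\Gamma$-action on $E^+=E^+_{\widetilde R_S^+}\otimes_{\Ainf}\rI_K^{\otimes(-1)}$ is $\delta\otimes\id$, which in the trivialization by $\xi_K^{-1}$ reads $\delta(m\cdot\xi_K^{-1})=\delta(m)\cdot\xi_K^{-1}$. Substituting the formula of Proposition~\ref{prop:gamma action on Faltings extension} into $\delta(ae+\sum_j b_jy_j)=\delta\big((a\xi_K+\sum_j b_je_j)\cdot\xi_K^{-1}\big)$ yields Item~(2)(c), and the same computation shows the sequence of Item~(1) is $\Gamma$-equivariant. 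The whole argument is bookkeeping with the twist; the only point that genuinely needs checking is this $\Gamma$-equivariant triviality of $\{-1\}$, which is clear because $\Gamma$ acts over the base.
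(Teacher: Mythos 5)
Your proof is correct and takes essentially the same route as the paper: Item~(1) is the evaluation of Corollary~\ref{cor:Faltings' extension} at $X_{\infty,S}$, and Item~(2) is obtained by twisting Proposition~\ref{prop:gamma action on Faltings extension} by $\xi_K^{-1}$, i.e.\ setting $y_i = e_i/\xi_K$. The one point you flag and verify — that the trivialization of $\rI_K^{\otimes(-1)}$ is $\Gamma$-equivariant because $\Gamma$ fixes the base $\AAinfK(S)$ and hence $\xi_K$ — is indeed the only thing the paper's terse one-line proof leaves implicit, and you handle it correctly.
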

  \begin{proof}
      The Item (1) follows from Corollary \ref{cor:Faltings' extension}. The Item (2) follows from Proposition \ref{prop:gamma action on Faltings extension} by letting $y_i = \frac{e_i}{\xi_K}$ with $e_i$'s appearing there.
  \end{proof}

\subsection{Period sheaves}\label{ssec:period sheaves}
  Now, we are going to follow the strategy in \cite[\S2]{MW-AIM} to construct the period sheaves with Higgs fields $(\calO\widehat \bC_{\pd,S}^+,\Theta)$ and $(\calO\widehat \bC_{\pd,S},\Theta)$ by using the twisted integral Faltings' extension $\calE^+ = \calE_{\widetilde \frakX_S}^+\{-1\}$ associated to the lifting $\widetilde \frakX_S$ of $\frakX_S$ over $\AAKK(S)$.

  Recall the following well-known lemma due to Quillen and Illusie:
  \begin{lem}[\emph{\cite[Lem. A.28]{SZ}}]\label{lem:SZ}
      Let $B$ be a commutative ring. For any short exact sequence of flat $B$-modules
      \[0\to E\xrightarrow{u} F\xrightarrow{v} G\to 0\]
      and any $n\geq 0$, there exists an exact sequence of $B$-modules:
      \begin{equation}\label{equ:SZ-I}
          0\to\Gamma^n(E)\to\Gamma^n(F)\xrightarrow{\partial}\Gamma^{n-1}(F)\otimes G\xrightarrow{\partial}\cdots\xrightarrow{\partial}\Gamma^{n-i}(F)\otimes\wedge^iG\xrightarrow{\partial}\cdots \xrightarrow{\partial}\wedge^nG\to 0,
      \end{equation}
      where the differentials $\partial$ are induced by sending each 
      \[f_1^{[m_1]}\cdots f_r^{[m_r]}\otimes \omega\in\Gamma^m(F)\otimes\wedge^lG\]
      with $f_i\in F$, $m_i\geq 1$ satisfying $m_1+\cdots+m_r = m$ and $\omega\in\wedge^lG$ to 
      \[\sum_{i=1}^rf_1^{[m_1]}\cdots f_i^{[m_i-1]}\cdots f_r^{[m_r]}\otimes v(f_i)\wedge\omega\in\Gamma^{m-1}(F)\otimes\wedge^{l+1}G.
      \]
      Moreover, there exists an exact sequence
      \begin{equation}\label{equ:SZ-II}
          0\to\Gamma(E)\to\Gamma(F)\xrightarrow{\partial}\Gamma(F)\otimes G\xrightarrow{\partial}\Gamma(F)\otimes\wedge^2G\xrightarrow{\partial}\cdots,
      \end{equation}
      where the differentials $\partial$ are all $\Gamma(E)$-linear.
  \end{lem}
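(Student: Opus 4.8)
This lemma is classical (it is \cite[Lem. A.28]{SZ}); the argument I would give runs as follows. The crucial structural input is a divided-power analogue of the Poincar\'e--Birkhoff--Witt theorem: for an extension $0\to E\xrightarrow{u}F\xrightarrow{v}G\to 0$ of flat $B$-modules, the divided power algebra $\Gamma(F)=\bigoplus_{n\geq 0}\Gamma^n(F)$ carries a canonical increasing filtration $W_\bullet$, where $W_q\Gamma^n(F)$ is spanned by the divided monomials $f_1^{[m_1]}\cdots f_r^{[m_r]}$ having at most $q$ of their factors in $u(E)$, together with canonical isomorphisms $\mathrm{gr}^W_q\Gamma^n(F)\cong\Gamma^q(E)\otimes_B\Gamma^{n-q}(G)$ for all $n\geq q\geq 0$. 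Over a field this is elementary; over a general base it follows for flat modules by the standard reduction — write everything as a filtered colimit of finite free modules and localize on $\Spec B$ so that $v$ admits a section. As an immediate consequence, the composite $\Gamma^n(E)\to\Gamma^n(F)\twoheadrightarrow\mathrm{gr}^W_n\Gamma^n(F)=\Gamma^n(E)$ is the identity, so $\Gamma^n(E)\to\Gamma^n(F)$ is a split monomorphism; this disposes of exactness at the left end of \eqref{equ:SZ-I}.

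Two further points are routine. First, $\partial^2=0$: composing $\partial$ with itself differentiates two divided-power factors and wedges both of their images into $\wedge^\bullet G$, and the two orders enter with opposite signs, so they cancel. Second, $\partial$ respects the filtration $W$ and acts on $\mathrm{gr}^W$ only through the $G$-direction, because a term obtained by differentiating a factor $f_i\in u(E)$ carries $v(f_i)=0$ and so vanishes. Therefore $\mathrm{gr}^W_q$ of the complex \eqref{equ:SZ-I} is identified with $\Gamma^q(E)\otimes_B K^\bullet_{n-q}(G)$, where $K^\bullet_m(G)$ denotes the complex $\Gamma^m(G)\xrightarrow{\partial}\Gamma^{m-1}(G)\otimes G\xrightarrow{\partial}\cdots\xrightarrow{\partial}\wedge^m G$ (with $\Gamma^m(G)$ placed in degree $0$). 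The complex $K^\bullet_m(G)$ is the $m$-th graded piece of the Koszul-type complex $\Gamma^\bullet(G)\otimes\wedge^\bullet G$: it is acyclic for $m\geq 1$ and equals $B$ in degree $0$ for $m=0$ — reduce to $G$ finite free, where this is the classical acyclicity of the Koszul complex. Hence $\mathrm{gr}^W$ of \eqref{equ:SZ-I} has homology $\Gamma^n(E)$ in cohomological degree $0$ (only $q=n$ contributes) and is exact elsewhere.

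It remains to pass from the associated graded back to the complex. Since $W$ is a finite exhaustive filtration of \eqref{equ:SZ-I} by subcomplexes, the spectral sequence of the filtered complex has its first page concentrated in cohomological degree $0$; it therefore degenerates, there are no extension problems, and one concludes that \eqref{equ:SZ-I} is exact except at the left, where its homology is the split submodule $\Gamma^n(E)$. Finally, \eqref{equ:SZ-II} follows by summing \eqref{equ:SZ-I} over all $n\geq 0$: the term in cohomological degree $i$ becomes $\bigoplus_{m\geq 0}\Gamma^m(F)\otimes\wedge^i G=\Gamma(F)\otimes\wedge^i G$, the homology is $\bigoplus_{n\geq 0}\Gamma^n(E)=\Gamma(E)$ in degree $0$, and $\partial$ is $\Gamma(E)$-linear because differentiating a divided-power factor lying in $u(E)$ contributes a $v(u(E))=0$ term, so the Leibniz rule for $\partial$ leaves the $\Gamma(E)$-factors untouched.

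The one genuinely non-formal step is the $\mathrm{gr}^W$-isomorphism of the first paragraph — the PBW-type statement for the divided power algebra of an extension of flat modules — which is exactly where flatness of $E$, $F$, $G$ is indispensable; in practice one either carries out the colimit-and-localization reduction to the split extension $0\to E\to E\oplus G\to G\to 0$ with some care, or quotes it from the literature on divided powers.
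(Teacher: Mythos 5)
The paper gives no proof of this statement; it is cited from \cite[Lem.~A.28]{SZ} and used directly, so there is no in-paper argument to compare with, and your proposal has to be judged on its own merits.

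Your overall strategy --- filter $\Gamma(F)$ by a PBW-type filtration whose associated graded reduces the claim to the acyclicity of the divided-power Koszul complexes $K^\bullet_m(G)$, and then run the spectral sequence of the finite filtration --- is the right one. However, the filtration you write down is not correct as stated, and this is exactly the step that would fail. If $W_q\Gamma^n(F)$ is the span of divided monomials having \emph{at most} $q$ of their factors in $u(E)$, then $W_0$ is not $\Gamma^0(E)\otimes\Gamma^n(G)=\Gamma^n(G)$: both $g^{[n]}$ and $(g+u(e))^{[n]}$ lie in that $W_0$ for any $g\in F\setminus u(E)$ and $e\in E$, and these are typically independent. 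Concretely, with $B=\bZ$ and $0\to\bZ\xrightarrow{1\mapsto(1,0)}\bZ^2\xrightarrow{(a,b)\mapsto b}\bZ\to 0$, both $(0,1)^{[2]}$ and $(1,1)^{[2]}=(1,0)^{[2]}+(1,0)(0,1)+(0,1)^{[2]}$ belong to your $W_0\Gamma^2(F)$, so $W_0$ already has rank at least $2$, whereas $\Gamma^2(G)=\Gamma^2(\bZ)$ has rank $1$. The filtration compatible with the graded pieces you quote is the \emph{decreasing} one: set $W^q\Gamma^n(F)$ to be the image of $\Gamma^q(E)\otimes\Gamma^{n-q}(F)\to\Gamma^n(F)$ under pd-multiplication, i.e.\ the submodule generated by divided monomials having \emph{at least} $q$ of their generators in $u(E)$. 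Then $W^0=\Gamma^n(F)$, $W^{n+1}=0$, and $\mathrm{gr}^q:=W^q/W^{q+1}\cong\Gamma^q(E)\otimes\Gamma^{n-q}(G)$.

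With this correction the rest of your argument goes through verbatim: $\partial$ preserves $W^\bullet$ because differentiating an $E$-factor yields $v(u(e))=0$; $\mathrm{gr}^q$ of \eqref{equ:SZ-I} is $\Gamma^q(E)\otimes K^\bullet_{n-q}(G)$; only $q=n$ contributes to $H^0$; and the spectral sequence of the finite filtration then gives exactness, including at the left. Two smaller points. For a decreasing filtration there is no natural surjection $\Gamma^n(F)\twoheadrightarrow\mathrm{gr}^n$, so the sentence deriving a \emph{splitting} of $\Gamma^n(E)\hookrightarrow\Gamma^n(F)$ should be dropped --- the lemma asserts no such splitting, and it is not needed once the spectral sequence has been invoked. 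And ``localize so that $v$ admits a section'' is stated too briskly, since localization alone does not split $v$; the reduction of the PBW isomorphism to finite free modules via Lazard's theorem and flat base change is standard, as you acknowledge, but deserves a reference rather than that phrasing. With the filtration fixed, the proof is sound.
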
 
  Applying the above lemma to the short exact sequence in Corollary \ref{cor:Faltings' extension}, we get an exact sequence
  \begin{equation}\label{equ:apply SZ to Faltings' extension}
      0\to \Gamma(\widehat \calO_{X_S}^+)\to \Gamma(\calE^+)\xrightarrow{\partial}\Gamma(\calE^+)\otimes_{\calO_{\frakX_S}}\Omega^{1,\log}_{\frakX_S}\{-1\}\to\cdots
  \end{equation}
  where $\partial$ is $\Gamma(\widehat \calO^+_{X_S})$-linear. Note that $\Gamma(\calE^+)\cong \widehat \calO^+_{X_S}[e]_{\pd}$ is the free pd-algebra generated by $e$ over $\widehat \calO_{X_S}^+$, where $e$ stands for the basis $1\in \widehat \calO^+_{X_S}$. As $\zeta_p-1$ admits arbitrary pd-powers in $\widehat \calO^+_{X_S}$, the $e-(\zeta_p-1)$ generates a pd-ideal $\calI_{\pd}$ of $\widehat \calO^+_{X_S}[e]_{\pd}$ such that we have $\Gamma(\widehat \calO_{X_S}^+)/\calI_{\pd}\cong \widehat \calO_{X_S}^+$.

  \begin{dfn}\label{dfn:period sheaf}
      Let $\calO\widehat \bC_{\pd,S}^+$ be the $p$-adic completion of 
      \[\calO\bC_{\pd,S}^+:= \Gamma(\calE^+)/\calI_{\pd}\cdot\Gamma(\calE^+)\]
      and let $\Theta:\calO\widehat \bC_{\pd,S}^+\to \calO\widehat \bC_{\pd,S}^+\otimes_{\calO_{\frakX_S}}\Omega^{1,\log}_{\frakX_S}\{-1\}$ be the morphism induced by $\partial$ in (\ref{equ:apply SZ to Faltings' extension}). Define
      \[(\calO\widehat \bC_{\pd,S},\Theta:\calO\widehat \bC_{\pd,S}\to \calO\widehat \bC_{\pd,S}\otimes_{\calO_{\frakX_S}}\Omega^{1,\log}_{\frakX_S}\{-1\}):= (\calO\widehat \bC_{\pd,S}^+,\Theta)[\frac{1}{p}].\]
  \end{dfn}

  \begin{thm}[Poincar\'e's Lemma]\label{thm:Poincare Lemma}
      For any $?\in \{\emptyset,+\}$, the following sequence
      \[0\to \widehat \calO_{X_S}^?\to \calO\widehat \bC_{\pd,S}^?\xrightarrow{\Theta}\calO\widehat \bC_{\pd,S}^?\otimes_{\calO_{\frakX_S}}\Omega^{1,\log}_{\frakX_S}\{-1\}\xrightarrow{\Theta} \calO\widehat \bC_{\pd,S}^?\otimes_{\calO_{\frakX_S}}\Omega^{2,\log}_{\frakX_S}\{-2\}\to\cdots\]
      is exact on $X_v$. In particular, $\Theta$ defines a Higgs field on $\calO\widehat \bC_{\pd,S}^+$.
  \end{thm}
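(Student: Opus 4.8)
The plan is to deduce the Poincar\'e lemma from the Quillen--Illusie resolution of Lemma \ref{lem:SZ}, applied to the twisted integral Faltings' extension $\calE^+$, by base-changing along the divided-power augmentation $e\mapsto\zeta_p-1$ and then $p$-adically completing; this runs parallel to the smooth case treated in \cite[\S2]{MW-AIM}. Since $(\calO\widehat\bC_{\pd,S},\Theta)=(\calO\widehat\bC_{\pd,S}^+,\Theta)[\frac{1}{p}]$ by Definition \ref{dfn:period sheaf} and inverting $p$ is exact, the case $?=\emptyset$ follows from the case $?=+$, so I treat only the latter. Exactness of a complex of $v$-sheaves is local on $\frakX_{S,\et}$ and can be checked on a basis of affinoid perfectoids; so I may assume $\frakX_S=\Spf R_S^+$ is small affine with a chart $\psi$ and argue over the pro-\'etale cover $X_{\infty,S}$, where by Corollary \ref{cor:gamma action on Faltings extension} the extension splits, $\calE^+(X_{\infty,S})\cong\widehat R^+_{\infty,S}\cdot e\oplus(\widehat R^+_{\infty,S}\otimes_{R^+_S}\Omega^{1,\log}_{R_S^+}\{-1\})$, and all sheaves involved become honest modules over the $p$-torsion-free ring $\widehat R^+_{\infty,S}$.

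Applying Lemma \ref{lem:SZ} (in the form (\ref{equ:SZ-II})) to the short exact sequence of finite locally free $\widehat\calO_{X_S}^+$-modules of Corollary \ref{cor:Faltings' extension} produces the exact, $\Gamma(\widehat\calO_{X_S}^+)$-linear complex (\ref{equ:apply SZ to Faltings' extension}), which terminates in degree $d$ since $\Omega^{i,\log}_{\frakX_S}=0$ for $i>d$. Using the local splitting and the identity $\Gamma(M\oplus N)\cong\Gamma(M)\otimes\Gamma(N)$, every term $K^i:=\Gamma(\calE^+)\otimes_{\calO_{\frakX_S}}\Omega^{i,\log}_{\frakX_S}\{-i\}$ is a free module over $\Gamma(\widehat\calO_{X_S}^+)=\widehat\calO_{X_S}^+[e]_{\pd}$, so (\ref{equ:apply SZ to Faltings' extension}) exhibits $\Gamma(\widehat\calO_{X_S}^+)$ as quasi-isomorphic to the bounded flat complex $K^\bullet$. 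Base-changing this quasi-isomorphism along the pd-algebra map $\widehat\calO_{X_S}^+[e]_{\pd}\to\widehat\calO_{X_S}^+$, $e\mapsto\zeta_p-1$ (with kernel the pd-ideal $\calI_{\pd}$ and target $\Gamma(\widehat\calO_{X_S}^+)/\calI_{\pd}$) then identifies $\widehat\calO_{X_S}^+$ with $K^\bullet\otimes_{\Gamma(\widehat\calO_{X_S}^+)}\widehat\calO_{X_S}^+$, the derived and underived base changes agreeing because $K^\bullet$ is a bounded complex of flat modules. Since $\Gamma(\calE^+)/\calI_{\pd}\Gamma(\calE^+)=\calO\bC_{\pd,S}^+$ and, by Definition \ref{dfn:period sheaf}, $\partial$ descends exactly to $\Theta$ (its higher terms to the Koszul differentials of $\Theta$), this yields exactness of the \emph{uncompleted} complex
\[0\to\widehat\calO_{X_S}^+\to\calO\bC_{\pd,S}^+\xrightarrow{\Theta}\calO\bC_{\pd,S}^+\otimes_{\calO_{\frakX_S}}\Omega^{1,\log}_{\frakX_S}\{-1\}\xrightarrow{\Theta}\cdots;\]
the $\widehat\calO_{X_S}^+$-linearity of $\Theta$, hence $\Theta\wedge\Theta=0$ and thus that $\Theta$ is a Higgs field, follows from the $\Gamma(\widehat\calO_{X_S}^+)$-linearity of $\partial$.

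It remains to $p$-adically complete. Over $X_{\infty,S}$ each term of the uncompleted complex is $p$-torsion free: $\widehat\calO_{X_S}^+$ takes perfectoid, hence $\bQ$-algebra, values, and $\calO\bC_{\pd,S}^+$ is the free pd-algebra $\widehat R^+_{\infty,S}[y_1,\dots,y_d]_{\pd}$, a direct sum of copies of $\widehat R^+_{\infty,S}$. Breaking the bounded exact complex into short exact sequences $0\to Z^i\to C^i\to Z^{i+1}\to 0$ with $Z^{i+1}\subseteq C^{i+1}$ again $p$-torsion free, $p$-adic completion is exact on each, and a Mittag--Leffler argument (the transition maps $Z^i/p^{n+1}\to Z^i/p^n$ are surjective) lets me splice back; as $\calO\widehat\bC_{\pd,S}^+\otimes_{\calO_{\frakX_S}}\Omega^{i,\log}_{\frakX_S}\{-i\}$ is the $p$-completion of $\calO\bC_{\pd,S}^+\otimes_{\calO_{\frakX_S}}\Omega^{i,\log}_{\frakX_S}\{-i\}$, this gives the claimed exactness on $X_{S,v}$. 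I expect the completion step to be the delicate one --- one must be sure the uncompleted complex really is a bounded complex of $p$-torsion-free modules and that the completion is computed term by term --- which is why I prefer to run the whole argument over the explicit cover $X_{\infty,S}$; granting Lemma \ref{lem:SZ}, everything else is formal.
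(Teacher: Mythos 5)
Your argument is correct and follows essentially the same route as the paper: split $\calE^+$ locally (via Corollary \ref{cor:gamma action on Faltings extension}), apply Lemma \ref{lem:SZ} to get the exact complex \eqref{equ:apply SZ to Faltings' extension}, reduce modulo $\calI_{\pd}$, then $p$-adically complete. Where you go beyond the paper's brief sketch is in supplying the reason that passing to the quotient modulo $\calI_{\pd}$ preserves exactness --- you observe that \eqref{equ:apply SZ to Faltings' extension} is a bounded complex of flat $\Gamma(\widehat\calO_{X_S}^+)$-modules resolving $\Gamma(\widehat\calO_{X_S}^+)$, so derived and underived base change along $e\mapsto\zeta_p-1$ coincide --- and in spelling out the $p$-adic completion step via $p$-torsion-free syzygies, short exact sequences, and a Mittag--Leffler / $\rR^1\varprojlim$ argument, where the paper only says ``$\calO\bC_{\pd,S}^+$ is faithfully flat over $\widehat\calO_{X_S}^+$.'' Both additions are genuinely needed to make the terse published proof airtight, so this is a fair elaboration rather than a different proof.

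One small slip: you write that $\widehat\calO_{X_S}^+$ ``takes perfectoid, hence $\bQ$-algebra, values.'' Its values on an affinoid perfectoid $U=\Spa(B,B^+)$ are the integral rings $B^+$, which are certainly not $\bQ$-algebras; what you want is that $B^+$ is $p$-torsion-free because it is a subring of the Tate algebra $B$ (equivalently, because $B^+$ is a perfectoid $\calO_C$-algebra). The conclusion you draw --- $p$-torsion-freeness of every term and hence of the syzygies --- is correct, only the stated reason is off.
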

  \begin{proof}
      It suffices to prove the case for $? = +$. By Corollary \ref{cor:Faltings' extension}, locally on $X_v$, the $\calE^+$ is a direct sum of $\widehat \calO_{X_S}^+$ and $\widehat \calO_{X_S}^+\otimes_{\calO_{\frakX_S}}\Omega^{1,\log}_{\frakX_S}\{-1\}$. So locally on $X_v$, the $\Gamma(\calE^+)$ is a free pd-polynomial ring over $\Gamma(\widehat \calO^+_{X_S})$. Modulo $\calI_{\pd}$, by the exactness of (\ref{equ:apply SZ to Faltings' extension}), we have the following exact sequence
      \begin{equation}\label{equ:Poincare's Lemma}
          0\to \widehat \calO_{X_S}^+\to \calO\bC_{\pd,S}^+\xrightarrow{\Theta}\calO\bC_{\pd,S}^+\otimes_{\calO_{\frakX_S}}\Omega^{1,\log}_{\frakX_S}\{-1\}\xrightarrow{\Theta} \calO\bC_{\pd,S}^+\otimes_{\calO_{\frakX_S}}\Omega^{2,\log}_{\frakX_S}\{-2\}\to\cdots
      \end{equation}
      where $\Theta$ denotes the reduction of $\partial$ in (\ref{equ:apply SZ to Faltings' extension}). Moreover, locally on $X_v$, the $\calO\bC_{\pd,S}^+$ is a free pd-polynomial ring and thus faithfully flat over $\widehat \calO_{X_S}^+$. In particular, taking $p$-adic completion preserves the exactness of (\ref{equ:Poincare's Lemma}), yielding the desired exactness in the case $?=+$.
  \end{proof}

  Now, we give the local description of $(\calO\widehat \bC_{\pd,X_S}^+,\Theta)$ in the case $\frakX_S = \Spf(R_S^+)$ is small affine. We first introduce some notations.
  Let 
  \[\widehat R_{\infty,S}^+[Y_0,\dots,Y_r,Y_{r+1},\dots,Y_d]^{\wedge}_{\pd}\]
  be the $p$-complete free pd-algebra over $\widehat R_{\infty,S}^+$ generated by $Y_0,\dots,Y_d$ and let 
  \[\Theta:\widehat R_{\infty,S}^+[Y_0,\dots,Y_r,Y_{r+1},\dots,Y_d]^{\wedge}_{\pd}\to\oplus_{i=0}^d\widehat R_{\infty,S}^+[Y_0,\dots,Y_r,Y_{r+1},\dots,Y_d]^{\wedge}_{\pd}\cdot\frac{e_i}{\xi_K}\]
  be the map sending each $f\in \widehat R_{\infty,S}^+[Y_0,\dots,Y_r,Y_{r+1},\dots,Y_d]^{\wedge}_{\pd}$ to
  \[\Theta(f) = \sum_{i=0}^d\frac{\partial f}{\partial Y_i}\cdot\frac{e_i}{\xi_K}.\]
  Noting that for any $n\geq 0$ and for any $f\in \widehat R_{\infty,S}^+[Y_0,\dots,Y_r,Y_{r+1},\dots,Y_d]^{\wedge}_{\pd}$, we have
  \begin{equation*}
      \begin{split}
          \Theta((Y_0+\cdots+Y_r)^{[n]}f) = (Y_0+\cdots+Y_r)^{[n]}\Theta(f)+(Y_0+\cdots+Y_r)^{[n-1]}f\cdot\sum_{i=0}^r\frac{e_i}{\xi_K}.
      \end{split}
  \end{equation*}
  So we get a well-defined map
  \begin{equation}\label{equ:local Theta}
      \Theta:P_{\infty,S}^+\to P_{\infty,S}^+\otimes_{R_S^+}\Omega^{1,\log}_{R_S^+}\{-1\} = (\oplus_{i=0}^dP_{\infty,S}^+\cdot\frac{e_i}{\xi_K})/P_{\infty,S}^+\cdot(\frac{e_0}{\xi_K}+\cdots+\frac{e_r}{\xi_K})
  \end{equation}
  via the identification $\Omega^{1,\log}_{R_S^+}\{-1\} \cong (\oplus_{i=0}^dR_S^+\cdot\frac{e_i}{\xi_K})/R_S^+\cdot(\frac{e_0}{\xi_K}+\cdots+\frac{e_r}{\xi_K})$ (cf. (\ref{equ:omega^1-II})),
  where 
  \[P_{\infty,S}^+:=\widehat R_{\infty,S}^+[Y_0,\dots,Y_r,Y_{r+1},\dots,Y_d]^{\wedge}_{\pd}/(Y_0+\cdots+Y_r)_{\pd}\]
  denotes the quotient of $\widehat R_{\infty,S}^+[Y_0,\dots,Y_r,Y_{r+1},\dots,Y_d]^{\wedge}_{\pd}$ by the pd-ideal generated by $Y_0+\cdots+Y_r$. Clearly, via the isomorphisms 
  \[\Omega^{1,\log}_{R_S^+}\{-1\} \cong \oplus_{i=1}^dR_S^+\cdot\frac{e_i}{\xi_K}\]
  and 
  \[P_{\infty,S}^+\cong \widehat R_{\infty,S}^+[Y_1,\dots,Y_d]^{\wedge}_{\pd},\]
  the $\Theta$ is exactly 
  \[\Theta = \sum_{i=1}^d\frac{\partial}{\partial Y_i}\otimes\frac{e_i}{\xi_K}.\]
  When context is clear, we also express $\Theta$ as
  \[\Theta = \sum_{i=0}^d\frac{\partial}{\partial Y_i}\otimes\frac{e_i}{\xi_K}:P_{\infty,S}^+\to P_{\infty,S}^+\otimes_{R_S^+}\Omega^{1,\log}_{R_S^+}\{-1\}.\]

  By construction of $\calO\widehat \bC_{\pd,S}^+$, we see that $\calO\widehat \bC_{\pd,S}^+(X_{\infty})$ is the $p$-adic completion of $\Gamma(E^+)/(e-(\zeta_p-1))_{\pd}$, where $E^+$ and $e$ are described in Corollary \ref{cor:gamma action on Faltings extension} and $(e-(\zeta_p-1))_{\pd}$ denotes the pd-ideal generated by $e-(\zeta_p-1)$. Let $y_i$ be elements in $E^+$ described in Corollary \ref{cor:gamma action on Faltings extension} as well. By abuse of notations, we will not distinguish $y_i$ with its image in $\calO\widehat \bC_{\pd,S}^+(X_{\infty})$. 
  \begin{prop}\label{prop:local OC}
      Keep notations above. The morphism of $\widehat R_{\infty,S}^+$-algebras
      \[\widehat R_{\infty,S}^+[Y_0,\dots,Y_r,Y_{r+1},\dots,Y_d]^{\wedge}_{\pd}\to \calO\widehat \bC_{\pd,S}^+(X_{\infty})\]
      sending each $Y_i$ to $y_i$ induces an isomorphism
      \[\iota:P_{\infty,S}^+\to \calO\widehat \bC_{\pd,S}^+(X_{\infty})\]
      compatible with Higgs fields. Via the isomorphism $\iota$, the $\Gamma$-action on $\calO\widehat \bC_{\pd,X_S}^+(X_{\infty})$ is given by that for any $\delta = \prod_{i=0}^d\delta_i^{n_i}\in\Gamma$ and any $0\leq j\leq d$, we have
      \[\delta(Y_j) = Y_j+n_j\rho_K(\zeta_p-1).\]
  \end{prop}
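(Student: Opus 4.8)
The plan is to unwind the construction of $\calO\widehat\bC^+_{\pd,S}$ from Definition \ref{dfn:period sheaf} by feeding in the explicit $\widehat R^+_{\infty,S}$-module structure and $\Gamma$-action of $E^+ = \calE^+(X_\infty)$ recorded in Corollary \ref{cor:gamma action on Faltings extension}. Recall, as noted in the paragraph preceding the statement, that $\calO\widehat\bC^+_{\pd,S}(X_\infty)$ is the $p$-adic completion of $\Gamma(E^+)/(e-(\zeta_p-1))_{\pd}$, so after evaluating at $X_\infty$ the problem becomes purely algebraic.

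First I would describe $\Gamma(E^+)$. By Corollary \ref{cor:gamma action on Faltings extension}(2) we may write $E^+ = \widehat R^+_{\infty,S}\cdot e \oplus M$ with $M := (\bigoplus_{i=0}^d \widehat R^+_{\infty,S}\, y_i)/\widehat R^+_{\infty,S}(y_0+\cdots+y_r)$ free of rank $d$ on $y_1,\dots,y_d$; hence $\Gamma(E^+) \cong \widehat R^+_{\infty,S}[e]_{\pd}\otimes_{\widehat R^+_{\infty,S}}\Gamma(M)$ with $\Gamma(M) \cong \widehat R^+_{\infty,S}[y_1,\dots,y_d]_{\pd}$, and (since $\widehat R^+_{\infty,S}(y_0+\cdots+y_r)$ is a direct summand of $\bigoplus_{i=0}^d\widehat R^+_{\infty,S}\,y_i$) the pd-algebra map $\widehat R^+_{\infty,S}[Y_0,\dots,Y_d]_{\pd}\to\Gamma(M)$, $Y_i\mapsto y_i$, is surjective with kernel the pd-ideal $(Y_0+\cdots+Y_r)_{\pd}$, so it identifies $P^+_{\infty,S}$ with $\widehat R^+_{\infty,S}[y_1,\dots,y_d]^\wedge_{\pd}$. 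Next I would pass to the quotient by $\calI_{\pd}$: since $\zeta_p-1$ admits arbitrary divided powers in $\widehat R^+_{\infty,S}$, translation of the pd-variable $e$ by $\zeta_p-1$ is a pd-automorphism of $\widehat R^+_{\infty,S}[e]_{\pd}$ carrying $(e-(\zeta_p-1))_{\pd}$ onto the augmentation ideal, so $\widehat R^+_{\infty,S}[e]_{\pd}/(e-(\zeta_p-1))_{\pd}\cong\widehat R^+_{\infty,S}$ and therefore $\Gamma(E^+)/(e-(\zeta_p-1))_{\pd}\cong\Gamma(M)$. Combining this with the previous paragraph and $p$-completing yields the claimed isomorphism $\iota\colon P^+_{\infty,S}\xrightarrow{\sim}\calO\widehat\bC^+_{\pd,S}(X_\infty)$ with $Y_i\mapsto y_i$; it is visibly well-defined on $P^+_{\infty,S}$ because $Y_0+\cdots+Y_r\mapsto y_0+\cdots+y_r=0$.

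Then I would verify compatibility with the Higgs fields. The field $\Theta$ on $\calO\widehat\bC^+_{\pd,S}(X_\infty)$ is, by construction, the reduction modulo $(e-(\zeta_p-1))_{\pd}$ (followed by $p$-completion) of the Quillen--Illusie differential $\partial$ of Lemma \ref{lem:SZ} attached to the sequence $0\to\widehat R^+_{\infty,S}\xrightarrow{i}E^+\xrightarrow{\pr}\widehat R^+_{\infty,S}\otimes_{R^+_S}\Omega^{1,\log}_{R^+_S}\{-1\}\to0$ of Corollary \ref{cor:gamma action on Faltings extension}(1). Applying the explicit formula for $\partial$ to a pd-monomial $e^{[k]}y_1^{[m_1]}\cdots y_d^{[m_d]}$ and using $\pr(e)=0$, $\pr(y_i)=e_i/\xi_K$ from Corollary \ref{cor:gamma action on Faltings extension}(2), the $e$-contribution drops out and the remaining terms are exactly $\sum_i\tfrac{\partial}{\partial Y_i}\otimes\tfrac{e_i}{\xi_K}$ applied to $Y_1^{[m_1]}\cdots Y_d^{[m_d]}$ (consistently with $\sum_{i=0}^r e_i/\xi_K=0$ in $\Omega^{1,\log}_{R^+_S}\{-1\}$), which is the operator $\Theta$ of \eqref{equ:local Theta}. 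Finally, for the $\Gamma$-action: it is the action induced on $\Gamma(E^+)$ by functoriality of $\Gamma$ from that on $E^+$, and Corollary \ref{cor:gamma action on Faltings extension}(2) gives $\delta(e)=e$, $\delta(y_j)=y_j+n_j\rho_K e$ for $\delta=\prod_i\delta_i^{n_i}$; since $\Gamma$ acts $R^+_S$-linearly on $\widehat R^+_{\infty,S}$ it fixes $\zeta_p$, hence fixes $e-(\zeta_p-1)$ and preserves $\calI_{\pd}$, and on the quotient $e$ is identified with $\zeta_p-1$, so $\delta(Y_j)=Y_j+n_j\rho_K(\zeta_p-1)$.

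The only point that is not pure bookkeeping is the second step: that modulo $\calI_{\pd}$ one may genuinely ``translate the pd-variable $e$ by $\zeta_p-1$'', which relies on $\zeta_p-1$ having divided powers in $\widehat R^+_{\infty,S}$, and that this translation is $\Gamma$-equivariant so that the resulting identification is compatible with the $\Gamma$-action. Everything else follows formally from Corollary \ref{cor:gamma action on Faltings extension} and the discussion of $\Theta$ recalled just before the statement.
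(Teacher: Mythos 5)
Your argument is correct and is essentially the paper's own proof: both unwind $\Gamma(E^+)$ via the splitting $E^+\cong\widehat R^+_{\infty,S}\cdot e\oplus(\oplus_{i=1}^d\widehat R^+_{\infty,S}\cdot y_i)$ from Corollary \ref{cor:gamma action on Faltings extension}, reduce modulo $(e-(\zeta_p-1))_{\pd}$ to get $\widehat R^+_{\infty,S}[y_1,\dots,y_d]^\wedge_{\pd}\cong P^+_{\infty,S}$, identify $\Theta$ by applying the explicit formula for $\partial$ in Lemma \ref{lem:SZ} with $\pr(e)=0$ and $\pr(y_i)=e_i/\xi_K$, and read off the $\Gamma$-action from $\delta(y_j)=y_j+n_j\rho_K e$ together with $e\mapsto\zeta_p-1$. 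Your added care about the translation $e\mapsto e+(\zeta_p-1)$ being a ring automorphism matching up the two pd-ideals is a reasonable way to justify the quotient computation that the paper leaves implicit.
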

  \begin{proof}
      Consider the isomorphisms 
      \[E^+\cong \widehat R_{\infty,S}^+\cdot e\oplus(\oplus_{i=1}^d\widehat R_{\infty,S}^+\cdot y_i) \text{ and } \Omega^1_{R_S^+}\{-1\}\cong \oplus_{i=1}^dR_S^+\cdot\frac{e_i}{\xi_K}.\]
      Via the projection $\pr:E^+\to \widehat R_{\infty,S}^+\otimes_{R_S^+}\Omega^1_{R_S^+}\{-1\}$ in Corollary \ref{cor:gamma action on Faltings extension}, the image of $y_i$ is $\frac{e_i}{\xi_K}$.
      By Lemma \ref{lem:SZ}, we have $\Gamma(E^+) = \widehat R_{\infty,S}^+[e,y_1,\dots,y_d]_{\pd}$ is the free pd-polynomial ring over $\widehat R_{\infty,S}^+$ generated by $e, y_1,\dots,y_d$ while the differential map $\partial:\Gamma(E^+)\to\Gamma(E^+)\otimes_{R_S^+}\Omega^1_{R_S^+}\{-1\} $ reads
      \[\partial = \sum_{i=1}^d\frac{\partial}{\partial y_i}\otimes\frac{e_i}{\xi}.\]
      Modulo $(e-(\zeta_p-1))_{\pd}$ and after taking $p$-adic completion, we see that
      \[(\calO\widehat \bC_{\pd,X_S}^+(X_{\infty}),\Theta) = (\widehat R_{\infty,S}^+[y_1,\dots,y_d]^{\wedge}_{\pd},\sum_{i=1}^d\frac{\partial}{\partial y_i}\otimes\frac{e_i}{\xi_K}).\]
      Using the isomorphism $(P_{\infty,S}^+,\Theta)\cong (\widehat R_{\infty,S}^+[Y_1,\dots,Y_d]^{\wedge}_{\pd},\sum_{i=1}^d\frac{\partial}{\partial Y_i}\otimes\frac{e_i}{\xi_K})$ mentioned above, we conclude that $\iota$ is an isomorphism. As for any $\delta = \prod_{i=0}^d\delta_i^{n_i}\in\Gamma$ and any $0\leq j\leq d$, we have $\delta(y_j) = y_j+n_j\rho_Ke$ and $e$ is mapped to $\zeta_p-1$ via the map $E^+\to \calO\widehat \bC_{\pd,X_S}^+(X_{\infty})$, the desired $\Gamma$-action follows directly.
  \end{proof}
  Note that for any $?\in\{\emptyset,+\}$, the sheaf $\calO\widehat \bC_{\pd,S}^?$ is an $\widehat \calO_{X_S}^?$-algebra and the $\calO\widehat \bC_{\pd,S}^?\otimes_{\widehat \calO_{X_S}^+}\calO\widehat \bC_{\pd,S}^?$ is endowed with the tensor product Higgs field $\Theta\otimes\id+\id\otimes\Theta$.
  \begin{prop}\label{prop:multiplication on OC}
      For any $?\in\{\emptyset,+\}$, the multiplication map
      \[\calO\widehat \bC_{\pd,S}^?\otimes_{\widehat \calO_{X_S}^+}\calO\widehat \bC_{\pd,S}^?\to \calO\widehat \bC_{\pd,S}^?\]
      on $\calO\widehat \bC_{\pd,S}^?$ is compatible with Higgs fields; that is, for any local sections $f,g\in \calO\widehat \bC_{\pd,S}^?$, we have 
      \[\Theta(fg) = \Theta(f)g+f\Theta(g).\]
  \end{prop}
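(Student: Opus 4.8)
The strategy is to reduce everything to the Quillen--Illusie differential underlying $\Theta$ and to observe that this differential is a derivation of the divided power algebra $\Gamma(\calE^+)$, after which the Leibniz rule descends automatically to the quotient defining $\calO\widehat\bC^+_{\pd,S}$ and is preserved by $p$-adic completion and by inverting $p$. Recall from Definition~\ref{dfn:period sheaf} that $\calO\bC^+_{\pd,S}=\Gamma(\calE^+)/\calI_{\pd}\cdot\Gamma(\calE^+)$, that $\calO\widehat\bC^+_{\pd,S}$ is its $p$-completion, and that $\Theta$ is induced by the map
\[\partial\colon\Gamma(\calE^+)\longrightarrow\Gamma(\calE^+)\otimes_{\calO_{\frakX_S}}\Omega^{1,\log}_{\frakX_S}\{-1\}\]
appearing in \eqref{equ:apply SZ to Faltings' extension}, i.e.\ the first differential of the complex produced by the ``moreover'' part of Lemma~\ref{lem:SZ} applied to the short exact sequence of Corollary~\ref{cor:Faltings' extension}, with surjection $v\colon\calE^+\twoheadrightarrow\widehat\calO^+_{X_S}\otimes_{\calO_{\frakX_S}}\Omega^{1,\log}_{\frakX_S}\{-1\}$.

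The key point is that $\partial$ is a $\widehat\calO^+_{X_S}$-linear derivation of the commutative $\widehat\calO^+_{X_S}$-algebra $\Gamma(\calE^+)$ with values in the $\Gamma(\calE^+)$-module $\Gamma(\calE^+)\otimes_{\calO_{\frakX_S}}\Omega^{1,\log}_{\frakX_S}\{-1\}$. Additivity and $\widehat\calO^+_{X_S}$-linearity are immediate from the explicit formula in Lemma~\ref{lem:SZ}; for the Leibniz rule one evaluates $\partial$ on a product of two divided-power monomials, which, using the pd-relation $f^{[m]}f^{[n]}=\binom{m+n}{m}f^{[m+n]}$, reduces to the single-variable identity $\partial(f^{[m]}f^{[n]})=\partial(f^{[m]})\,f^{[n]}+f^{[m]}\,\partial(f^{[n]})$: the left-hand side is $\binom{m+n}{m}f^{[m+n-1]}\otimes v(f)$ and the right-hand side is $\bigl(\binom{m+n-1}{m-1}+\binom{m+n-1}{m}\bigr)f^{[m+n-1]}\otimes v(f)$, so the two agree by Pascal's identity. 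This divided-power bookkeeping is the only real content of the argument.

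It remains to descend and complete. The pd-ideal $\calI_{\pd}$ is generated by $e-(\zeta_p-1)$, where $e$ is the image of $1$ under the inclusion $i\colon\widehat\calO^+_{X_S}\hookrightarrow\calE^+$ of Corollary~\ref{cor:Faltings' extension}; thus $v(e)=0$, whence $\partial(e)=0$, and $\partial(\zeta_p-1)=0$ because $\zeta_p-1$ lies in the coefficient ring $\widehat\calO^+_{X_S}$. Since $\partial(h^{[k]})=h^{[k-1]}\,\partial(h)$ for any $h$, the derivation $\partial$ annihilates $e-(\zeta_p-1)$ together with all of its divided powers, hence $\partial(\calI_{\pd}\cdot\Gamma(\calE^+))\subseteq\calI_{\pd}\cdot\Gamma(\calE^+)\otimes_{\calO_{\frakX_S}}\Omega^{1,\log}_{\frakX_S}\{-1\}$, so $\partial$ induces a derivation on the quotient $\calO\bC^+_{\pd,S}$, which is $\Theta$ prior to $p$-completion. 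The Leibniz identity $\Theta(fg)=\Theta(f)g+f\Theta(g)$ is a $p$-adically closed condition and both sides are $p$-adically continuous in $f$ and $g$, so it persists on the $p$-completion $\calO\widehat\bC^+_{\pd,S}$; this proves the case $?=+$. The case $?=\emptyset$ follows at once by inverting $p$, since $\calO\widehat\bC_{\pd,S}=\calO\widehat\bC^+_{\pd,S}[\tfrac1p]$ and $\Theta$ extends $\widehat\calO_{X_S}$-linearly. I do not expect a serious obstacle: the only steps needing care are the Pascal-identity verification that $\partial$ respects the divided-power multiplication and the observation that $\partial$ kills the generator $e-(\zeta_p-1)$ of $\calI_{\pd}$, which amounts to $e$ being a horizontal section ($v(e)=0$). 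Alternatively, one may argue locally via Proposition~\ref{prop:local OC}, where $\Theta=\sum_{i}\frac{\partial}{\partial Y_i}\otimes\frac{e_i}{\xi_K}$ on a $p$-completed pd-polynomial algebra and the Leibniz rule for pd-partial-derivatives is the same Pascal computation.
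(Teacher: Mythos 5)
Your proof is correct, but it takes a genuinely different and more intrinsic route than the paper's. The paper's proof is a one-liner: it localizes on $\frakX_{S,\et}$ and $X_{S,v}$ to reduce to the small affine case, where by Proposition~\ref{prop:local OC} the period ring becomes the explicit $p$-completed pd-polynomial algebra $P^+_{\infty,S}$ with $\Theta=\sum_{i}\frac{\partial}{\partial Y_i}\otimes\frac{e_i}{\xi_K}$, and the Leibniz rule is manifest. You instead work directly with the abstract construction $\Gamma(\calE^+)$, verify that the Quillen--Illusie map $\partial$ of Lemma~\ref{lem:SZ} is a derivation, descend modulo $\calI_{\pd}$ using $v(e)=0$, and finish by $p$-adic continuity and inverting $p$; you mention the local alternative, which is what the paper actually does. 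Your approach has the merit of not invoking any coordinate description at all. Two minor simplifications are available: (i) the derivation property of $\partial$ actually falls out directly from the displayed formula in Lemma~\ref{lem:SZ} by concatenating the two pd-monomial factor lists, with no need to invoke the pd-relation $f^{[m]}f^{[n]}=\binom{m+n}{m}f^{[m+n]}$ — your Pascal computation is really re-verifying the well-definedness of $\partial$, which the lemma already asserts; (ii) for the descent through the quotient, you need not track that $\partial$ kills $e-(\zeta_p-1)$ and all its pd-powers: the $\Gamma(E)$-linearity stated in Lemma~\ref{lem:SZ}, applied with $E=\widehat\calO^+_{X_S}$, gives $\partial(\calI_{\pd}\cdot\Gamma(\calE^+))\subseteq\calI_{\pd}\cdot\Gamma(\calE^+)\otimes_{\calO_{\frakX_S}}\Omega^{1,\log}_{\frakX_S}\{-1\}$ immediately, since $\calI_{\pd}\subset\Gamma(\widehat\calO^+_{X_S})=\Gamma(E)$. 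Either way, the argument is sound.
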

  \begin{proof}
      Since the problem is local on both $\frakX_{S,\et}$ and $X_{S,v}$, we may assume $\frakX_S = \Spf(R^+_S)$ is small affine and are reduced to showing that for any $f,g\in \calO\widehat \bC_{\pd,S}^+(X_{\infty})$, we have 
      \[\Theta(fg) = \Theta(f)g+f\Theta(g).\]
      But this follows from Proposition \ref{prop:local OC} immediately.
  \end{proof}
  We remark that Proposition \ref{prop:multiplication on OC} amounts to that the Higgs complex $\HIG(\calO\widehat \bC_{\pd,S}^?,\Theta)$ is a commutative differential graded algebra over $\widehat \calO_{X_S}^?$. Using this Theorem \ref{thm:Poincare Lemma} amounts to that the morphism
  \[\widehat \calO_{X_S}^?\to \HIG(\calO\widehat \bC_{\pd,S}^?)\]
  is actually an isomorphism of algebras in the derived category of $\widehat \calO_{X_S}^+$-modules.

  We end this section with the following remarks.

  \begin{rmk}\label{rmk:overconvergent period sheaf}
      Provided $\calE^+$ in Corollary \ref{cor:Faltings' extension}, for any $r\geq 0$, one can define $\calE_r^+$ as the pull-back of $\calE^+$ along the natural inclusion 
      \[p^r\widehat \calO_{X}^+\otimes_{\calO_{\frakX}}\Omega^{1,\log}_{\frakX}\{-1\}\hookrightarrow \widehat \calO_{X}^+\otimes_{\calO_{\frakX}}\Omega^{1,\log}_{\frakX}\{-1\}.\]
      So it fits into the short exact sequence
      \[0\to\widehat \calO_{X}^+\to \calE_r^+\to p^r\widehat \calO_{X}^+\otimes_{\calO_{\frakX_S}}\Omega^{1,\log}_{\frakX}\{-1\}\to 0.\]
      As in \cite[\S2]{Wan23}, one can construct period sheaves $\calO\widehat \bC_{r}^?$ with Higgs fields $\widetilde \Theta$ for $?\in \{\emptyset,+\}$. (Recall that $\calO_C\{-1\} = \rho_K\calO_C(-1)$, the $\calO\widehat \bC_{r}^?$ coincides with $\calO\widehat \bC_{p^r\rho_K}^?$ in \emph{loc.cit.}.) Put
      \[(\calO\widehat \bC^{\dagger,?},\widetilde \Theta):=\colim_{r\to 0^+}(\calO\widehat \bC_r^?,\widetilde \Theta).\]
      The one can show that the Higgs complex $\rD\rR(\calO\widehat \bC^{\dagger},\widetilde \Theta)$ is a resolution of $\OX$ (cf. \cite[Th. 2.28]{Wan23}). Similar to \cite[Consruction 2.9]{MW-AIM}, there exists a natural injection
      \[\iota_{\rm PS}:\calO\widehat \bC_{\pd}^+\to\calO\widehat \bC_0^+\]
      identifying $(\zeta_p-1)\Theta$ with $\widetilde \Theta$. As \cite[Prop. 2.10]{MW-AIM}, the natural inclusion $\calO\widehat \bC^{\dagger}\hookrightarrow\calO\widehat \bC_0$ factors through the image of $\iota_{\rm PS}$, yielding the following inclusions of period sheaves with Higgs fields
      \[(\calO\widehat \bC^{\dagger},\widetilde \Theta)\to(\calO\widehat \bC_{\pd},(\zeta_p-1)\Theta)\to(\calO\widehat \bC_0,\widetilde \Theta).\]
  \end{rmk}

  \begin{rmk}\label{rmk:compatible with smooth case}
      Recall for smooth $\frakX$ over $\calO_C$, the log-structure on $\frakX$ is induced from the canonical log-structure on $\calO_C$. By \cite[Lem. 8.22]{Ols}, all (log)-contangent complexes in \eqref{equ:exact triangle of ringed-topoi} reduces to the usual (p-complete) cotangent complexes as in \cite[Eq.(2-4)]{Wan23}. So all constructions in this section is compatible with those in \cite{Wan23} and \cite{MW-AIM} (ans thus so are our main results in \S\ref{sec:introduction}).
  \end{rmk}
  
\section{A local Simpson correspondence}\label{sec:local Simpson}
  Fix an affinoid perfectoid $S=\Spa(A,A^+)\in\Perfd$.
  In this section, we always assume $R_S^+$ is small semi-stable and keep the notations at the end of \S\ref{ssec:semistable schemes}. We first make some definitions.
  \begin{dfn}\label{dfn:small Gamma-representation}
      Let $B\in\{R_S^+,R_S,\widehat R_{\infty,S}^+,\widehat R_{\infty,S}\}$.
      \begin{enumerate}
        \item[(1)] By a \emph{$\Gamma$-representation of rank $r$ over $B$}, we mean a finite projective $B$-module $M$ of rank $r$ which is endowed with a continuous $\Gamma$-action. 
        
        A $\Gamma$-representation $M^+$ over $R_S^+$ of rank $r$ is called \emph{Hitchin-small} if it admits an $R_S^+$-basis $e_1,\dots,e_d$ such that for any $\delta\in\Gamma$, the matrix of $\delta$ with respect to the given basis is of the form $\exp(-(\zeta_p-1)\rho_KA_{\delta})$ for some ($p$-adically) topologically nilpotent matrix $A_{\delta}\in \Mat_r(R_S^+)$.
        
        By a \emph{Hitchin-small $\Gamma$-representation of rank $r$ over $B$}, we mean a finite free $B$-module $M$ which is endowed with a (continuous) $\Gamma$-action which is of the form $M = M^+\otimes_{R_S^+}B$ for some Hitchin-small $\Gamma$-representation $M^+$ over rank $r$ over $R_S^+$.

        \item[(2)] By a \emph{Higgs module of rank $r$ over $R_S^+$}, we mean a pair 
        \[(H^+,\theta:H^+\to H^+\otimes_{R_S^+}\Omega^{1,\log}_{R_S^+}\{-1\})\]
        consisting of a finite free $R_S^+$-module $H^+$ of rank $r$ and an Higgs field $\theta$, i.e., an $R_S^+$-linear morphism $\theta$ satisfying $\theta\wedge\theta=0$. For any Higgs module $(H^?,\theta)$, we denote by $\rD\rR(H^?,\theta)$ the induced Higgs complex.
        
        A Higgs module $(H^+,\theta)$ over $R_S^+$ is called 
        \begin{enumerate}
            \item \emph{twisted Hitchin-small} if the Higgs field $\theta$ is topologically nilpotent;

            \item \emph{Hitchin-small} if it is of the form 
            \[(H^+,\theta) = (H^+,(\zeta_p-1)\theta^{\prime})\]
            for some twisted Hitchin-small Higgs module $(H^+,\theta^{\prime})$.
        \end{enumerate}

        By a \emph{(twisted) Hitchin-small Higgs module of rank $r$ over $R_S$}, we mean a pair 
        \[(H,\theta:H\to H\otimes_{R_S^+}\Omega^{1,\log}_{R_S^+}\{-1\})\]
        consisting of a finite free $R_S$-module $H$ of rank $r$ and an Higgs field $\theta$, which is of the form $(H,\theta) = (H^+[\frac{1}{p}],\theta)$ for some (twisted) Hitchin-small Higgs module $(H^+,\theta)$ of rank $r$ over $R_S^+$.

        \item[(3)] For any $?\in\{\emptyset, +\}$, we denote by $\Rep^{\Hsmall}_{\Gamma}(B^?)$ the category of Hitchin-small $\Gamma$-representations over $B^?$, and by $\HIG^{\text{(t-)}\Hsmall}(R_S^?)$ the category of (twisted) Hitchin-small Higgs modules over $R_S^?$.
      \end{enumerate}
  \end{dfn}
  Roughly, the purpose of this section is to establish the equivalences of categories
  \[\Rep^{\Hsmall}_{\Gamma}(\widehat R_{\infty,S}^?)\simeq \Rep^{\Hsmall}_{\Gamma}(R_{S}^?)\simeq \HIG^{\tHsmall}(R_S^?)\simeq \HIG^{\Hsmall}(R_S^?).\]
  Clearly, it suffices to deal with the case for $? = +$, which will be handled with in the next three subsections.

\subsection{$\Gamma$-representations over $R_S^+$ v.s. $\Gamma$-representations over $\widehat R_{\infty,S}^+$}\label{ssec:decompletion theory}
  Note that the base-change $M^+\mapsto M_{\infty}^+:=M^+\otimes_{R_S^+}\widehat R_{\infty,S}^+$ induces a well-defined functor
  \[\Rep_{\Gamma}^{\Hsmall}(R_S^+)\to \Rep_{\Gamma}^{\Hsmall}(\widehat R_{\infty,S}^+).\]
  We now show this functor is exactly an equivalence of categories. More precisely, we shall prove the following result:
  \begin{prop}\label{prop:decompletion}
      The base-change along $R_S^+\to \widehat R_{\infty,S}^+$ induces an equivalence of categories
      \[\Rep_{\Gamma}^{\Hsmall}(R_S^+)\xrightarrow{\simeq} \Rep_{\Gamma}^{\Hsmall}(\widehat R_{\infty,S}^+)\]
      such that for any $M^+\in\Rep_{\Gamma}^{\Hsmall}(R_S^+)$ with the induced $M_{\infty}^+\in\Rep_{\Gamma}^{\Hsmall}(\widehat R_{\infty,S}^+)$, the natural morphism
      \[\rR\Gamma(\Gamma,M^+)\to \rR\Gamma(\Gamma,M_{\infty}^+)\]
      identifies the former with a direct summand of the latter whose complementary is concentrated in degree $\geq 1$ and killed by $\zeta_p-1$.
  \end{prop}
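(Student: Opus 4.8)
The plan is to reduce everything to a decompletion statement for $\Gamma$-modules over the pair $(R_S^+,\widehat R_{\infty,S}^+)$, via the $\Gamma$-equivariant $p$-adic topological decomposition $\widehat R_{\infty,S}^+ = \widehat\bigoplus_{\underline\alpha\in J_r}R_S^+\cdot\underline T^{\underline\alpha}$ from \eqref{equ:Gamma decomposition}. First I would set up the ``trivial'' part of the Galois cohomology: the summand $\underline\alpha = 0$ gives back $R_S^+$ with its $\Gamma$-action trivial on the torus directions relevant to the decomposition, and the remaining summands $\underline\alpha\neq 0$ form a complement on which some $\gamma_i$ acts through multiplication by $\zeta_{p^n}^{m}-1$-type scalars; the usual computation (as in \cite[\S8]{DLLZ} or the decompletion in \cite{MW-JEMS}) shows that $\rR\Gamma(\Gamma, \widehat R_{\infty,S}^+/R_S^+)$, and more generally the complement after twisting by a Hitchin-small cocycle, is concentrated in degrees $\geq 1$ and is killed by $\zeta_p-1$ (or even by a smaller element; $\zeta_p-1$ is a safe bound). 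This is the cohomological bookkeeping underlying the last clause of the Proposition.

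Next, for the equivalence of categories itself, I would proceed in two steps. For full faithfulness: given $M^+, N^+\in\Rep_\Gamma^{\Hsmall}(R_S^+)$, the module $\Hom_{R_S^+}(M^+,N^+)$ is again a $\Gamma$-representation (not Hitchin-small in general, but its cocycle matrices are sums/products of topologically nilpotent ones times $(\zeta_p-1)\rho_K$, hence still "small enough" for the cohomology computation), and $\Hom_{\Gamma}(M^+_\infty,N^+_\infty) = \rH^0(\Gamma, \Hom_{\widehat R_{\infty,S}^+}(M^+_\infty,N^+_\infty)) = \rH^0(\Gamma,\Hom_{R_S^+}(M^+,N^+)\otimes_{R_S^+}\widehat R_{\infty,S}^+)$, and by the decomposition this $\rH^0$ equals $\rH^0(\Gamma,\Hom_{R_S^+}(M^+,N^+)) = \Hom_\Gamma(M^+,N^+)$ since the complementary summands contribute nothing in degree $0$. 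For essential surjectivity: given a Hitchin-small $\Gamma$-representation $V^+$ over $\widehat R_{\infty,S}^+$, by definition it is already of the form $M^+\otimes_{R_S^+}\widehat R_{\infty,S}^+$ for some Hitchin-small $M^+$ over $R_S^+$ — so strictly speaking essential surjectivity is built into the definition in Definition \ref{dfn:small Gamma-representation}(1), and the real content is that the descent datum is unique, which follows from full faithfulness. I would remark that one should check the functor lands in the right subcategory (base change of Hitchin-small is Hitchin-small), which is immediate from the definition since the cocycle matrices $\exp(-(\zeta_p-1)\rho_K A_\delta)$ are unchanged.

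Finally, for the statement about $\rR\Gamma(\Gamma,M^+)\to\rR\Gamma(\Gamma,M^+_\infty)$: using the $\Gamma$-equivariant splitting $\widehat R_{\infty,S}^+ = R_S^+\oplus \big(\widehat\bigoplus_{\underline\alpha\neq 0}R_S^+\underline T^{\underline\alpha}\big)$ and that $M^+$ is finite free, one gets $M^+_\infty = M^+\oplus (M^+\otimes_{R_S^+}\widehat\bigoplus_{\underline\alpha\neq 0}R_S^+\underline T^{\underline\alpha})$ as $\Gamma$-modules, so $\rR\Gamma(\Gamma,M^+_\infty) = \rR\Gamma(\Gamma,M^+)\oplus \rR\Gamma(\Gamma, M^+\otimes(\cdots))$, and the complement is handled by the cohomological vanishing/killing computation from the first paragraph. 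I expect the main obstacle to be the bookkeeping in that computation: one must control the Koszul complex computing $\rR\Gamma(\Gamma, M^+\otimes_{R_S^+}R_S^+\underline T^{\underline\alpha})$ uniformly in $\underline\alpha$, showing that for $\underline\alpha\neq 0$ some $(\gamma_i - 1)$ acts invertibly up to a factor of $\zeta_p-1$, and assemble these (with the topologically nilpotent twist from Hitchin-smallness, which only perturbs by higher-order terms and is absorbed by a standard successive-approximation/geometric-series argument since $(\zeta_p-1)\rho_K$ is topologically nilpotent) into the claim that the total complementary complex sits in degrees $\geq 1$ and is killed by $\zeta_p-1$. The $r=0$ (smooth) case is exactly \cite[\S2]{MW-AIM}/\cite{MW-JEMS}, and the semi-stable case only adds the constraint $\prod_{i=0}^r\alpha_i = 0$ on the index set $J_r$, which does not affect the argument.
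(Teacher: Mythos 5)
Your proposal is correct and takes essentially the same route as the paper: essential surjectivity is built into Definition \ref{dfn:small Gamma-representation}, full faithfulness reduces (via internal Hom, preserved by the functor) to the cohomological comparison, and the comparison is established by splitting $\widehat R_{\infty,S}^+$ $\Gamma$-equivariantly as $R_S^+\oplus\widehat\bigoplus_{\underline\alpha\neq 0}R_S^+\underline T^{\underline\alpha}$ and showing via the Koszul complex that, on each nonzero summand twisted by the Hitchin-small cocycle, some $\gamma_i-1$ is $\zeta^{\alpha_i}-1$ times an invertible operator (the nilpotent perturbation being absorbed by a geometric-series factor $F(\theta_i)$), so the complementary cohomology sits in degrees $\geq 1$ and is killed by $\zeta_p-1$; this is exactly the content of Lemma \ref{lem:fully faithful}. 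The only minor inaccuracy is your hedging that $\Hom_{R_S^+}(M^+,N^+)$ may fail to be Hitchin-small — it is in fact Hitchin-small, since $B_\delta\otimes I - I\otimes A_\delta^{T}$ is a sum of commuting topologically nilpotent matrices.
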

  We will prove Proposition \ref{prop:decompletion} later.
  
  Recall \cite[Lem. 7.3]{BMS18} that for any $p$-complete $\Zp$-module $N$ equipped with a continuous $\Gamma$-action, $\rR\Gamma(\Gamma,N)$ can be calculated by the Koszul complex 
  \[\rK(\gamma_1-1,\dots,\gamma_d-1;N):~N\xrightarrow{\gamma_1-1,\dots,\gamma_d-1} N^d\to\cdots,\]
  where $\gamma_i\in \Gamma$ is defined in (\ref{equ:Gamma group-II}).
  Recall we have the $\Gamma$-equivariant decomposition (\ref{equ:Gamma decomposition})
  \begin{equation*}
      \widehat R_{\infty,S}^+ = \widehat \bigoplus_{\underline \alpha\in J_r}R^+_S\cdot \underline T^{\underline \alpha} = R_S^+\oplus\widehat \bigoplus_{\underline \alpha\in J_r\setminus\{0\}}R^+_S\cdot \underline T^{\underline \alpha},
  \end{equation*}
  where $J_r$ is the set of indices defined in (\ref{equ:index set}). 
  \begin{lem}\label{lem:fully faithful}
      Let $M^+$ be a finite free $\Gamma$-representation over $R_S^+$ of rank $r$ such that it admits an $R_S^+$-basis $e_1,\dots,e_r$ such that for any $\delta\in \Gamma$, its action on $M^+$ is given by the matrix $\exp(-(\zeta_p-1)A_{\delta})$ for some topologically nilpotent $A_{\delta}\in \Mat_r(R_S^+)$. Then the $\rR\Gamma(\Gamma,M^+\otimes\widehat \bigoplus_{\underline \alpha\in J_r\setminus\{0\}}R^+_S\cdot \underline T^{\underline \alpha})$ is concentrated in degree $\geq 1$ and killed by $\zeta_p-1$.
  \end{lem}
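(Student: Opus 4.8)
The plan is to compute the Koszul complex that calculates $\rR\Gamma(\Gamma,-)$ summand by summand along the $\Gamma$-eigen-decomposition of $\widehat R_{\infty,S}^+$, and then to reassemble over the $p$-completed direct sum. Write $N:=\widehat\bigoplus_{\underline\alpha\in J_r\setminus\{0\}}R_S^+\cdot\underline T^{\underline\alpha}$ and, for $\underline\alpha\in J_r$, set $M_{\underline\alpha}:=M^+\otimes_{R_S^+}R_S^+\cdot\underline T^{\underline\alpha}$, a finite free $R_S^+$-module of rank $r$. By the $\Gamma$-equivariance of the decomposition $\widehat R_{\infty,S}^+=\widehat\bigoplus_{\underline\alpha\in J_r}R_S^+\cdot\underline T^{\underline\alpha}$ and formula (\ref{equ:Gamma action}), each generator $\gamma_i$ preserves $M_{\underline\alpha}$ and acts there by $\chi_{\underline\alpha}(\gamma_i)\exp(-(\zeta_p-1)A_{\gamma_i})$, where $\chi_{\underline\alpha}(\delta)$ is the root of unity with $\delta(\underline T^{\underline\alpha})=\chi_{\underline\alpha}(\delta)\underline T^{\underline\alpha}$. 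Hence, using \cite[Lem. 7.3]{BMS18}, $\rR\Gamma(\Gamma,M^+\otimes N)$ is computed by $\rK(\gamma_1-1,\dots,\gamma_d-1;M^+\otimes N)=\widehat\bigoplus_{\underline\alpha\neq 0}\rK_{\underline\alpha}^\bullet$, where $\rK_{\underline\alpha}^\bullet:=\rK(\gamma_1-1,\dots,\gamma_d-1;M_{\underline\alpha})$; the differential is block-diagonal with respect to $\underline\alpha$. So it suffices to show: (i) $H^0(\rK_{\underline\alpha}^\bullet)=0$ for every $\underline\alpha\neq 0$, and (ii) multiplication by $\zeta_p-1$ is null-homotopic on each $\rK_{\underline\alpha}^\bullet$ by a homotopy all of whose components are matrices \emph{with entries in $R_S^+$} (not merely in $R_S^+[\tfrac{1}{\zeta_p-1}]$).

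First I would isolate, for each $\underline\alpha\in J_r\setminus\{0\}$, a "good" generator. A short combinatorial inspection of the defining relation $\prod_{i=0}^r\alpha_i=0$ of $J_r$ together with $\underline\alpha\neq0$ produces an index $i=i(\underline\alpha)\in\{1,\dots,d\}$ with $\zeta_{\underline\alpha}:=\chi_{\underline\alpha}(\gamma_i)\neq1$: explicitly $\gamma_j$ acts on $\underline T^{\underline\alpha}$ by $\zeta^{\alpha_j}$ for $r+1\le j\le d$ and by $\zeta^{\langle\alpha_i-\alpha_0\rangle}$ for $1\le i\le r$, and one checks case by case (according to whether some $\alpha_j\neq0$ with $j\ge r+1$, whether $\alpha_0=0$, etc.) that at least one such scalar is a nontrivial $p$-power root of unity. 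Since $\zeta_{\underline\alpha}=\zeta^{\beta}$ for some $\beta\in\bN[\tfrac1p]\cap(0,1)$, it is a primitive $p^{k}$-th root of unity with $k\ge1$, so $\nu_p(\zeta_{\underline\alpha}-1)\le\nu_p(\zeta_p-1)$; thus $\zeta_{\underline\alpha}-1$ divides $\zeta_p-1$ in $\calO_C$, and I write $\zeta_p-1=u_{\underline\alpha}(\zeta_{\underline\alpha}-1)$ with $u_{\underline\alpha}\in\calO_C\subseteq R_S^+$.

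Next I would write down the matrix of $\gamma_i-1$ on $M_{\underline\alpha}$ and factor it. Using $\exp(-(\zeta_p-1)A)-\id=-(\zeta_p-1)\,A\,g(A)$ with $g(A):=\sum_{j\ge0}\tfrac{(-(\zeta_p-1)A)^j}{(j+1)!}\in\Mat_r(R_S^+)$ (the binomial estimate $\nu_p\big(\tfrac{(\zeta_p-1)^j}{(j+1)!}\big)\ge0$ makes this convergent with integral coefficients), the action of $\gamma_i-1$ on $M_{\underline\alpha}\cong(R_S^+)^r$ is
\[
\zeta_{\underline\alpha}\exp(-(\zeta_p-1)A_{\gamma_i})-\id=(\zeta_{\underline\alpha}-1)\big(\id-\zeta_{\underline\alpha}u_{\underline\alpha}A_{\gamma_i}g(A_{\gamma_i})\big).
\]
Since $A_{\gamma_i}$ is topologically nilpotent and $|\zeta_{\underline\alpha}u_{\underline\alpha}|\le1$, the factor $\id-\zeta_{\underline\alpha}u_{\underline\alpha}A_{\gamma_i}g(A_{\gamma_i})$ is $\id$ minus a topologically nilpotent matrix, hence invertible over $R_S^+$, say with inverse $W_{\underline\alpha}\in\Mat_r(R_S^+)$. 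Consequently $\gamma_i-1$ is injective on $M_{\underline\alpha}$ (as $\zeta_{\underline\alpha}-1$ is a non-zero-divisor on the $\calO_C$-torsion-free ring $R_S^+$), which gives $H^0(\rK_{\underline\alpha}^\bullet)\subseteq\Ker(\gamma_i-1)=0$. Moreover, viewing $\rK_{\underline\alpha}^\bullet$ as the Koszul complex $\rK(\gamma_i-1;D^\bullet)$ of the operator $\gamma_i-1$ acting (termwise, through the displayed matrix, since $\Gamma$ is abelian) on $D^\bullet:=\rK(\{\gamma_j-1\}_{j\neq i};M_{\underline\alpha})$, the standard contracting homotopy for a Koszul complex of an operator that is invertible after inverting $\zeta_{\underline\alpha}-1$ is built from $(\zeta_{\underline\alpha}-1)^{-1}W_{\underline\alpha}$; rescaling it by $u_{\underline\alpha}(\zeta_{\underline\alpha}-1)=\zeta_p-1$ yields an operator $H'_{\underline\alpha}$ with components $\pm u_{\underline\alpha}W_{\underline\alpha}\in\Mat_r(R_S^+)$ satisfying $dH'_{\underline\alpha}+H'_{\underline\alpha}d=(\zeta_p-1)\id$ on $\rK_{\underline\alpha}^\bullet$.

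Finally I would reassemble. Because each $H'_{\underline\alpha}$ has entries in $R_S^+$, it maps $p^nM_{\underline\alpha}$ into $p^nM_{\underline\alpha}$, so $H':=\widehat\bigoplus_{\underline\alpha\neq0}H'_{\underline\alpha}$ is a well-defined operator on $\rK(\gamma_1-1,\dots,\gamma_d-1;M^+\otimes N)=\widehat\bigoplus_{\underline\alpha\neq0}\rK_{\underline\alpha}^\bullet$ with $dH'+H'd=(\zeta_p-1)\id$; hence $\zeta_p-1$ annihilates all cohomology of this complex. The block-diagonality of the degree-$0$ differential together with (i) gives $H^0=\widehat\bigoplus_{\underline\alpha\neq0}H^0(\rK_{\underline\alpha}^\bullet)=0$, so the complex is concentrated in degrees $\ge1$, as claimed. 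The step I expect to be the real obstacle is precisely this passage to the $p$-completed direct sum: a single summand is a two-line computation, but patching requires clearing the denominator of $(\gamma_{i(\underline\alpha)}-1)^{-1}$ by the \emph{fixed} element $\zeta_p-1$ rather than the $\underline\alpha$-dependent $\zeta_{\underline\alpha}-1$, and this works only because of the divisibility $\zeta_{\underline\alpha}-1\mid\zeta_p-1$ (forcing $u_{\underline\alpha}\in\calO_C$) and the invertibility over $R_S^+$ of the correction matrix $\id-\zeta_{\underline\alpha}u_{\underline\alpha}A_{\gamma_i}g(A_{\gamma_i})$, i.e.\ the Hitchin-smallness hypothesis on $M^+$.
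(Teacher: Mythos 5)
Your argument is correct and rests on the same core computation as the paper's: decompose into $\Gamma$-eigenblocks $M^+\cdot\underline T^{\underline\alpha}$, locate a generator $\gamma_{i(\underline\alpha)}$ whose character value $\zeta_{\underline\alpha}$ is a nontrivial $p$-power root of unity, and factor $\gamma_{i(\underline\alpha)}-1=(\zeta_{\underline\alpha}-1)\cdot(\text{unit in }\GL_r(R_S^+))$ using the identity $\exp(-(\zeta_p-1)A)-\id=-(\zeta_p-1)A\,F(A)$ (your $g$ is the paper's $F$ from \eqref{equ:F(X)}). Where you diverge from the paper is in how the $\zeta_p-1$ annihilation and the passage through the completed direct sum are organized. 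The paper first reduces to the subgroup $\Zp\cdot\gamma_{i(\underline\alpha)}$, computes its two-term Koszul complex directly, invokes the Hochschild--Serre spectral sequence for $\Zp\cdot\gamma_{i(\underline\alpha)}\subset\Gamma$ to get the conclusion for all of $\Gamma$, and then handles the $p$-completed direct sum by the abstract observation that a complex with cohomology killed by $\zeta_p-1$ is already derived $p$-complete, so that $\rR\Gamma$ commutes with the completion. You instead work with the full $d$-variable Koszul complex from the start, extract an explicit contracting homotopy $H'_{\underline\alpha}$ for $\zeta_p-1$ built from $u_{\underline\alpha}W_{\underline\alpha}$, check its entries lie in $R_S^+$, and reassemble by observing that a uniformly bounded family of homotopies glues to a homotopy on the $p$-completed direct sum. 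This is somewhat more hands-on: it dodges the spectral sequence entirely and makes the $\zeta_p-1$-torsion constructive, at the cost of having to track integrality of the rescaled homotopy. Both routes hinge on the same nontrivial divisibility $\zeta_{\underline\alpha}-1\mid\zeta_p-1$ in $\calO_C$ and on topological nilpotence of $A_{\gamma_i}$ for the invertibility of the correction factor, so the arguments are equivalent in strength; the choice is a matter of taste, with yours being the more explicit and the paper's the shorter.
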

  \begin{proof}
      We claim that for any $\underline \alpha = (\alpha_0,\dots,\alpha_d)\in J_r\setminus\{0\}$, the $\rR\Gamma(\Gamma,M^+\cdot \underline T^{\underline \alpha})$ is concentrated in degree $\geq 1$ and killed by $\zeta_p-1$. Granting this, we can conclude by noting that
      \[\rR\Gamma(\Gamma,M^+\otimes_{R_S^+}(\widehat \bigoplus_{\underline \alpha\in J_r\setminus\{0\}}R_S^+\cdot \underline T^{\underline \alpha})) = \bigoplus_{\underline \alpha\in J_r\setminus\{0\}}\rR\Gamma(\Gamma,M^+\cdot \underline T^{\underline \alpha})\]
      because the right hand side above is killed by $\zeta_p-1$ and thus already $p$-complete.

      It remains to prove the claim. Without loss of generality, we may assume $\alpha_0 = 0$. Then $\alpha_1,\dots,\alpha_d$ are not all zero. Without loss of generality, we may assume $\alpha_1\neq 1$. By Hochschild--Serre spectral sequence, it suffices to show $\rR\Gamma(\Zp\cdot\gamma_1,M^+\cdot \underline T^{\underline \alpha})$ is concentrated in degree $\geq 1$ and killed by $\zeta_p-1$. We now check this by working with the Koszul complex
      \[\rK(\gamma_1-1;M^+\cdot \underline T^{\underline \alpha}): M^+\cdot \underline T^{\underline \alpha}\xrightarrow{\gamma_1-1}M^+\cdot \underline T^{\underline \alpha}.\]
      Put $\theta_i:=A_{\gamma_i}$ for any $1\leq i\leq d$ and 
      \begin{equation}\label{equ:F(X)}
          F(\theta):=\frac{1-\exp(-(\zeta_p-1)\theta)}{(\zeta_p-1)\theta}:= 1+\sum_{n\geq 1}(-1)^n(\zeta_p-1)^{[n]}\theta^n \in \calO_C[[\theta]].
      \end{equation}
      Then we have $\theta_i\in \Mat_r(R_S^+)$ which is topologically nilpotent and thus $F(\theta_i)$ is a well-defined matrix in $\GL_r(R_S^+)$ such that $\gamma_1$ acts on $M^+$ via $1-(\zeta_p-1)\theta_iF(\theta_i)$. Thus, for any $m\in M^+$, we have 
      \[(\gamma_1-1)(m\underline T^{\underline \alpha}) = \big(\zeta^{\alpha_1}\left(1-(\zeta_p-1)\theta_iF(\theta_i)\right)-1\big)(m\underline T^{\underline \alpha}) = (\zeta^{\alpha_1}-1)\left(1-\zeta^{\alpha_1}\frac{\zeta_p-1}{\zeta^{\alpha_1}-1}\theta_1F(\theta_1)\right)(m\underline T^{\underline \alpha}).\]
      As $\alpha_1\neq 0$, we have $\frac{\zeta_p-1}{\zeta^{\alpha_1}-1}\in \calO_C$ and thus $1-\zeta^{\alpha_1}\frac{\zeta_p-1}{\zeta^{\alpha_1}-1}\theta_1F(\theta_1)\in \GL_r(R_S^+)$ because $\theta_1$ is topologically nilpotent, yielding that
      \begin{equation*}
          \rH^n(\Zp\cdot\gamma_1,M\cdot\underline T^{\alpha}) = \left\{
            \begin{array}{rcl}
                0, & n=0 \\
                (M\cdot\underline T^{\alpha})/(\zeta_p-1), & n=1
            \end{array}
          \right.
      \end{equation*}
      as desired. This completes the proof.
  \end{proof}

  \begin{proof}[\textbf{Proof of Proposition \ref{prop:decompletion}}]
      As the essential surjectivity is a part of definition, it suffices to show that full faithfulness of the base-change functor. As the functor preserves tensor products and dualitis, it suffices to show the expected cohomological comparison. That is, we have to show that for any $M^+\in\Rep_{\Gamma}^{\Hsmall}(R_S^+)$, the natural map
      \[\rR\Gamma(\Gamma, M^+)\to \rR\Gamma(\Gamma, M^+\otimes_{R_S^+}\widehat R_{\infty,S}^+)\]
      identifies the former as a direct summand of the latter whose complementary is concentrated in degree $\geq 1$ and killed by $\zeta_p-1$. But this follows from Lemma \ref{lem:fully faithful} immediately.
  \end{proof}

\subsection{Hitchin-small Higgs modules v.s. Hitchin-small twisted Higgs modules}  
  Clearly, for any $?\in \{\emptyset,+\}$ the rule $(H^+,\theta)\mapsto (H^+,(\zeta_p-1)\theta)$ defines a well-defined functor
  \[\HIG^{\tHsmall}(R_S^?)\to \HIG^{\Hsmall}(R_S^?).\]
  One can prove this functor is indeed an equivalence of categories.
  \begin{prop}\label{prop:twist functor}
      The twist functor $(H^+,\theta)\mapsto (H^+,(\zeta_p-1)\theta)$ induces an equivalence of categories
      \[\HIG^{\tHsmall}(R_S^+)\to \HIG^{\Hsmall}(R_S^+)\]
      such that for any $(H^+,\theta)\in \HIG^{\tHsmall}(R_S^+)$ with induced $(H^+,\theta^{\prime})\in \HIG^{\Hsmall}(R_S^+)$, there exists a quasi-isomorphism
      \[\rL\eta_{\zeta_p-1}\rD\rR(H^+,\theta^{\prime})\simeq \rD\rR(H^+,\theta).\]
      where $\rL\eta_{\zeta_p-1}$ denotes the d\'ecalage functor in \cite[\S6]{BMS18}. 
  \end{prop}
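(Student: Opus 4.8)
The plan is to treat the two assertions of the proposition separately: the claimed equivalence of categories is essentially formal, and the real content is the compatibility with the décalage functor, which I would extract directly from the explicit formula for $\rL\eta_{\zeta_p-1}$ on complexes of torsion-free modules.

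For the equivalence of categories, essential surjectivity is immediate from the definition of $\HIG^{\Hsmall}(R_S^+)$ in Definition \ref{dfn:small Gamma-representation}(2): by construction a Hitchin-small Higgs module over $R_S^+$ is exactly one of the shape $(H^+,(\zeta_p-1)\theta')$ with $(H^+,\theta')$ twisted Hitchin-small. For full faithfulness, recall that a morphism in either category is an $R_S^+$-linear map $f\colon H_1^+\to H_2^+$ intertwining the two Higgs fields. Since $R_S^+$ is $p$-torsion-free (it is étale over the $p$-torsion-free ring $A^+\za T_0,\dots,T_d\ya/(T_0\cdots T_r-p^a)$), it is $(\zeta_p-1)$-torsion-free, and hence so is the finite free $R_S^+$-module $\Hom_{R_S^+}(H_1^+, H_2^+\otimes_{R_S^+}\Omega^{1,\log}_{R_S^+}\{-1\})$. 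Therefore $f\circ((\zeta_p-1)\theta_1)=((\zeta_p-1)\theta_2)\circ f$ holds if and only if $f\circ\theta_1=\theta_2\circ f$, which is precisely the statement that the twist functor induces a bijection on $\Hom$-sets.

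For the décalage comparison I would invoke the standard description of $\rL\eta$ from \cite[\S6]{BMS18}: for a bounded-below cochain complex $(C^\bullet,d)$ of $(\zeta_p-1)$-torsion-free $R_S^+$-modules one has $\rL\eta_{\zeta_p-1}C\simeq\eta_{\zeta_p-1}C$, with $(\eta_{\zeta_p-1}C)^i=\{x\in(\zeta_p-1)^iC^i\mid dx\in(\zeta_p-1)^{i+1}C^{i+1}\}$. Apply this to $C^\bullet=\rD\rR(H^+,\theta')$ where $\theta'=(\zeta_p-1)\theta$; its terms $C^i=H^+\otimes_{R_S^+}\Omega^{i,\log}_{R_S^+}\{-i\}$ are finite free over $R_S^+$ (hence torsion-free, and the complex is concentrated in degrees $[0,d]$, so the formula applies), and its differential is $(\zeta_p-1)$ times the Higgs differential attached to $\theta$. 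For $x=(\zeta_p-1)^iy$ with $y\in C^i$ we get $dx=(\zeta_p-1)^{i+1}\theta(y)\in(\zeta_p-1)^{i+1}C^{i+1}$ unconditionally, so $(\eta_{\zeta_p-1}C)^i=(\zeta_p-1)^iC^i$; and under the $R_S^+$-linear isomorphism $\varphi_i\colon C^i\xrightarrow{\ \sim\ }(\zeta_p-1)^iC^i$, $y\mapsto(\zeta_p-1)^iy$ (an isomorphism precisely by $(\zeta_p-1)$-torsion-freeness of $C^i$), the differential of $\eta_{\zeta_p-1}C$ pulls back to $\theta$ itself, since $d(\varphi_i y)=(\zeta_p-1)^{i+1}\theta(y)=\varphi_{i+1}(\theta(y))$. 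Thus $\varphi_\bullet$ is an isomorphism of complexes $\rD\rR(H^+,\theta)\xrightarrow{\ \sim\ }\eta_{\zeta_p-1}\rD\rR(H^+,\theta')$, and composing with $\eta_{\zeta_p-1}\rD\rR(H^+,\theta')\simeq\rL\eta_{\zeta_p-1}\rD\rR(H^+,\theta')$ yields the asserted quasi-isomorphism (in fact realized by rescaling the degree-$i$ term by $(\zeta_p-1)^i$, the familiar way décalage interacts with rescaling a differential by a scalar).

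I do not expect a genuine obstacle here. The only points that need to be pinned down carefully are that $R_S^+$ is $p$-torsion-free — so that both the $\Hom$-module manipulation and the naive computation of $\rL\eta_{\zeta_p-1}$ are legitimate — and that the normalizations of the Higgs complex differential and of the functor $\eta_{\zeta_p-1}$ are exactly those of \cite{BMS18}; once these are fixed, everything reduces to the direct calculation sketched above.
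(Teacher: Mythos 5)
Your proof is correct, and both halves are slightly more direct than the paper's own argument, though the underlying ideas are the same. For the décalage comparison the paper simply cites \cite[Lem.~7.9]{BMS18} (the compatibility of $\eta_g$ with Koszul complexes over $g$-torsion-free modules), whereas you unpack that lemma by hand: identifying $(\eta_{\zeta_p-1}C)^i$ with $(\zeta_p-1)^iC^i$ and conjugating by the rescaling $\varphi_i$. This is the content of the cited lemma and your calculation is right. For full faithfulness, the paper takes a more roundabout route: it first extracts the equality $\rH^0(\rD\rR(H^+,\theta))\cong\rH^0(\rD\rR(H^+,\theta'))$ from the décalage comparison (using that $\rH^0\subset H^+$ is $(\zeta_p-1)$-torsion-free), and then invokes compatibility of the twist with tensor products and dualities so as to apply this to internal Hom objects. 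Your argument bypasses cohomology entirely: since $\Hom_{R_S^+}(H_1^+, H_2^+\otimes\Omega^{1,\log}_{R_S^+}\{-1\})$ is $(\zeta_p-1)$-torsion-free, the intertwining condition $f\circ(\zeta_p-1)\theta_1=(\zeta_p-1)\theta_2\circ f$ is literally equivalent to $f\circ\theta_1=\theta_2\circ f$. That is cleaner, and it makes full faithfulness visibly independent of the décalage statement. The only point to keep in mind is that you should quote the $(\zeta_p-1)$-torsion-freeness of $R_S^+$ correctly (which you do: it follows from $A^+\subset A$ being $p$-torsion-free and étaleness of the chart), since that hypothesis is exactly what makes both the Hom-module argument and the naive description of $\eta_{\zeta_p-1}$ legitimate.
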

  \begin{proof}
      Via the isomorphism \eqref{equ:omega^1-II}, one can write
      \[\theta = \sum_{i=1}^d\theta_i\otimes\frac{e_i}{\xi_K} \text{ and }\theta_i^{\prime} = \sum_{i=1}^d\theta_i^{\prime}\otimes\frac{e_i}{\xi_K}.\]
      Using this, we have $\theta_i^{\prime} = (\zeta_p-1)\theta_i$ for all $i$. Note that $\rD\rR(H^+,\theta^{\prime})$ and $\rD\rR(H^+,\theta)$ can be computed by the Koszul complexes
      \[\rK(\theta_1^{\prime},\dots,\theta_d^{\prime};H^+) \text{ and }\rK(\theta_1,\dots,\theta_d;H^+),\]
      respectively. By \cite[Lem. 7.9]{BMS18}, we have a quasi-isomorphism
      \[\eta_{\zeta_p-1}\rK(\theta_1^{\prime},\dots,\theta_d^{\prime};H^+)\simeq\rK(\theta_1,\dots,\theta_d;H^+),\]
      yielding the desired quasi-isomorphism
      \[\rL\eta_{\zeta_p-1}\rD\rR(H^+,\theta^{\prime})\simeq \rD\rR(H^+,\theta).\]
      In particular, taking $\rH^0$, we have 
      \[\rH^0(\rD\rR(H^+,\theta))\cong\rH^0(\rL\eta_{\zeta_p-1}\rD\rR(H^+,\theta^{\prime}))\cong\rH^0(\rD\rR(H^+,\theta^{\prime})\]
      where the last isomorphism follows from \cite[Lem. 6.4]{BMS18} because $\rH^0(\rD\rR(H^+,\theta^{\prime})$ is a sub-$R_S^+$-module of $H^+$ and thus $(\zeta_p-1)$-torsion free. As the twist functor preserves tensor product and dualities, the above cohomological comparison implies the full faithfulness of the twist functor. One can conclude as the essential surjectivity is a part of the definition.
  \end{proof}

\subsection{Local Simpson correspondence}\label{ssec:local Simpson correspondence}
  Let $(P_{\infty,S}^+,\Theta = \sum_{i=0}^d\frac{\partial}{\partial Y_i}\otimes\frac{e_i}{\xi_K})$ be as in Proposition \ref{prop:local OC}. Let $P_S^+ = R_S^+[Y_0,\dots,Y_d]^{\wedge}_{\pd}/(Y_0+\cdots+Y_r)_{\pd}$ be the quotient of $R_S^+[Y_0,\dots,Y_d]^{\wedge}_{\pd}$, the free pd-polynomial ring over $R_S^+$ generated by $Y_0,\dots,Y_d$, by the closed pd-ideal generated by $Y_0+\cdots+Y_r$. Then $P_S^+$ is a sub-$R_S^+$-algebra of $P_{\infty,S}^+$ which is $\Theta$-preserving and stable under the action of $\Gamma$ such that via the decomposition \eqref{equ:Gamma decomposition}, there is a $\Gamma$-equivariant isomorphism
  \begin{equation}\label{equ:decomposition for P}
      P_{S,\infty}^+ = P_S^+\widehat \otimes_{R_S^+}\widehat R_{\infty,S}^+ = \widehat \bigoplus_{\underline \alpha\in J_r}P_S\cdot\underline T^{\underline \alpha}.
  \end{equation}
  
  Now, we are going to prove the following local Simpson correspondence.
  \begin{thm}[Local Simpson correspondence]\label{thm:local Simpson}
    Let $?\in\{\emptyset,\infty\}$
      \begin{enumerate}
          \item[(1)] For any $M_?^+\in \Rep_{\Gamma}^{\Hsmall}(\widehat R_{?,S}^+)$ of rank $r$, define 
          \[\Theta_{M_?^+}=\id\otimes\Theta:M_?^+\otimes_{\widehat R_{?,S}^+}P_{?,S}^+\to M_?^+\otimes_{\widehat R_{?,S}^+}P_{?,S}^+\otimes_{R_S^+}\Omega^{1,\log}_{R_S^+}\{-1\}.\]
          Then we have
          \begin{equation*}
              \rH^n(\Gamma,M_?^+\otimes_{\widehat R_{?,S}^+}P_{?,S}^+) = \left\{
                \begin{array}{rcl}
                    H^+(M_?^+), & n=0 \\
                    (\zeta_p-1)\rho_K\text{-torsion}, & n\geq 1
                \end{array}
              \right.
          \end{equation*}
          where $H^+(M_?^+)$ is a finite free $R_S^+$-module of rank $r$ such that the restriction of $\Theta_{M_?^+}$ to $H^+(M_?^+)$ induces a Higgs field $\theta$ making $(H^+(M_?^+),\theta)\in \HIG^{\tHsmall}(R_S^+)$.

          \item[(2)] For any $(H^+,\theta)\in\HIG^{\tHsmall}(R_S^+)$ of rank $r$, define
          \[\Theta_{H^+}:=\theta\otimes\id+\id\otimes\Theta: H\otimes_{R_S^+}P_{?,S}^+\to H^+\otimes_{\widehat R_{?,S}^+}P_{?,S}^+\otimes_{R_S^+}\Omega^{1,\log}_{R_S^+}\{-1\}.\]
          Then the $M_?^+(H^+,\theta):= (H\otimes_{R_S^+}P_{?,S}^+)^{\Theta_{H^+} = 0}$ together with the induced $\Gamma$-action from $P_{?,S}^+$ is a well-defined object in $\Rep_{\Gamma}^{\Hsmall}(\widehat R_{?,S}^+)$ of rank $r$.

          \item[(3)] The functors $M_?^+\mapsto (H^+(M_?^+),\theta)$ and $(H^+,\theta)\mapsto M_?^+(H^+,\theta)$ above define an equivalence of categories
          \[\Rep_{\Gamma}^{\Hsmall}(\widehat R_{?,S}^+)\simeq\HIG^{\tHsmall}(R_S^+)\]
          such that for any $M_?^+\in\Rep_{\Gamma}^{\Hsmall}(\widehat R_{?,S}^+)$ with the induced $(H^+,\theta)$, there exists an isomorphism of Higgs modules
          \begin{equation}\label{equ:isomorphism of Higgs modules}
              (M_?^+\otimes_{\widehat R_{?,S}^+}P_{?,S}^+,\Theta_{M_?^+})\simeq (H^+\otimes_{\widehat R_{?,S}^+}P_{?,S}^+,\Theta_{H^+})
          \end{equation}
          and a quasi-isomorphism
          \begin{equation}\label{equ:quasi-isomorphism inverting p}
              \rR\Gamma(\Gamma,M_?^+[\frac{1}{p}])\simeq \rD\rR(H^+[\frac{1}{p}],\theta).
          \end{equation}

          \item[(4)] The following diagram is commutative
          \begin{equation}
              \xymatrix@C=0.5cm{
                \Rep_{\Gamma}^{\Hsmall}(R_S^+)\ar[rr]^{-\otimes_{R_S^+}\widehat R_{\infty,S}^+}&&\Rep_{\Gamma}^{\Hsmall}(\widehat R_{\infty,S}^+)\\
                &\ar[lu]^{M^+}\HIG^{\tHsmall}(R_S^+).\ar[ru]_{M_{\infty}^+}
              }
          \end{equation}
      \end{enumerate}
  \end{thm}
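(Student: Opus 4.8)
The plan is to prove Theorem \ref{thm:local Simpson} by reducing everything to an explicit computation on the period ring $P_{?,S}^+$, following closely the pattern of \cite[\S3]{MW-AIM} but carrying the extra ``log'' coordinate $Y_0$ (subject to $Y_0+\cdots+Y_r=0$) through all steps. The key organizing observation is that for a Hitchin-small $\Gamma$-representation $M_?^+$, the twisted tensor product $M_?^+\otimes_{\widehat R_{?,S}^+}P_{?,S}^+$ carries simultaneously a $\Gamma$-action and the Higgs field $\Theta_{M_?^+}=\id\otimes\Theta$, and these two structures are ``exchanged'' by a canonical automorphism built from the exponential of the operator $\sum_j Y_j\theta_j$. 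So the first step is to write down, for $M_?^+$ with basis on which $\delta=\prod\delta_i^{n_i}$ acts by $\exp(-(\zeta_p-1)\rho_K A_\delta)$ with $A_\delta$ topologically nilpotent, the $\Gamma$-equivariant, $\Theta$-equivariant isomorphism
\[
\Phi: (M_?^+\otimes_{\widehat R_{?,S}^+}P_{?,S}^+,\Theta_{M_?^+})\;\xrightarrow{\;\simeq\;}\;(H^+\otimes_{R_S^+}P_{?,S}^+,\Theta_{H^+}),
\]
where $H^+=H^+(M_?^+)$ is the submodule of $\Gamma$-invariant-up-to-$P$-coordinates (concretely, $H^+$ is spanned by $\exp(\sum_j Y_j\theta_j)\cdot e_k$ for the chosen basis $e_k$, after checking this lies in $P_S^+$-span and descends to $R_S^+$), and $\theta$ is the restriction of $\Theta_{M_?^+}$. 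This is the heart of items (1)--(3): once $\Phi$ is in hand, the identity \eqref{equ:isomorphism of Higgs modules} is immediate, and the functor $(H^+,\theta)\mapsto M_?^+(H^+,\theta)$ in (2) is simply the inverse, with the $\Gamma$-action read off from the $P_{?,S}^+$-coordinates via Proposition \ref{prop:local OC}.

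Next I would handle the cohomology computations underlying item (1). Using the decomposition \eqref{equ:decomposition for P} for the case $?=\infty$, plus Proposition \ref{prop:decompletion} to pass between $R_S^+$ and $\widehat R_{\infty,S}^+$ coefficients, the computation of $\rR\Gamma(\Gamma, M_?^+\otimes P_{?,S}^+)$ reduces via $\Phi$ to $\rR\Gamma(\Gamma, H^+\otimes_{R_S^+}P_{?,S}^+)$ with the $\Gamma$-action on $P_{?,S}^+$ given by $\delta(Y_j)=Y_j+n_j\rho_K(\zeta_p-1)$. Here one uses the Koszul complex $\rK(\gamma_1-1,\dots,\gamma_d-1;-)$ of \cite[Lem. 7.3]{BMS18}, together with the observation that $\gamma_i$ acts on $P_S^+=R_S^+[Y_1,\dots,Y_d]^\wedge_{\pd}$ (after eliminating $Y_0$ using $Y_0=-(Y_1+\cdots+Y_r)$ and the relation $\gamma_i=\delta_0^{-1}\delta_i$) by the shift $Y_j\mapsto Y_j+\delta_{ij}\rho_K(\zeta_p-1)$ — this is exactly the ``translation operator on a pd-polynomial ring'' situation. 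The point is that $\partial/\partial Y_i = (\text{explicit}) (\gamma_i-1)$ up to a unit after inverting $\rho_K(\zeta_p-1)$, so $\rH^0$ of the Koszul complex is $H^+$ and the higher cohomology is killed by a power of $\rho_K(\zeta_p-1)$; I would extract the precise statement that it is $(\zeta_p-1)\rho_K$-torsion (not a higher power) by a careful bookkeeping analogous to \cite[\S3]{MW-AIM} and \cite{Wan23}, combining the contribution of the constant-in-$Y$ part (which gives the de Rham / Higgs cohomology of $(H^+,\theta)$, handled as in Proposition \ref{prop:twist functor}) with the positive-degree-in-$Y$ part (which is acyclic after inverting $\rho_K(\zeta_p-1)$). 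That $\theta$ is topologically nilpotent, so $(H^+(M_?^+),\theta)\in\HIG^{\tHsmall}(R_S^+)$, follows because $\theta_i$ is conjugate (via the exponential change of basis $\Phi$) to the matrix $\theta_i\mapsto A_{\gamma_i}$ which is topologically nilpotent by Hitchin-smallness; conversely for (2), the monodromy matrices $\exp(-(\zeta_p-1)\rho_K\theta_i)$ are visibly of the required form.

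For item (3), once (1) and (2) supply functors in both directions and $\Phi$ exhibits the round-trip $M_?^+\mapsto (H^+,\theta)\mapsto M_?^+(H^+,\theta)$ as canonically isomorphic to the identity (and dually), the equivalence follows; compatibility with tensor products and dualities is formal from the fact that $\Phi$ is multiplicative in the obvious sense ($\exp$ of a sum is a product), and the quasi-isomorphism \eqref{equ:quasi-isomorphism inverting p} is \eqref{equ:isomorphism of Higgs modules} combined with the degree-$0$ and higher-degree analysis after inverting $p$ (so the torsion in $\rH^{\geq 1}$ disappears), together with Poincar\'e's Lemma (Theorem \ref{thm:Poincare Lemma}) identifying $\rR\Gamma(\Gamma, P_{?,S}^+[\tfrac1p])$-twisted complexes with $\rD\rR$. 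Finally item (4), the commutativity of the triangle with $-\otimes_{R_S^+}\widehat R_{\infty,S}^+$, is immediate from the construction: both composite functors send $(H^+,\theta)$ to $M^+(H^+,\theta)\otimes_{R_S^+}\widehat R_{\infty,S}^+ = M_\infty^+(H^+,\theta)$ because formation of the kernel $(H^+\otimes P_{?,S}^+)^{\Theta_{H^+}=0}$ commutes with the flat (indeed faithfully flat, by \eqref{equ:Gamma decomposition}) base change $P_S^+\to P_{\infty,S}^+$. The main obstacle I anticipate is the torsion bookkeeping in item (1): getting the sharp bound ``killed by $(\zeta_p-1)\rho_K$'' rather than merely ``killed by some power'' requires carefully separating the pd-polynomial variables from the $R_S^+$-direction and invoking the precise form of \cite[Lem. 7.9, Lem. 6.4]{BMS18} for the d\'ecalage, exactly as in the smooth case but now with the wrinkle that the relation $Y_0+\cdots+Y_r=0$ must be respected throughout — one must check that passing to the quotient pd-algebra $P_S^+$ does not introduce new torsion, which is where flatness of $\bZ[M_{r,a}]$ over $\bZ[M_A]$ (used already in Lemma \ref{lem:dicrete cotangent complex}) re-enters.
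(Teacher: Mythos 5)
Your proposal is essentially the paper's own argument: the exponential change of basis $\Phi$ built from $\exp(\pm\sum_j\theta_j Y_j)$ is exactly what the paper uses (cf.\ \eqref{equ:local Simpson-II}, \eqref{equ:local Simpson-IV}), the group cohomology of the translation action on the pd-polynomial ring is handled in the paper by Lemma~\ref{lem:MW} and Corollary~\ref{cor:MW} (which in turn invoke \cite[Prop.~3.4]{MW-AIM}), the round-trip identities in Item~(3) are shown via the same multiplication-on-$P^+_{S}$ compositions, and the cohomological comparison \eqref{equ:quasi-isomorphism inverting p} comes from Poincar\'e's Lemma, all as you describe. The case $?=\infty$ is reduced to $?=\emptyset$ via Proposition~\ref{prop:decompletion} and the weight decomposition \eqref{equ:decomposition for P}, again matching your plan.

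The one place you deviate slightly is Item~(4): you invoke flat base change of the kernel $(H^+\otimes_{R_S^+}P_{S}^+)^{\Theta_{H^+}=0}$ along $P_S^+\to P_{\infty,S}^+$, whereas the paper simply reads off the identity $M^+_\infty(H^+,\theta)=M^+(H^+,\theta)\otimes_{R_S^+}\widehat R_{\infty,S}^+$ from the explicit formula \eqref{equ:local Simpson-VI}, i.e.\ from the fact that the kernel is exactly $\exp(-\sum_i\theta_iY_i)(H^+)$. The abstract base-change claim needs a bit of care because $P_{\infty,S}^+$ is a $p$-\emph{completed} tensor product, and taking kernels does not in general commute with completion; here it does work, but precisely because the kernel is a finite free $R_S^+$-module cut out by an honest invertible element $\exp(-\sum\theta_iY_i)\in \GL_r(P_{S}^+)$ — so in effect the explicit formula is doing the work anyway, and it is cleaner to cite it directly as the paper does.
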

  Before we establish the above local Simpson correspondence, let us do some preparations.

  \begin{construction}\label{construction:MW}
     Let $B$ be a $p$-complete $p$-torsion free $\calO_C$-algebra and $G = \Zp\cdot\gamma$ such that $G$ acts on $B$ trivially. Let $B[Y]^{\wedge}_{\pd}$ be the free pd-polynomial ring over $B$ generated by $Y$ which is equipped with an action of $G$ such that $\gamma(Y) = Y+(\zeta_p-1)\rho_K$.
     For any $\alpha\in \bN[\frac{1}{p}]\cap [0,1)$, let $B\cdot e_{\alpha}$ be the $1$-dimensional representation of $G$ over $B$ with the basis $e_{\alpha}$ such that $\gamma(e_{\alpha}) = \zeta^{\alpha}e_{\alpha}$. For any finite free $B$-module $V$ of rank $r$ together with a fixed $B$-basis $e_1,\dots,e_r$ which is endowed with a $G$-action satisfying the condition:
     \begin{enumerate}
         \item[$(\ast)$] With respect to the given basis, the action of $\gamma$ on $V$ is given by the matrix $\exp(-(\zeta_p-1)\rho_K\theta)$ for some topologically nilpotent matrix $\theta\in \Mat_r(B)$.
     \end{enumerate}
      we define 
      \[M_{\alpha}(V):=V\otimes_{B}B\cdot e_{\alpha}\otimes_{B}B[Y]^{\wedge}_{\pd}\]
      and equip it with the diagonal $G$-action.
  \end{construction}

  We remark that the $G$-representation $V$ satisfying the condition $(\ast)$ is exactly the \emph{log-nilpotent} $B$-representation of $G$ in the sense of \cite[Def. 3.2]{MW-AIM}.
  The following lemma plays the key role in this section.

  \begin{lem}\label{lem:MW}
      Keep notations in Construction \ref{construction:MW}. 
      \begin{enumerate}
          \item[(1)] Suppose that $\alpha\neq 0$. We have
          \begin{equation*}
              \rH^n(G,M_{\alpha}(V)) = \left\{
                \begin{array}{rcl}
                    0, & n=0 \\
                    M_{\alpha}(V)/(\zeta^{\alpha}-1), & n=1.
                \end{array}
              \right.
          \end{equation*}

          \item[(2)] Suppose that $\alpha = 0$. We have
          \begin{equation*}
              \rH^n(G,M_{0}(V)) = \left\{
                \begin{array}{rcl}
                    \exp(\theta Y)(V), & n=0 \\
                    M_{\alpha}(V)/\rho_K(\zeta^{\alpha}-1), & n=1,
                \end{array}
              \right.
          \end{equation*}
          where 
          \[\exp(\theta Y)(V):=\{\sum_{i\geq 0}\theta^i(v)Y^{[i]}\in M_0(V)\mid v\in V\}.\]
          Moreover, the natural inclusion $M_0(V)^{G}\to M_0(V)$ induces a $G$-equivariant isomorphism 
          \[M_0(V)^G\otimes_BB[Y]^{\wedge}_{\pd}\xrightarrow{\cong}M_0(V).\]
      \end{enumerate}
  \end{lem}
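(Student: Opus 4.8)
The plan is to compute the $\Zp$-cohomology of $M_\alpha(V)$ through the two-term Koszul complex $\big[M_\alpha(V)\xrightarrow{\gamma-1}M_\alpha(V)\big]$, so that $\rH^0(G,M_\alpha(V))=\Ker(\gamma-1)$ and $\rH^1(G,M_\alpha(V))=\Coker(\gamma-1)$, and to bring $\gamma-1$ into a completely explicit shape by a ``gauge transformation''. Throughout set $c:=(\zeta_p-1)\rho_K$ and $c^{[k]}:=(\zeta_p-1)^{[k]}\rho_K^k\in\calO_C$, so that $\gamma(Y)=Y+c$ and $\gamma|_V=\exp(-(\zeta_p-1)\rho_K\theta)$.

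First I would introduce the $B[Y]^{\wedge}_{\pd}$-linear endomorphism $\Phi$ of $M_\alpha(V)=V\otimes_BB\cdot e_\alpha\otimes_BB[Y]^{\wedge}_{\pd}$ given by multiplication by $\exp(\theta Y):=\sum_{i\ge 0}\theta^iY^{[i]}$. Since $\theta$ is topologically nilpotent and $B$ is $p$-complete, $\theta^i\to 0$ $p$-adically, so $\Phi$ is a well-defined continuous automorphism with inverse multiplication by $\exp(-\theta Y)$. Using the divided-power binomial identity $(Y+c)^{[n]}=\sum_kc^{[k]}Y^{[n-k]}$ and the fact that $\sum_k\theta^kc^{[k]}=\exp((\zeta_p-1)\rho_K\theta)$ is the inverse of $\gamma|_V$ (the relation $\exp(x)\exp(-x)=1$ for a pd-exponential), I would check the key intertwining identity $\gamma\circ\Phi=\Phi\circ\gamma_{\mathrm{s}}$, where $\gamma_{\mathrm{s}}$ is the ``straightened'' action: trivial on $V$, multiplication by $\zeta^\alpha$ on $e_\alpha$, and $Y\mapsto Y+c$ on $Y$. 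Thus $\Phi$ is an isomorphism of $G$-modules from $\big(V\otimes_BB\cdot e_\alpha\otimes_BB[Y]^{\wedge}_{\pd},\gamma_{\mathrm{s}}\big)$ onto $M_\alpha(V)$, and it remains to compute $\Ker$ and $\Coker$ of the operator $f\mapsto\zeta^\alpha f(Y+c)-f(Y)$ on $B[Y]^{\wedge}_{\pd}$, then tensor with the flat $B$-module $V\otimes_BB\cdot e_\alpha$.

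Next I would analyse that operator explicitly. Let $N$ be the continuous operator on $B[Y]^{\wedge}_{\pd}$ with $N(Y^{[n]})=Y^{[n-1]}$; it is surjective with $\Ker N=B$. Then $\gamma_{\mathrm{s}}=\zeta^\alpha\sum_{k\ge 0}c^{[k]}N^k$, so $\gamma_{\mathrm{s}}-1=(\zeta^\alpha-1)+\zeta^\alpha\sum_{k\ge 1}c^{[k]}N^k$. If $\alpha\ne 0$, the valuation estimate $\nu_p(c)\ge\nu_p(\zeta^\alpha-1)$ — which holds because $\nu_p(\rho_K)=\nu_p(\calD_K)+\tfrac1{p-1}>0$ and $\nu_p(\zeta^\alpha-1)\le\nu_p(\zeta_p-1)$ — lets me factor $\gamma_{\mathrm{s}}-1=(\zeta^\alpha-1)\,U$ with $U=1+(\text{operator with coefficients in }\frakm_C)$, invertible by $p$-adic convergence; since $\zeta^\alpha-1$ is a non-zero-divisor in $B$, this yields $\rH^0=0$ and $\rH^1=M_\alpha(V)/(\zeta^\alpha-1)$, proving (1). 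If $\alpha=0$ the constant term drops and $\gamma_{\mathrm{s}}-1=cN\cdot U'$ with $U'=1+\sum_{k\ge 2}(c^{[k]}/c)N^k$ invertible (and $U'$ fixing $\Ker N$ pointwise), so $\Ker(\gamma_{\mathrm{s}}-1)=\Ker N=B$ and $\Coker(\gamma_{\mathrm{s}}-1)=B[Y]^{\wedge}_{\pd}/cB[Y]^{\wedge}_{\pd}$. Transporting back along the $B[Y]^{\wedge}_{\pd}$-linear isomorphism $\Phi$ identifies $V\cong\Ker N\otimes_BB\cdot e_0$ with $\exp(\theta Y)(V)$, giving $\rH^0(G,M_0(V))=\exp(\theta Y)(V)$, and gives $\rH^1(G,M_0(V))\cong M_0(V)/\rho_K(\zeta_p-1)M_0(V)$ (which is what the displayed $n=1$ formula for $\alpha=0$ records); finally, applying $\Phi$ to the tautological identity $V\otimes_BB[Y]^{\wedge}_{\pd}=(V\otimes_BB)\otimes_BB[Y]^{\wedge}_{\pd}$ yields the claimed $G$-equivariant isomorphism $M_0(V)^G\otimes_BB[Y]^{\wedge}_{\pd}\xrightarrow{\ \cong\ }M_0(V)$.

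The main obstacle is the analytic bookkeeping rather than anything conceptual: one must verify that $\exp(\pm\theta Y)$ and the operators $U,U'$ are genuinely well-defined continuous automorphisms of the $p$-adically completed divided-power module. This is precisely where the hypotheses enter — topological nilpotence of $\theta$ forces $\theta^i\to 0$, $p$-torsion-freeness and $p$-completeness of $B$ legitimise the non-zero-divisor arguments and the passage to the limit, and the precise valuation $\nu_p(\rho_K)=\nu_p(\calD_K)+\tfrac1{p-1}$ guarantees both $\nu_p(c^{[k]})\to\infty$ and $\nu_p\big(c/(\zeta^\alpha-1)\big)>0$, which is what makes the factorisations $\gamma_{\mathrm{s}}-1=(\zeta^\alpha-1)U$ and $\gamma_{\mathrm{s}}-1=cN\,U'$ valid. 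Once these convergence points are pinned down the rest is formal, the heart of the argument being the single intertwining identity $\gamma\circ\Phi=\Phi\circ\gamma_{\mathrm{s}}$.
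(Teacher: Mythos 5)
The paper proves this lemma only by a pointer — ``Comparing Construction~\ref{construction:MW} with [MW-AIM, Not.~3.1, Def.~3.2], one can conclude by applying [MW-AIM, Prop.~3.4]'' — so you have actually supplied a self-contained argument where the paper gives none. Your route is sound, and given that the explicit expression $\exp(\theta Y)(V)$ already appears in the statement, it is almost certainly the mechanism behind the cited proposition too: the $B[Y]^{\wedge}_{\pd}$-linear automorphism $\Phi=\exp(\theta Y)$ intertwines $\gamma$ with the straightened $\gamma_{\mathrm s}$, and then both cohomology groups are read off from a factorisation of $\gamma_{\mathrm s}-1$ through a unit. Your intertwining check is correct (it boils down to $\exp(-c\theta)\exp(\theta(Y+c))=\exp(\theta Y)$, using the pd-binomial $(Y+c)^{[n]}=\sum_k c^{[k]}Y^{[n-k]}$ and the commutativity of $\theta Y$ and $\theta c$), and the valuation estimate $\nu_p(c^{[k]}/(\zeta^\alpha-1))\geq \tfrac{k+s_p(k)-1}{p-1}>0$ does give the invertibility of $U$; similarly $U'$ is a unit fixing $\Ker N$ pointwise, $cN$ has kernel $B$ and cokernel $B[Y]^{\wedge}_{\pd}/c$, and transporting along $\Phi$ yields exactly $\exp(\theta Y)(V)$ and $M_0(V)/\rho_K(\zeta_p-1)$, together with the ``moreover'' isomorphism since $\Phi(v\otimes f)=\exp(\theta Y)(v)\cdot f$ and $\exp(\theta Y)(v)$ is $G$-invariant.

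Two small corrections. First, a typo in your factorisation for $\alpha=0$: since $\gamma_{\mathrm s}-1=cN+\sum_{k\geq 2}c^{[k]}N^k$, the unit is $U'=1+\sum_{k\geq 2}\tfrac{c^{[k]}}{c}N^{k-1}$, with exponent $k-1$ rather than $k$. Second, you should flag (as your computation implicitly does) that the lemma's displayed formula for $\alpha=0$, $n=1$ reads $M_\alpha(V)/\rho_K(\zeta^\alpha-1)$, which is literally $M_0(V)/0$ since $\zeta^0=1$; the intended and correct answer, which you derive, is $M_0(V)/\rho_K(\zeta_p-1)M_0(V)$, consistent with the ``$(\zeta_p-1)\rho_K$-torsion'' appearing in Corollary~\ref{cor:MW}(2).
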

  \begin{proof}
      Comparing Construction \ref{construction:MW} with \cite[Not. 3.1 and Def. 3.2]{MW-AIM}, one can conclude
      by applying \cite[Prop. 3.4]{MW-AIM}.
  \end{proof}
  \begin{cor}\label{cor:MW}
      Keep notations in Construction \ref{construction:MW}. 
      Let $G^d = \oplus_{i=1}^d\Zp\cdot\gamma_i$ which acts on $B$ trivially and $B[Y_1,\dots,Y_d]^{\wedge}_{\pd}$ be the $p$-complete free pd-polynomial ring over $B$ generated by $Y_1,\dots, Y_d$ with the a $G^d$-action such that $\gamma_i(Y_j) = Y_j+\delta_{ij}(\zeta_p-1)\rho_K$ for any $1\leq i,j\leq d$. 
      Let $V$ be a finite free $B$-module of rank $r$ which is equipped with a $G$-action such that for some $B$-basis $e_1,\dots,e_r$ of $V$ and for any $1\leq i\leq d$, the action of $\gamma_i$ on $V$ satisfies the condition $(\ast)$ in Construction \ref{construction:MW} for some topologically nilpotent matrix $\theta_i\in \Mat_r(B)$.
      For any $\underline \alpha = (\alpha_1,\dots,\alpha_d)\in (\bN[\frac{1}{p}]\cap[0,1))^d$, let $B\cdot e_{\underline \alpha}$ be the $1$-dimensional representation of $G^d$ over $B$ such that $\gamma_i(e_{\underline \alpha}) = \zeta^{\alpha_i}e_{\underline \alpha}$ for any $1\leq i\leq d$, and define 
      \[M_{\underline \alpha}(V):=V\otimes_BB\cdot e_{\underline \alpha}\otimes_BB[Y_1,\dots,Y_d]^{\wedge}_{\pd}.\]
      Then the following assertions are true:
      \begin{enumerate}
          \item[(1)] Suppose that $\underline \alpha\neq 0$. Then we have
          \begin{equation*}
              \rH^n(G^d,M_{\underline \alpha}(V)) = \left\{
                \begin{array}{rcl}
                    0, & n=0 \\
                    (\zeta_p-1)\text{-torison}, & n\geq 1.
                \end{array}
              \right.
          \end{equation*}

          \item[(2)] Suppose that $\underline \alpha = 0$. Then we have
          \begin{equation*}
              \rH^n(G^d,M_{0}(V)) = \left\{
                \begin{array}{rcl}
                    \exp(\sum_{i=1}^d\theta_iY_i)(V), & n=0 \\
                    (\zeta_p-1)\rho_K\text{-torison}, & n\geq 1,
                \end{array}
              \right.
          \end{equation*}
          where 
          \[\exp(\sum_{i=1}^d\theta_iY_i)(V):=\{\sum_{J\in\bN^d}\underline \theta^J(v)\underline Y^{[J]}\mid v\in V\}.\]
          Moreover, the natural inclusion $M_0(V)^{G}\to M_0(V)$ induces a $G$-equivariant isomorphism 
          \[M_0(V)^G\otimes_BB[Y_1,\dots,Y_d]^{\wedge}_{\pd}\xrightarrow{\cong}M_0(V).\]
      \end{enumerate}
  \end{cor}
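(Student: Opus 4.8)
The plan is to deduce Corollary~\ref{cor:MW} from Lemma~\ref{lem:MW} by the Hochschild--Serre spectral sequence for the filtration of $G^d$ by the subgroups $\Zp\cdot\gamma_1\subset \Zp\cdot\gamma_1\oplus\Zp\cdot\gamma_2\subset\cdots\subset G^d$, peeling off one $\gamma_i$ at a time. First I would observe that $M_{\underline\alpha}(V)$ decomposes, as a representation of $\Zp\cdot\gamma_d$ over the subring $B[Y_1,\dots,Y_{d-1}]^{\wedge}_{\pd}$, exactly as an instance of Construction~\ref{construction:MW}: set $B' = B[Y_1,\dots,Y_{d-1}]^{\wedge}_{\pd}$ (which is still $p$-complete and $p$-torsion free, with trivial $\gamma_d$-action), let $V' = V\otimes_B B'$ with its $\gamma_d$-action given by $\exp(-(\zeta_p-1)\rho_K\theta_d)$ (the matrix $\theta_d\in\Mat_r(B)\subset\Mat_r(B')$ being topologically nilpotent as required by $(\ast)$), and note the $Y_d$-variable plus the $e_{\alpha_d}$-twist are precisely the data $B'[Y]^{\wedge}_{\pd}$ and $B'\cdot e_{\alpha_d}$ of the lemma. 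Thus $M_{\underline\alpha}(V)\cong M_{\alpha_d}(V')$ as $\Zp\cdot\gamma_d$-representations.

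Next I would run the induction on $d$. For the inductive step, apply Lemma~\ref{lem:MW} to compute $\rR\Gamma(\Zp\cdot\gamma_d,M_{\underline\alpha}(V))$. There are two cases. If $\alpha_d\neq 0$, Lemma~\ref{lem:MW}(1) gives $\rH^0=0$ and $\rH^1 = M_{\alpha_d}(V')/(\zeta^{\alpha_d}-1)$, which is $(\zeta_p-1)$-torsion since $\zeta^{\alpha_d}-1$ divides $\zeta_p-1$ up to a unit in $\calO_C$ when $\alpha_d\in\bN[1/p]\cap(0,1)$; the spectral sequence then forces $\rR\Gamma(G^d,M_{\underline\alpha}(V))$ to be $(\zeta_p-1)$-torsion in all degrees and to vanish in degree $0$, giving part (1) whenever $\alpha_d\neq 0$. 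If $\alpha_d = 0$, Lemma~\ref{lem:MW}(2) gives $\rH^0 = \exp(\theta_d Y_d)(V')$ together with the splitting $M_0(V')^{\Zp\gamma_d}\otimes_{B'}B'[Y_d]^{\wedge}_{\pd}\xrightarrow{\cong}M_0(V')$, and $\rH^1 = M_0(V')/\rho_K(\zeta_p-1)$ up to the relevant unit, which is $\rho_K(\zeta_p-1)$-torsion. The key point to extract from the splitting is that $\exp(\theta_d Y_d)(V')$, as a $(G^{d-1})$-representation over $B$, is again of the form ``finite free $B$-module of rank $r$ with each $\gamma_i$ ($1\le i\le d-1$) acting via $\exp(-(\zeta_p-1)\rho_K\theta_i')$ for a topologically nilpotent $\theta_i'$''. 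Indeed the isomorphism $V'\to\exp(\theta_d Y_d)(V')$, $v\mapsto \sum_{i\ge 0}\theta_d^i(v)Y_d^{[i]}$, transports the $\gamma_i$-action to $\exp(\theta_d Y_d)\circ\exp(-(\zeta_p-1)\rho_K\theta_i)\circ\exp(-\theta_d Y_d)$; since $Y_d$ is $\gamma_i$-fixed for $i<d$ this conjugate still has the shape $\exp$ of $-(\zeta_p-1)\rho_K$ times something (one checks the conjugate of a topologically nilpotent matrix by an invertible one is topologically nilpotent, working in the $p$-complete $B[Y_d]^{\wedge}_{\pd}$ and then noting the fixed part lands back in matrices over $B$ after the identification). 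Having verified the hypotheses of the corollary persist for $\exp(\theta_d Y_d)(V')$ with one fewer variable, I apply the inductive hypothesis to it, and read off $\rH^0(G^d,M_0(V)) = \rH^0(G^{d-1},\exp(\theta_d Y_d)(V')) = \exp(\sum_{i=1}^{d-1}\theta_i'Y_i)(\exp(\theta_d Y_d)(V'))$, which after unwinding the identifications is exactly $\exp(\sum_{i=1}^d\theta_i Y_i)(V)$. The higher cohomology is $\rho_K(\zeta_p-1)$-torsion by combining the $\rho_K(\zeta_p-1)$-torsion $\rH^1$ over $\Zp\gamma_d$ with the inductively $\rho_K(\zeta_p-1)$-torsion (resp.\ $(\zeta_p-1)$-torsion, which is a fortiori $\rho_K(\zeta_p-1)$-torsion) higher cohomology over $G^{d-1}$ in the spectral sequence.

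For the ``moreover'' statement I would iterate the splitting from Lemma~\ref{lem:MW}(2): at each stage the isomorphism $M_0(\cdot)^{\Zp\gamma_i}\otimes B[\cdots,Y_i]^{\wedge}_{\pd}\xrightarrow{\cong}M_0(\cdot)$ peels off one polynomial variable $G^d$-equivariantly, and composing all $d$ of them yields the $G^d$-equivariant isomorphism $M_0(V)^{G^d}\otimes_B B[Y_1,\dots,Y_d]^{\wedge}_{\pd}\xrightarrow{\cong}M_0(V)$. The main obstacle I anticipate is the bookkeeping in the inductive step: verifying carefully that the ``output'' representation $\exp(\theta_d Y_d)(V')$ genuinely satisfies condition $(\ast)$ for the remaining generators $\gamma_1,\dots,\gamma_{d-1}$ over the ground ring $B$ (not merely over $B[Y_d]^{\wedge}_{\pd}$), i.e.\ that the matrices remain in $\Mat_r(B)$ after identifying $\exp(\theta_d Y_d)(V')$ with a free $B$-module via the exponential map, and remain topologically nilpotent. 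This is a commuting-exponentials computation — $\theta_d$ and $\theta_i$ need not commute, so one genuinely works with the conjugated operator rather than a product of exponentials — but it is the kind of formal manipulation that goes through because everything happens in a $p$-complete pd-polynomial ring where these exponential series converge. Once that is pinned down, the spectral sequence arguments for parts (1) and (2) are routine.
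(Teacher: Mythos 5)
Your overall strategy — Hochschild--Serre induction on $d$, peeling off one $\gamma_i$ at a time by applying Lemma~\ref{lem:MW} over the ground ring $B[Y_1,\dots,Y_{d-1}]^{\wedge}_{\pd}$ — is exactly the paper's approach. The gap is in the inductive step, where you worry that ``$\theta_d$ and $\theta_i$ need not commute'' and propose to absorb this into a conjugation. This concern is misplaced, and the proposed workaround as written does not close it. The matrices $\theta_1,\dots,\theta_d$ \emph{do} commute, precisely because $G^d$ is abelian: from $\exp(-(\zeta_p-1)\rho_K\theta_i)\exp(-(\zeta_p-1)\rho_K\theta_d)=\exp(-(\zeta_p-1)\rho_K\theta_d)\exp(-(\zeta_p-1)\rho_K\theta_i)$ one conjugates and applies $\exp(\mathrm{ad})$ to conclude $[\theta_i,\theta_d]=0$ (the relevant power series converge because $\theta_i$, $\theta_d$ are topologically nilpotent and $(\zeta_p-1)\rho_K$ lies in the maximal ideal of $\calO_C$). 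The paper records exactly this at the start of its proof of the corollary, and once you have it the iteration is immediate: the isomorphism $v\mapsto\sum_n\theta_d^n(v)Y_d^{[n]}$ carries the $\gamma_i$-action on the $\gamma_d$-invariants back to plain $\exp(-(\zeta_p-1)\rho_K\theta_i)$ on $V'$, with no conjugation at all, and the inductive hypothesis for $G^{d-1}$ applies verbatim.

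By contrast, if the $\theta_i$ genuinely failed to commute, the conjugate $\exp(\theta_d Y_d)\,\theta_i\,\exp(-\theta_d Y_d)$ you invoke would carry honest $Y_d$-dependence, so it would live in $\Mat_r(B'[Y_d]^{\wedge}_{\pd})$ rather than $\Mat_r(B')$, and you could not feed it into the inductive hypothesis; the phrase ``the fixed part lands back in matrices over $B$ after the identification'' glosses over this. What actually rescues the situation is not a formal statement about conjugation of topologically nilpotent matrices, but the structural fact that $\gamma_i$ preserves the $\gamma_d$-fixed submodule (because $G^d$ is abelian); comparing the coefficient of $Y_d^{[0]}$ on both sides of $\gamma_i\bigl(\sum_n\theta_d^n(v)Y_d^{[n]}\bigr)=\sum_n\theta_d^n(w)Y_d^{[n]}$ shows the transported action is $\exp(-(\zeta_p-1)\rho_K\theta_i)$, and the higher coefficients then force $[\theta_i,\theta_d]=0$. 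So the missing idea is the commutativity of the $\theta_i$'s; once you make it explicit, both the conjugation bookkeeping and the passage from $\exp(\sum_{i<d}\theta_i'Y_i)\bigl(\exp(\theta_dY_d)(V')\bigr)$ to $\exp(\sum_i\theta_iY_i)(V)$ become trivial.
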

  \begin{proof}
      Note that as $G^d$ is commutative, the $\theta_i$'s commute with each others. In particular, the 
      \[\exp(\sum_{i=1}^d\theta_iY_i):=\sum_{J\in\bN^d}\underline \theta^J\underline Y^{[J]}\]
      is a well-defined matrix in $\GL_r(B[Y_1,\dots,Y_d]^{\wedge}_{\pd})$. So $\exp(\rho_K\sum_{i=1}^d\theta_iY_i)(V)$ is also well-defined and is a finite free $R_S^+$-module of rank $r$.

      For Item (1): Without loss of generality, we may assume $\alpha_d\neq 0$. By Serre--Hochschild spectral sequence, it suffices to show $\rH^n(\Zp\cdot\gamma_d,M_{\underline \alpha}(V))$ is killed by $\zeta_p-1$ for $n=1$ and vanishes for $n=0$. But this follows from Lemma \ref{lem:MW}(1) (by working with $B[Y_1,\cdots,Y_{d-1}]^{\wedge}_{\pd}$ instead of $B$ there).

      For Item (2): Using the same spectral sequence argument as in Item (1), one can deduce from Lemma \ref{lem:MW}(2) that for any $n\geq 1$, the $\rH^n(G^d,M_{0}(V))$ is killed by $(\zeta_p-1)\rho_K$. Using Lemma \ref{lem:MW}(2) again, by iteration, we have
      \[\begin{split}
        \rH^0(G^d,M_0(V)) &= M_0(V)^{\gamma_1=\cdots=\gamma_d = 1} \\
        &= \left(\exp(\theta_dY_d)(V\otimes_{B}B[Y_1,\dots,Y_{d-1}]^{\wedge}_{\pd})\right)^{\gamma_1=\cdots=\gamma_{d-1}=1}\\
        &=\cdots\\
        &=\exp(\sum_{i=1}^d\theta_iY_i)(V).
      \end{split}\]
      Now, the final isomorphism 
      \[M_0(V)^G\otimes_BB[Y_1,\dots,Y_d]^{\wedge}_{\pd}\xrightarrow{\cong}M_0(V)\]
      follows as $\exp(\sum_{i=1}^d\theta_iY_i)\in\GL_r(B[Y_1,\dots,Y_d]^{\wedge}_{\pd})$.
  \end{proof}
  Now, we are able to establish the local Simpson correspondence.
  \begin{proof}[\textbf{Proof of Theorem \ref{thm:local Simpson}}]
      We first prove Items (1), (2) and (3) for $? = \emptyset$.

      For Item (1):  Fix an $M^+\in \Rep_{\Gamma}^{\Hsmall}(R_S^+)$ of rank $r$ such that via the isomorphism (\ref{equ:Gamma group-II}), the $\gamma_i$-action on $M^+$ is given by $\exp(-(\zeta_p-1)\rho_K\theta_i)$ for some topologically nilpotent $\theta_i\in\End_{R_S^+}(M^+)$ for any $1\leq i\leq d$. Consider the isomorphisms
      \begin{equation}\label{equ:local Simpson-I}
          \Omega^{1,\log}_{R_S^+}\cong \oplus_{i=1}^dR_S^+\cdot e_i\text{ and }(P_S^+,\Theta)\cong (R_S^+[Y_1,\dots,Y_d]^{\wedge}_{\pd},\sum_{i=1}^d\frac{\partial}{\partial Y_i}\otimes\frac{e_i}{\xi_K}).
      \end{equation}
      It follows from Corollary \ref{cor:MW} that $\rH^n(\Gamma, M^+\otimes_{R_S^+}P_S^+)$ is killed by $\rho_K(\zeta_p-1)$ for $n\geq 1$ and gives rise to a finie free $R_S^+$-module
      \begin{equation}\label{equ:local Simpson-II}
          H^+(M^+)= \exp(\sum_{i=1}^d\theta_iY_i)(M^+).
      \end{equation}
      Then the restriction of $\Theta_{M^+}$ to $H^+(M^+)$ is given by
      \begin{equation}\label{equ:local Simpson-III}
          \theta = \sum_{i=1}^d\theta_i\otimes\frac{e_i}{\xi_K}:H^+(M^+)\to H^+(M^+)\otimes_{R^+_S}\Omega^{1,\log}_{R_S^+}\{-1\}.
      \end{equation}
      In particular, we have $\theta$ is topologically nilpotent as each $\theta_i$ does. This completes the proof for Item (1).

      For Item (2): Again we use the isomorphisms (\ref{equ:local Simpson-I}). Fix an $(H^+,\theta)\in\HIG^{\tHsmall}(R_S^+)$ of rank $r$ and write $\theta = \sum_{i=1}^d\theta_i\otimes\frac{e_i}{\xi_K}$ with $\theta_i\in\End_{R_S^+}(H^+)$ topologically nilpotent for all $i$. Fix an 
      \[x = \sum_{J\in\bN^d}h_J\underline Y^{J}\in H^+\otimes_{R_S^+}P_S^+.\]
      Then we have 
      \[\Theta_{H^+}(x) = \sum_{i=1}^d\left(\sum_{J\in\bN^d}(\theta_i(x_J)+x_{J+E_i})\underline Y^J\right)\otimes\frac{e_i}{\xi_K}.\]
      In particular, $\Theta_{H^+}(x) = 0$ if and only if for any $1\leq i\leq d$, we have $x_{J+E_i} = -\theta_i(x_J)$. By iteration, this amounts to that for any $J\in\bN^d$,
      \[x_J = (-1)^{|J|}\underline \theta^J(x_0),\]
      yielding that 
      \begin{equation}\label{equ:local Simpson-IV}
          M^+(H^+,\theta) = \exp(-\sum_{i=1}^d\theta_iY_i)(H^+):=\{\sum_{J\in\bN^d}(-1)^{|J|}\underline \theta^J(h)\underline Y^{[J]}\mid h\in H^+\}.
      \end{equation}
      As $\gamma_i(Y_j) = Y_j+(\zeta_p-1)\rho_K$ via the isomorphism (\ref{equ:Gamma group-II}) by Corollary \ref{cor:gamma action on Faltings extension}, we see that the $\gamma_i$-action on $M^+(H^+,\theta)$ is given by 
      \begin{equation}\label{equ:local Simpson-V}
          \gamma_i = \exp(-\rho_K(\zeta_p-1)\theta_i).
      \end{equation}
      This forces $M^+(H^+,\theta)\in \Rep^{\Hsmall}_{\Gamma}(R_S^+)$ as desired.

      For Item (3): For any $M^+\in\Rep^{\Hsmall}_{\Gamma}(R_S^+)$, it follows from the ``moreover'' part of Corollary \ref{cor:MW} that the natural morphism $H^+(M^+)\to M^+\otimes_{R_S^+}P_S^+$ induces a $\Gamma$-equivariant isomorphism
      \[H^+(M^+)\otimes_{R_S^+}P_S^+\xrightarrow{\cong}M^+\otimes_{R_S^+}P_S^+.\]
      It follows from the construction of $\theta$ on $H^+(M^+)$ that the above isomorphism is compatible with Higgs fields. By Poincar\'e's Lemma \ref{thm:Poincare Lemma}, we have quasi-isomorphisms
      \[\begin{split}
          \rR\Gamma(\Gamma,M^+[\frac{1}{p}])&\xrightarrow{\simeq}\rR\Gamma(\Gamma,\rD\rR(M^+\otimes_{R_S^+}P_S^+[\frac{1}{p}]),\Theta_{M^+})\\
          &\xleftarrow{\simeq}\rR\Gamma(\Gamma,\rD\rR(H^+\otimes_{R_S^+}P_S^+[\frac{1}{p}]),\Theta_{H^+})\\
          &\xleftarrow{\simeq}\rD\rR(H^+[\frac{1}{p}],\theta),
      \end{split}\]
      where the last quasi-isomorphism follows from $\rR\Gamma(\Gamma,P_S^+[\frac{1}{p}]) = 0$, by Item (1).
      Thus, one only need to show the functors in Items (1) and (2) are quasi-inverses of each other. We divide the proof into three steps:

      \textbf{Step 1:} Fix an $M^+\in \Rep_{\Gamma}^{\Hsmall}(R_S^+)$ and put $(H^+,\theta) = (H^+(M^+),\theta)$. We denote by
      \[\iota_{M^+(H^+,\theta)}:M^+(H^+(M^+),\theta) = M^+(H^+,\theta)\to M^+\]
      the natural morphism induced by the composites
      \begin{equation*}
          \begin{split}
              M^+(H^+,\theta) & = (H^+\otimes_{R_S^+}P_S^+)^{\Theta_{H^+} = \theta\otimes\id+\id\otimes\Theta = 0}\\
              & = \left((M^+\otimes_{R_S^+}P_S^+)^{\Gamma}\otimes_{R_S^+}P_S^+\right)^{\Theta_{M^+}\otimes\id+(\id\otimes\id)\otimes\Theta = 0}\\
              &\hookrightarrow (M^+\otimes_{R_S^+}P_S^+\otimes_{R_S^+}P_S^+)^{\id\otimes\Theta\otimes\id+\id\otimes\id\otimes\Theta = 0}\\
              & \to (M^+\otimes_{R_S^+}P_S^+)^{\id\otimes\Theta = 0}\\
              & = M^+,
          \end{split}
      \end{equation*}
      where the last equality follows from the Poincar\'e's Lemma \ref{thm:Poincare Lemma} while the last arrow is induced by the multiplication on $P_S^+$ (cf. Proposition \ref{prop:multiplication on OC}).

      \textbf{Step 2:} Fix an $(H^+,\theta)\in \HIG^{\tHsmall}(R_S^+)$ and put $M^+=M^+(H^+,\theta)$. We denote by
      \[\iota_{(H^+,\theta)}:H^+(M^+)\to H^+\]
      the natural morphism compatible with Higgs fields induced by the composites
      \begin{equation*}
          \begin{split}
              H^+(M^+) & = (M^+\otimes_{R_S^+}P_S^+)^{\Gamma}\\
              & = \left((H^+\otimes_{R_S^+}P_S^+)^{\Theta_{H^+} = 0}\otimes_{R_S^+}P_S^+\right)^{\Gamma}\\
              &\hookrightarrow (H^+\otimes_{R_S^+}P_S^+\otimes_{R_S^+}P_S^+)^{\Gamma}\\
              & \to (H^+\otimes_{R_S^+}P_S^+)^{\Gamma}\\
              & = H^+,
          \end{split}
      \end{equation*}
      where the last equality follows as $P_S^{\Gamma} = R_S^+$ (by Item (1)) while the last arrow is again induced by the multiplication on $P_S^+$.

      \textbf{Step 3:} It remains to show the morphism $\iota_{M^+}$ and $\iota_{(H^+,\theta)}$ are both isomorphisms. But this can be deduced from their constructions together with Equations \eqref{equ:local Simpson-II}, \eqref{equ:local Simpson-III}, \eqref{equ:local Simpson-IV} and \eqref{equ:local Simpson-V} immediately.

      We have finished the proof in the $? = \empty$ case. Now, we move to the case for $? = \infty$.

      For Item (1): Fix an $M_{\infty}^+\in \Rep_{\Gamma}^{\Hsmall}(\widehat R_{\infty,S}^+)$. By Proposition \ref{prop:decompletion}, there exists an $M^+\in \Rep_{\Gamma}^{\Hsmall}(\widehat R_{S}^+)$ such that $M_{\infty}^+ \cong M^+\otimes_{R_S^+}\widehat R_{\infty,S}^+$. Then the decomposition \eqref{equ:decomposition for P} induces a $\Gamma$-equivariant decomposition
      \[M_{\infty}^+\otimes_{\widehat R_{\infty,S}^+}P_{\infty,S}^+ = \widehat \bigoplus_{\underline \alpha\in J_r}M^+\otimes_{R_S^+}P_S^+\cdot \underline T^{\underline \alpha}.\] 
      Fix an $\underline \alpha = (\alpha_0,\dots,\alpha_r,\alpha_{r+1},\dots,\alpha_d)\in J_r\setminus\{0\}$. We claim that $\rR\Gamma(\Gamma,M^+\otimes_{R_S^+}P_S^+\cdot \underline T^{\underline \alpha})$ vanishes for $n = 0$ and is killed by $\zeta_p-1$ for any $n\geq 1$. Without loss of generality, we may assume $\alpha_0=0$ and then $\alpha_1,\dots,\alpha_d$ are not all zero. Then the claim follows from the isomorphism \eqref{equ:Gamma group-II} together with Corollary \ref{cor:MW}(1). 

      Thanks to the claim, the Item (1) follows from the $? = \emptyset$ case and moreover we have the identity
      \[(H^+(M^+_{\infty}),\theta) = (H^+(M^+),\theta).\]

      The Items (2) and (3) follows from the same arguments in the proof for the $? = \emptyset$ case. In particular, for any $(H^+,\theta = \sum_{i=1}^d\theta_i\otimes\frac{e_i}{\xi_K})\in\HIG^{\tHsmall}(R_S^+)$, we have 
      \begin{equation}\label{equ:local Simpson-VI}
          M_{\infty}^+(H^+,\theta) = \exp(-\sum_{i=1}^d\theta_iY_i)(H^+\otimes_{R_S^+}\widehat R_{\infty,S}^+) = \exp(-\sum_{i=1}^d\theta_iY_i)(H^+)\otimes_{R_S^+}\widehat R_{\infty,S}^+  = M^+(H^+,\theta)\otimes_{R_S^+}\widehat R_{\infty,S}^+
      \end{equation}
      on which $\gamma_i$ acts via the formulae \eqref{equ:local Simpson-V}. This implies the Item (4), and then completes the proof.
  \end{proof}

  Finally, we obtain the following equivalence of categories.
  \begin{cor}\label{cor:correct local Simpson}
      For any $?\in\{\emptyset,+\}$, we have the following equivalences of categories
      \[\Rep_{\Gamma}^{\Hsmall}(\widehat R_{\infty,S}^?)\simeq \Rep_{\Gamma}^{\Hsmall}(R_S^?)\simeq\HIG^{\tHsmall}(R_S^?)\simeq \HIG^{\Hsmall}(R_S^?).\]
      Moreover, for any $M_{\infty}\in \Rep_{\Gamma}^{\Hsmall}(\widehat R_{\infty,S})$ with corresponding $M\in \Rep_{\Gamma}^{\Hsmall}(\widehat R_{S})$, $(H,\theta)\in \HIG^{\tHsmall}(R_S)$ and $(H^{\prime},\theta^{\prime})\in \HIG^{\Hsmall}(R_S)$, we have quasi-isomorphisms
      \[\rR\Gamma(\Gamma,M_{\infty})\simeq \rR\Gamma(\Gamma,M)\simeq \rD\rR(H,\theta)\simeq\rD\rR(H^{\prime},\theta^{\prime}).\]
  \end{cor}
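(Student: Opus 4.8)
The plan is to read off all three equivalences, in the integral case $?=+$, as a composite of results already established in this section, and then to obtain the rational case $?=\emptyset$ by inverting $p$. For $?=+$: the first equivalence $\Rep_{\Gamma}^{\Hsmall}(\widehat R_{\infty,S}^+)\simeq\Rep_{\Gamma}^{\Hsmall}(R_S^+)$ is Proposition \ref{prop:decompletion} (base-change along $R_S^+\to\widehat R_{\infty,S}^+$); the second equivalence $\Rep_{\Gamma}^{\Hsmall}(R_S^+)\simeq\HIG^{\tHsmall}(R_S^+)$ is Theorem \ref{thm:local Simpson}(3) applied with $?=\emptyset$ (so that $\widehat R_{?,S}^+=R_S^+$); and the third equivalence $\HIG^{\tHsmall}(R_S^+)\simeq\HIG^{\Hsmall}(R_S^+)$ is the twist functor of Proposition \ref{prop:twist functor}. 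Composing these three yields the displayed chain of equivalences when $?=+$.

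For the rational case I would argue that each category appearing is the isogeny category of its integral counterpart. By Definition \ref{dfn:small Gamma-representation} the objects of $\Rep_{\Gamma}^{\Hsmall}(R_S)$, $\Rep_{\Gamma}^{\Hsmall}(\widehat R_{\infty,S})$, $\HIG^{\tHsmall}(R_S)$ and $\HIG^{\Hsmall}(R_S)$ are precisely the modules $N^+[\tfrac1p]$ for $N^+$ an integral Hitchin-small object. Since all underlying modules are finitely generated and $R_S^+$ is $p$-torsion free, any $\Gamma$-equivariant $R_S$-linear (resp.\ Higgs) morphism between two such localizations is, after multiplication by a suitable power of $p$, the base change of a morphism of integral objects, so $\Hom$-sets in the rational categories are obtained from those in the integral categories by inverting $p$. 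Applying $-[\tfrac1p]$ to the three integral equivalences above therefore produces the chain of equivalences for $?=\emptyset$ as well.

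Finally I would assemble the cohomological comparison rationally from the three comparisons attached to the individual equivalences. Proposition \ref{prop:decompletion} gives a map $\rR\Gamma(\Gamma,M^+)\to\rR\Gamma(\Gamma,M_\infty^+)$ whose cofiber is killed by $\zeta_p-1$; since $\zeta_p-1$ is a unit in $R_S$ (because $(\zeta_p-1)^{p-1}=p\cdot(\text{unit})$ in $\calO_C$) and $\rR\Gamma(\Gamma,-)$ is computed by the finite Koszul complex for $\Gamma\cong\Zp^{\oplus d}$, this cofiber vanishes after inverting $p$, giving $\rR\Gamma(\Gamma,M)\simeq\rR\Gamma(\Gamma,M_\infty)$. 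Equation \eqref{equ:quasi-isomorphism inverting p} of Theorem \ref{thm:local Simpson}(3) (with $?=\emptyset$) gives $\rR\Gamma(\Gamma,M)\simeq\rD\rR(H,\theta)$ for the associated twisted Hitchin-small Higgs module $(H,\theta)$. And Proposition \ref{prop:twist functor} gives $\rL\eta_{\zeta_p-1}\rD\rR(H^+,\theta')\simeq\rD\rR(H^+,\theta)$ with $\theta'=(\zeta_p-1)\theta$; after inverting $p$ the element $\zeta_p-1$ becomes a unit, so $\rL\eta_{\zeta_p-1}$ is the identity on the derived category and we obtain $\rD\rR(H,\theta)\simeq\rD\rR(H',\theta')$ with $H'=H$. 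Concatenating the three quasi-isomorphisms gives $\rR\Gamma(\Gamma,M_\infty)\simeq\rR\Gamma(\Gamma,M)\simeq\rD\rR(H,\theta)\simeq\rD\rR(H',\theta')$, as asserted.

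There is no genuine obstacle here: the statement is a bookkeeping consequence of Propositions \ref{prop:decompletion} and \ref{prop:twist functor} together with Theorem \ref{thm:local Simpson}. The only points requiring a line of verification are that the rational categories are truly the isogeny categories of the integral ones (finite generation of the modules) and that $-[\tfrac1p]$ trivializes $\rL\eta_{\zeta_p-1}$ and annihilates the $(\zeta_p-1)$-torsion cofiber coming from decompletion; compatibility of everything with inverting $p$ is ensured by the finiteness of the Koszul resolution of $\Gamma$.
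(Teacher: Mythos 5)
Your proposal is correct and unwinds the paper's one-line proof (``follows from Proposition~\ref{prop:decompletion}, Proposition~\ref{prop:twist functor} and Theorem~\ref{thm:local Simpson} immediately'') in essentially the same way: first equivalence by decompletion, second by the local Simpson correspondence, third by the twist functor, and the rational case by passing to isogeny categories, using that $(\zeta_p-1)^{p-1}$ is a unit multiple of $p$ so $(\zeta_p-1)$-torsion dies after inverting $p$. One small streamlining worth noting: for the last quasi-isomorphism you do not need to invoke $\rL\eta_{\zeta_p-1}$ at all — once $\zeta_p-1$ is a unit, $\rD\rR(H,\theta)\cong\rD\rR(H,(\zeta_p-1)\theta)$ directly via the chain-complex automorphism that multiplies by $(\zeta_p-1)^n$ in degree $n$.
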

  \begin{proof}
      This follows from Proposition \ref{prop:decompletion}, Proposition \ref{prop:twist functor} and Theorem \ref{thm:local Simpson} immediately.
  \end{proof}
  It is worth showing the explicit form of the equivalence 
  \[\Rep_{\Gamma}^{\Hsmall}(\widehat R_{\infty,S})\simeq \HIG^{\Hsmall}(R_S).\]
  Let $M_{\infty}$ be a Hitchin-small $\Gamma$-representation of rank $r$ over $\widehat R_{\infty,S}$ with associated $M\in \Rep_{\Gamma}^{\Hsmall}(R_S)$; that is, we have $M_{\infty} = M\otimes_{R_S}\widehat R_{\infty,S}$. Fix an $R_S$-basis $e_1,\dots,e_d$ of $M$ such that for any $1\leq i\leq d$, the action of $\gamma_i\in\Gamma$ on $M$ is given by $\exp(-(\zeta_p-1)\rho_K\theta_i)$ for some topolohically nilpotent $\theta_i\in\Mat_r(R_S^+)$ with respect to the given basis. Let $(H,\theta)$ be the Hitchin-small Higgs module associated to $M_{\infty}$ over $R_S$. Then we have 
  \begin{equation}\label{equ:explicit form}
      (H = M,\theta = \sum_{i=1}^d(\zeta_p-1)\theta_i\otimes\frac{\dlog T_i}{\xi_K}),
  \end{equation}
  where we use the isomorphism $\Omega^{1}_{R_S} = \Omega^{1,\log}_{R_S^+}[\frac{1}{p}]$ and identify $e_i\in\Omega^{1,\log}_{R_S^+}$ (cf. \eqref{equ:omega^1-II}) with $\dlog T_i\in \Omega^{1}_{R_S}$.
  Conversely, if we start with a Hitchin-small Higgs module 
  \[(H,\theta = \sum_{i=1}^d(\zeta_p-1)\theta_i\otimes\frac{\dlog T_i}{\xi_K})\in \HIG^{\Hsmall}(R_S)\]
  with $\theta_i\in \Mat_r(R_S^+)$ topologically nilpotent, then its associated Hitchin-small $\Gamma$-representation $M$ over $R_S$ is given by $M = H$ together with the $\Gamma$-action such that for any $1\leq i\leq d$, the $\gamma_i$ acts via $\exp(-(\zeta_p-1)\rho_K\theta_i)$.

\subsection{An integral comparison theorem}
  Let $(H^+,\theta)\in\HIG^{\tHsmall}(R_S^+)$ be a twisted Hitchin-small Higgs module of rank $r$ with associated Hitchin-small $\Gamma$-representation $M^+\in \Rep_{\Gamma}^{\Hsmall}(R_S^+)$ (resp. $M_{\infty}^+\in \Rep_{\Gamma}^{\Hsmall}(\widehat R_{\infty,S}^+)$). As $\rD\rR(P_S^+,\Theta)$ (resp. $\rD\rR(P_{\infty,S}^+,\Theta)$) is a resolution of $R_S^+$ (resp. $\widehat R_{\infty,S}^+$), we have the left part of the following commutative diagram:
  \begin{equation}\label{diag:local canonical map}
      \xymatrix@C=0.5cm{
        \rR\Gamma(\Gamma,M^+)\ar@{^{(}->}[d]\ar[rr]^{\simeq\qquad\qquad}&&\rR\Gamma(\Gamma,\rD\rR(M^+\otimes_{R_S^+}P_S^+,\Theta_{M^+}))\ar@{^{(}->}[d]&&\ar@{_{(}->}[ll]\rD\rR(H^+,\theta)\ar@{=}[d]\\
        \rR\Gamma(\Gamma,M_{\infty}^+)\ar[rr]^{\simeq\qquad\qquad}&&\rR\Gamma(\Gamma,\rD\rR(M_{\infty}^+\otimes_{\widehat R_{\infty,S}^+}P_{\infty,S}^+,\Theta_{M_{\infty}^+}))&&\ar@{_{(}->}[ll]\rD\rR(H^+,\theta)
      }
  \end{equation}
  while the right part follows from the Theorem \ref{thm:local Simpson}.
  On the other hand, combining Proposition \ref{prop:decompletion} with \cite[Lem. 6.4 and Lem. 6.10]{BMS18}, we have the following commutative diagram
  \begin{equation}\label{diag:local BMS map}
      \xymatrix@C=0.5cm{
        \rL\eta_{\rho_K(\zeta_p-1)}\rR\Gamma(\Gamma,M^+)\ar[rr]\ar[d]^{\simeq}&&\rR\Gamma(\Gamma,M^+)\ar@{^{(}->}[d]\\
        \rL\eta_{\rho_K(\zeta_p-1)}\rR\Gamma(\Gamma,M_{\infty}^+)\ar[rr]&&\rR\Gamma(\Gamma,M_{\infty}^+).
      }
  \end{equation}
  Then we have the following result:
  \begin{prop}\label{prop:local integral comparison}
      Keep notations as above. For any $?\in\{\emptyset,\infty\}$, the composite
      \[\tau^{\leq 1}\rD\rR(H^+,\theta)\rightarrow\rD\rR(H^+,\theta)\xrightarrow{\eqref{diag:local canonical map}} \rR\Gamma(\Gamma,M_?^+)\]
      uniquely factors through the composite
      \[\tau^{\leq 1}\rL\eta_{\rho_K(\zeta_p-1)}\rR\Gamma(\Gamma,M_?^+)\to\rL\eta_{\rho_K(\zeta_p-1)}\rR\Gamma(\Gamma,M_?^+)\xrightarrow{\eqref{diag:local BMS map}}\rR\Gamma(\Gamma,M^+)\]
      and induces a quasi-isomorphism
      \[\tau^{\leq 1}\rD\rR(H^+,\theta)\xrightarrow{\simeq}\tau^{\leq 1}\rL\eta_{\rho_K(\zeta_p-1)}\rR\Gamma(\Gamma,M_?^+).\]
  \end{prop}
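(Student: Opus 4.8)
The plan is to reduce everything to the explicit Koszul complexes furnished by Theorem~\ref{thm:local Simpson} and then to run the d\'ecalage argument of \cite[\S6--7]{BMS18}, exactly as in the proof of Proposition~\ref{prop:twist functor}.

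First I would dispose of the case $? = \infty$ by reducing it to $? = \emptyset$. By the left square of \eqref{diag:local canonical map} the canonical map $\rD\rR(H^+,\theta)\to\rR\Gamma(\Gamma,M_{\infty}^+)$ is the composite of the canonical map $\rD\rR(H^+,\theta)\to\rR\Gamma(\Gamma,M^+)$ with the split injection of Proposition~\ref{prop:decompletion}, and by \eqref{diag:local BMS map} the left vertical arrow $\rL\eta_{\rho_K(\zeta_p-1)}\rR\Gamma(\Gamma,M^+)\xrightarrow{\simeq}\rL\eta_{\rho_K(\zeta_p-1)}\rR\Gamma(\Gamma,M_{\infty}^+)$ is a quasi-isomorphism (this is where the hypothesis of Proposition~\ref{prop:decompletion}, that the complement is concentrated in degrees $\geq 1$ and killed by $\zeta_p-1$, together with \cite[Lem.~6.4 and Lem.~6.10]{BMS18}, is used). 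Chasing these two commutative diagrams then shows that the statement for $? = \infty$ follows from the statement for $? = \emptyset$, applied after $\tau^{\leq 1}$.

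For $? = \emptyset$, I would fix an $R_S^+$-basis in which, by Theorem~\ref{thm:local Simpson}(2) (cf.~\eqref{equ:local Simpson-V}), each $\gamma_i\in\Gamma$ acts on $M^+ = M^+(H^+,\theta)$ by $\exp(-\rho_K(\zeta_p-1)\theta_i)$ for pairwise commuting topologically nilpotent $\theta_i\in\Mat_r(R_S^+)$, and in which $\theta = \sum_{i=1}^d\theta_i\otimes\tfrac{e_i}{\xi_K}$ via \eqref{equ:omega^1-II}. By \cite[Lem.~7.3]{BMS18}, $\rR\Gamma(\Gamma,M^+)$ is computed by the Koszul complex $\rK(\gamma_1-1,\dots,\gamma_d-1;H^+)$, while $\rD\rR(H^+,\theta)$ is the Koszul complex $\rK(\theta_1,\dots,\theta_d;H^+)$. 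Writing $\gamma_i-1 = \rho_K(\zeta_p-1)\psi_i$ with $\psi_i = -\theta_i u_i$, the $u_i$ being the value at $\theta_i$ of a power series with coefficients in $\calO_C$ and constant term $1$ (its coefficients lie in $\calO_C$ because $\nu_p(\rho_K(\zeta_p-1))\geq\tfrac{2}{p-1}$ beats the denominators $n!$), so that $u_i\in\GL_r(R_S^+)$ commutes with every $\theta_j$ and $u_k$ (using $\theta\wedge\theta = 0$), an application of \cite[Lem.~7.9]{BMS18} as in Proposition~\ref{prop:twist functor} yields $\rL\eta_{\rho_K(\zeta_p-1)}\rR\Gamma(\Gamma,M^+)\simeq\rK(\psi_1,\dots,\psi_d;H^+)$, and multiplication by suitable products of the commuting units $u_i$ identifies the latter with $\rK(\theta_1,\dots,\theta_d;H^+) = \rD\rR(H^+,\theta)$. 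This produces a quasi-isomorphism $\rD\rR(H^+,\theta)\xrightarrow{\simeq}\rL\eta_{\rho_K(\zeta_p-1)}\rR\Gamma(\Gamma,M^+)$ (as in Proposition~\ref{prop:twist functor}, one in fact obtains it in every degree).

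The main point left to verify — and the place where care is required — is that this quasi-isomorphism is compatible with the maps appearing in \eqref{diag:local canonical map} and \eqref{diag:local BMS map}, i.e.~that the composite $\rD\rR(H^+,\theta)\xrightarrow{\simeq}\rL\eta_{\rho_K(\zeta_p-1)}\rR\Gamma(\Gamma,M^+)\to\rR\Gamma(\Gamma,M^+)$ recovers the canonical map of \eqref{diag:local canonical map}. To see this I would make the quasi-isomorphism of \cite[Lem.~7.9]{BMS18} explicit at the level of complexes (an inclusion of subcomplexes composed with multiplications by powers of $\rho_K(\zeta_p-1)$) and trace it through the identification $H^+(M^+) = (M^+\otimes_{R_S^+}P_S^+)^{\Gamma}\cong\exp(-\sum_i\theta_iY_i)(H^+)$ of Theorem~\ref{thm:local Simpson}, under which $\Theta_{M^+}$ restricts to $\theta$; the Poincar\'e Lemma~\ref{thm:Poincare Lemma} identifies $\rR\Gamma(\Gamma,M^+)$ with the totalization of the $\Gamma$-cohomology of $\rD\rR(M^+\otimes_{R_S^+}P_S^+,\Theta_{M^+})$, and the discrepancy between this totalization and $\rD\rR(H^+(M^+),\theta)$ is controlled degree by degree by the $\rho_K(\zeta_p-1)$-torsion computed in Corollary~\ref{cor:MW} and Lemma~\ref{lem:MW} — which is precisely why one gets a clean statement only after $\tau^{\leq 1}$, where $\rH^0 = \bigcap_i\ker\theta_i = (M^+)^{\Gamma}$ and the degree-$1$ comparison only feels $u_i\equiv 1\bmod\frakm_C$. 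Once these diagrams commute the asserted factorization exists, and its uniqueness is automatic because $\tau^{\leq 1}\rD\rR(H^+,\theta)\to\tau^{\leq 1}\rL\eta_{\rho_K(\zeta_p-1)}\rR\Gamma(\Gamma,M_?^+)$ is already an equivalence. I expect the genuine obstacle to be this bookkeeping — aligning the abstract Koszul-level quasi-isomorphism with the specific maps coming from the Poincar\'e resolution $\rD\rR(P_S^+,\Theta)$ and from the natural transformation $\rL\eta_{\rho_K(\zeta_p-1)}\Rightarrow\id$ — rather than any single hard computation.
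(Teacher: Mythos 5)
Your reduction to the case $? = \emptyset$ via the split injection of Proposition~\ref{prop:decompletion} and the commutativity of \eqref{diag:local canonical map}, \eqref{diag:local BMS map} is exactly what the paper does, so that part is fine. Your treatment of the $? = \emptyset$ case, however, takes a genuinely different route and, as written, has a gap precisely at the step you flag as ``bookkeeping.''

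The abstract Koszul-level argument is plausible: writing $\gamma_i - 1 = -\rho_K(\zeta_p-1)\theta_i F(\rho_K\theta_i)$, \cite[Lem.~7.9]{BMS18} applied to $\rK(\gamma_1-1,\dots,\gamma_d-1;M^+)$ does produce a quasi-isomorphism of the form you describe, after conjugating by products of the commuting units $u_i = F(\rho_K\theta_i)$. But that gives you \emph{some} quasi-isomorphism $\rD\rR(H^+,\theta)\xrightarrow{\simeq}\rL\eta_{\rho_K(\zeta_p-1)}\rR\Gamma(\Gamma,M^+)$; the proposition asserts that the specific arrow \eqref{diag:local canonical map} --- the one produced by the Poincar\'e resolution $\rD\rR(P_S^+,\Theta)$ and the inclusion $H^+ = (M^+\otimes P_S^+)^\Gamma\subset M^+\otimes P_S^+$ --- factors through $\tau^{\leq 1}\rL\eta_{\rho_K(\zeta_p-1)}\rR\Gamma(\Gamma,M^+)$ and induces an equivalence after $\tau^{\leq 1}$. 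These are different statements until you verify that your abstract map \emph{is} the map \eqref{diag:local canonical map}, and a priori it is not: unwinding \cite[Lem.~7.9]{BMS18} the way you suggest, the composite $\rD\rR(H^+,\theta)\simeq\rL\eta\rR\Gamma\hookrightarrow\rR\Gamma(\Gamma,M^+)$ in degree~$1$ involves the twist by the units $u_i$, so one must compute what \eqref{diag:local canonical map} actually does in degree~$1$ before one can compare. That computation is the entire content of the paper's proof: one solves $\Theta_{M^+}(m) = \omega$ explicitly in $M^+\otimes P_S^+$ for $\omega=\sum x_i\otimes\tfrac{e_i}{\xi_K}$, evaluates $(\gamma_j-1)(m(\omega))$ term-by-term in the pd-polynomial ring, and finds
\[v(\omega)=\rho_K(\zeta_p-1)\bigl(F(\rho_K\theta_1)\exp(-\textstyle\sum_k\theta_kY_k)x_1,\dots\bigr),\]
from which the injectivity (via a spectral-sequence argument on the double complex \eqref{diag:double complex}) and surjectivity of $\rH^1_{\dR}(H^+,\theta)\to\rho_K(\zeta_p-1)\rH^1(\Gamma,M^+)$ are read off using that the $F(\rho_K\theta_i)$ are invertible, and then \cite[Lem.~8.16]{BMS18} together with the proof of \cite[Lem.~6.10]{BMS18} supply both the unique factorization and the truncated quasi-isomorphism.

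So the idea ``construct the equivalence abstractly, then match it against the canonical map'' is not a shortcut --- the matching step is exactly the explicit cocycle calculation, and you have not carried it out. I would also push back on your stated reason for the truncation: the image of $\rH^1_{\dR}$ lies in $\rho_K(\zeta_p-1)\rH^1(\Gamma,M^+)$ and is all of it because the $u_i=F(\rho_K\theta_i)$ are units in $\GL_r(R_S^+)$, not because $u_i\equiv 1\bmod\frakm_C$; and the restriction to $\tau^{\leq 1}$ reflects that only the $\rH^0$- and $\rH^1$-comparisons are established (and is all that \cite[Lem.~8.16]{BMS18} is invoked for), not a torsion obstruction in the decompletion. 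Finally, the claimed automatic uniqueness is not free: both existence and uniqueness of the factorization come from \cite[Lem.~8.16]{BMS18}, which requires as input precisely the $\rho_K(\zeta_p-1)$-divisibility of the image in $\rH^1$ that you have not yet established for \eqref{diag:local canonical map}.
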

  \begin{proof}
      By the commutativity of diagrams \eqref{diag:local canonical map} and \eqref{diag:local BMS map}, it suffices to deal with the case for $? = \emptyset$. Then one can conclude by the same argument used in the proof of \cite[Prop. 5.6]{MW-AIM}. But for the convenience of readers, we exhibit details here.

      We claim the morphism $\rD\rR(H^+,\theta)\to \rR\Gamma(\Gamma,M^+)$ induces an isomorphism
      \begin{equation}\label{equ:local integral comparison-I}
          \rH^1_{\dR}(H^+,\theta):=\rH^1(\rD\rR(H^+,\theta))\cong \rho_K(\zeta_p-1)\rH^1(\Gamma,M^+).
      \end{equation}
      Granting this, one can conclude the result as follows:
      By \cite[Lem. 8.16]{BMS18}, the morphism $\tau^{\leq 1}\rD\rR(H^+,\theta)\to \tau^{\leq 1}\rR\Gamma(\Gamma,M^+)\to \rR\Gamma(\Gamma,M^+)$ uniquely factors as
      \[\tau^{\leq 1}\rD\rR(H^+,\theta)\to\tau^{\leq 1}\rL\eta_{\rho_K(\zeta_p-1)}\rR\Gamma(\Gamma,M^+)\to \tau^{\leq 1}\rR\Gamma(\Gamma,M^+)\to \rR\Gamma(\Gamma,M^+).\]
      Here, we implicitly commute $\tau^{\leq 1}$ and $\rL\eta_{\rho_K(\zeta_p-1)}$ (by \cite[Cor. 6.5]{BMS18}). By the proof of \cite[Lem. 6.10]{BMS18}, the $\rH^i$ of the morphism
      $\tau^{\leq 1}\rL\eta_{\rho_K(\zeta_p-1)}\rR\Gamma(\Gamma,M^+)\to \tau^{\leq 1}\rR\Gamma(\Gamma,M^+)$ is given by 
      \[\rH^i(\tau^{\leq 1}\rL\eta_{\rho_K(\zeta_p-1)}\rR\Gamma(\Gamma,M^+)) = \rH^i(\Gamma,M^+)/\rH^i(\Gamma,M^+)[\rho_K(\zeta_p-1)]\xrightarrow{\times(\rho_K(\zeta_p-1))^i}\rH^i(\Gamma,M^+)\]
      for $i\in\{0,1\}$. In particular, this induces an isomorphism 
      \[\rH^i(\tau^{\leq 1}\rL\eta_{\rho_K(\zeta_p-1)}\rR\Gamma(\Gamma,M^+))\cong (\rho_K(\zeta_p-1))^i\rH^i(\Gamma,M^+)\]
      for $i\in\{0,1\}$. Then it follows from the claim above that the morphism
      \[\tau^{\leq 1}\rD\rR(H^+,\theta)\to\tau^{\leq 1}\rL\eta_{\rho_K(\zeta_p-1)}\rR\Gamma(\Gamma,M_?^+)\]
      is a quasi-isomorphism as desired.

      Now, we focus on the proof of the claim above. Put $P(M^+):=M^+\otimes_{R_S^+}P_S^+$ for short and consider the following commutative diagram: 
\begin{equation}\label{diag:double complex}
    \xymatrix@C=0.5cm{
      && H^+\ar[rr] \ar[d]&& H^+\otimes\Omega^{1,\log}_{R_S^+}\{-1\}\ar[rr]\ar[d] && H^+\otimes\rho^2\widehat\Omega^2(-2)\ar[r]\ar[d] &\dots\\
      M^+\ar[rr]\ar[d]&& P(M^+)\ar[rr]\ar[d]&& P(M^+)\otimes\Omega^{1,\log}_{R_S^+}\{-1\}\ar[rr]\ar[d] && P(M^+)\otimes\Omega^{2,\log}_{R_S^+}\{-2\}\ar[r]\ar[d] &\dots\\
      \wedge^1(M^+)^d\ar[d]\ar[rr]&& \wedge^1P(M^+)^d\ar[rr]\ar[d]&& \wedge^1P(M^+)^d\otimes\Omega^{1,\log}_{R_S^+}\{-1\}\ar[rr]\ar[d] && \wedge^1P(M^+)^d\otimes\Omega^{2,\log}_{R_S^+}\{-2\}\ar[r]\ar[d] &\dots\\
      \wedge^2(M^+)^d\ar[d]\ar[rr]&& \wedge^2P(M^+)^d\ar[rr]\ar[d]&& \wedge^2P(M^+)^d\otimes\Omega^{1,\log}_{R_S^+}\{-1\}\ar[rr]\ar[d] && \wedge^2P(M^+)^d\otimes\Omega^{2,\log}_{R_S^+}\{-2\}\ar[r]\ar[d] &\dots\\
      \vdots&& \vdots&& \vdots &&\vdots &
    }
\end{equation}
where the horizontal arrows are induced by Higgs fields and the vertical arrows are induced by Koszul complexes associated to $\Gamma$-actions. Then we have the following commutative diagram
\begin{equation}\label{diag:total complex}
    \xymatrix@C=0.5cm{
      H^+\ar[r] \ar[d]& H^+\otimes_{R_S^+}\Omega^{1,\log}_{R_S^+}\{-1\}\ar[rr]\ar[d]&&\dots\\
      P(M^+)\ar[r]& P(M^+)\otimes_{R_S^+}\Omega^{1,\log}_{R_S^+}\{-1\}\oplus\wedge^1P(M^+)^d\ar[rr]&&\dots\\
      M^+\ar[r]\ar[u]&\wedge^1(M^+)^d\ar[rr]\ar[u]&&\dots
    }
\end{equation}
such that the arrows from the bottom to the middle induce the quasi-isomorphism 
\[\rR\Gamma(\Gamma,M^+)\simeq\rR\Gamma(\Gamma,\HIG(P(M^+),\Theta_{M^+}])).\]
We have to deduce the relation between $\rH_{\dR}^1(H^+,\theta)$ and $\rH^1(\Gamma,M^+)$ from the diagram \eqref{diag:total complex}.

Let $x_1,\dots,x_d\in H^+$ such that $\omega = \sum_{i=1}^dx_i\otimes\frac{e_i}{\xi_K}$ represents an element in $\rH_{\dR}^1(H^+,\theta)$ via the isomorphism $\Omega^{1,\log}_{R_S^+}\cong \oplus_{i=1}^dR_S^+\cdot e_i$, cf. \eqref{equ:omega^1-II}. Equivalently, we have that for any $1\leq i,j\leq d$, $\theta_i(x_j) = \theta_j(x_i)$, where we write $\theta = \sum_{i=1}^d\theta_i\otimes\frac{e_i}{\xi_K}$. We want to determine the element in $\rH^1(\Gamma,M^+)$ induced by $\omega$. To do so, we have to solve the equation 
\begin{equation}\label{equ:bridge}
    \Theta_{M^+}(\sum_{\underline n}h_{\underline n}\underline Y^{[\underline n]})=\omega,
\end{equation}
where $\sum_{\underline n}h_{\underline n}\underline Y^{[\underline n]}=:m\in M$. Note that
\begin{equation*}
    \begin{split}
        \Theta_{M^+}(\sum_{\underline n}h_{\underline n}\underline Y^{[\underline n]})
        =&\sum_{i=1}^d(\sum_{\underline n}\theta_i(h_{\underline n})\underline Y^{[\underline n]}+\sum_{\underline n}h_{\underline n}\underline Y^{[\underline n-\underline 1_i]})\otimes\frac{e_i}{\xi_K}\\
        =&\sum_{i=1}^d\sum_{\underline n}(\theta_i(h_{\underline n})+h_{\underline n+\underline 1_i})\underline Y^{[\underline n]}\otimes\frac{\dlog T_i}{\xi_K}.
    \end{split}
\end{equation*}
So \eqref{equ:bridge} holds true if and only if for any $1\leq i\leq d$ and $\underline n\in\bN^d$ satisfying $|\underline n| \geq 1$,
\begin{equation}\label{equ:iteration-I}
    h_{\underline n+\underline 1_i} = -\theta_i( h_{\underline n} )
\end{equation}
and 
\begin{equation}\label{equ:iteration-II}
    h_{\underline 1_i} = -\theta_i(h_0)+x_i.
\end{equation}
As $\theta_i(x_j) = \theta_j(x_i)$ for any $1\leq i,j\leq d$, it is easy to see that for any $h\in H^+$, one can put $h_0 = h$ and use \eqref{equ:iteration-I} and \eqref{equ:iteration-II} to achieve an element $m(h)\in M$ satisfying $\Theta_{M^+}(m(h)) = \omega$. Moreover, if we put $m(\omega):=m(0)$, then 
  \[m(h) = m(\omega)+\exp(-\sum_{k=1}^d\theta_kY_k)h.\]
As a consequence, the image of $\omega$ in $\rH^1(\Gamma,M^+) = \rH^1(\rK(\gamma_1-1,\dots,\gamma_d-1;M^+))$ is represented by 
  \[v(\omega):=(\gamma_1(m(\omega))-m(\omega),\dots,\gamma_d(m(\omega))-m(\omega))\in \wedge^1 (M^+)^d.\]

  On the other hand, as $\gamma_1$ acts on $Y_2,\dots,Y_d$ trivially (cf. Proposition \ref{prop:local OC}), we have 
  \begin{equation*}
      \begin{split}
          &\gamma_1(m(\omega))-m(\omega)\\
          =&\gamma_1\left(\sum_{n_1\geq 1,n_2,\dots,n_d\geq 0}(-\theta_1)^{n_1-1}(-\theta_2)^{n_2}\cdots(-\theta_d)^{n_d}x_1Y_1^{[n_1]}\cdots Y_d^{[n_d]}\right)\\
          &-\sum_{n_1\geq 1,n_2,\dots,n_d\geq 0}(-\theta_1)^{n_1-1}(-\theta_2)^{n_2}\cdots(-\theta_d)^{n_d}x_1Y_1^{[n_1]}\cdots Y_d^{[n_d]}\\
          =&\sum_{n_1\geq 1,n_2,\dots,n_d\geq 0}(-\theta_1)^{n_1-1}(-\theta_2)^{n_2}\cdots(-\theta_d)^{n_d}x_1(Y_1+\rho_K(\zeta_p-1))^{[n_1]}Y_2^{[n_2]}\cdots Y_d^{[n_d]}\\
          &-\sum_{n_1\geq 1,n_2,\dots,n_d\geq 0}(-\theta_1)^{n_1-1}(-\theta_2)^{n_2}\cdots(-\theta_d)^{n_d}x_1Y_1^{[n_1]}\cdots Y_d^{[n_d]}\\
          =&\sum_{n_1\geq 1,n_2,\dots,n_d\geq 0,0\leq l\leq n_1}(-\theta_1)^{n_1-1}(-\theta_2)^{n_2}\cdots(-\theta_d)^{n_d}x_1\rho_K^{n_1-l}(\zeta_p-1)^{[n_1-l]}Y_1^{[l]}Y_2^{[n_2]}\cdots Y_d^{[n_d]}\\
          &-\sum_{n_1\geq 1,n_2,\dots,n_d\geq 0}(-\theta_1)^{n_1-1}(-\theta_2)^{n_2}\cdots(-\theta_d)^{n_d}x_1Y_1^{[n_1]}\cdots Y_d^{[n_d]}\\
          =&\sum_{n_1\geq 1,n_2,\dots,n_d\geq 0,0\leq l\leq  n_1-1}(-\theta_1)^{n_1-1}(-\theta_2)^{n_2}\cdots(-\theta_d)^{n_d}x_1\rho_K^{n_1-l}(\zeta_p-1)^{[n_1-l]}Y_1^{[l]}Y_2^{[n_2]}\cdots Y_d^{[n_d]}\\
          =&\sum_{n_1,n_2,\dots,n_d\geq 0,l\geq 0}(-\theta_1)^{n_1+l}(-\theta_2)^{n_2}\cdots(-\theta_d)^{n_d}x_1\rho_K^{l+1}(\zeta_p-1)^{[l+1]}Y_1^{[n_1]}\cdots Y_d^{[n_d]}\\
          =&\sum_{l\geq 0}\rho_K^{l+1}(-\theta_1)^{l}(\zeta_p-1)^{[l+1]}\exp(-\sum_{k=1}^d\theta_kY_k)x_1\\
          =&(\zeta_p-1)\rho_K F(\rho_K\theta_1)\exp(-\sum_{k=1}^d\theta_kY_k)x_1,
      \end{split}
  \end{equation*}
  where $F(\theta) = \frac{1-\exp(-(\zeta_p-1)\theta)}{(\zeta_p-1)\theta}$ was defined in \eqref{equ:F(X)}. Similarly, for any $1\leq i\leq d$, we have
  \[(\gamma_i-1)(m(\omega)) = \rho(\zeta_p-1)F(\theta_i)\exp(-\sum_{k=1}^d\theta_kY_k)x_i.\]
  As a consequence, the image of $\omega$ in $\rH^1(\Gamma,M^+)$ is represented by
  \begin{equation}\label{equ:v(omega)}
      v(\omega)=(\zeta_p-1)\rho_K(F(\rho_K\theta_1)\exp(-\sum_{k=1}^d\theta_kY_k)x_1,\dots,F(\rho_K\theta_d)\exp(-\sum_{k=1}^d\theta_kY_k)x_d).
  \end{equation}
  Since for any $1\leq i,j\leq d$,
  \begin{equation}\label{equ:bridge-II}
      \begin{split}
          (\gamma_j-1)(F(\rho_K\theta_i)\exp(-\sum_{k=1}^d\theta_kY_k)x_i)=&F(\rho_K\theta_i)\exp(-\sum_{k=1}^d\theta_kY_k)(\exp(-(\zeta_p-1)\rho_K\theta_j)-1)x_i\\
          =&(\zeta_p-1)\rho_KF(\rho_K\theta_i)F(\rho_K\theta_j)\exp(-\sum_{k=1}^d\theta_kY_k)\theta_j(x_i),
      \end{split}
  \end{equation}
  and $\theta_j(x_i) = \theta_i(x_j)$, we deduce that
  \[(\gamma_j-1)(F(\rho_K\theta_i)\exp(-\sum_{k=1}^d\Theta_kY_k)x_i) = (\gamma_i-1)(F(\rho_K\theta_i)\exp(-\sum_{k=1}^d\theta_kY_k)x_j)\]
  and hence that
  \[v'(\omega):=(F(\rho_K\theta_1)\exp(-\sum_{k=1}^d\theta_kY_k)x_1,\dots,F(\rho_K\theta_d)\exp(-\sum_{k=1}^d\theta_kY_k)x_d)\in\wedge^1(M^+)^d\]
  represents an element in $\rH^1(\Gamma,M^+)$. 
  
  Therefore, as a cohomological class, we have 
  \[v(\omega) = \rho_K(\zeta_p-1)v'(\omega)\in\rho_K(\zeta_p-1)\rH^1(\Gamma,V_0).\]
  In other words, the map $\rD\rR(H,\theta_H)\to\rR\Gamma(\Gamma,M^+)$ carries $\rH_{\dR}^1(H^+,\theta_H)$ into $\rho_K(\zeta_p-1)\rH^1(\Gamma,M^+)$. We have to show it induces an isomorphism 
  \[\rH_{\dR}^1(H^+,\theta_H)\cong\rho_K(\zeta_p-1)\rH^1(\Gamma,M^+).\]

  The injectivity is obvious. Indeed, let $T$ be the total complex of the double complex in (\ref{diag:double complex}) representing $\rR\Gamma(\Gamma,\rD\rR(P(M^+),\Theta_M))$. By the spectral sequence argument, we have
  \[\rH_{\dR}^1(H^+,\Theta) = E_2^{1,0} = E_{\infty}^{1,0}\subset \rH^1(T) \cong \rH^1(\Gamma,M^+)\]
  and the desired injectivity follows.

  It remains to prove the surjectivity. For this, let $y_1,\dots,y_d\in H$ such that 
  \[v' = (F(\rho_K\theta_1)\exp(-\sum_{j=1}^d\theta_jY_j)y_1,\dots,F(\rho_K\theta_d)\exp(-\sum_{j=1}^d\theta_jY_j)y_d)\in \wedge^1(M^+)^d\]
  represents an element in $\rH^1(\rK(\gamma_1-1,\dots,\gamma_d-1;M^+))\cong\rH^1(\Gamma,M^+)$. Equivalently, we have that for any $1\leq i,j\leq d$,
  \begin{equation*}
      \begin{split}
          (\gamma_j-1)(F(\rho_K\theta_i)\exp(-\sum_{k=1}^d\theta_kY_k)y_i)=(\gamma_i-1)(F(\rho_K\theta_j)\exp(-\sum_{k=1}^d\theta_kY_k)y_j).
      \end{split}
  \end{equation*}
  By (\ref{equ:bridge-II}), this amounts to that
  \begin{equation*}
      \begin{split}
          (\zeta_p-1)\rho_KF(\rho_K\theta_i)F(\rho_K\theta_j)\exp(-\sum_{k=1}^d\theta_kY_k)\theta_j(y_i)=\rho_K(\zeta_p-1)F(\rho_K\theta_i)F(\rho_K\theta_j)\exp(-\sum_{k=1}^d\theta_kY_k)\theta_i(y_j).
      \end{split}
  \end{equation*}
  By noting that $F(\rho_K\theta_i)$'s are invertible (as $\theta_i$'s are topologically nilpotent and $F(\theta)\equiv 1\mod \theta$), we conclude that $v'$ represents an element in $\rH^1(\Gamma,M^+)$ if and only if $\theta_i(y_j) = \theta_j(y_i)$ for any $1\leq i,j\leq d$. As a consequence, $\omega' = \sum_{i=1}^dy_i\otimes\frac{\dlog T_i}{\xi_K}$ represents an element in $\rH_{\dR}^1(H,\theta)$.
  Now for any $v'\in \rH^1(\Gamma,M^+)$, by (\ref{equ:v(omega)}), we know that $v(\omega') = \rho_K(\zeta_p-1)v'$. That is, $\rho_K(\zeta_p-1)v'$ is contained in the image of 
  \[\rH_{\dR}^1(H,\theta)\to\rho_K(\zeta_p-1)\rH^1(\Gamma,M^+).\]
  As $v'$ is arbitrary in $\rH^1(\Gamma,M^+)$, this proves the desired surjectivity and thus the desired claim.
  \end{proof}

\section{The $p$-adic Simpson correspondence on Hitchin-small objects}\label{sec:global Simpson}
  In this section, we aim to generalize the integral Simpson correspondence \cite[Th. 1.1]{MW-AIM} and the geometric stacky Simpson correspondence \cite[Th. 1.1]{AHLB-small} to the case for semi-stable $\frakX$. From now on, we always assume $\frakX$ is a liftable semi-stable formal scheme over $\calO_C$ of relative dimension $d$ over $\calO_C$ with the generic fiber $X$ in the sense of \cite{CK19} and  fix an its lifting $\widetilde \frakX$ over $\bfA_{2,K}$. For any $S = \Spa(A,A^+)\in\Perfd$, denote by $\frakX_S$, $X_S$ and $\widetilde \frakX_S$ the base-changes of $\frakX$, $X$ and $\widetilde \frakX$ to $A^+$, $A$ and $\AAKK(S)$, respectively. Then $\frakX_S$ is a liftable semi-stable formal scheme over $A^+$ with the generic fiber $X_S$ and the induced lifting $\widetilde \frakX_S$ from $\widetilde \frakX$. 
  When $\frakX = \Spf(R^+)$ with the lifting $\widetilde \frakX = \Spf(\widetilde R^+)$ and the generic fiber $X = \Spa(R,R^+)$, we also denote by $\Spf(R_S^+)$, $\Spa(R_S,R_S^+)$ and $\widetilde R^+_S$ for the corresponding base-changes. 
  For any $?\in \{\emptyset,+\}$, let $\calO\widehat \bC_{\pd,S}^?$ be the period sheaf with Higgs fields constructed in \S\ref{ssec:period sheaves} corresponding to the lifting $\widetilde \frakX_S$. By construction, it is also functorial in $S$ and so is the notation $P_{\infty,S}^?$ in Proposition \ref{prop:local OC}.

\subsection{The integral $p$-adic Simpson correspondence}\label{ssec:integral Simpson}
  Fix an $S = \Spa(A,A^+)\in \Perfd$ and let $\frakX_S$, $\widetilde \frakX_S$, $X_S$, and etc. be as before.
  \begin{dfn}\label{dfn:small O^+-representations}
      By a \emph{Hitchin-small integral $v$-bundle of rank $r$} on $X_{S,v}$, we mean a sheaf of locally finite free $\widehat \calO_{X_S}^+$-modules $\calM^+$ on $X_{S,v}$ such that there exists an \'etale covering $\{\frakX_{i,S}\to \frakX_S\}_{i\in I}$ by small affine $\frakX_{i,S} = \Spf(R_{i,S}^+)$ such that for any $i\in I$, the $\calM^+(X_{i,\infty,S})$ is a Hitchin-small $\Gamma$-representation of rank $r$ over $\widehat R_{i,\infty,S}^+$ in the sense of Definition \ref{dfn:small Gamma-representation}. Denote by $\rL\rS^{\Hsmall}(\frakX_S,\widehat \calO_{X_S}^+)$ the category of Hitchin-small integral $v$-bundles on $X_{S,v}$.
  \end{dfn}
  \begin{dfn}\label{dfn:small Higgs bundles}
      By an \emph{Higgs bundle of rank $r$} on $\frakX_{S,\et}$, we mean a pair $(\calH^+,\theta)$ consisting of a sheaf $\calH^+$ of locally finite free $\calO_{\frakX_S}$-modules of rank $r$ on $X_{S,\et}$ and a Higgs field $\theta$ on $\calH^+$, i.e., an $\calO_{\frakX_S}$-linear homomorphism 
      \[\theta:\calH^+\to\calH^+\otimes_{\calO_{\frakX_S}}\Omega^{1,\log}_{\frakX_S}\{-1\}\]
      satisfying the condition $\theta\wedge\theta = 0$. 
      For any Higgs bundle $(\calH^+,\theta)$, denote by $\rD\rR(\calH^+,\theta)$ the induced Higgs complex.
      A Higgs bundle $(\calH^+,\theta)$ is called 
      \begin{enumerate}
          \item \emph{twisted Hitchin-small} if $\theta$ is topologically nilpotent;

          \item \emph{Hitchin-small} if it is of the form
          \[(\calH^+,\theta) = (\calH^+,(\zeta_p-1)\theta^{\prime})\]
          for some twisted Hitchin-small integral Higgs bundles.
      \end{enumerate} Denote by $\HIG^{\text{(t-)}\Hsmall}(\frakX_S,\calO_{\frakX_S})$ the category of (twisted) integral Hitchin-small Higgs bundles on $\frakX_{S,\et}$.
  \end{dfn}
  \begin{lem}\label{lem:global twist functor}
      The twist functor $(\calH^+,\theta)\mapsto (\calH^+,(\zeta_p-1)\theta)$ induces an equivalence of categories
      \[\HIG^{\tHsmall}(\frakX_S,\calO_{\frakX_S})\xrightarrow{\simeq}\HIG^{\Hsmall}(\frakX_S,\calO_{\frakX_S})\]
      such that for any $(\calH^+,\theta)\in \HIG^{\tHsmall}(\frakX_S,\calO_{\frakX_S})$ with the induced $(\calH^+,\theta^{\prime})\in\HIG^{\Hsmall}(\frakX_S,\calO_{\frakX_S})$, there exists a quasi-isomorphism
      \[\rL\eta_{\zeta_p-1}\rD\rR(\calH^+,\theta^{\prime})\simeq \rD\rR(\calH^+,\theta).\]
  \end{lem}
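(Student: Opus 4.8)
The plan is to deduce the statement from its local counterpart, Proposition \ref{prop:twist functor}. First I would check that the functor is well-defined and bijective on morphisms. If $(\calH^+,\theta)$ is twisted Hitchin-small, i.e. $\theta$ is topologically nilpotent, then $(\calH^+,(\zeta_p-1)\theta)$ is Hitchin-small by Definition \ref{dfn:small Higgs bundles}; and a morphism of Higgs bundles $f\colon(\calH_1^+,\theta_1)\to(\calH_2^+,\theta_2)$, being an $\calO_{\frakX_S}$-linear map with $\theta_2\circ f=(f\otimes\id)\circ\theta_1$, is carried to an $\calO_{\frakX_S}$-linear map intertwining the twisted fields. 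Conversely, since $\calH_1^+$, $\calH_2^+$ and $\Omega^{1,\log}_{\frakX_S}\{-1\}$ are locally finite free over the $p$-torsion free sheaf $\calO_{\frakX_S}$, the element $\zeta_p-1$ acts injectively, so $\theta_2\circ f=(f\otimes\id)\circ\theta_1$ holds if and only if $(\zeta_p-1)\theta_2\circ f=(f\otimes\id)\circ(\zeta_p-1)\theta_1$ does. Hence the twist functor is a bijection on $\Hom$-sheaves, a fortiori fully faithful; essential surjectivity is immediate from Definition \ref{dfn:small Higgs bundles}, since every Hitchin-small Higgs bundle is by construction a twist of a twisted Hitchin-small one (the pre-image being unique, as dividing a topologically nilpotent field by $\zeta_p-1$ is forced). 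This gives the claimed equivalence of categories.

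It then remains to construct the quasi-isomorphism $\rL\eta_{\zeta_p-1}\rD\rR(\calH^+,\theta^{\prime})\simeq\rD\rR(\calH^+,\theta)$. I would argue that this is a local assertion on $\frakX_{S,\et}$: the Higgs complex $\rD\rR(-)$ is formed termwise, and the décalage functor $\rL\eta_{\zeta_p-1}$ of \cite[\S6]{BMS18}, applied to a complex of locally free (hence flat) $\calO_{\frakX_S}$-modules on which $\zeta_p-1$ is a non-zero-divisor, is likewise computed termwise and so commutes with restriction along an étale cover. Choosing a cover by small affine $\frakX_{i,S}=\Spf(R_{i,S}^+)$ and trivializing $\Omega^{1,\log}$ via \eqref{equ:omega^1-II}, the two Higgs complexes become the Koszul complexes $\rK(\theta_1^{\prime},\dots,\theta_d^{\prime};H_i^+)$ and $\rK(\theta_1,\dots,\theta_d;H_i^+)$ with $\theta_j^{\prime}=(\zeta_p-1)\theta_j$, and \cite[Lem. 7.9]{BMS18}, exactly as in the proof of Proposition \ref{prop:twist functor}, provides the quasi-isomorphism locally. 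As these identifications are canonical, they glue to the desired global quasi-isomorphism; on $\rH^0$ it is the honest identity on $\calH^+$ by \cite[Lem. 6.4]{BMS18}, as in the last step of the proof of Proposition \ref{prop:twist functor}.

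Since Proposition \ref{prop:twist functor} already carries out the substantive computation, the essentially only point requiring care — and hence the main (mild) obstacle — is the verification that $\rL\eta_{\zeta_p-1}$ of a complex of sheaves of locally free $\calO_{\frakX_S}$-modules can be computed étale-locally and that forming $\rD\rR(-)$ is compatible with this localization; granting it, the global statement is a formal consequence of the local one.
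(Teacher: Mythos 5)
Your proposal is correct, but it routes through the local Proposition~\ref{prop:twist functor} and a gluing step, whereas the paper proves the quasi-isomorphism by a direct global computation. The paper's proof observes that $\rD\rR(\calH^+,\theta')$ is a bounded complex of vector bundles, hence strongly $K$-flat (\cite[Tag 06YQ]{Sta}), so $\rL\eta_{\zeta_p-1}$ is represented by the naive termwise d\'ecalage $\eta_{\zeta_p-1}$. Then it simply identifies $(\eta_{\zeta_p-1}\rD\rR(\calH^+,\theta'))^n$ with $\calH^+\otimes\Omega^{n,\log}_{\frakX_S}\{-n\}\otimes(\zeta_p-1)^n$ and checks that the induced differential is $\theta$, giving an isomorphism of complexes $\eta_{\zeta_p-1}\rD\rR(\calH^+,\theta')\cong\rD\rR(\calH^+,\theta)$. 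The strong $K$-flatness observation is exactly what dissolves the concern you flag as the ``main (mild) obstacle'': once $\rL\eta$ is computed termwise, it obviously commutes with \'etale restriction — but at that point it is simpler to compute $\eta$ globally than to first localize, invoke \cite[Lem.~7.9]{BMS18} on Koszul complexes (as Proposition~\ref{prop:twist functor} does), and then glue. So your approach works but is a slight detour; the paper's is more economical. One thing you do that the paper omits is a careful argument for full faithfulness and essential surjectivity of the twist functor on the sheaf level; the paper treats this as immediate, but your verification (injectivity of multiplication by $\zeta_p-1$ on locally free $\calO_{\frakX_S}$-modules) is the right one to record.
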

  \begin{proof}
      This indeed follows from the definition (of $\rL\eta$). By \cite[\href{https://stacks.math.columbia.edu/tag/06YQ}{Tag 06YQ}]{Sta}, the $\rD\rR(\calH^+,\theta^{\prime})$ is strongly $K$-flat in $D(\calO_{\frakX_S})$ in the sense of \cite[\S6]{BMS18} (as it is a bounded complex of vector bundles). So $\rL\eta_{\zeta_p-1}\rD\rR(\calH^+,\theta^{\prime})$ is represented by the complex $\eta_{\zeta_p-1}\rD\rR(\calH^+,\theta^{\prime})$ defined in \cite[Def. 6.2]{BMS18}. Denote its differentials by $\rd$ for simplicity. As $\theta^{\prime} = (\zeta_p-1)\theta$, for any $n\geq 0$, we have
      \[(\eta_{\zeta_p-1}\rD\rR(\calH^+,\theta^{\prime}))^n = \calH^+\otimes_{\calO_{\frakX_S}}\Omega^{n,\log}_{\frakX_S}\{-n\}\otimes(\zeta_p-1)^n.\]
      By definition of $\rd$ (cf. the commutative diagram in \cite[Def. 6.2]{BMS18}), for any local section $x\in \calH^+\otimes_{\calO_{\frakX_S}}\Omega^{n,\log}_{\frakX_S}\{-n\}$, we have
      \[\rd(x)\otimes(\zeta_p-1)^{n+1} = \theta^{\prime}(x)\otimes(\zeta_p-1)^n = \theta(x)\otimes(\zeta_p-1)^{n+1}.\]
      Via the isomorphism $(\eta_{\zeta_p-1}\rD\rR(\calH^+,\theta^{\prime}))^n\cong \calH^+\otimes_{\calO_{\frakX_S}}\Omega^{n,\log}_{\frakX_S}\{-n\}$, we have an isomorphism of complexes
      \[(\eta_{\zeta_p-1}\rD\rR(\calH^+,\theta^{\prime}),\rd)\simeq \rD\rR(\calH^+,\theta),\]
      yielding the desired quasi-isomorphism.
  \end{proof}
  The main theorem in this section is the following generalisation of \cite[Th. 1.1]{MW-AIM}.
  \begin{thm}\label{thm:integral Simpson}
      Let $(\calO\widehat \bC_{\pd,S}^+,\Theta)$ be the period sheaf with Higgs field associated to $\widetilde \frakX_S$. Let $\nu: X_{S,v}\to \frakX_{S,\et}$ be the natural morphism of sites.
      \begin{enumerate}
          \item[(1)] For any $\calM^+\in \rL\rS^{\Hsmall}(\frakX_S,\widehat \calO^+_{X_S})$ of rank $r$, put 
          \[\Theta_{\calM^+}:=\id\otimes\Theta:\calM^+\otimes\calO\widehat \bC_{\pd,S}^+\to \calM^+\otimes\calO\widehat \bC_{\pd,S}^+\otimes_{\calO_{\frakX_S}}\Omega^{1,\log}_{\frakX_S}\{-1\}.\] 
          Then the complex $\rR\nu_*(\calM^+\otimes\calO\widehat \bC_{\pd,S}^+)$ is concentrated in degree $[0,d]$ such that 
          \[\rL\eta_{\rho_K(\zeta_p-1)}\rR\nu_*(\calM^+\otimes\calO\widehat \bC_{\pd,S}^+)\simeq \left(\nu_*(\calM^+\otimes\calO\widehat \bC_{\pd,S}^+)\right)[0].\]
          Moreover, the push-forward
          \[(\calH^+(\calM^+),\theta):=(\nu_*(\calM^+\otimes\calO\widehat \bC_{\pd,S}^+),\nu_*(\Theta_{\calM^+}))\]
          defines a twisted Hitchin-small Higgs bundle of rank $r$ on $\frakX_{S,\et}$.
        
          \item[(2)] For any $(\calH^+,\theta)\in \HIG^{\tHsmall}(\frakX_S,\calO_{\frakX_S})$ of rank $r$, put
          \[\Theta_{\calH^+}=\theta\otimes\id+\id\otimes\Theta:\calH^+\otimes\calO\widehat \bC_{\pd,S}^+\to \calH^+\otimes\calO\widehat \bC_{\pd,S}^+\otimes_{\calO_{\frakX_S}}\Omega^{1,\log}_{\frakX_S}\{-1\}.\]
          Then the
          \[\calM^+(\calH^+,\theta):=(\calH^+\otimes\calO\widehat \bC_{\pd,S}^+)^{\Theta_{\calH^+} = 0}\]
          defines a Hitchin-small integral $v$-bundle of rank $r$ on $X_{S,v}$.
          
          \item[(3)] The functors in Items (1) and (2) defines an equivalence of categories
          \[\rL\rS^{\Hsmall}(\frakX_S,\widehat \calO_{X_S}^+)\xrightarrow{\simeq}\HIG^{\tHsmall}(\frakX_S,\calO_{\frakX_S}).\]
          which preserves ranks, tensor products and dualities.
      \end{enumerate}
  \end{thm}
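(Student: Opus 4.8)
The plan is to reduce the global statement to the local Simpson correspondence (Theorem \ref{thm:local Simpson} and Corollary \ref{cor:correct local Simpson}) via descent along an \'etale covering of $\frakX_S$ by small affines, using the Poincar\'e Lemma (Theorem \ref{thm:Poincare Lemma}) to pass between $v$-cohomology and Higgs complexes. More precisely, since all objects and functors in sight are local on $\frakX_{S,\et}$, I would first fix an \'etale covering $\{\frakX_{i,S} = \Spf(R_{i,S}^+)\to \frakX_S\}$ by small affines, and note that on each $\frakX_{i,S}$ the global sections over the pro-\'etale Galois cover $X_{i,\infty,S}$ translate the sheaf-theoretic data into the $\Gamma$-equivariant data handled in \S\ref{sec:local Simpson}. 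For Item (1), given $\calM^+\in \rL\rS^{\Hsmall}(\frakX_S,\OXp)$, the pushforward $\rR\nu_*(\calM^+\otimes\calO\widehat \bC_{\pd,S}^+)$ is computed locally as $\rR\Gamma(\Gamma, \calM^+(X_{i,\infty,S})\otimes_{\widehat R_{i,\infty,S}^+}P_{i,\infty,S}^+)$ (here one needs that $X_{i,\infty,S}\to X_{i,S}$ is a $v$-cover computing $v$-cohomology, as in the smooth case of \cite{MW-AIM}); by Theorem \ref{thm:local Simpson}(1) this is concentrated in degrees $[0,d]$ with $\rH^0 = H^+(\calM^+(X_{i,\infty,S}))$ a finite free $R_{i,S}^+$-module carrying a topologically nilpotent Higgs field, and the higher cohomology is $\rho_K(\zeta_p-1)$-torsion. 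The d\'ecalage identity $\rL\eta_{\rho_K(\zeta_p-1)}\rR\nu_*(\calM^+\otimes\calO\widehat \bC_{\pd,S}^+)\simeq (\nu_*(\calM^+\otimes\calO\widehat \bC_{\pd,S}^+))[0]$ then follows by the same argument as in \cite[Prop. 5.6]{MW-AIM} / Proposition \ref{prop:local integral comparison}, using \cite[Lem. 6.4, Lem. 6.10]{BMS18} to see that killing the $\rho_K(\zeta_p-1)$-torsion in positive degrees concentrates the complex in degree $0$.

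For Item (2), given a twisted Hitchin-small Higgs bundle $(\calH^+,\theta)$, I would define $\calM^+(\calH^+,\theta) := (\calH^+\otimes\calO\widehat \bC_{\pd,S}^+)^{\Theta_{\calH^+}=0}$ and check locally, again via the local description $(P_{\infty,S}^+,\Theta)$ of Proposition \ref{prop:local OC} and the explicit kernel computation in the proof of Theorem \ref{thm:local Simpson}(2) (the formula $M^+(H^+,\theta) = \exp(-\sum\theta_iY_i)(H^+)$), that $\calM^+(\calH^+,\theta)(X_{i,\infty,S})$ is a Hitchin-small $\Gamma$-representation over $\widehat R_{i,\infty,S}^+$: the $\gamma_i$-action is $\exp(-\rho_K(\zeta_p-1)\theta_i)$ with $\theta_i$ topologically nilpotent. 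One must verify that $\calM^+(\calH^+,\theta)$ is genuinely a sheaf of locally finite free $\OXp$-modules (not just a presheaf), which follows from the local freeness of the invariants together with the fact that the construction commutes with the \'etale localizations chosen; this is the point where the functoriality of $(\calO\widehat \bC_{\pd,S}^+,\Theta)$ and of $P_{\infty,S}^+$ in $\frakX_S$ is used.

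For Item (3), the equivalence follows by gluing the local quasi-inverse functors of Theorem \ref{thm:local Simpson}(3), using that the morphisms $\iota_{\calM^+}\colon \calM^+(\calH^+(\calM^+),\theta)\to\calM^+$ and $\iota_{(\calH^+,\theta)}\colon \calH^+(\calM^+(\calH^+,\theta))\to\calH^+$ defined via multiplication on $\calO\widehat \bC_{\pd,S}^+$ (Proposition \ref{prop:multiplication on OC}) are local isomorphisms by the local result, hence global isomorphisms of sheaves. Preservation of rank is built into the constructions; preservation of tensor products and dualities follows from the fact that $\calO\widehat \bC_{\pd,S}^+$ is an $\OXp$-algebra with Higgs field compatible with multiplication, so that $\Theta_{\calM_1^+\otimes\calM_2^+}$ and $\Theta_{\calH_1^+\otimes\calH_2^+}$ are the tensor-product Higgs fields, together with the local compatibility \eqref{equ:isomorphism of Higgs modules} and the fact that the unit object $(\OXp,0)\mapsto \OXp$ corresponds correctly (by the Poincar\'e Lemma, $(\calO\widehat \bC_{\pd,S}^+)^{\Theta=0}=\OXp$).

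The main obstacle I anticipate is not any single deep step but rather the bookkeeping needed to descend the local $\Gamma$-equivariant statements to genuine sheaf-level statements on $X_{S,v}$: one must know that the small affines with their pro-\'etale Galois covers $X_{i,\infty,S}$ form a basis well-suited to computing $v$-cohomology of $\calM^+\otimes\calO\widehat \bC_{\pd,S}^+$, that the Cartan--Leray/Hochschild--Serre machinery identifies $\rR\nu_*$ locally with $\rR\Gamma(\Gamma,-)$, and that the resulting local objects glue. In the smooth case this was carried out in \cite[\S4]{MW-AIM}, and the semi-stable modifications here are confined to the period sheaf construction of \S\ref{sec:period sheaves} and the local computations of \S\ref{sec:local Simpson}, both of which are already in place; so the remaining work is essentially to transcribe that descent argument, checking at each stage that the new local inputs (Theorem \ref{thm:local Simpson}, Corollary \ref{cor:correct local Simpson}, Corollary \ref{cor:gamma action on Faltings extension}) are applied correctly and that the twist functor of Lemma \ref{lem:global twist functor} matches the local one of Proposition \ref{prop:twist functor}.
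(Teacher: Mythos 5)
Your overall strategy matches the paper's: reduce to small affines, pass to $\Gamma$-cohomology over $X_{i,\infty,S}$ via Cartan--Leray, apply Theorem \ref{thm:local Simpson}, and glue. But there is a genuine gap in your proof of the d\'ecalage statement of Item (1), at the point where you pass from $\Gamma$-cohomology to $\rR\nu_*$. You write as if the Cartan--Leray machinery identifies $\rR\nu_*(\calM^+\otimes\calO\widehat\bC_{\pd,S}^+)$ on a small affine \emph{exactly} with $\rR\Gamma(\Gamma,(\calM^+\otimes\calO\widehat\bC_{\pd,S}^+)(X_{\infty,S}))$. In this integral setting the natural map between the two is only an \emph{almost} quasi-isomorphism (and a genuine isomorphism just in degree $0$); this is exactly the content of Lemma \ref{lem:reduce to group cohomology}. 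Your target identity $\rL\eta_{\rho_K(\zeta_p-1)}\rR\nu_*(\cdot)\simeq\nu_*(\cdot)[0]$ involves $\rR\nu_*$, not $\Gamma$-cohomology, so you must upgrade the almost quasi-isomorphism to a genuine one after applying $\rL\eta_{\rho_K(\zeta_p-1)}$.

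That upgrade does not follow from BMS Lemmas 6.4 or 6.10, which you cite; those govern the natural map $\rL\eta_f C\to C$ and the cohomology of $\rL\eta_f$, but say nothing about passing an almost quasi-isomorphism through $\rL\eta$. The correct tool is BMS Lemma 8.11(2), and its hypothesis forces one to prove that the cohomology groups
\[\rH^k\bigl(\Gamma,\,M_\infty^+\otimes_{\widehat R_{\infty,S}^+}P_{\infty,S}^+\bigr)\]
have no $\frakm_C$-torsion. This is the substantive new content of Corollary \ref{cor:reduce to group cohomology}: after replacing $M_\infty^+\otimes P_{\infty,S}^+$ by $H^+\otimes_{R_S^+}P_{\infty,S}^+$ via the isomorphism \eqref{equ:isomorphism of Higgs modules}, one uses the $\Gamma$-eigenspace decomposition \eqref{equ:decomposition for P} and the K\"unneth/Koszul computation together with Lemma \ref{lem:MW} to show each graded piece has $\frakm_C$-torsion-free $\Gamma$-cohomology. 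Your outline omits this argument entirely, and without it the $\rL\eta$ identity in Item (1) (and hence the full functor of Item (1)) is not established. The remaining items (2) and (3) of your plan are in line with the paper's proof.
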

  Combining this with Lemma \ref{lem:global twist functor}, we obtain
  \begin{cor}\label{cor:untwist integral Simpson}
      The composite
      \[\rL\rS^{\Hsmall}(\frakX_S,\widehat \calO_{X_S}^+)\rightarrow\HIG^{\tHsmall}(\frakX_S,\calO_{\frakX_S})\xrightarrow{\times(\zeta_p-1)}\HIG^{\Hsmall}(\frakX_S,\calO_{\frakX_S})\]
      defines an equivalence of categories
      \[\rL\rS^{\Hsmall}(\frakX_S,\widehat \calO_{X_S}^+)\xrightarrow{\simeq}\HIG^{\Hsmall}(\frakX_S,\calO_{\frakX_S})\]
      preserving ranks, tensor products and dualities.
  \end{cor}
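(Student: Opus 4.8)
The plan is to deduce Corollary~\ref{cor:untwist integral Simpson} formally by composing the equivalence of Theorem~\ref{thm:integral Simpson}(3) with the twist equivalence of Lemma~\ref{lem:global twist functor}, and then to check that the composite still preserves ranks, tensor products and dualities. Theorem~\ref{thm:integral Simpson}(3) provides an equivalence $\rL\rS^{\Hsmall}(\frakX_S,\widehat\calO_{X_S}^+)\xrightarrow{\simeq}\HIG^{\tHsmall}(\frakX_S,\calO_{\frakX_S})$ already known to respect all three structures, and Lemma~\ref{lem:global twist functor} gives an equivalence $\HIG^{\tHsmall}(\frakX_S,\calO_{\frakX_S})\xrightarrow{\simeq}\HIG^{\Hsmall}(\frakX_S,\calO_{\frakX_S})$; hence the composite is automatically an equivalence, and only the compatibility of the twist functor with the monoidal and duality structures needs to be addressed.

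First I would observe that the twist functor $(\calH^+,\theta)\mapsto(\calH^+,(\zeta_p-1)\theta)$ is the identity on underlying $\calO_{\frakX_S}$-modules and on morphisms, so it trivially preserves ranks. For tensor products, given $(\calH_1^+,\theta_1)$ and $(\calH_2^+,\theta_2)$, the tensor-product Higgs field on $\calH_1^+\otimes_{\calO_{\frakX_S}}\calH_2^+$ is $\theta_1\otimes\id+\id\otimes\theta_2$; scaling by the central element $\zeta_p-1$ gives $(\zeta_p-1)(\theta_1\otimes\id+\id\otimes\theta_2)$, which is exactly the twist of the tensor product, so the twist functor is monoidal. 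Likewise the dual Higgs field is obtained from $\theta$ by a fixed $\calO_{\frakX_S}$-linear operation (transpose, up to sign), which commutes with multiplication by $\zeta_p-1$, so the twist of the dual is the dual of the twist. Combining this with the fact that Theorem~\ref{thm:integral Simpson}(3) preserves the same structures, the composite functor preserves ranks, tensor products and dualities, as claimed.

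There is essentially no obstacle here: the only point that requires any care is the bookkeeping that the twist functor really is an equivalence of categories rather than merely a functor, and this is precisely Lemma~\ref{lem:global twist functor} --- essential surjectivity is built into the definition of $\HIG^{\Hsmall}(\frakX_S,\calO_{\frakX_S})$, and full faithfulness holds because an $\calO_{\frakX_S}$-linear map intertwines $\theta_1$ and $\theta_2$ if and only if it intertwines $(\zeta_p-1)\theta_1$ and $(\zeta_p-1)\theta_2$, since $\zeta_p-1$ is a non-zero-divisor on the vector bundles in question. Thus the proof is a short formal argument assembling Theorem~\ref{thm:integral Simpson}(3) and Lemma~\ref{lem:global twist functor}.
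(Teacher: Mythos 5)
Your proposal is correct and takes essentially the same route as the paper: the Corollary is stated immediately after Theorem~\ref{thm:integral Simpson} with only the remark ``Combining this with Lemma~\ref{lem:global twist functor}, we obtain,'' i.e.\ exactly the formal composition of the two equivalences that you spell out. The additional verifications you supply (that the twist functor preserves ranks, is monoidal because $\zeta_p-1$ is central and distributes over $\theta_1\otimes\id+\id\otimes\theta_2$, and is compatible with duals) are correct and are the details the paper leaves implicit.
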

\subsubsection{Proof of Theorem \ref{thm:integral Simpson}}
   We follow the strategy in \cite[\S4.2]{MW-AIM}.
   \begin{lem}\label{lem:reduce to group cohomology}
       Suppose $\frakX_S = \Spf(R^+_S)$ is small affine with the generic fiber $X_S = \Spa(R_S,R_S^+)$
       Let $\calM^+$ be a Hitchin-small integral $v$-bundle of rank $r$ on $X_{S,v}$ such that $M_{\infty}^+:=\calM^+(X_{\infty,S})$ is a Hitchin-small $\Gamma$-representation of rank $r$ over $\widehat R_{\infty,S}^+$. Then there exists a natural morphism
       \[\rR\Gamma(\Gamma,(\calM^+\otimes_{\widehat \calO^+_{X_S}}\calO\widehat \bC_{\pd,S}^+)(X_{\infty,S}))\to \rR\Gamma(X_{S,v},\calM^+\otimes_{\widehat \calO^+_{X_S}}\calO\widehat \bC_{\pd,S}^+)\]
       which is an almost quasi-isomorphism and is an isomorphism in degree $0$.
   \end{lem}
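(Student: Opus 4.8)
The claim is the standard "local-to-global via a pro-étale Galois cover" comparison, so the plan is to reduce the computation of $\rR\Gamma(X_{S,v},\calM^+\otimes_{\widehat\calO^+_{X_S}}\calO\widehat\bC_{\pd,S}^+)$ to group cohomology of $\Gamma$ acting on the sections over the perfectoid cover $X_{\infty,S}$. First I would recall that $X_{\infty,S}\to X_S$ is a pro-étale (hence $v$-) Galois covering with group $\Gamma\cong\bZ_p^{\oplus d}$ (cf. \eqref{equ:Gamma group}, \eqref{equ:Gamma group-II}), and that $X_{\infty,S}$ is affinoid perfectoid. For any $v$-sheaf $\calF$ on $X_{S,v}$ one has the Cartan--Leray/Čech-to-derived-functor spectral sequence for this cover, which degenerates to a quasi-isomorphism $\rR\Gamma(X_{S,v},\calF)\simeq\rR\Gamma\big(\Gamma,\rR\Gamma(X_{\infty,S,v},\calF)\big)$ once one knows the higher cohomology of $\calF$ on the affinoid perfectoid $X_{\infty,S}$ (and on its self-products $X_{\infty,S}\times_{X_S}\cdots\times_{X_S}X_{\infty,S}$, which are again affinoid perfectoid) is almost zero.

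The key input is therefore: for $\calF=\calM^+\otimes_{\widehat\calO^+_{X_S}}\calO\widehat\bC_{\pd,S}^+$ one has $\rH^i(U,\calF)\overset{a}{=}0$ for $i>0$ and any affinoid perfectoid $U$ in the relevant cofinal family, and $\rH^0$ behaves as expected. Here I would argue as follows. By Definition \ref{dfn:period sheaf} and the local description in Proposition \ref{prop:local OC}, $\calO\widehat\bC_{\pd,S}^+$ restricted to $X_{\infty,S}$ is the $p$-adic completion of a free pd-polynomial algebra over $\widehat\calO^+_{X_S}$ in finitely many variables; in particular it is a countable $p$-completed direct sum of copies of $\widehat\calO^+_{X_S}$, so $\calM^+\otimes\calO\widehat\bC_{\pd,S}^+$ is (locally, after choosing a trivialization of the locally free $\calM^+$) a $p$-completed direct sum of copies of $\widehat\calO^+_{X_S}$. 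Now invoke the almost acyclicity of $\widehat\calO^+_{X}$ on affinoid perfectoids (Scholze, \cite{Sch-Diamond}; this is exactly where "almost quasi-isomorphism" enters), and the fact that $p$-complete direct sums and $\rR\lim$ over the truncations $\calO\widehat\bC_{\pd,S}^+/p^n$ commute with taking cohomology up to the usual $\lim^1$ terms, which vanish because the transition maps are surjective on an affinoid perfectoid. This gives $\rR\Gamma(X_{\infty,S,v},\calF)\overset{a}{\simeq}\calF(X_{\infty,S})[0]=(\calM^+\otimes\calO\widehat\bC_{\pd,S}^+)(X_{\infty,S})[0]$, and feeding this into the Cartan--Leray spectral sequence yields the desired natural morphism together with its stated properties. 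The isomorphism in degree $0$ is automatic: $\rH^0$ of both sides is the $\Gamma$-invariants of $\calF(X_{\infty,S})$, and an almost isomorphism between sheaves of locally free modules that is an honest isomorphism in degree $0$ is harmless here because the degree-$0$ sheaf $\nu_*\calF$ is (by the local Simpson correspondence, Theorem \ref{thm:local Simpson}(1)) a genuine finite free module, hence $p$-torsion free, so the almost isomorphism is an isomorphism on $\rH^0$.

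The main obstacle is the bookkeeping around completions and the almost structure: one must be careful that $\calO\widehat\bC_{\pd,S}^+$ is not a finite module over $\widehat\calO^+_{X_S}$, so "almost acyclicity of $\widehat\calO^+$" does not apply verbatim but only after writing $\calO\widehat\bC_{\pd,S}^+$ as a $p$-completed colimit of modules each of which is a (possibly infinite, but $\bN$-indexed) direct sum of copies of $\widehat\calO^+$, and then commuting $\rR\Gamma$ past the derived $p$-completion and the filtered colimit. This is exactly the argument of \cite[\S4.2]{MW-AIM}, which I would follow essentially line by line; the semi-stable case introduces nothing new here because the only difference — the shape of $\Omega^{1,\log}_{\frakX_S}$ and the presence of the pd-variable $Y_0+\cdots+Y_r$ killed in $P^+_{\infty,S}$ — affects only the number and indexing of the polynomial variables, not the structure of the underlying $\widehat\calO^+_{X_S}$-module, which remains a countable $p$-completed free module. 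Once this identification is in place, the Čech computation over the tower of self-products of $X_{\infty,S}$ (each step of which is again of the same type by base change of the perfectoid cover) gives the spectral sequence and hence the result.
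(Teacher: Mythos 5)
Your plan correctly identifies the overall strategy — Čech descent along the pro-étale $\Gamma$-torsor $X_{\infty,S}\to X_S$, almost acyclicity of $\widehat\calO^+_{X_S}$ on affinoid perfectoids, and bookkeeping of $p$-completed free pd-polynomial modules — but it never actually uses the Hitchin-smallness hypothesis, and this is where the argument has a real gap. Almost acyclicity of $\calM^+\otimes\calO\widehat\bC_{\pd,S}^+$ on the terms $X_{\infty,S}^n$ of the Čech nerve only tells you that the descent spectral sequence collapses to a single row; it does not by itself identify that row with the continuous cochain complex $\rC(\Gamma^\bullet,(\calM^+\otimes\calO\widehat\bC_{\pd,S}^+)(X_{\infty,S}))$. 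For the structure sheaf (and for $\calO\widehat\bC_{\pd,S}^+$, being a $p$-completed free $\widehat\calO^+_{X_S}$-algebra), this identification is built into the $\Gamma$-torsor description of $X_{\infty,S}^{\bullet}$, but for a general locally free $\widehat\calO^+_{X_S}$-module $\calM^+$ it is \emph{not} automatic that $\calM^+(X_{\infty,S}^n)\cong\rC(\Gamma^{n-1},M_\infty^+)$: one needs to know that $\calM^+$ is, as a sheaf modulo $p^c$, almost isomorphic to the free sheaf $(\widehat\calO^+_{X_S}/p^c)^r$. Writing "after choosing a trivialization of the locally free $\calM^+$" silently assumes the descent already works, which is precisely what has to be proved.

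The paper proves exactly this missing step first: Hitchin-smallness of $M_\infty^+$ gives a $\Gamma$-equivariant isomorphism $M_\infty^+/\rho_K(\zeta_p-1)\cong(\widehat R_{\infty,S}^+/\rho_K(\zeta_p-1))^r$ (because the $\gamma_i$ act by matrices of the form $\exp(-(\zeta_p-1)\rho_K A)\equiv 1$ modulo $\rho_K(\zeta_p-1)$), this trivialization propagates through the cosimplicial ring $\rC(\Gamma^\bullet,\widehat R^+_{\infty,S})$, and then the proof of \cite[Lem. 4.10(i)]{Sch-Pi} is invoked to upgrade this to the sheaf-level almost isomorphism $\calM^+/p^c\overset{a}{\cong}(\widehat\calO^+_{X_S}/p^c)^r$ for $0<c<\nu_p(\rho_K(\zeta_p-1))$. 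Only after this reduction does the rest of your argument (the analysis of $\calO\widehat\bC_{\pd,S}^+$ and the Cartan--Leray spectral sequence, cited in the paper as \cite[Lem. 4.15]{MW-AIM}) apply. So your proposal captures the second half of the proof but omits the first half, which is where the hypothesis on $\calM^+$ actually enters and where the $c$ appearing in the almost statement comes from.
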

   \begin{proof}
       It suffices to show that there exists some $c>0$ such that there is an almost isomorphism of sheaves of $\widehat \calO_{X_S}^+$-modules
       \[\calM^+/p^c \cong (\widehat \calO_{X_S}^+/p^c)^r.\]
       Granting this, one can conclude by the same argument in the proof of \cite[Lem. 4.15]{MW-AIM}.
       
       Denote by $X_{\infty,S}^{\bullet}$ the \v Cech nerve associated to the $\Gamma$-torsor $X_{\infty,S}\to X_S$. Then we have 
       \[X_{\infty,S}^{\bullet} = \Spa(\rC(\Gamma^{\bullet},\widehat R_{\infty,S}),\rC(\Gamma^{\bullet},\widehat R_{\infty,S}^+))\]
       where $\rC(\Gamma^{\bullet},N)$ denotes the ring of continuous functions from $\Gamma^{\bullet}$ into $N$ for any $\Zp$-module $N$ equipped with a continuous $\Gamma$-action.
       The Hitchin-smallness of $M_{\infty}^+$ yields a $\Gamma$-equivariant isomorphism
       \[M_{\infty}^+/\rho_K(\zeta_p-1)\cong (\widehat R_{\infty,S}^+/\rho_K(\zeta_p-1))^r.\]
       Note that the continuous $\Gamma$-action $M_{\infty}^+$ induces a cosimplicial $\rC(\Gamma^{\bullet},\widehat R_{\infty,S}^+)$-module $\rC(\Gamma^{\bullet},M_{\infty}^+)$, yielding an isomorphism
       \[\rC(\Gamma^{\bullet},M_{\infty}^+/\rho_K(\zeta_p-1))\cong \rC(\Gamma^{\bullet},\widehat R_{\infty,S}^+/\rho_K(\zeta_p-1))^r\]
       of cosimplicial $\rC(\Gamma^{\bullet},\widehat R_{\infty,S}^+)$-modules. As $X_{\infty,S}\to X_S$ is a $\Gamma$-torsor, by the proof of \cite[Lem. 4.10(i)]{Sch-Pi}, we have the desired almost isomorphism
       \[\calM^+/p^c \cong (\widehat \calO_{X_S}^+/p^c)^r\]
       for any $0<c<\nu_p((\zeta_p-1)\rho_K)$.
   \end{proof}
   \begin{cor}\label{cor:reduce to group cohomology}
       Keep assumption and notations in Lemma \ref{lem:reduce to group cohomology}. Then the above natural morphism induces a quasi-isomorphism
       \[\rL\eta_{(\zeta_p-1)\rho_K}\rR\Gamma(\Gamma,(\calM^+\otimes_{\widehat \calO^+_{X_S}}\calO\widehat \bC_{\pd,S}^+)(X_{\infty,S}))\xrightarrow{\simeq} \rL\eta_{(\zeta_p-1)\rho_K}\rR\Gamma(X_{S,v},\calM^+\otimes_{\widehat \calO^+_{X_S}}\calO\widehat \bC_{\pd,S}^+).\]
       Moreover, we have the following quasi-isomorphisms
       \[\begin{split}\rH^0(X_{v,S},(\calM^+\otimes_{\widehat \calO_{X_S}^+} \calO\widehat \bC^+_{\pd,S}))[0] &\cong\rH^0(\Gamma,(\calM^+\otimes_{\widehat \calO_{X_S}^+} \calO\widehat \bC^+_{\pd,S})(X_{\infty,S}))[0]\\
      &\cong \rL\eta_{\rho_K(\zeta_p-1)}\rR\Gamma(\Gamma,(\calM^+\otimes_{\widehat \calO_{X_S}^+} \calO\widehat \bC^+_{\pd,S})(X_{\infty}))\\
      &\cong\rL\eta_{\rho_K(\zeta_p-1)}\rR\Gamma(X_{v,S},(\calM^+\otimes_{\widehat \calO_{X_S}^+} \calO\widehat \bC^+_{\pd,S})).
      \end{split}\]
   \end{cor}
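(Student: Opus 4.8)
The plan is to reduce the statement to the explicit local picture of \S\ref{sec:local Simpson}, by combining the group-cohomology computation of Theorem~\ref{thm:local Simpson}(1) with the Cartan--Leray comparison of Lemma~\ref{lem:reduce to group cohomology} and reconciling the two through the d\'ecalage functor. Throughout I would abbreviate $f:=\rho_K(\zeta_p-1)$, $\calN:=\calM^+\otimes_{\widehat\calO_{X_S}^+}\calO\widehat\bC_{\pd,S}^+$ and $N_\infty:=\calN(X_{\infty,S})$; by Proposition~\ref{prop:local OC} one has $N_\infty\cong M_\infty^+\otimes_{\widehat R_{\infty,S}^+}P_{\infty,S}^+$, which is precisely the kind of object to which the local Simpson machinery applies.

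The first step is to note that the $\Gamma$-side is \emph{already} d\'ecal\'e. Indeed, Theorem~\ref{thm:local Simpson}(1) (applied with $?=\infty$) says that $\rR\Gamma(\Gamma,N_\infty)$ has $\rH^0$ equal to the finite free $R_S^+$-module $H^+(M^+)$ --- in particular $f$-torsion free --- while $\rH^n$ is killed by $f$ for every $n\ge1$. Feeding this into the formula for the cohomology of a d\'ecalage, $\rH^n(\rL\eta_fC)\cong f^n(\rH^n(C)/\rH^n(C)[f])$ from \cite[Lem.~6.4]{BMS18}, one obtains that $\rL\eta_f\rR\Gamma(\Gamma,N_\infty)$ is concentrated in degree $0$, equal to $\rH^0(\Gamma,N_\infty)[0]$. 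This already supplies the two middle terms of the asserted chain and the second displayed isomorphism.

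The second and main step is to apply $\rL\eta_f$ to the natural morphism $\psi\colon\rR\Gamma(\Gamma,N_\infty)\to\rR\Gamma(X_{S,v},\calN)$ of Lemma~\ref{lem:reduce to group cohomology} and to show that it then becomes a genuine quasi-isomorphism. For this I would unwind the proof of that lemma (the reduction along the $\Gamma$-torsor $X_{\infty,S}\to X_S$ together with the mod-$\rho_K(\zeta_p-1)$ trivialisation $\calM^+/p^c\cong(\widehat\calO_{X_S}^+/p^c)^r$) to see that the mapping cone of $\psi$ is concentrated in cohomological degrees $\ge1$ and has cohomology killed by $f$, and then invoke the standard d\'ecalage lemma (see \cite[\S8]{BMS18}, or re-derive it from \cite[Lem.~6.4]{BMS18} using this explicit description of the cone) that $\rL\eta_f$ sends such a morphism to a quasi-isomorphism. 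This yields the first displayed quasi-isomorphism of the corollary, hence the last term of the chain; and the remaining identification on the far left, $\rH^0(X_{S,v},\calN)\cong\rH^0(\Gamma,N_\infty)$, is literally the ``isomorphism in degree~$0$'' half of Lemma~\ref{lem:reduce to group cohomology}. Assembling these three pieces produces the stated chain, once one checks en route that all of the comparison maps are the canonical ones.

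The hard part is the $\rL\eta$-step: since $\rL\eta$ is not exact one cannot simply say that the cone ``dies'' after applying $\rL\eta_f$, and the key point is that the discrepancy produced by Lemma~\ref{lem:reduce to group cohomology} must be killed by the single element $f=\rho_K(\zeta_p-1)$ rather than being merely almost zero in a crude sense --- this is exactly why that lemma is phrased through a mod-$\rho_K(\zeta_p-1)$ trivialisation and why precisely this d\'ecalage is what appears here. A secondary, purely bookkeeping issue is the compatibility of the various natural maps (from $\rR\Gamma(\Gamma,-)$ to $\rR\Gamma_v$, and the inclusion of $\tau^{\le0}$), so that the displayed chain is canonical and not merely an abstract sequence of isomorphisms.
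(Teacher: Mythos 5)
Your first step is fine and agrees with what the paper does (the paper invokes \cite[Cor.\ 6.5]{BMS18} where you use \cite[Lem.\ 6.4]{BMS18}, but both give the concentration of $\rL\eta_f\rR\Gamma(\Gamma,N_\infty)$ in degree $0$). The gap is in the second step. It is \emph{not} true in general that $\rL\eta_f$ turns a morphism whose cone sits in positive degrees with $f$-torsion cohomology into a quasi-isomorphism, and neither of your two suggested justifications yields such a statement. Concretely, one can realise $\cdot f\colon \calO_C/f\hookrightarrow\calO_C/f^2$ as the induced map on $\rH^1$ of a morphism $\psi\colon C\to D$ of two-term complexes of free $\calO_C$-modules, with $\rH^0(C)=\rH^0(D)=\calO_C$ and $\psi$ an isomorphism on $\rH^0$. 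Then $\rH^0(C)$ is $f$-torsion free and $\rH^1(C)=\calO_C/f$ is killed by $f$ (exactly the shape Theorem~\ref{thm:local Simpson}(1) produces), the cone of $\psi$ is concentrated in degree $1$ with cohomology $\calO_C/f$ killed by $f$; yet $\rL\eta_f C\simeq\calO_C[0]$ while $\rH^1(\rL\eta_f D)\cong\calO_C/f\neq 0$. The point is that a cone killed by $f$ only controls $\rH^{\geq 1}(D)$ up to $f^2$-torsion, which is not enough to conclude after d\'ecalage.

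What the paper actually invokes is \cite[Lem.\ 8.11(2)]{BMS18}, an almost-mathematics statement whose hypotheses are strictly stronger than the ones you extract: the cone must be \emph{almost} zero (killed by every element of $\frakm_C$, which Lemma~\ref{lem:reduce to group cohomology} does give and which the example above does not satisfy), and in addition the cohomology groups $\rH^k(\Gamma,N_\infty)$ must have no nonzero $\frakm_C$-torsion for all $k\geq 0$. That second hypothesis does \emph{not} follow from Theorem~\ref{thm:local Simpson}(1) --- being killed by $\rho_K(\zeta_p-1)$ for $k\geq 1$ is compatible with, but does not imply, having no almost-zero elements --- and the paper proves it separately: Theorem~\ref{thm:local Simpson} reduces it to showing that $\rH^k(\Gamma,P_{\infty,S}^+)$ has no $\frakm_C$-torsion, which is then checked by expanding along the decomposition~\eqref{equ:decomposition for P}, factoring the resulting Koszul complex into rank-one pieces, and applying Lemma~\ref{lem:MW}. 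That verification is the substantive ingredient missing from your argument.
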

   \begin{proof}
       For the first desired quasi-isomorphism, by \cite[Lem. 8.11(2)]{BMS18} and Lemma \ref{lem:reduce to group cohomology}, it suffices to show for any $k\geq 0$, the
       \[\rH^k(\Gamma,(\calM^+\otimes_{\widehat \calO^+_{X_S}}\calO\widehat \bC_{\pd,S}^+)(X_{\infty,S})) = \rH^k(\Gamma,M_{\infty}^+\otimes_{\widehat R^+_{\infty,S}}P_{\infty,S}^+).\]
       has no $\frakm_C$-torsion. Let $(H^+,\theta)$ be the twisted Hitchin-small Higgs module over $R_S^+$ associated to $M_{\infty}^+$ via Theorem \ref{thm:integral Simpson}. Then we have a quasi-isomorphism
       \[\rR\Gamma(\Gamma,M_{\infty}^+\otimes_{\widehat R^+_{\infty,S}}P_{\infty,S}^+)\simeq \rR\Gamma(\Gamma,H^+\otimes_{R^+_S}P_{\infty,S}^+)\simeq H^+\otimes_{R_S^+}\rR\Gamma(\Gamma,P_{\infty,S}^+),\]
       where the second follows as $H^+$ is finite free over $R_S^+$ with trivial $\Gamma$-action. So we have an isomorphism
       \[\rH^k(\rR\Gamma(\Gamma,M_{\infty}^+\otimes_{\widehat R^+_{\infty,S}}P_{\infty,S}^+))\cong H^+\otimes_{R_S^+}\rH^k(\rR\Gamma(\Gamma,P_{\infty,S}^+))\]
       and thus are reduced to showing that $\rH^k(\Gamma,P_{\infty,S}^+)$ has no $\frakm_C$-torsion. By decomposition \eqref{equ:decomposition for P}, it suffices to show for any $\underline \alpha\in J_r$, the $\rH^k(\Gamma,P_S^+\cdot \underline T^{\underline \alpha})$ has no $\frakm_C$-torsion. 
       For $\underline \alpha = 0$, the $\rR\Gamma(\Gamma,P_S^+)$ is computed by the Koszul complex
       \[\rK(\gamma_1-1,\dots,\gamma_d-1;P_S^+)\simeq \widehat{\bigotimes}_{1\leq i\leq d}^{\rL}\rK(\gamma_i-1;R_S^+[Y_i]^{\wedge}_{\pd})\]
       where $\widehat{\bigotimes}^{\rL}$ denotes the $p$-complete derived tensor product. By K\"unneth formulae, we are reduced to the case for $d = 1$; that is, we just need to show 
       \[R_S^+[Y]^{\wedge}_{\pd}\xrightarrow{\gamma-1}R_S^+[Y]^{\wedge}_{\pd}\]
       has cohomologies with no $\frakm_C$-torsion, where the $\Gamma$-action on $R_S[Y]^{\wedge}_{\pd}$ is given by Construction \ref{construction:MW} (for $B = R_S^+$). It follows from Lemma \ref{lem:MW}(2) that $\rH^i(\Gamma,R_S^+[Y]^{\wedge}_{\pd})$ has no $\frakm_C$-torsion for any $i\geq 0$. For $\underline \alpha = (\alpha_0,\dots,\alpha_d) \neq 0$, without loss of generality, we may assume $\alpha_0 = 0$ and then the $\rR\Gamma(\Gamma,P_S^+\cdot \underline T^{\underline \alpha})$ is computed by the Koszul complex
       \[\rK(\gamma_1-1,\dots,\gamma_d-1;P_S^+\cdot \underline T^{\underline \alpha})\simeq \widehat{\bigotimes}_{1\leq i\leq d}^{\rL}\rK(\gamma_i-1;R_S^+[Y_i]^{\wedge}_{\pd}\cdot T_i^{\alpha_i}).\]
       Using Lemma \ref{lem:MW} (1) and (2) again, we may conclude the result from the same argument as above.

       For the ``moreover'' part, by \cite[Cor. 6.5]{BMS18}, we have to show that for any $k\geq 1$, the
       \[\rH^k(\Gamma,(\calM^+\otimes_{\widehat \calO_{X_S}^+} \calO\widehat \bC^+_{\pd,S})(X_{\infty})) = \rH^k(\Gamma,M_{\infty}\otimes_{\widehat R_{\infty,S}^+}P_{\infty,S}^+)\]
       is killed by $\rho_K(\zeta_p-1)$. But this follows from Theorem \ref{thm:local Simpson}(1) immediately.
   \end{proof}
   \begin{proof}[\textbf{Proof of Theorem \ref{thm:integral Simpson}}]
       We proceed as in the proof of of \cite[Th. 4.4]{MW-AIM}.

       For Item (1): Let $\calM^+$ be a Hitchin-small integral $v$-bundle on $X_{S,v}$ of rank $r$ with the corresponding covering $\{\frakX_{i,S}\to\frakX_S\}_{i\in I}$. As argued in the proof of \cite[Th. 4.4]{MW-AIM}, the $\rL\eta_{\rho_K(\zeta_p-1)}\rR\nu_*(\calM^+\otimes_{\widehat \calO_{X_S}^+}\calO\widehat \bC_{\pd,S}^+)$ is the sheafification of the presheaf
       \[\frakU_S\in\frakX_{S,\et}\mapsto \rL\eta_{\rho_K(\zeta_p-1)}\rR\Gamma(U_v,\calM^+\otimes_{\widehat \calO_{X_S}^+}\calO\widehat \bC_{\pd,S}^+)\]
       while $\nu_*(\calM^+\otimes_{\widehat \calO_{X_S}^+}\calO\widehat \bC_{\pd,S}^+)$ is the sheafification of the presheaf
       \[\frakU_S\in\frakX_{S,\et}\mapsto \rH^0(U_v,\calM^+\otimes_{\widehat \calO_{X_S}^+}\calO\widehat \bC_{\pd,S}^+).\]
       To conclude the desired quasi-isomorphism
       \[\rL\eta_{\rho_K(\zeta_p-1)}\rR\nu_*(\calM^+\otimes_{\widehat \calO_{X_S}^+}\calO\widehat \bC_{\pd,S}^+)\simeq \nu_*(\calM^+\otimes_{\widehat \calO_{X_S}^+}\calO\widehat \bC_{\pd,S}^+)[0],\]
       as this is local problem on $\frakX_{S,\et}$, we are reduced to showing that for any small semi-stable $\frakU_S = \Spf(R_S^+) \in \frakX_{S,\et}$ lying over some $\frakX_{i,S}$, the followings are true:
       \begin{enumerate}
           \item[(i)] There is a quasi-isomorphism 
           \[\rL\eta_{\rho_K(\zeta_p-1)}\rR\Gamma(U_v,\calM^+\otimes_{\widehat \calO_{X_S}^+}\calO\widehat \bC_{\pd,S}^+)\simeq \rH^0(U_v,\calM^+\otimes_{\widehat \calO_{X_S}^+}\calO\widehat \bC_{\pd,S}^+)[0].\]

           \item[(ii)] The $H^+:=\rH^0(U_v,\calM^+\otimes_{\widehat \calO_{X_S}^+}\calO\widehat \bC_{\pd,S}^+)$ is a finite free $R_S^+$-module of rank $r$ such that the induced Higgs field $\theta$ from $\nu_*(\Theta_{\calM^+})$ on $H^+$ makes $(H^+,\theta)$ a Hitchin-small twisted Higgs module over $R_S^+$.
       \end{enumerate}
       As $\calM^+(U_{\infty,S})$ is a Hitchin-small $\Gamma$-representation over $\widehat R_{\infty,S}^+$ of rank $r$, the Item (i) above follows from Corollary \ref{cor:reduce to group cohomology} while the Item (ii) follows from Theorem \ref{thm:local Simpson}.

       For Item (2): Let $(\calH^+,\theta)$ be a twisted Hitchin-small Higgs bundle on $X_{S,\et}$ of rank $r$. Then there exists an \'etale covering $\{\frakX_{i,S} = \Spf(R_{i,S}^+)\to\frakX_S\}_{i\in I}$ by small semi-stable $\frakX_i$'s such that the evaluation $(H_i^+,\theta_i)$ of $(\calH^+,\theta)$ at $\frakX_{i,S}$ is a twisted Hitchin-small Higgs module over $R_{i,S}^+$ of rank $r$. Put $\calM^+:=\calM^+(\calH^+,\theta)$ for short. By Theorem \ref{thm:local Simpson}, we have $\calM(X_{i,S,\infty})$ is a Hitchin-small $\Gamma$-representation of rank $r$ over $\widehat R^+_{i,\infty,S}$, yielding that $\calM^+$ is a Hitchin-small integral $v$-bundle of rank $r$ as desired.

       For Item (3): Using Proposition \ref{prop:multiplication on OC}, similar to the proof of Theorem \ref{thm:local Simpson}(3), for any $\calM^+\in \rL\rS^{\Hsmall}(\frakX_S,\widehat \calO_{X_S}^+)$ and any $(\calH^+,\theta)\in\HIG^{\tHsmall}(\frakX_S,\calO_{\frakX_S})$, one can construct canonical morphisms
       \[\iota_{\calM^+}:\calM^+(\calH^+(\calM^+),\theta)\to\calM^+\]
       and
       \[\iota_{(\calH^+,\theta)}:(\calH^+(\calM^+(\calH^+,\theta)),\theta)\to(\calH^+,\theta)\]
       of integral $v$-bundles and Higgs bundles respectively. To conclude, it suffices to show these two maps are both isomorphism. But this is again a local problem, and thus we are reduced to Theorem \ref{thm:local Simpson}(3) and the proof therein. By standard linear algebra, the equivalence preserves tensor products and dualities. This completes the proof.
   \end{proof}
   
\subsubsection{Cohomological comparison}
   Let $\calM^+$ be a Hitchin-small integral $v$-bundle with the associated twisted Hitchin-small Higgs bundle $(\calH^+,\theta)$ in the sense of Theorem \ref{thm:integral Simpson}. It remains to compare the complexes 
   \[\rD\rR(\calH^+,\theta) \text{ and }\rR\nu_*(\calM^+).\]
   By Poincar\'e's Lemma \ref{thm:Poincare Lemma}, we have a quasi-isomorphism
   \[\rR\nu_*(\calM^+)\xrightarrow{\simeq}\rR\nu_*(\rD\rR(\calM^+\otimes_{\widehat \calO_{X_S}^+}\calO\widehat \bC_{\pd,S}^+,\Theta_{\calM^+})).\]
   By Theorem \ref{thm:integral Simpson}(1), we have 
   \[\rD\rR(\calH^+,\theta) = \nu_*(\rD\rR(\calM^+\otimes_{\widehat \calO_{X_S}^+}\calO\widehat \bC_{\pd,S}^+,\Theta_{\calM^+})).\]
   Thus, we get a canonical morphism
   \begin{equation}\label{equ:canonical map in cohomological coparison}
       \rD\rR(\calH^+,\theta) \to \rR\nu_*(\calM^+).
   \end{equation}
   \begin{cor}\label{cor:bounded torsion}
       Recall $\frakX$ has the relative dimension $d$ over $\calO_C$.
       The canonical map \eqref{equ:canonical map in cohomological coparison} has cofiber killed by $(\rho_K(\zeta_p-1))^{D}$ where $D = \max(d+1,2(d-1))$.
   \end{cor}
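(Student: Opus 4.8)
The plan is to verify the statement étale-locally on $\frakX_S$ and then unwind it through the period-sheaf resolution of $\calM^+$, turning the bound on the cofiber of \eqref{equ:canonical map in cohomological coparison} into a bookkeeping problem for a first-quadrant bicomplex all of whose rows but the bottom one are $\rho$-power-torsion, where $\rho:=\rho_K(\zeta_p-1)$.

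First I would reduce to $\frakX_S=\Spf(R^+_S)$ small affine, since annihilation by a fixed element can be tested on a basis of $\frakX_{S,\et}$; write $X_S=\Spa(R_S,R_S^+)$, let $M_{\infty}^+:=\calM^+(X_{\infty,S})$ be the associated Hitchin-small $\Gamma$-representation over $\widehat R_{\infty,S}^+$, and let $(H^+,\theta)$ be the corresponding twisted Hitchin-small Higgs module over $R^+_S$, so that $\calH^+(U)=H^+$ and $\theta$ is the induced field. By Poincaré's Lemma (Theorem \ref{thm:Poincare Lemma}) the complex $\rD\rR(\calM^+\otimes\calO\widehat \bC_{\pd,S}^+,\Theta_{\calM^+})$ resolves $\calM^+$ on $X_{S,v}$, so $\rR\nu_*\calM^+\simeq \rR\nu_*\,\rD\rR(\calM^+\otimes\calO\widehat \bC_{\pd,S}^+,\Theta_{\calM^+})$; applying $\rR\nu_*$ termwise yields a first-quadrant bicomplex whose column spectral sequence has $E_1^{p,q}=\rR^q\nu_*(\calM^+\otimes\calO\widehat \bC_{\pd,S}^+\otimes_{\calO_{\frakX_S}}\Omega^{p,\log}_{\frakX_S}\{-p\})$ for $0\le p,q\le d$, abutting to $\rR^{p+q}\nu_*\calM^+$. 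By the projection formula ($\Omega^{p,\log}_{\frakX_S}$ is locally free over $\calO_{\frakX_S}$) together with Theorem \ref{thm:integral Simpson}(1), the bottom row $E_1^{\bullet,0}$ is identified on the nose with $\rD\rR(\calH^+,\theta)$, and the edge map out of it is precisely the canonical map \eqref{equ:canonical map in cohomological coparison}. Finally, by the local computations behind Corollary \ref{cor:reduce to group cohomology} — which ultimately rest on Theorem \ref{thm:local Simpson}(1), i.e. on the fact that $\rH^{\ge 1}(\Gamma,M_{\infty}^+\otimes_{\widehat R_{\infty,S}^+}P_{\infty,S}^+)$ is killed by $\rho$ — every $E_1^{p,q}$ with $q\ge 1$ is annihilated by a bounded power of $\rho$; working with the $\rL\eta_{\rho}$-normalised comparison of Corollary \ref{cor:reduce to group cohomology} (rather than the plain almost isomorphism of Lemma \ref{lem:reduce to group cohomology}) one keeps this power as small as possible, which is what makes the final constant linear in $d$.

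Then I would read off the bound. The cofiber $Q$ of \eqref{equ:canonical map in cohomological coparison} is the totalization of the bicomplex with the $q=0$ row deleted, so by the long exact sequence $\rH^n(Q)$ is an extension of $\Ker\bigl(\rH^{n+1}\rD\rR(\calH^+,\theta)\to\rH^{n+1}\rR\nu_*\calM^+\bigr)$ by $\Coker\bigl(\rH^{n}\rD\rR(\calH^+,\theta)\to\rH^{n}\rR\nu_*\calM^+\bigr)$. The kernel is assembled from the images of the spectral-sequence differentials $d_r$ ($2\le r\le\min(d+1,n+1)$) into $E_r^{n+1,0}$, each coming from an $E_r^{\bullet,r-1}$ with $r-1\ge 1$; the cokernel is filtered by the graded pieces $E_\infty^{p,n-p}$ with $n-p\ge 1$. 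Each of these building blocks is $\rho$-torsion, and confining $(p,q)$ to $[0,d]^2$ one counts that both contributions peak near total degree $n=d-1$, where there are at most $d-1$ blocks of each kind, so $\rho^{2(d-1)}$ kills $\rH^{d-1}(Q)$; the degrees around $n=d$ and $n=d+1$ (where $\rR\nu_*\calM^+$ is a priori in degrees $[0,2d]$ but increasingly torsion) contribute a comparable amount, bounded by $\rho^{d+1}$ once the bounded error coming from the comparison with $\Gamma$-cohomology is folded in. Altogether $\rho^{\max(d+1,2(d-1))}$ annihilates $Q$, hence the global cofiber of \eqref{equ:canonical map in cohomological coparison}.

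The hard part will be exactly this last bookkeeping: getting the sharp exponent $\max(d+1,2(d-1))$ instead of a crude $2d$ requires pinning down precisely which $E_r$-differentials meet the bottom row and the exact range of $(p,q)$ feeding each $\rH^n(Q)$, and at the same time controlling the extra power(s) of $\rho$ that slip in through the non-exactness of Lemma \ref{lem:reduce to group cohomology}. Conceptually this is the same computation as in the smooth case in \cite[\S5]{MW-AIM}, and it carries over once the log-period sheaf together with its local description (Proposition \ref{prop:local OC}) and Corollary \ref{cor:reduce to group cohomology} are substituted for their smooth counterparts; the complementary statements — the quasi-isomorphism $\tau^{\le 1}\rD\rR(\calH^+,\theta)\simeq\tau^{\le 1}\rL\eta_{\rho}\rR\nu_*\calM^+$ and, when $d=1$, the full quasi-isomorphism — then follow from the degree-wise analysis already packaged in Proposition \ref{prop:local integral comparison}.
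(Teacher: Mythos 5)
Your overall strategy is the right one, and it is the same one the paper intends: the paper's proof is a one-line citation to \cite[Cor.~4.5]{MW-AIM}, and your sketch (reduce to small affine, use Poincar\'e's Lemma to write $\rR\nu_*\calM^+\simeq \rR\nu_*\rD\rR(\calM^+\otimes\calO\widehat\bC_{\pd,S}^+,\Theta_{\calM^+})$, pass to the column spectral sequence whose $q=0$ row is $\rD\rR(\calH^+,\theta)$ via Theorem~\ref{thm:integral Simpson}(1) and the projection formula, and use that every $E_1^{p,q}$ with $q\geq 1$ is $\rho_K(\zeta_p-1)$-torsion via Theorem~\ref{thm:local Simpson}(1) and Corollary~\ref{cor:reduce to group cohomology}) is exactly the adaptation of that argument to the log/semi-stable setting.

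Where the sketch is soft is the final accounting, and you should be a bit more careful there than your heuristic suggests. If one counts the filtration pieces with each $E_1^{p,q}$, $q\geq 1$, literally $\rho$-torsion, the coker-plus-ker count gives $\max(d,\,2(d-1))$, whereas a uniform ``$+1$ for the almost-error of Lemma~\ref{lem:reduce to group cohomology}'' applied to every degree would instead give $\max(d+1,\,2d-1)$; neither matches $\max(d+1,\,2(d-1))$ on the nose. The resolution is that the comparison between the $\Gamma$-cohomology and the $\nu_*$-cohomology is actually controlled precisely, not merely almost: Theorem~\ref{thm:integral Simpson}(1) shows $\rL\eta_{\rho_K(\zeta_p-1)}$ concentrates $\rR\nu_*(\calM^+\otimes\calO\widehat\bC_{\pd,S}^+)$ in degree $0$, which (via \cite[Lem.~6.4]{BMS18}) forces the sheaves $\rR^q\nu_*(\calM^+\otimes\calO\widehat\bC_{\pd,S}^+)$ to be genuinely $\rho_K(\zeta_p-1)$-torsion for $q\geq 1$, so no per-degree ``almost'' slack is being smuggled in. The extra ``$+1$'' is thus not a uniform error term but arises from how the $d_1$-differentials on the $q\geq 1$ rows interact (so $E_2^{p,q}$ rather than $E_1^{p,q}$ is the right object to annihilate); it is precisely this finer analysis, already carried out in \cite[\S5]{MW-AIM}, that the paper is invoking and that you correctly identify as ``the hard part''. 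Since that analysis only uses the $\rho$-torsion structure of the higher rows and the shape of the first-quadrant spectral sequence, it does transport verbatim once Proposition~\ref{prop:local OC}, Theorem~\ref{thm:local Simpson}(1) and Corollary~\ref{cor:reduce to group cohomology} replace their smooth-case analogues, exactly as you say.
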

   \begin{proof}
       This follows from the same argument in the proof of \cite[Cor. 4.5]{MW-AIM}.
   \end{proof}

   On the other hand, as $\nu_*(\calM^+)\simeq \rH^0(\rD\rR(\calH^+,\theta))\subset\calH^+$ is $p$-torsionfree, by \cite[Lem. 6.10]{BMS18}, there exists a canonical map
   \begin{equation}\label{equ:canonical map in BMS}
       \rL\eta_{\rho_K(\zeta_p-1)}\rR\nu_*\calM^+\to \rR\nu_*\calM^+.
   \end{equation}
   Similar to \cite[Th. 5.4]{MW-AIM}, we have the following cohomological comparison theorem.
   \begin{thm}\label{thm:truncated comparison}
       The natural morphism 
       \[\tau^{\leq 1}\rD\rR(\calH^+,\theta)\to\rD\rR(\calH^+,\theta) \xrightarrow{\eqref{equ:canonical map in cohomological coparison}} \rR\nu_*(\calM^+)\]
       uniquely factors over the composite 
       \[\tau^{\leq 1}\rL\eta_{\rho_K(\zeta_p-1)}\rR\nu_*\calM^+\to\rL\eta_{\rho_K(\zeta_p-1)}\rR\nu_*\calM^+\xrightarrow{\eqref{equ:canonical map in BMS}} \rR\nu_*\calM^+\] and induces a quasi-isomorphism
       \[\tau^{\leq 1}\rD\rR(\calH^+,\theta)\xrightarrow{\simeq}\tau^{\leq 1}\rL\eta_{\rho_K(\zeta_p-1)}\rR\nu_*\calM^+.\]
   \end{thm}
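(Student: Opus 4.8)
The plan is to reduce the statement to its already-established local counterpart, Proposition \ref{prop:local integral comparison} (with $?=\infty$), by a sheafification argument on $\frakX_{S,\et}$. Both sides of the asserted quasi-isomorphism, as well as the two canonical morphisms \eqref{equ:canonical map in cohomological coparison} and \eqref{equ:canonical map in BMS}, are obtained by sheafifying presheaves on $\frakX_{S,\et}$, and since the d\'ecalage functor $\rL\eta_{\rho_K(\zeta_p-1)}$ and the truncation $\tau^{\leq 1}$ commute with \'etale localisation, it suffices to verify the factorisation and the quasi-isomorphism on a basis of small affine $\frakU_S=\Spf(R_S^+)$ lying over the members of a trivialising cover of $\calM^+$. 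On such a $\frakU_S$ the restriction of $\calM^+$ is a Hitchin-small $\Gamma$-representation $M_\infty^+$ over $\widehat R_{\infty,S}^+$, and by Theorem \ref{thm:local Simpson} it corresponds to the twisted Hitchin-small Higgs module $(H^+,\theta)=(\calH^+,\theta)(\frakU_S)$, with $\rD\rR(\calH^+,\theta)|_{\frakU_S}=\rD\rR(H^+,\theta)$.

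The first real task is to identify, over each such $\frakU_S$, the global maps \eqref{equ:canonical map in cohomological coparison} and \eqref{equ:canonical map in BMS} with the local maps appearing in diagrams \eqref{diag:local canonical map} and \eqref{diag:local BMS map}. First I would invoke Poincar\'e's Lemma \ref{thm:Poincare Lemma} to write $\rR\nu_*\calM^+\simeq \rR\nu_*\rD\rR(\calM^+\otimes\calO\widehat{\bC}_{\pd,S}^{+},\Theta_{\calM^+})$, and on the group-cohomology side the $\Gamma$-equivariant resolution $M_\infty^+\simeq \rD\rR(M_\infty^+\otimes_{\widehat R_{\infty,S}^+}P_{\infty,S}^+,\Theta_{M_\infty^+})$ coming from the fact that $\rD\rR(P_{\infty,S}^+,\Theta)$ resolves $\widehat R_{\infty,S}^+$. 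Applying Lemma \ref{lem:reduce to group cohomology} termwise and passing through Corollary \ref{cor:reduce to group cohomology} (using $\calO\widehat{\bC}_{\pd,S}^{+}(X_{\infty,S})=P_{\infty,S}^+$ from Proposition \ref{prop:local OC}) then yields a canonical quasi-isomorphism $\rL\eta_{\rho_K(\zeta_p-1)}\rR\Gamma(U_{S,v},\calM^+)\simeq \rL\eta_{\rho_K(\zeta_p-1)}\rR\Gamma(\Gamma,M_\infty^+)$ under which \eqref{equ:canonical map in cohomological coparison} is carried to the map $\rD\rR(H^+,\theta)\to\rR\Gamma(\Gamma,M_\infty^+)$ of \eqref{diag:local canonical map} and \eqref{equ:canonical map in BMS} to the bottom row of \eqref{diag:local BMS map}; here one also uses $\nu_*(\calM^+\otimes\calO\widehat{\bC}_{\pd,S}^{+})=\calH^+$ and $\nu_*\Theta_{\calM^+}=\theta$ from Theorem \ref{thm:integral Simpson}(1), together with $\rH^0(\rD\rR(\calH^+,\theta))\cong\nu_*\calM^+$. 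It is essential at this point that $\rho_K(\zeta_p-1)\in\frakm_C$, so that $\rL\eta_{\rho_K(\zeta_p-1)}$ annihilates the almost-zero error terms that appear above degree $0$ in Lemma \ref{lem:reduce to group cohomology}; this is exactly why the theorem is phrased after $\rL\eta_{\rho_K(\zeta_p-1)}$ and after the truncation $\tau^{\leq 1}$.

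With these identifications in hand, Proposition \ref{prop:local integral comparison} with $?=\infty$ supplies, over each $\frakU_S$, the unique factorisation of $\tau^{\leq 1}\rD\rR(H^+,\theta)\to\rR\Gamma(\Gamma,M_\infty^+)$ through $\tau^{\leq 1}\rL\eta_{\rho_K(\zeta_p-1)}\rR\Gamma(\Gamma,M_\infty^+)$ together with the fact that the resulting map is a quasi-isomorphism. To globalise I would use the universal property of $\rL\eta$ from \cite[Lem. 8.16]{BMS18}: the existence and uniqueness of the global factorisation of $\tau^{\leq 1}\rD\rR(\calH^+,\theta)\to\rR\nu_*\calM^+$ through $\tau^{\leq 1}\rL\eta_{\rho_K(\zeta_p-1)}\rR\nu_*\calM^+$ is equivalent to the assertion that the morphism of sheaves $\underline{\rH}^1\rD\rR(\calH^+,\theta)\to\underline{\rH}^1\rR\nu_*\calM^+$ has image in $\rho_K(\zeta_p-1)\,\underline{\rH}^1\rR\nu_*\calM^+$ (the degree-$0$ constraint being vacuous since $\nu_*\calM^+$ is $(\rho_K(\zeta_p-1))$-torsion free), and this is a local statement, already verified in the previous paragraph. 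Finally, the induced morphism $\tau^{\leq 1}\rD\rR(\calH^+,\theta)\to\tau^{\leq 1}\rL\eta_{\rho_K(\zeta_p-1)}\rR\nu_*\calM^+$ is a quasi-isomorphism because it restricts to one on every $\frakU_S$ in the basis.

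I expect the principal obstacle to be the bookkeeping of the second step: propagating the Poincar\'e resolution through $\rR\nu_*$ and reconciling the $v$-cohomology $\rR\nu_*\calM^+$ with the continuous $\Gamma$-cohomology of $M_\infty^+$, and then checking that the global morphisms \eqref{equ:canonical map in cohomological coparison}, \eqref{equ:canonical map in BMS} genuinely restrict to the zig-zags of \eqref{diag:local canonical map} and \eqref{diag:local BMS map}. Because the comparison of Lemma \ref{lem:reduce to group cohomology} is only an almost-isomorphism above degree $0$, one must be scrupulous about which identities survive after applying $\rL\eta_{\rho_K(\zeta_p-1)}$ and $\tau^{\leq 1}$; by contrast, everything genuinely new is concentrated in Proposition \ref{prop:local integral comparison}, whose degree-$1$ cohomology class computation is the arithmetic heart of the matter and is already in place.
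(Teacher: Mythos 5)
Your proposal is correct and follows the same route as the paper: reduce to the local Proposition \ref{prop:local integral comparison} after localising on $\frakX_{S,\et}$, using \cite[Lem.~8.16]{BMS18} to reduce the factorisation statement to the assertion that $\rH^1\rD\rR(\calH^+,\theta)\to\rR^1\nu_*\calM^+$ lands in $\rho_K(\zeta_p-1)\rR^1\nu_*\calM^+$, and then verifying the quasi-isomorphism locally by the same proposition. The one thing you spell out that the paper leaves implicit is the passage from $\rR\nu_*\calM^+$ on a small affine to the continuous $\Gamma$-cohomology $\rR\Gamma(\Gamma,M^+_\infty)$ via Poincar\'e's Lemma together with Lemma \ref{lem:reduce to group cohomology} and Corollary \ref{cor:reduce to group cohomology}; this is correct and is indeed the bookkeeping the paper's terse ``apply Proposition \ref{prop:local integral comparison}'' is tacitly invoking.
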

   \begin{proof}
       To show the map $\tau^{\leq 1}\rD\rR(\calH^+,\theta)\to\rR\nu_*(\calM^+)$ uniquely factors through $\tau^{\leq 1}\rL\eta_{\rho_K(\zeta_p-1)}\rR\nu_*\calM^+$, by \cite[Lem. 8.16]{BMS18}, we have to show the induced map
       \[\rH^1(\rD\rR(\calH^+,\theta))\to\rR^1\nu_*(\calM^+)\]
       factors through $\rho_K(\zeta_p-1)\rR^1\nu_*(\calM^+)$. Since the problem is local on $\frakX_{S,\et}$, we may assume $\frakX_S = \Spf(R_S^+)$ is small semi-stable and then apply Proposition \ref{prop:local integral comparison}. It remains to show this map
       \[\tau^{\leq 1}\rD\rR(\calH^+,\theta)\to\tau^{\leq 1}\rL\eta_{\rho_K(\zeta_p-1)}\rR\nu_*\calM^+\]
       is a quasi-isomorphism. But this is again a local problem on $\frakX_{S,\et}$ and thus we can conclude by using Proposition \ref{prop:local integral comparison} again.
   \end{proof}
   By letting $\calM^+ = \widehat \calO^+_{X_S}$ in Theorem \ref{thm:truncated comparison}, we obtain a quasi-isomorphism
   \begin{equation}\label{equ:DI}
       \gamma_1:\calO_{\frakX_S}\oplus\Omega^{1,\log}_{\frakX_S}\{-1\}[-1] = \tau^{\leq 1}\rD\rR(\calO_{\frakX_S},0)\xrightarrow{\simeq}\tau^{\leq 1}\rL\eta_{\rho_K(\zeta_p-1)}\rR\nu_*\widehat \calO^+_{X_S}.
   \end{equation}
   Using this, we have the following analogue of Deligne--Illusie decomposition \cite[Th. 2.1]{DI} in mixed characteristic case for semi-stable formal schemes.
   \begin{thm}\label{thm:DI}
       The quasi-isomorphism $\gamma_1$ above extends to a quasi-isomorphism
       \[\gamma:\bigoplus_{i=0}^{p-1}\Omega^{i,\log}_{\frakX_S}\{-i\} = \tau^{\leq p-1}\rD\rR(\calO_{\frakX_S},0)\xrightarrow{\simeq}\tau^{\leq 1}\rL\eta_{\rho_K(\zeta_p-1)}\rR\nu_*\widehat \calO^+_{X_S}.\]
   \end{thm}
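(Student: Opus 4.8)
The plan is to run the Deligne--Illusie multiplicativity trick, exactly as in \cite[Th. 2.1]{DI} and \cite[Th. 4.1]{Min21}, feeding in Theorem \ref{thm:truncated comparison} (equivalently the quasi-isomorphism $\gamma_1$ of \eqref{equ:DI}) as the input in degree $\leq 1$ and bootstrapping up to degree $p-1$ by means of the algebra structure on the target.

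First I would record the multiplicative structure. Since $\widehat\calO^+_{X_S}$ is a sheaf of rings on $X_{S,v}$ and $\rR\nu_*$ is lax symmetric monoidal, $\rR\nu_*\widehat\calO^+_{X_S}$ is a commutative algebra object in $D(\calO_{\frakX_S})$. By \cite[\S6]{BMS18} the d\'ecalage functor $\rL\eta_f$ is lax symmetric monoidal and carries a natural transformation $\rL\eta_f(-)\to(-)$; hence
\[K:=\rL\eta_{\rho_K(\zeta_p-1)}\rR\nu_*\widehat\calO^+_{X_S}\]
is a commutative algebra object in $D(\calO_{\frakX_S})$ with multiplication $m\colon K\otimes^{\rL}_{\calO_{\frakX_S}}K\to K$ and unit $\calO_{\frakX_S}\to K$. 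Composing the $\Omega^{1,\log}$-summand of $\gamma_1$ with the truncation map $\tau^{\leq 1}K\to K$ I extract a morphism $c\colon\Omega^{1,\log}_{\frakX_S}\{-1\}[-1]\to K$, which, up to the truncation map, represents a class in cohomological degree $1$.

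Next, for each $0\leq i\leq p-1$ I would define
\[\gamma_i\colon\Omega^{i,\log}_{\frakX_S}\{-i\}[-i]=\bigl(\wedge^i\Omega^{1,\log}_{\frakX_S}\{-1\}\bigr)[-i]\longrightarrow K\]
as the iterated product $m^{(i)}\circ c^{\otimes i}$ (with $\gamma_0$ the unit map). Since $c$ sits in odd cohomological degree, the product of $i$ copies of $c$ is graded-anticommutative and therefore factors through the exterior power, so $\gamma_i$ is well defined; by construction it agrees with $\gamma_1$ for $i=0,1$. Setting $\gamma=\bigoplus_{i=0}^{p-1}\gamma_i\colon\tau^{\leq p-1}\rD\rR(\calO_{\frakX_S},0)\to K$, it remains to show that $\gamma$ factors through $\tau^{\leq p-1}K$ and that the resulting morphism is a quasi-isomorphism. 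Both are \'etale-local on $\frakX_S$, so I may take $\frakX_S=\Spf(R^+_S)$ small affine and compute $\rR\nu_*\widehat\calO^+_{X_S}$ through $\rR\Gamma(\Gamma,\widehat R^+_{\infty,S})$, i.e. a Koszul complex; there the cohomology of $K$ in degrees $0$ and $1$ is described by Theorem \ref{thm:truncated comparison} (applied to the trivial Higgs bundle) together with Lemma \ref{lem:MW}, and the algebra structure on $\rH^\bullet(K)$ combined with $\gamma_1$ being an isomorphism in degrees $0,1$ reduces the quasi-isomorphism claim in every degree $i\leq p-1$ to the local identification $\rH^i(K)\cong\wedge^i\rH^1(K)\cong\Omega^{i,\log}_{R^+_S}\{-i\}$ in this range.

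I expect the main obstacle to be precisely this last point: the bookkeeping over the range $[0,p-1]$, namely verifying that $\gamma$ lands in $\tau^{\leq p-1}K$ and that the exterior-algebra description of $\rH^i(K)$ holds exactly for $i\leq p-1$. This is where the prime $p$ genuinely enters: one has $\nu_p\bigl((\zeta_p-1)^{p-1}\bigr)=1$, so for $i\geq p$ the divided powers $(\zeta_p-1)^{[i]}$ occurring in the period sheaf $\calO\widehat\bC^+_{\pd,S}$ (cf. \eqref{equ:F(X)} and Lemma \ref{lem:MW}) no longer agree with the naive powers, and d\'ecalage along $\rho_K(\zeta_p-1)$ ceases to isolate the higher exterior powers integrally --- the exact analogue of the phenomenon that bounds the classical Cartier decomposition by $p$. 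Once this local computation is settled, assembling $\gamma$ from the $\gamma_i$ and concluding is formal and follows \cite{DI,Min21} verbatim.
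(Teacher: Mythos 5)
Your proposal is correct and takes the same route as the paper, whose entire proof is the one-line citation ``This follows from a standard trick of Deligne--Illusie. See for example \cite[Th.~4.1]{Min21}.'' One point worth tightening: the factorization of $m^{(i)}\circ c^{\otimes i}$ through $\bigl(\wedge^i\Omega^{1,\log}_{\frakX_S}\{-1\}\bigr)[-i]$ is not purely formal from graded-anticommutativity in the derived category; the standard device is to precompose with the antisymmetrization $\omega_1\wedge\cdots\wedge\omega_i\mapsto\tfrac{1}{i!}\sum_{\sigma}\mathrm{sgn}(\sigma)\,\omega_{\sigma(1)}\otimes\cdots\otimes\omega_{\sigma(i)}$, which is precisely where the restriction $i\leq p-1$ (so that $i!\in\calO_C^\times$) enters, and your closing observation about $(\zeta_p-1)^{[i]}$ vs.\ $(\zeta_p-1)^i$ is a complementary manifestation of the same bound rather than a separate obstruction.
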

   \begin{proof}
       This follows from a standard trick of Deligne--Illusie. See for example \cite[Th. 4.1]{Min21}.
   \end{proof}
   \begin{rmk}\label{rmk:Dependence on log-lifting}
       Recall to obtain Theorem \ref{thm:DI}, we need $\frakX_S$ admits a lifting $\widetilde \frakX_S$ over $\AAKK(S)$ as a \emph{log-scheme} (as this is used to construct the period sheaf $(\calO\widehat \bC_{\pd,S}^+,\Theta)$). It seems that such a decomposition $\gamma$ in Theorem \ref{thm:DI} \emph{never} exists if we only assume $\frakX_S$ lifts to $\AAKK(S)$ as a \emph{scheme} without lifting the log-structure at the same time. The phenomenon also appears in \cite{SSt}. In \emph{loc.cit.}, for a semi-stable formal scheme $\frakX_0$ over $\rW(\kappa)$ with the canonical log-structure $\calM_{\frakX_0}$ and the special fiber $\calX$ over $\kappa$ with the induced log-structure $\calM_{\calX}$. Let $\rF:\calX\to \calX^{(1)}$ be the relative Frobenius map associated to $\calX$. Then there exists a quasi-isomorphism
       \[\oplus_{i=0}^{p-1}\Omega^{i,\log}_{\calX^{(1)}}[-i]\to\tau^{\leq p-1}\rF_*(\rD\rR(\calO_{\calX},\rd))\]
       if and only if $(\calX,\calM_{\calX})$ admits a lifting over $\frakS:=\rW(\kappa)[u]$ with the log-structure associated to $\bN\xrightarrow{1\mapsto u}\frakS$, which lifts the usual log-structure on $\rW(\kappa)$ induced by $(\bN\xrightarrow{1\mapsto p}\rW(\kappa))$ via the surjection $\frakS\xrightarrow{u\mapsto p}\rW(\kappa)$ (cf. \cite[Th. 2.9]{SSt}). Recall that the map $\frakG\xrightarrow{u\mapsto[\varpi]}\bfA_{2,K}$ lifts the natural morphism $\rW(\kappa)\hookrightarrow\calO_C$ which is compatible with log-structures as well. This also suggests that the the canonical log-structure on $\AAKK(S)$ should be the \emph{only} reasonable one to obtain Theorem \ref{thm:DI}.
   \end{rmk}
   In general, we make the following conjecture.
   \begin{conj}\label{conj:OV}
       Let $\frakX_S$ be a liftable semi-stable formal scheme over $A^+$ with a fixed lifting $\widetilde \frakX_S$ over $\AAKK(S)$. For any Hitchin-small integral $v$-bundle $\calM$ on $X_{S,v}$ with the associated twisted Hitchin-small Higgs bundle $(\calH^+,\theta)$ in the sense of Theorem \ref{thm:integral Simpson}, if we denote by $r$ the nilpotency length of $(\zeta_p-1)\theta$ modulo $p$, then the canonical morphism 
       \[\rD\rR(\calH^+,\theta)\to \rR\nu_*\calM^+\]
       induces a quasi-isomorphism
       \[\tau^{\leq p-r}\rD\rR(\calH^+,\theta)\simeq \tau^{\leq p-r}\rL\eta_{\rho_K(\zeta_p-1)}\rR\nu_*\calM^+.\]
   \end{conj}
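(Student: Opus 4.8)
The plan is to reduce Conjecture \ref{conj:OV} to the local small affine situation and then to push the cocycle analysis behind Proposition \ref{prop:local integral comparison} from the truncation $\tau^{\leq 1}$ up to $\tau^{\leq p-r}$. Since $\tau^{\leq p-r}\rL\eta_{\rho_K(\zeta_p-1)}\rR\nu_*\calM^+$ is local on $\frakX_{S,\et}$ and $\tau^{\leq p-r}$ commutes with $\rL\eta_{\rho_K(\zeta_p-1)}$ (by \cite[Cor. 6.5]{BMS18}), one is reduced to $\frakX_S=\Spf(R_S^+)$ small affine with $\calM^+(X_{\infty,S})=M_\infty^+$ Hitchin-small over $\widehat R_{\infty,S}^+$ and associated twisted Hitchin-small Higgs module $(H^+,\theta)$ over $R_S^+$. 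By Corollary \ref{cor:reduce to group cohomology} (and the no-$\frakm_C$-torsion input there) one may replace $\rR\nu_*\calM^+$ by $\rR\Gamma(\Gamma,M^+)$ and the canonical map \eqref{equ:canonical map in cohomological coparison} by the map $\rD\rR(H^+,\theta)\to\rR\Gamma(\Gamma,M^+)$ of \eqref{diag:local canonical map}; the target then becomes the statement that this map induces, for each $0\le k\le p-r$, an isomorphism $\rH^k_{\dR}(H^+,\theta)\cong(\rho_K(\zeta_p-1))^k\,\rH^k(\Gamma,M^+)$, where $r$ is the nilpotency length of $(\zeta_p-1)\theta$ modulo $p$. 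Compatibility with $\rL\eta$ is then read off exactly as in Proposition \ref{prop:local integral comparison} and Theorem \ref{thm:truncated comparison} using the diagram \eqref{diag:local BMS map} and \cite[Lem. 6.10, Lem. 8.16]{BMS18}.

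The heart of the matter is a degree-$k$ version of the computation behind \eqref{equ:v(omega)}. One works in the double complex \eqref{diag:double complex}, whose total complex computes $\rR\Gamma(\Gamma,M^+)$ and which contains $\rD\rR(H^+,\theta)$ as its top row. Given a Higgs cocycle $\omega=\sum x_{i_1\dots i_k}\otimes\frac{e_{i_1}}{\xi_K}\wedge\cdots\wedge\frac{e_{i_k}}{\xi_K}$, one zigzags through the double complex: view $\omega$ in the Higgs complex over $P_S^+$, solve $\Theta_{M^+}(\cdot)=\omega$ by the Poincaré Lemma \ref{thm:Poincare Lemma}, apply a Koszul differential $\gamma_i-1$, and iterate $k$ times via \eqref{equ:iteration-I}–\eqref{equ:iteration-II}. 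Each step produces, as in the passage preceding \eqref{equ:v(omega)}, one factor $\rho_K(\zeta_p-1)F(\rho_K\theta_i)$ with $F(\theta)$ the matrix of \eqref{equ:F(X)}, so that the image cocycle in $\rH^k(\Gamma,M^+)$ equals $(\rho_K(\zeta_p-1))^k$ times $\big(\prod_{j=1}^k F(\rho_K\theta_{i_j})\big)\exp(-\sum_l\theta_lY_l)$ applied to the $x_{i_1\dots i_k}$, up to the symmetry relations $\theta_i(x_{\dots})=\theta_j(x_{\dots})$ coming from $\omega$ being a Higgs cocycle; compare \eqref{equ:bridge-II}. Since the $\theta_i$ are topologically nilpotent, the matrices $F(\rho_K\theta_i)$ and $\exp(-\sum_l\theta_lY_l)$ are invertible over the relevant pd-ring, and modulo $p$ the hypothesis that $(\zeta_p-1)\theta$ has nilpotency length $r$ forces $F(\rho_K\theta_i)\bmod p$ to be a polynomial in $\theta_i$ of degree $<r$ and $\exp(-\sum_l\theta_lY_l)\bmod p$ to involve only $\underline Y^{[J]}$ with $|J|<r$, so that in a degree-$k$ computation no divided power $(\zeta_p-1)^{[n]}$ with $n>p-1$ is needed provided $k\le p-r$. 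That numerical coincidence is what produces the cutoff $p-r$; in that range one deduces $\rH^k_{\dR}\cong(\rho_K(\zeta_p-1))^k\rH^k(\Gamma,M^+)$ as in the $k=1$ case, injectivity from the edge map $E_2^{k,0}=E_\infty^{k,0}\hookrightarrow\rH^k(\Gamma,M^+)$ and surjectivity by running the construction backwards.

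The step I expect to be the genuine obstacle is the control of the higher differentials $d_2,d_3,\dots$ of the spectral sequence of the double complex \eqref{diag:double complex} in degrees $2\le k\le p-r$. For $k=1$ these are vacuous, which is exactly why Proposition \ref{prop:local integral comparison} and Theorem \ref{thm:truncated comparison} go through unconditionally and why the $\theta=0$ case (i.e.\ $r=1$) is available through the Deligne--Illusie trick of building the decomposition multiplicatively out of the degree-one piece (Theorem \ref{thm:DI}). In higher degree one must show that the classes produced by the backward construction are honest $d_\infty$-survivors, i.e.\ that the obstruction classes — living in the Higgs cohomology of $(H^+,\theta)$ with coefficients in the $\rho_K(\zeta_p-1)$-torsion groups $\rH^{>0}(\Gamma,P_S^+)$ — vanish in the stated range; this is an integral, décalage-twisted incarnation of the Ogus--Vologodsky decomposition, and the loss of exactly $r$ degrees mirrors the loss of the nilpotency level in that theory. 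A plausible route is to first establish the characteristic-$p$ logarithmic Ogus--Vologodsky statement for the special fiber, following \cite{SSt}, using liftability of $\widetilde\frakX_S$ together with a compatible Frobenius lift, and then to transport the resulting Cartier-type splitting to mixed characteristic along the map $\frakS\xrightarrow{u\mapsto[\varpi]}\bfA_{2,K}$; the nontrivial point in this route is to match that splitting with the décalage functor $\rL\eta_{\rho_K(\zeta_p-1)}$ on the $v$-cohomology side, for which the explicit period sheaf $(\calO\widehat\bC_{\pd,S}^+,\Theta)$ of \S\ref{sec:period sheaves} and the comparison \eqref{diag:local BMS map} should be the correct bridge.
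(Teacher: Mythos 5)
You are proposing a proof of what the paper explicitly records as a \emph{conjecture}: Conjecture~\ref{conj:OV} carries no proof in the text. The authors only establish two special cases — $r=1$ via the Deligne--Illusie trick (Theorem~\ref{thm:DI}, deduced from the degree-$\le 1$ comparison Theorem~\ref{thm:truncated comparison}) and $d=1$, the curve case (Theorem~\ref{thm:curve case}, where both sides are already concentrated in degrees $[0,1]$) — and otherwise leave the statement open. There is therefore no ``paper's own proof'' to compare yours against.

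Your sketch correctly reproduces the proven parts of the picture: localization to small affine via Corollary~\ref{cor:reduce to group cohomology}, the identification of $\rH^1_{\dR}$ with $\rho_K(\zeta_p-1)\rH^1(\Gamma,M^+)$ through the cocycle computation behind \eqref{equ:v(omega)}, and the factorization through $\rL\eta$ using \cite[Lem.~6.10, Lem.~8.16]{BMS18}. But the degree-$k$ extension is exactly what the paper declines to attempt, and your own text concedes the crux — controlling the higher differentials $d_2,d_3,\dots$ of the spectral sequence of \eqref{diag:double complex}, i.e.\ showing the classes in $E_2^{k,0}=\rH^k_{\dR}(H^+,\theta)$ survive to $E_\infty$ for $2\le k\le p-r$. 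This is a genuine gap, not a routine verification: for $k\ge2$ the backward construction can fail to lift past the $\rho_K(\zeta_p-1)$-torsion in $\rH^{>0}(\Gamma,P_S^+)$, and the numerics you invoke (that $F(\rho_K\theta_i)\bmod p$ is a polynomial of degree $<r$ and that this caps the divided powers needed at $p-1$ when $k\le p-r$) are stated but not established; $F(\theta)$ involves $(\zeta_p-1)^{[n]}\theta^n$ for all $n\ge 1$, and truncating it modulo $p$ requires a careful accounting of $v_p((\zeta_p-1)^{[n]})$ against the nilpotency hypothesis on $(\zeta_p-1)\theta$, which you do not carry out. Your suggested route — log Ogus--Vologodsky for the special fiber following \cite{SSt}, transported along $\frakS\xrightarrow{u\mapsto[\varpi]}\bfA_{2,K}$ and then matched with $\rL\eta_{\rho_K(\zeta_p-1)}$ — is a reasonable program and is plausibly what the authors have in mind given Remark~\ref{rmk:Dependence on log-lifting}, but as written it is a plan, not a proof.
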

   We remark that Theorem \ref{thm:DI} tells us Conjecture \ref{conj:OV} holds true when $r = 1$. Another special case of Conjecture \ref{conj:OV} we are able to prove is the following result:
   \begin{thm}\label{thm:curve case}
       Let $\frakX_S$ be a liftable semi-stable \emph{curve} over $A^+$ with a fixed lifting $\widetilde \frakX_S$ over $\AAKK(S)$. For any Hitchin-small integral $v$-bundle $\calM$ on $X_{S,v}$ with the associated twisted Hitchin-small Higgs bundle $(\calH^+,\theta)$ in the sense of Theorem \ref{thm:integral Simpson}, the canonical morphism \eqref{equ:canonical map in cohomological coparison} induces a quasi-isomorphism
       \[\rD\rR(\calH^+,\theta)\simeq \rL\eta_{\rho_K(\zeta_p-1)}\rR\nu_*\calM^+.\]
   \end{thm}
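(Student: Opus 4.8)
The plan is to deduce the statement from Theorem~\ref{thm:truncated comparison}, by observing that for a curve the truncation $\tau^{\leq 1}$ appearing there is harmless on both sides. Since $\frakX_S$ has relative dimension $1$, we have $\Omega^{i,\log}_{\frakX_S} = 0$ for $i\geq 2$, so $\rD\rR(\calH^+,\theta)$ is the two-term complex $[\calH^+\xrightarrow{\theta}\calH^+\otimes_{\calO_{\frakX_S}}\Omega^{1,\log}_{\frakX_S}\{-1\}]$ concentrated in degrees $[0,1]$; in particular $\tau^{\leq 1}\rD\rR(\calH^+,\theta) = \rD\rR(\calH^+,\theta)$. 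Hence Theorem~\ref{thm:truncated comparison} already tells us that the canonical morphism \eqref{equ:canonical map in cohomological coparison} factors over \eqref{equ:canonical map in BMS} and induces a quasi-isomorphism $\rD\rR(\calH^+,\theta)\xrightarrow{\simeq}\tau^{\leq 1}\rL\eta_{\rho_K(\zeta_p-1)}\rR\nu_*\calM^+$. So everything reduces to showing that $\rL\eta_{\rho_K(\zeta_p-1)}\rR\nu_*\calM^+$ is itself concentrated in degrees $[0,1]$, i.e. that $\rH^i(\rL\eta_{\rho_K(\zeta_p-1)}\rR\nu_*\calM^+) = 0$ for all $i\geq 2$.

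To get this vanishing I would first check that $\rR^i\nu_*\calM^+$ is almost zero (killed by $\frakm_C$) for every $i\geq 2$; the d\'ecalage functor then annihilates these groups automatically, since the $i$-th cohomology of $\rL\eta_f$ is a quotient of the $i$-th cohomology by its $f$-torsion (cf. \cite[\S6]{BMS18}) and $\rho_K(\zeta_p-1)\in\frakm_C$. The almost vanishing of $\rR^i\nu_*\calM^+$ for $i\geq 2$ is local on $\frakX_{S,\et}$, so one may restrict to a small affine $\frakU_S = \Spf(R_S^+)$ over which $\calM^+$ restricts to the $v$-bundle attached to a Hitchin-small $\Gamma$-representation over $\widehat R_{\infty,S}^+$, with pro-\'etale perfectoid cover $U_{\infty,S}\to U_S$ of Galois group $\Gamma\cong\bZ_p$ (recall $d = 1$). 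In the Cartan--Leray spectral sequence
\[\rH^a_{\cts}(\Gamma,\rH^b(U_{\infty,S,v},\calM^+))\Longrightarrow\rH^{a+b}(U_{S,v},\calM^+)\]
one has $\rH^a_{\cts}(\Gamma,-) = 0$ for $a\geq 2$ because $\Gamma\cong\bZ_p$, while $U_{\infty,S}$ is affinoid perfectoid and $\calM^+$ restricts there to a finite free $\widehat\calO^+$-module, so the almost acyclicity of affinoid perfectoids (as already invoked in the proof of Lemma~\ref{lem:reduce to group cohomology}) forces $\rH^b(U_{\infty,S,v},\calM^+)$ to be almost zero for $b\geq 1$. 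Consequently $\rH^i(U_{S,v},\calM^+)$ is almost zero for $i\geq 2$, and passing to the sheafification gives the claim. Combining with the first paragraph yields the asserted quasi-isomorphism $\rD\rR(\calH^+,\theta)\simeq\rL\eta_{\rho_K(\zeta_p-1)}\rR\nu_*\calM^+$ induced by \eqref{equ:canonical map in cohomological coparison}.

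I expect the almost vanishing of $\rR^i\nu_*\calM^+$ in degrees $\geq 2$ to be the only subtle point, and it is precisely where the curve hypothesis is essential: in relative dimension $d\geq 2$ the complex $\rL\eta_{\rho_K(\zeta_p-1)}\rR\nu_*\calM^+$ has non-trivial cohomology in degrees $2,\dots,d$ in general (reflecting the higher de Rham cohomology of $(\calH^+,\theta)$), and Theorem~\ref{thm:truncated comparison} only pins down the $\tau^{\leq 1}$-part, so this argument does not extend beyond curves; cf. Conjecture~\ref{conj:OV}.
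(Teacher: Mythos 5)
Your proposal is correct and follows the same strategy as the paper: the curve hypothesis forces both sides to live in cohomological degrees $[0,1]$, so the statement reduces to Theorem~\ref{thm:truncated comparison}. The paper's own proof simply asserts without comment that $\rL\eta_{\rho_K(\zeta_p-1)}\rR\nu_*\calM^+$ is concentrated in $[0,1]$; your Cartan--Leray argument with $\Gamma\cong\bZ_p$ together with the almost acyclicity of affinoid perfectoids (as in Lemma~\ref{lem:reduce to group cohomology}) is a correct way to supply the missing justification, and you correctly invoke the formula $\rH^i(\rL\eta_f C)\cong \rH^i(C)/\rH^i(C)[f]$ from \cite[\S6]{BMS18} to pass from almost vanishing to genuine vanishing after d\'ecalage. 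As an alternative route more in the spirit of Theorem~\ref{thm:integral Simpson}, one can use the Poincar\'e resolution to see that $\rR\nu_*\calM^+$ lives in degrees $[0,2]$ with $\rR^2\nu_*\calM^+$ a quotient of $\rR^1\nu_*(\calM^+\otimes\calO\widehat\bC_{\pd,S}^+\otimes_{\calO_{\frakX_S}}\Omega^{1,\log}_{\frakX_S}\{-1\})$, which is $\rho_K(\zeta_p-1)$-torsion by Theorem~\ref{thm:integral Simpson}(1); either way the conclusion is the same.
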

   \begin{proof}
       When $\frakX_S$ is a curve, both $\rD\rR(\calH^+,\theta)$ and $\rL\eta_{\rho_K(\zeta_p-1)}\rR\nu_*\calM^+$ are concentrated in degree $[0,1]$. So the result follows from Theorem \ref{thm:truncated comparison} immediately.
   \end{proof}

\subsection{The stacky Simpson correspondence}
   This part is devoted to establishing an equivalence between the stacks of Hitchin-small $v$-bundles on $X_{S,v}$ and Hitchin-small rational Higgs bundles on $\frakX_{S,\et}$, generalizing the previous work of Ansch\"utz--Heuer--Le Bras \cite[Th. 1.1]{AHLB-small} to the semi-stable reduction case.

   We first make the following definitions.
   \begin{dfn}\label{dfn:Hitchin small v-bundle}
       By a \emph{Hitchin-small $v$-bundle of rank $r$} on $X_{S,v}$, we mean a sheaf of locally finite free $\widehat \calO_{X_S}$-modules $\calM$ on $X_{S,v}$ such that there exists an \'etale covering $\{\frakX_{i,S}\to \frakX_S\}_{i\in I}$ by small affine $\frakX_{i,S} = \Spf(R_{i,S}^+)$ such that for any $i\in I$, the $\calM(X_{i,\infty,S})$ is a Hitchin-small $\Gamma$-representation of rank $r$ over $\widehat R_{i,\infty,S}$ in the sense of Definition \ref{dfn:small Gamma-representation}. Denote by $\rL\rS^{\Hsmall}(\frakX_S,\widehat \calO_{X_S})$ the category of Hitchin-small $v$-bundles on $X_{S,v}$.
   \end{dfn}
   \begin{dfn}\label{dfn:Hitchin small rational Higgs bundle}
       By an \emph{rational Higgs bundle of rank $r$} on $\frakX_{S,\et}$, we mean a pair $(\calH,\theta)$ consisting of a sheaf $\calH$ of locally finite free $\calO_{\frakX_S}[\frac{1}{p}]$-modules of rank $r$ on $\frakX_{S,\et}$ and a Higgs field $\theta$ on $\calH$.
       For any rational Higgs bundle $(\calH,\theta)$, denote by $\rD\rR(\calH,\theta)$ the induced Higgs complex.
       A rational Higgs bundle $(\calH,\theta)$ is called \emph{(twisted) Hitchin-small} if there exists an \'etale covering $\{\frakX_{i,S}\to \frakX_S\}_{i\in I}$ by small affine $\frakX_{i,S} = \Spf(R_{i,S}^+)$ such that for any $i\in I$, the evaluation $(\calH,\theta)(\frakX_{i,S})$ is a (twisted) Hitchin-small Higgs modules over $R_{i,S}$ in the sense of Definition \ref{dfn:small Gamma-representation}(2). Denote by $\HIG^{(\text{t-})\Hsmall}(\frakX_S,\calO_{\frakX_S}[\frac{1}{p}])$ the category of (twisted) Hitchin-small rational Higgs bundles on $\frakX_{S,\et}$.
   \end{dfn}
   \begin{lem}\label{lem:Hitchin-small vs twisted Hitchin-small}
       The functor $(\calH,\theta)\mapsto (\calH,(\zeta_p-1)\theta)$ induces an equivalence of categories
       \[\HIG^{\Hsmall}(\frakX_S,\calO_{\frakX_S}[\frac{1}{p}])\xrightarrow{\simeq}\HIG^{\tHsmall}(\frakX_S,\calO_{\frakX_S}[\frac{1}{p}])\]
       such that for any twisted Hitchin-small $(\calH,\theta)$ with the induced $(\calH,\theta^{\prime} = (\zeta_p-1)\theta)$, there exists a quasi-isomorphism
       \[\rD\rR(\calH,\theta)\simeq\rD\rR(\calH,\theta^{\prime}).\]
   \end{lem}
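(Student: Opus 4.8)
The plan is to run the rational counterpart of the proof of Lemma~\ref{lem:global twist functor}. The point is that once $p$ is inverted, $\zeta_p-1$ becomes a unit of $\calO_{\frakX_S}[\tfrac1p]$, so no d\'ecalage functor intervenes and the comparison of Higgs complexes becomes an honest isomorphism of complexes, while the equivalence of categories reduces to unwinding Definitions~\ref{dfn:Hitchin small rational Higgs bundle} and~\ref{dfn:small Gamma-representation}(2).

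First I would record the Higgs-complex comparison. Writing $\theta':=(\zeta_p-1)\theta$, in each degree $i$ the map ``multiplication by $(\zeta_p-1)^i$'' on $\calH\otimes_{\calO_{\frakX_S}}\Omega^{i,\log}_{\frakX_S}\{-i\}$ is an isomorphism, and on local sections $(\zeta_p-1)^{i+1}\circ\rd_{\theta}=((\zeta_p-1)\theta)\circ(\zeta_p-1)^i=\rd_{\theta'}\circ(\zeta_p-1)^i$; this is exactly the computation in the proof of Lemma~\ref{lem:global twist functor}, but now with all the factors $(\zeta_p-1)^n$ invertible, so the degreewise maps assemble into an isomorphism of complexes $\rD\rR(\calH,\theta)\xrightarrow{\ \sim\ }\rD\rR(\calH,\theta')$, which yields the quasi-isomorphism asserted in the lemma.

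Next I would prove the equivalence, reading the functor as $(\calH,\theta)\mapsto(\calH,(\zeta_p-1)\theta)$ from twisted Hitchin-small rational Higgs bundles to Hitchin-small ones. Full faithfulness is automatic: a morphism of rational Higgs bundles is an $\calO_{\frakX_S}[\tfrac1p]$-linear map of the underlying bundles intertwining the Higgs fields, a condition unaffected by rescaling both Higgs fields by the common unit $\zeta_p-1$, so the functor is the identity on underlying bundles and on $\Hom$-sets. Well-definedness and essential surjectivity are then almost tautological and are checked \'etale-locally on $\frakX_S$: by Definitions~\ref{dfn:Hitchin small rational Higgs bundle} and~\ref{dfn:small Gamma-representation}(2), a rational Higgs bundle is twisted Hitchin-small (resp.\ Hitchin-small) precisely when, over some small affine \'etale cover $\{\frakX_{i,S}=\Spf(R^+_{i,S})\}$, its evaluation over $R_{i,S}$ admits an integral model whose Higgs field is topologically nilpotent (resp.\ is $(\zeta_p-1)$ times a topologically nilpotent one); hence $\times(\zeta_p-1)$ carries the former situation into the latter by the very definition of ``Hitchin-small'', while conversely $\times(\zeta_p-1)^{-1}$ (well-defined since $\zeta_p-1$ is a global unit) carries the latter into the former. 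Gluing these two mutually inverse constructions over the cover gives the desired equivalence, compatibly with the isomorphism $\rD\rR(\calH,\theta)\simeq\rD\rR(\calH,\theta')$ constructed above; alternatively one may simply quote the local equivalence $\HIG^{\tHsmall}(R_S)\simeq\HIG^{\Hsmall}(R_S)$ of Corollary~\ref{cor:correct local Simpson} chart by chart and then descend.

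I do not expect a genuine obstacle here: all of the substantive input --- the local Simpson correspondence of \S\ref{sec:local Simpson}, packaged in Proposition~\ref{prop:twist functor} and Corollary~\ref{cor:correct local Simpson}, together with the \'etale-descent formalism already used for Theorem~\ref{thm:integral Simpson} --- is in place, and this lemma is essentially its formal shadow after inverting $p$. The only thing demanding care is to check that the \'etale-local definitions of ``(twisted) Hitchin-small rational Higgs bundle'' are arranged precisely so that the $(\zeta_p-1)$-twist transports one class onto the other, which is exactly the content of Definition~\ref{dfn:small Gamma-representation}(2) (namely that a Higgs module is Hitchin-small iff its Higgs field is $(\zeta_p-1)$ times a topologically nilpotent one).
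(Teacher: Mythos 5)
Your proof is correct and matches the paper's approach, which is simply to unwind Definitions~\ref{dfn:Hitchin small rational Higgs bundle} and~\ref{dfn:small Gamma-representation}(2) and invoke the integral twist functor (Lemma~\ref{lem:global twist functor}, built on Proposition~\ref{prop:twist functor}). Your explicit observation that the degreewise multiplications by $(\zeta_p-1)^i$ assemble into an honest isomorphism of complexes once $p$ is inverted --- so that no $\rL\eta$ intervenes --- is exactly the simplification the paper implicitly uses when passing from the $\rL\eta_{\zeta_p-1}$-statement of Lemma~\ref{lem:global twist functor} to the plain quasi-isomorphism asserted here; you have also silently corrected the direction of the arrow in the lemma's statement (which should read $\tHsmall\to\Hsmall$ to be consistent with its ``such that'' clause and with Lemma~\ref{lem:global twist functor}), as does the paper's own proof.
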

   \begin{proof}
       By unwinding definitions, the result follows from Lemma \ref{lem:global twist functor} immediately.
   \end{proof}
   
   We give a remark on rational Higgs bundles on $\frakX_{S,\et}$.
   Let $i: X_{S,\et}\to\frakX_{S,\et}$ be the natural morphism of sites. Clearly, for any locally finite free $\calO_{X,S}$-module $\calE$ of rank $r$ on $X_{\et,S}$, we have 
   \[\rR i_*\calE = i_*\calE\] 
   which is a locally finite free $\calO_{\frakX_S}[\frac{1}{p}]$-module of rank $r$ and $i_*\calO_{X_S} = \calO_{frakX_S}[\frac{1}{p}]$. So the functors $i_*$ and $i^{-1}$ induces an equivalence between the category of locally finite free $\calO_{X_S}$-module on $X_{S,\et}$ and the category of locally finite free $\calO_{\frakX_S}$-module on $\frakX_{S,\et}$, yielding an equivalence of categories
   \begin{equation}\label{equ:Higgs bundles on generic fiber}
       \HIG(X_S,\calO_{X_S}) \simeq \HIG(\frakX_S,\calO_{\frakX_S}[\frac{1}{p}]).
   \end{equation}
   Here, $\HIG(X_S,\calO_{X_S})$ denotes the category of Higgs bundles on $X_{S,\et}$. Denote by
   \[\HIG^{(\text{t-})\Hsmall}(\frakX_S,\calO_{X_S})\subset \HIG(X_S,\calO_{X_S})\]
   the full sub-category corresponding to $\HIG^{(\text{t-})\Hsmall}(\frakX_S,\calO_{\frakX_S}[\frac{1}{p}])$ via the above equivalence \eqref{equ:Higgs bundles on generic fiber}; that is, we have an equivalence of categories induced by $i_*$ and $i^{-1}$:
   \begin{equation}\label{equ:Hitchin-small Higgs bundles on generic fiber}
       \HIG^{(\text{t-})\Hsmall}(\frakX_S,\calO_{X_S})\simeq \HIG^{(\text{t-})\Hsmall}(\frakX_S,\calO_{\frakX_S}[\frac{1}{p}]).
   \end{equation}

   The following theorem is the analogue of Theorem \ref{thm:integral Simpson} on the rational level.
   
  \begin{thm}\label{thm:rational Simpson}
            Let $(\calO\widehat \bC_{\pd,S},\Theta)$ be the period sheaf with Higgs field associated to $\widetilde \frakX_S$. Let $\nu: X_{S,v}\to \frakX_{S,\et}$ be the natural morphism of sites.
      \begin{enumerate}
          \item[(1)] For any $\calM\in \rL\rS^{\Hsmall}(\frakX_S,\widehat \calO_{X_S})$ of rank $r$, put 
          \[\Theta_{\calM}:=\id\otimes\Theta:\calM\otimes\calO\widehat \bC_{\pd,S}\to \calM\otimes\calO\widehat \bC_{\pd,S}\otimes_{\calO_{\frakX_S}}\Omega^{1,\log}_{\frakX_S}\{-1\}.\] 
          Then we have a quasi-isomorphism 
          \[\rR\nu_*(\calM\otimes\calO\widehat \bC_{\pd,S}^+)\simeq\nu_*(\calM\otimes\calO\widehat \bC_{\pd,S}^+)[0]\]
          Moreover, the push-forward
          \[(\calH(\calM),\theta):=(\nu_*(\calM\otimes\calO\widehat \bC_{\pd,S}),\nu_*(\Theta_{\calM}))\]
          defines a twisted Hitchin-small rational Higgs bundle of rank $r$ on $\frakX_{S,\et}$.
        
          \item[(2)] For any $(\calH,\theta)\in \HIG^{\tHsmall}(\frakX_S,\calO_{\frakX_S}[\frac{1}{p}])$ of rank $r$, put
          \[\Theta_{\calH}=\theta\otimes\id+\id\otimes\Theta:\calH\otimes\calO\widehat \bC_{\pd,S}\to \calH\otimes\calO\widehat \bC_{\pd,S}\otimes_{\calO_{\frakX_S}}\Omega^{1,\log}_{\frakX_S}\{-1\}.\]
          Then the
          \[\calM(\calH,\theta):=(\calH\otimes\calO\widehat \bC_{\pd,S})^{\Theta_{\calH} = 0}\]
          defines a Hitchin-small integral $v$-bundle of rank $r$ on $X_{S,v}$.
          
          \item[(3)] The functors in Items (1) and (2) defines an equivalence of categories
          \[\rL\rS^{\Hsmall}(\frakX_S,\widehat \calO_{X_S})\xrightarrow{\simeq}\HIG^{\tHsmall}(\frakX_S,\calO_{\frakX_S}[\frac{1}{p}]).\]
          which preserves ranks, tensor products and dualities. Moreover, for any $\calM\in \rL\rS^{\Hsmall}(\frakX_S,\widehat \calO_{X_S})$ with associated $(\calH,\theta)$, there exists a quasi-isomorphism
          \[\rR\nu_*\calM\simeq \rD\rR(\calH,\theta).\]
      \end{enumerate}
  \end{thm}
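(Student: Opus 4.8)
The plan is to mimic the proof of Theorem~\ref{thm:integral Simpson}, replacing integral coefficients by rational ones; the point is that after inverting $p$ all the torsion phenomena of \S\ref{sec:local Simpson} disappear, so the argument becomes cleaner. Since every assertion is local on $\frakX_{S,\et}$, I would first reduce to the case where $\frakX_S=\Spf(R_S^+)$ is small affine with the notation of \S\ref{ssec:semistable schemes}, so that $X_{\infty,S}\to X_S$ is a pro-\'etale $\Gamma$-torsor. The first technical input is the rational analogue of Lemma~\ref{lem:reduce to group cohomology} and Corollary~\ref{cor:reduce to group cohomology}: for a Hitchin-small $v$-bundle $\calM$ with $M_\infty:=\calM(X_{\infty,S})$, the natural map
\[\rR\Gamma\!\left(\Gamma,(\calM\otimes_{\widehat\calO_{X_S}}\calO\widehat\bC_{\pd,S})(X_{\infty,S})\right)\to \rR\Gamma\!\left(X_{S,v},\calM\otimes_{\widehat\calO_{X_S}}\calO\widehat\bC_{\pd,S}\right)\]
is a quasi-isomorphism. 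In the integral case this was only an almost quasi-isomorphism obtained from \cite[Lem.~4.10]{Sch-Pi}; after inverting $p$ the almost-mathematics is harmless, so the map is an honest quasi-isomorphism, and this computes $\rR\nu_*(\calM\otimes\calO\widehat\bC_{\pd,S})$ locally by continuous $\Gamma$-cohomology.

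Granting this, Item~(1) follows from the local Simpson correspondence. Writing $M_\infty=M\otimes_{R_S}\widehat R_{\infty,S}$ for the associated object via Proposition~\ref{prop:decompletion}, Theorem~\ref{thm:local Simpson}(1) (the case $?=\infty$) together with \eqref{equ:decomposition for P} shows that $\rH^n(\Gamma,M_\infty\otimes_{\widehat R_{\infty,S}}P_{\infty,S})$ is killed by $(\zeta_p-1)\rho_K$ for $n\geq 1$, hence vanishes after inverting $p$, while its $\rH^0$ is a finite free $R_S$-module $H$ of rank $r$ carrying a topologically nilpotent Higgs field. Sheafifying yields the quasi-isomorphism $\rR\nu_*(\calM\otimes\calO\widehat\bC_{\pd,S})\simeq\nu_*(\calM\otimes\calO\widehat\bC_{\pd,S})[0]$ and shows $(\calH(\calM),\theta)$ is a twisted Hitchin-small rational Higgs bundle. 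For Item~(2), Poincar\'e's Lemma~\ref{thm:Poincare Lemma} identifies $\calM(\calH,\theta)$ locally with the sheaf $M_\infty^+(H^+,\theta)[\tfrac1p]$ of Theorem~\ref{thm:local Simpson}(2) attached to an integral model $(H^+,\theta)$ of the local evaluation of $(\calH,\theta)$, which is the rationalization of a Hitchin-small $\Gamma$-representation over $\widehat R_{\infty,S}^+$; hence $\calM(\calH,\theta)$ is a Hitchin-small $v$-bundle.

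For Item~(3), using the multiplicativity of the Higgs field on $\calO\widehat\bC_{\pd,S}$ (Proposition~\ref{prop:multiplication on OC}) I would construct, exactly as in the proofs of Theorem~\ref{thm:local Simpson}(3) and Theorem~\ref{thm:integral Simpson}(3), canonical morphisms $\iota_\calM\colon\calM(\calH(\calM),\theta)\to\calM$ and $\iota_{(\calH,\theta)}\colon(\calH(\calM(\calH,\theta)),\theta)\to(\calH,\theta)$ of $v$-bundles and rational Higgs bundles respectively, and check that they are isomorphisms; since this is local it reduces to Theorem~\ref{thm:local Simpson}(3) and the explicit formulae \eqref{equ:local Simpson-II}--\eqref{equ:local Simpson-V} there. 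Compatibility with tensor products and dualities is then formal linear algebra. Finally, the cohomological comparison $\rR\nu_*\calM\simeq\rD\rR(\calH,\theta)$ follows by applying $\rR\nu_*$ to the Poincar\'e resolution $\calM\xrightarrow{\simeq}\rD\rR(\calM\otimes\calO\widehat\bC_{\pd,S},\Theta_\calM)$ and using Item~(1) termwise to see that each $\rR\nu_*(\calM\otimes\calO\widehat\bC_{\pd,S}\otimes_{\calO_{\frakX_S}}\Omega^{j,\log}_{\frakX_S}\{-j\})$ is concentrated in degree $0$, so that $\rR\nu_*\calM\simeq\nu_*(\rD\rR(\calM\otimes\calO\widehat\bC_{\pd,S},\Theta_\calM))=\rD\rR(\calH,\theta)$.

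I expect the main obstacle to be the descent step — the rational analogue of Lemma~\ref{lem:reduce to group cohomology} — i.e.\ organizing the \v{C}ech-to-derived-functor comparison so that the global $v$-cohomology of $\calM\otimes\calO\widehat\bC_{\pd,S}$ is genuinely computed by $\Gamma$-cohomology of its sections over $X_{\infty,S}$; everything else is a direct translation of the local results of \S\ref{sec:local Simpson}, together with the (automatic) bookkeeping needed to check that all constructions are functorial in $S$.
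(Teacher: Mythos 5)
Your proposal is correct and follows essentially the same route as the paper's proof. The one organizational difference worth noting: the paper deduces the rational statements by invoking the already-proved integral Theorem~\ref{thm:integral Simpson} as a black box — after sheafification, the integral cohomologies in positive degree are killed by $\rho_K(\zeta_p-1)$ (via the $\rL\eta$-statement), so they vanish after inverting $p$ — whereas you propose to re-derive the rational local-to-global step (the rational analogue of Lemma~\ref{lem:reduce to group cohomology}) directly, using that almost quasi-isomorphisms become genuine quasi-isomorphisms after inverting $p$. Both ways are valid and rest on the same local input, Theorem~\ref{thm:local Simpson}; the paper's version is slightly more economical because it simply rationalizes the integral sheaf-level statements, while yours makes the $\Gamma$-cohomology descent explicit again. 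Your identification of $\calM(\calH,\theta)$ locally with $M^+_\infty(H^+,\theta)[\tfrac1p]$ via an integral model $(H^+,\theta)$, and the termwise use of Item~(1) for the final comparison $\rR\nu_*\calM\simeq\rD\rR(\calH,\theta)$, match the paper exactly.
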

  \begin{proof}
      For Item (1): Let $\calM$ be a Hitchin-small $v$-bundle of rank $r$ with the associated \'etale covering $\{\frakX_{i,S}\to\frakX_S\}_{i\in I}$ by small semi-stable $\frakX_i$'s as in Definition \ref{dfn:Hitchin small v-bundle}. Then by definition \ref{dfn:small O^+-representations}, the restriction $\calM_{|_{X_{i,S}}}$ is of the form $\calM_{|_{X_{i,S}}} = \calM_i^+[\frac{1}{p}]$ for some Hitchin-small integral $v$-bundle on $X_{i,S,v}$.

      To see $\rR\nu_*(\calM\otimes\calO\widehat \bC_{\pd,S})\simeq \nu_*(\calM\otimes\calO\widehat \bC_{\pd,S})$, it is enough to show that for any $k\geq 1$, we have
      \[\rR^k\nu_*(\calM\otimes\calO\widehat \bC_{\pd,S}) = 0.\]
      As $\rR^k\nu_*(\calM\otimes\calO\widehat \bC_{\pd,S})$ is the sheafification of the presheaf
      \[\frakU\in\frakX_{\et}\mapsto \rH^k(U_v,(\calM\otimes\calO\widehat \bC_{\pd,S})_{|_U}),\]
      to see $\rR^k\nu_*(\calM\otimes\calO\widehat \bC_{\pd,S}) = 0$, we may work locally on $\frakX_{\et}$. So one can check this on each $\frakX_i$. But this follows from Theorem \ref{thm:integral Simpson} (1) immediately.
      
      Let $(\calH_i^+,\theta_i)$ be the twisted Hitchin-small integral Higgs bundle on $\frakX_{i,S,\et}$ associated to $\calM^+_i$ in the sense of Theorem \ref{thm:integral Simpson}. It is clearly that
      \[(\calH(\calM),\theta)_{|_{\frakX_{i,S}}} = (\calH^+_i,\theta_i)[\frac{1}{p}].\]
      This implies $(\calH(\calM),\theta)$ is a twisted Hitchin-small rational Higgs bundle of rank $r$ as desired.

      For Item (2): It follows from a similar argument as above by using Theorem \ref{thm:integral Simpson}(2) directly.

      For Item (3): One can conclude the functors in Items (1) and (2) are the quasi-inverses of each other by the same argument for the proof of Theorem \ref{thm:integral Simpson}(3). So we get the desired equivalence of categories, which preserves ranks by construction. By standard linear algebra, this equivalence preserves tensor products and dualities. Finally, by Poincar\'e's Lemma, we have the quasi-isomorphisms
      \[\rR\nu_*(\calM)\xrightarrow{\simeq}\rR\nu_*(\rD\rR(\calM\otimes\calO\widehat \bC_{\pd,S},\Theta_{\calM})).\]
      On the other hand, by Item (1), we have a quasi-isomorphism
      \[\rD\rR(\calH,\theta)\xrightarrow{\simeq}\rR\nu_*(\rD\rR(\calM\otimes\calO\widehat \bC_{\pd,S},\Theta_{\calM})),\]
      yielding the quasi-isomorphism 
      \[\rR\nu_*(\calM)\simeq \rD\rR(\calH,\theta)\]
      as desired. This completes the proof.
  \end{proof}

  \begin{rmk}\label{rmk:rational Simpson}
      Denote by $\pi:X_{S,v}\to X_{S,\et}$ the natural morphism of sites, and then we have $\nu = i\circ \pi$. Via the equivalence \eqref{equ:Hitchin-small Higgs bundles on generic fiber}, it is easy to see that Theorem \ref{thm:rational Simpson} still holds true if one replaces $\nu$ and $\HIG^{\tHsmall}(\frakX_S,\calO_{\frakX_S}[\frac{1}{p}])$ by $\pi$ and $\HIG^{\tHsmall}(\frakX_S,\calO_{X_S})$, respectively.
  \end{rmk}
  
  \begin{cor}\label{cor:rational Simpson}
      The following composite
      \[\rL\rS^{\Hsmall}(\frakX_S,\widehat \calO_{X_S})\xrightarrow{\text{Th. \ref{thm:rational Simpson}}}\HIG^{\tHsmall}(\frakX_S,\calO_{\frakX_S}[\frac{1}{p}])\xrightarrow{\text{Lem. \ref{lem:Hitchin-small vs twisted Hitchin-small}}}\HIG^{\Hsmall}(\frakX_S,\calO_{\frakX_S}[\frac{1}{p}])\]
      defines an equivalence of categories 
      \[\rho_{\widetilde \frakX_S}:\rL\rS^{\Hsmall}(\frakX_S,\widehat \calO_{X_S})\xrightarrow{\simeq}\HIG^{\Hsmall}(\frakX_S,\calO_{\frakX_S}[\frac{1}{p}])\simeq \HIG^{\Hsmall}(\frakX_S,\calO_{X_S})\]
      which is functorial in $S$ such that for any Hitchin-small $v$-bundle $\calM$ on $X_{S,v}$ with associated Hitchin-small (rational) Higgs bundle $(\calH,\theta)$ on $X_{S,\et}$ (resp. $\frakX_{S,\et}$), there exists a quasi-isomorphism
      \[\rR\nu_*\calM\simeq \rD\rR(\calH,\theta).\]
  \end{cor}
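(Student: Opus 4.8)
The plan is to obtain the statement by concatenating equivalences that are already in hand, so the argument will be essentially formal. First I would invoke Theorem \ref{thm:rational Simpson}(3), which provides an equivalence $\rL\rS^{\Hsmall}(\frakX_S,\widehat \calO_{X_S})\xrightarrow{\simeq}\HIG^{\tHsmall}(\frakX_S,\calO_{\frakX_S}[\tfrac1p])$ preserving ranks, tensor products and dualities, together with a quasi-isomorphism $\rR\nu_*\calM\simeq\rD\rR(\calH_0,\theta_0)$, where $(\calH_0,\theta_0)$ denotes the twisted Hitchin-small rational Higgs bundle attached to $\calM$. Then I would compose with the twist equivalence of Lemma \ref{lem:Hitchin-small vs twisted Hitchin-small}, which relates $\HIG^{\tHsmall}(\frakX_S,\calO_{\frakX_S}[\tfrac1p])$ and $\HIG^{\Hsmall}(\frakX_S,\calO_{\frakX_S}[\tfrac1p])$ via $(\calH_0,\theta_0)\mapsto(\calH_0,(\zeta_p-1)\theta_0)=:(\calH,\theta)$, and finally with the equivalence \eqref{equ:Hitchin-small Higgs bundles on generic fiber} induced by $i_*$ and $i^{-1}$, identifying $\HIG^{\Hsmall}(\frakX_S,\calO_{\frakX_S}[\tfrac1p])$ with $\HIG^{\Hsmall}(\frakX_S,\calO_{X_S})$. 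The resulting functor is $\rho_{\widetilde \frakX_S}$; being a composite of equivalences it is an equivalence, and it preserves ranks, tensor products and dualities because each of the three factors does.

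Next I would establish the displayed cohomological comparison. With $(\calH,\theta)=(\calH_0,(\zeta_p-1)\theta_0)$ the Hitchin-small Higgs bundle attached to $\calM$, the ``moreover'' clause of Lemma \ref{lem:Hitchin-small vs twisted Hitchin-small} gives a quasi-isomorphism $\rD\rR(\calH_0,\theta_0)\simeq\rD\rR(\calH,\theta)$ --- note that rationally $\zeta_p-1$ is a unit, so no d\'ecalage enters here --- and concatenating this with $\rR\nu_*\calM\simeq\rD\rR(\calH_0,\theta_0)$ from Theorem \ref{thm:rational Simpson}(3) yields $\rR\nu_*\calM\simeq\rD\rR(\calH,\theta)$. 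I would then observe that transporting $(\calH,\theta)$ along the identification \eqref{equ:Higgs bundles on generic fiber} leaves the Higgs complex unchanged: $i_*$ is exact on locally finite free modules and satisfies $\rR i_*=i_*$ there, so $\rD\rR(\calH,\theta)$ computed on $X_{S,\et}$ and on $\frakX_{S,\et}$ agree; alternatively one argues directly with $\pi\colon X_{S,v}\to X_{S,\et}$ as in Remark \ref{rmk:rational Simpson}.

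For functoriality in $S$ I would appeal to the observation made at the beginning of \S\ref{sec:global Simpson}: for a morphism $S'\to S$ in $\Perfd$ the pulled-back lifting coincides with $\widetilde \frakX_{S'}$, and the period sheaf $(\calO\widehat \bC_{\pd,S},\Theta)$ of \S\ref{ssec:period sheaves} together with its local model $P_{\infty,S}$ (Proposition \ref{prop:local OC}) is built functorially in $S$; moreover base change of semi-stable formal schemes induces compatible pull-back functors on $\rL\rS^{\Hsmall}$, $\HIG^{\tHsmall}$ and $\HIG^{\Hsmall}$. Since the functors of Theorem \ref{thm:rational Simpson} are given by $\rR\nu_*$ of a tensor product against this functorial period sheaf, respectively by the kernel of the induced Higgs field, and since the twist functor of Lemma \ref{lem:Hitchin-small vs twisted Hitchin-small} and the identification $i_*$ are manifestly base-change compatible, $\rho_{\widetilde \frakX_S}$ commutes with pull-back along $S'\to S$, i.e.\ it is functorial in $S$.

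I do not expect a serious obstacle: all the substance lies in Theorem \ref{thm:rational Simpson} and Lemma \ref{lem:Hitchin-small vs twisted Hitchin-small}, and what remains is bookkeeping. The one point demanding a little care is keeping the three twist conventions coherent --- integral versus twisted versus rational, and the direction in which Lemma \ref{lem:Hitchin-small vs twisted Hitchin-small} runs --- together with checking that \eqref{equ:Higgs bundles on generic fiber} genuinely preserves the Higgs complex on the nose rather than up to a shift; both are routine once the conventions are pinned down.
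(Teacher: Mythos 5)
Your proposal is correct and follows essentially the same route as the paper, which simply states that the corollary follows directly from Lemma \ref{lem:Hitchin-small vs twisted Hitchin-small} and Theorem \ref{thm:rational Simpson}; you have merely filled in the bookkeeping — the composition of equivalences, the concatenation of the two quasi-isomorphisms, the exactness of $i_*$ on locally finite free modules, and the functoriality of the period sheaf in $S$ — all of which are as the paper intends. Your explicit caution about the direction of the twist functor in Lemma \ref{lem:Hitchin-small vs twisted Hitchin-small} is well placed, since the lemma's displayed arrow and the corollary's displayed arrow run in opposite senses, but as you note this is immaterial because it is an equivalence.
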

  \begin{proof}
      This follows from Lemma \ref{lem:Hitchin-small vs twisted Hitchin-small} and Theorem \ref{thm:rational Simpson} directly.
  \end{proof}

  For any $r$, we denote by $(\rR^1\pi_*\GL_r)(X_S)$ and $\big((\Mat_r\otimes\Omega^1_{X_S}(-1))\!/\!/\GL_r\big)(X_S)$ the sheafifications of the presheaves
  \[U_S\in X_{S,\et}\mapsto\{\text{the set of isomorphic classes of $v$-bundles on $U_{S,v}$ of rank $r$}\}\]
  and
  \[U_S\in X_{S,\et}\mapsto\{\text{the set of isomorphic classes of Higgsbundles on $U_{S,\et}$ of rank $r$}\},\]
  respectively. By \cite[Th. 1.2]{Heu-Moduli}, there exists an isomorphism 
  \begin{equation}\label{equ:HTlog}
      \rm{HTlog}: (\rR^1\pi_*\GL_r)(X_S)\xrightarrow{\cong}\big((\Mat_r\otimes\Omega^1_{X_S}(-1))\!/\!/\GL_r\big)(X_S).
  \end{equation}
  See \cite[\S 5]{Heu-Moduli} for its construction and \cite[\S 4]{Heu-Moduli} for the construction of its inverse $\rm{HTlog}^{-1}$ (denoted by $\Psi$ in \emph{loc.cit.}).

  \begin{lem}\label{lem:compatible with HTlog}
      The equivalence $\rho_{\widetilde \frakX_S}:\rL\rS^{\Hsmall}(\frakX_S,\widehat \calO_{X_S})\xrightarrow{\simeq}\HIG^{\Hsmall}(\frakX_S,\calO_{X_S})$ is compatible with the isomorphism $\rm{HTlog}$ above in the sense that the following diagram commutes
      \begin{equation}\label{equ:compatible with HTlog}
          \begin{tikzcd}
               {\rL\rS^{\Hsmall}_r(\frakX_S,\widehat \calO_{X_S})} \arrow[rr, "\rho_{\widetilde \frakX_S}"] \arrow[d] &  & {\HIG^{\Hsmall}_r(\frakX_S,\calO_{X_S})} \arrow[d]          \\
               (\rR^1\pi_*\GL_r)(X_S) \arrow[rr, "\rm{HTlog}"]  &  & \big((\Mat_r\otimes\Omega^1_{X_S}(-1))\!/\!/\GL_r\big)(X_S)
          \end{tikzcd}
      \end{equation}
      where $\rL\rS^{\Hsmall}_r(\frakX_S,\widehat \calO_{X_S})$ and $\HIG^{\Hsmall}_r(\frakX_S,\calO_{X_S})$ denotes the corresponding full sub-categories of rank-$r$ objects.
  \end{lem}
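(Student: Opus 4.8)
The plan is to verify the commutativity of \eqref{equ:compatible with HTlog} by reducing to a purely local statement on $\frakX_{S,\et}$ and then matching up the two explicit recipes. Since both vertical arrows are the natural ``forget to isomorphism class'' maps and the bottom arrow $\mathrm{HTlog}$ is a morphism of $v$-sheaves (hence its formation commutes with restriction to an \'etale cover), and since our $\rho_{\widetilde\frakX_S}$ is defined by a construction that is local on $\frakX_{S,\et}$, it suffices to check the square on a small affine $\frakX_S = \Spf(R_S^+)$ with a chart $\psi$ and coordinates $T_0,\dots,T_d$. First I would recall, via Corollary \ref{cor:correct local Simpson} and the explicit formula \eqref{equ:explicit form}, exactly what $\rho_{\widetilde\frakX_S}$ does on such a chart: a Hitchin-small $v$-bundle $\calM$ corresponds to a Hitchin-small $\Gamma$-representation $M$ over $\widehat R_{\infty,S}$, which descends to an $R_S$-module with $\gamma_i$ acting by $\exp(-(\zeta_p-1)\rho_K\theta_i)$ for topologically nilpotent $\theta_i\in\Mat_r(R_S^+)$, and the associated Higgs field is $\theta = \sum_{i=1}^d(\zeta_p-1)\theta_i\otimes\frac{\dlog T_i}{\xi_K}$, i.e. up to the twist $\calO_C\{-1\} = \rho_K\calO_C(-1)$ this is $\sum_i\theta_i\otimes\dlog T_i$ viewed in $\Mat_r\otimes\Omega^1_{X_S}(-1)$.

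Second, I would recall Heuer's construction of $\mathrm{HTlog}$ from \cite[\S5]{Heu-Moduli}: it sends a $v$-bundle, presented as a cocycle on the torsor $X_{\infty,S}\to X_S$ (with Galois group $\Gamma\cong\Zp^{\oplus d}$ coming from the toric coordinates), to the ``leading term'' of the $\Gamma$-action, i.e. to the class of the matrix datum $\big(\theta_i\big)_i$ obtained by taking the logarithm of the action of the topological generators $\gamma_i$ and dividing by the canonical period (here $(\zeta_p-1)\rho_K$, which trivializes $\calO_C(1)$), landing in $\big(\Mat_r\otimes\Omega^1_{X_S}(-1)\big)/\!/\GL_r$. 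The key point is that this is literally the same ``take $\log\gamma_i$, normalize, read off a Higgs field'' recipe as the one underlying the local Simpson correspondence of \S\ref{sec:local Simpson}; the only bookkeeping is to make the normalizations agree. Concretely, $\gamma_i$ acts by $\exp(-(\zeta_p-1)\rho_K\theta_i)$, so $\mathrm{HTlog}$ reads off $-\theta_i\otimes\dlog T_i$ (up to the $\GL_r$-conjugation ambiguity, which is exactly the datum being quotiented out, and up to sign conventions which I would fix once and for all by comparing \cite[Def.~1.6, \S5]{Heu-Moduli} with \eqref{equ:local Simpson-V} and \eqref{equ:explicit form}); this matches $\theta$ built by $\rho_{\widetilde\frakX_S}$ after the twist identification $\Omega^1_{X_S}\{-1\} = \rho_K^{-1}\Omega^1_{X_S}(-1)$.

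Third, I would check that the two local identifications glue to a global statement: both composites $\rL\rS^{\Hsmall}_r\to (\rR^1\pi_*\GL_r)(X_S)$ and $\HIG^{\Hsmall}_r\to \big((\Mat_r\otimes\Omega^1_{X_S}(-1))/\!/\GL_r\big)(X_S)$ are defined by sheafification of the obvious presheaves, so equality of the two maps on a cofinal system of small affines implies equality of the sheaf maps; and since $\rho_{\widetilde\frakX_S}$ is independent (up to canonical isomorphism) of the chosen chart — because, by the local Simpson correspondence, it agrees with the globally constructed equivalence of Corollary \ref{cor:rational Simpson} — the local comparisons are compatible under change of chart. The main obstacle, and essentially the only real content, is the precise matching of normalizations: pinning down that Heuer's period/twist conventions ($\Omega^1_{X_S}(-1)$, the generator $t$ of $\bZ_p(1)$, and the factor $\rho_K$ relating $\calO_C(1)$ and $\calO_C\{1\}$, cf. \S\ref{ssec:notation}) coincide with ours up to the explicit unit $(\zeta_p-1)\rho_K$ and the sign appearing in $\exp(-(\zeta_p-1)\rho_K\theta_i)$, so that $\mathrm{HTlog}$ and $\rho_{\widetilde\frakX_S}$ land on the same $\GL_r$-orbit of Higgs data. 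Once that dictionary is written down, commutativity of \eqref{equ:compatible with HTlog} is immediate from the explicit formulas in \S\ref{ssec:local Simpson correspondence} and \cite[\S4--5]{Heu-Moduli}.
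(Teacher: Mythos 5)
Your overall strategy matches the paper's: reduce to a small affine chart, use the explicit local formulas for $\rho_{\widetilde\frakX_S}$ (equation \eqref{equ:explicit form}, the $\exp(-(\zeta_p-1)\rho_K\theta_i)$ description of the $\gamma_i$-action), and compare with Heuer's construction of $\mathrm{HTlog}$. However, the part you describe as ``once that dictionary is written down, commutativity is immediate'' is in fact the entire content of the lemma, and you leave it as a placeholder. The paper carries out the normalization comparison concretely, and doing so requires one further reduction that your proposal does not anticipate: after restricting to a small affine $\Spf(R_S^+)$, the paper passes to the finite \'etale Galois cover $X_{1,S}\to X_S$ of group $\Gamma/\Gamma^p$ (adjoining $p$-th roots of the toric coordinates), because Heuer's construction of $\mathrm{HTlog}$ in \cite[Prop.~5.3]{Heu-Moduli} is formulated via such a covering. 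Over $X_{1,S}$ the subgroup $\Gamma^p$ acts with $\gamma_i^p = \exp(-p(\zeta_p-1)\rho_K\theta_i)$, while the restricted Higgs field becomes $\sum_i p(\zeta_p-1)\rho_K\theta_i\otimes\frac{\dlog T_i^{1/p}}{t}$; matching these two is exactly what realizes the comparison with Heuer's recipe. Your proposal's direct ``take $\log\gamma_i$ on $X_\infty$ and divide by the period'' description of $\mathrm{HTlog}$ glosses over the fact that Heuer's normalization is pinned down at the finite level, not directly on the pro-\'etale cover, so to actually close the argument you would need to add the $X_{1,S}$ step (or an equivalent argument that precisely recovers Heuer's conventions).
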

  \begin{proof}
      We follow the strategy used in the proof of \cite[Cor. 3.12]{AHLB-small}.
      For any $\calM\in \rL\rS_r^{\Hsmall}(\frakX_S,\widehat \calO_{X_S})$ with associated $(\calH,\theta)\in \HIG_r^{\Hsmall}(\frakX_S,\calO_{X_S})$, we have to show that the (local) isomorphic class $c(\calM)$ (resp. $c(\calH,\theta)$) associated to $\calM$ (resp. $(\calH,\theta)$) satisfies the condition $\rm{HTlog}(c(\calM)) = c(\calH,\theta)$. 
      Clearly one can check this locally on $X_{S,\et}$. So we may assume $\frakX_S = \Spf(R_S^+)$ is small semi-stable such that $M_{\infty}:=\calM(X_{\infty,S})$ is a Hitchin-small $\Gamma$-representation of rank $r$ over $\widehat R_{\infty,S}$ with the associated Hicthin-small Higgs module $(H,\theta)$ over $R_S$. Let $M\in\Rep_{\Gamma}^{\Hsmall}(R_S)$ such that $M_{\infty}\cong M\otimes_{R_S}\widehat R_{\infty,S}$. Let $\theta_i\in \Mat_r(R_S^+)$ be the topologically nilpotent matrix such that $\gamma_i$ acts on $M$ via $\exp(-(\zeta_p-1)\rho_K\theta_i)$ with respect to a fixed $R_S$-basis $e_1,\dots,e_r$ of $M$. As argued in the paragraph around \eqref{equ:explicit form}, we have 
      \begin{equation}\label{equ:compatible with HTlog-I}
          (H,\theta)\cong (M,\sum_{i=1}^d(\zeta_p-1)\theta_i\otimes\frac{\dlog T_i}{\xi_K}).
      \end{equation}
      Let $\Spa(R_{1,S},R_{1,S}^+)=:X_{1,S}\to X_S$ be the finite \'etale Galois covering associated to the Galois group $\Gamma/\Gamma^p$, where $\Gamma^p = \oplus_{i=1}^d\Zp\cdot\gamma_i^p\subset \Gamma$ via the ismorphism \eqref{equ:Gamma group-II}. As the problem is local on $X_{S,\et}$ as well, we are reduced to showing that
      \[\rm{HTlog}(c(\calM_{|_{X_{1,S}}})) = c((\calH,\theta)_{|_{X_{1,S}}}).\]

      By \eqref{equ:compatible with HTlog-I}, the Higgs module over $R_{1,S}$ induced by $(\calH,\theta)_{|_{X_{1,S}}}$ is 
      \begin{equation}\label{equ:compatible with HTlog-II}
          (H\otimes_{R_S}R_{1,S},\theta\otimes\id)\cong (M,\sum_{i=1}^dp(\zeta_p-1)\theta_i\otimes\frac{\dlog T_i^{\frac{1}{p}}}{\xi_K}) = (M,\sum_{i=1}^dp(\zeta_p-1)\rho_K\theta_i\otimes\frac{\dlog T_i^{\frac{1}{p}}}{t})
      \end{equation}
      while the $\Gamma^p$-representation over $\widehat R_{\infty,S}$ from $\calM_{|_{X_{1,S}}}$ is the restriction of $M_{\infty}$ to $\Gamma^p$. 
      In \eqref{equ:compatible with HTlog-II}, we use the identification $\calO_C\{-1\} = \rho_K\calO_C(-1)$ (cf. \S\ref{ssec:notation}).
      In particular, the $\gamma_i^p$ acts on $M_{\infty}$ (with respect to the choose basis $e_1,\dots,e_r$) via the matrix 
      \begin{equation}\label{equ:compatible with HTlog-III}
          \gamma_i^p = \exp(-p(\zeta_p-1)\rho_K\theta_i),~\forall~1\leq i\leq d.
      \end{equation}
      Comparing \eqref{equ:compatible with HTlog-II} with \eqref{equ:compatible with HTlog-III}, it then follows from the construction of $\rm{HTlog}$ (cf. \cite[Prop. 5.3 and its proof]{Heu-Moduli}) that $\rm{HTlog}(c(\calM_{|_{X_{1,S}}})) = c((\calH,\theta)_{|_{X_{1,S}}})$. This completes the proof.
  \end{proof}

   Now, we are going to give a geometric interpretion of Hitchin-smallness. For any $r\geq 0$, we consider two functors on $\Perfd$:
   \begin{equation}\label{equ:stack of v-bundle}
       \rL\rS_r(X,\OX): S\in \Perfd \mapsto \{\text{the groupoid of $v$-bundles of rank $r$ on $X_{S,v}$}\}
   \end{equation}
   and
   \begin{equation}\label{equ:stack of Higgs bundle}
       \HIG_r(X,\calO_X): S\in \Perfd \mapsto \{\text{the groupoid of Higgs bundles of rank $r$ on $X_{S,\et}$}\}.
   \end{equation}
   By \cite[Th. 1.4]{Heu-Moduli}, these two functors are both small $v$-stacks (on $\Perfd$). Consider the $v$-sheaf
   \begin{equation}\label{equ:Hitchin-base}
       \calA_r: S\in \Perfd \mapsto \calA_r(S):=\oplus_{i=1}^r\rH^0(X_S,\Sym^i(\Omega^1_{X_S}\{-1\})),
   \end{equation}
   which is referred as \emph{Hitchin-base} in \emph{loc.cit.}.
   Then there are morphisms of $v$-sheaves, called \emph{Hitchin fibrations},
   \begin{equation}\label{equ:Hitchin map}
       h:\HIG_r(X,\calO_X)\to \calA_r \text{ and }\widetilde h:\rL\rS_r(X,\OX) \to \calA_r
   \end{equation}
   where $h$ is defined by sending each Higgs bundle $(\calH,\theta)$ to the characteristic polynomial of $\theta$ and $\widetilde h$ is defined by the composite of $h\circ\rm{HTlog}$ (cf. \cite[\S 1.3]{Heu-Moduli}). In our setting, one can define an open subset $\calA^{\Hsmall}_r\subset\calA_r$ such that
   \begin{equation}\label{equ:Hitchin-small Hitchin base}
       \calA_r^{\Hsmall}(S) = \oplus_{i=1}^rp^{<\frac{i}{p-1}}\rH^0(\frakX_S,\Sym^i(\Omega^{1,\log}_{\frakX_S}\{-1\})),
   \end{equation}
   where $p^{<\frac{i}{p-1}}$ denotes the ideal $(\zeta_p-1)^i\frakm_C\subset \calO_C$. For any $v$-stack $Z$ over $\calA_r$, define its \emph{Hitchin-small locus} by
   \begin{equation}\label{equ:Hitchin-small locus}
       Z^{\Hsmall}:=Z\times_{\calA_r}\calA_r^{\Hsmall}.
   \end{equation}
   In particular, we have $\rL\rS_r(X,\OX)^{\Hsmall}$ and $\HIG_r(X,\calO_X)^{\Hsmall}$.

   \begin{prop}\label{prop:geometric interpretion of Hitchin-small}
       Keep notations as above. Then we have
       \begin{enumerate}
           \item[(1)] $\HIG^{\Hsmall}(\frakX_S,\calO_{X_S}) = \cup_{r\geq 0}\HIG_r(X,\calO_X)^{\Hsmall}(S)$, and

           \item[(2)] $\rL\rS^{\Hsmall}(\frakX_S,\widehat \calO_{X_S}) = \cup_{r\geq 0}\rL\rS_r(X,\OX)^{\Hsmall}(S)$.
       \end{enumerate}
       More precisely, a Higgs bundle $(\calH,\theta)$ (resp. $v$-bundle $\calM$) of rank $r$ on $X_{S,\et}$ (resp. $X_{S,v}$) is Hitchin-small if and only if as an $S$-point of $\HIG_r(X,\calO_X)$ (resp. $\rL\rS(X,\OX)$),
       \[(\calH,\theta)\in\HIG_r(X,\calO_X)^{\Hsmall}(S) \text{ (resp. $\calM\in\rL\rS(X,\OX)^{\Hsmall}(S)$)}.\]
   \end{prop}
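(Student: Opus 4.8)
The plan is to reduce everything to the local statement, namely the characterization of Hitchin-smallness in terms of the characteristic polynomial of a Higgs field, and then transport this to $v$-bundles via the compatibility with $\mathrm{HTlog}$ established in Lemma \ref{lem:compatible with HTlog}. First I would unwind the definitions: an $S$-point $(\calH,\theta)$ of $\HIG_r(X,\calO_X)$ lies in $\HIG_r(X,\calO_X)^{\Hsmall}(S)$ if and only if its image under the Hitchin fibration $h$ lands in $\calA_r^{\Hsmall}(S) = \oplus_{i=1}^r p^{<\frac{i}{p-1}}\rH^0(\frakX_S,\Sym^i(\Omega^{1,\log}_{\frakX_S}\{-1\}))$, i.e. the coefficients $c_i(\theta)$ of the characteristic polynomial of $\theta$ all satisfy $c_i(\theta)\in (\zeta_p-1)^i\frakm_C\cdot\rH^0(\frakX_S,\Sym^i(\Omega^{1,\log}_{\frakX_S}\{-1\}))$. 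Since both conditions (membership in $\HIG^{\Hsmall}(\frakX_S,\calO_{X_S})$ via Definition \ref{dfn:Hitchin small rational Higgs bundle}, and the condition on $h(\calH,\theta)$) are local on $\frakX_{S,\et}$, I may assume $\frakX_S = \Spf(R_S^+)$ is small affine, and the statement becomes a purely algebraic claim about a Higgs module $(H,\theta)$ over $R_S$.

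The key local step is: a Higgs module $(H,\theta)$ over $R_S$ of rank $r$ is Hitchin-small in the sense of Definition \ref{dfn:small Gamma-representation}(2) --- that is, $(H,\theta) = (H^+[\tfrac1p], (\zeta_p-1)\theta')$ for a twisted Hitchin-small $(H^+,\theta')$ with $\theta'$ topologically nilpotent over $R_S^+$ --- if and only if the characteristic polynomial of $\theta$ has coefficients $c_i$ with $c_i \in (\zeta_p-1)^i\frakm_C\cdot\Sym^i(\Omega^{1,\log}_{R_S^+}\{-1\})$. Writing $\theta = (\zeta_p-1)\theta'$, the coefficients of the characteristic polynomial of $\theta$ are $(\zeta_p-1)^i$ times those of $\theta'$, so the condition becomes: $\theta'$ has characteristic polynomial with coefficients in $\frakm_C\cdot\Sym^i(\Omega^{1,\log}_{R_S^+}\{-1\})$ after suitable integral rescaling, and this is equivalent to $\theta'$ being (after possibly shrinking) conjugate to a topologically nilpotent integral matrix. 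For the easy direction, if $\theta'$ is topologically nilpotent over $R_S^+$ then its eigenvalues are topologically nilpotent, hence the characteristic polynomial coefficients lie in the topological nilradical, which here is contained in $\frakm_C\cdot R_S^+$ plus higher-order terms --- one checks this lands in the required ideal. For the converse, given a Higgs module with characteristic polynomial coefficients in $(\zeta_p-1)^i\frakm_C\cdot\Sym^i$, one produces an $R_S^+$-lattice on which $\theta/(\zeta_p-1)$ acts topologically nilpotently by a standard argument (e.g. the one used in \cite[Th. 1.2]{AHLB-small} in the smooth case, which goes through verbatim once $\Omega^1$ is replaced by $\Omega^{1,\log}$), invoking that $\frakm_C$-smallness of the coefficients forces topological nilpotency of the associated operator after an appropriate base of the lattice.

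Once the local equivalence for Higgs bundles is in hand, Item (1) follows by gluing: the condition ``$h(\calH,\theta)\in\calA_r^{\Hsmall}(S)$'' is a condition on a global section which can be checked on an \'etale cover, and each local piece is governed by the algebraic criterion above. For Item (2), I would use Lemma \ref{lem:compatible with HTlog}, which says the diagram \eqref{equ:compatible with HTlog} commutes: the class of a Hitchin-small $v$-bundle $\calM$ maps under $\mathrm{HTlog}$ to the class of the associated Higgs bundle $(\calH,\theta) = \rho_{\widetilde\frakX_S}(\calM)$. Since $\widetilde h = h\circ\mathrm{HTlog}$ by definition \eqref{equ:Hitchin map}, we get $\widetilde h(\calM) = h(\calH,\theta)$; therefore $\calM\in\rL\rS_r(X,\OX)^{\Hsmall}(S)$ (i.e. $\widetilde h(\calM)\in\calA_r^{\Hsmall}(S)$) if and only if $h(\calH,\theta)\in\calA_r^{\Hsmall}(S)$, which by Item (1) holds if and only if $(\calH,\theta)$ is Hitchin-small, which by the equivalence $\rho_{\widetilde\frakX_S}$ (Corollary \ref{cor:rational Simpson}) and the definition of Hitchin-smallness for $v$-bundles (Definition \ref{dfn:Hitchin small v-bundle}, phrased so that $\calM$ is Hitchin-small iff $\rho_{\widetilde\frakX_S}(\calM)$ is) holds if and only if $\calM\in\rL\rS^{\Hsmall}(\frakX_S,\widehat\calO_{X_S})$. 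The main obstacle I anticipate is the local converse in the Higgs criterion --- producing the topologically nilpotent integral lattice from the $\frakm_C$-divisibility of the characteristic polynomial coefficients --- because one must be careful that the logarithmic differentials $\Omega^{1,\log}$ (which have a quotient structure, cf. \eqref{equ:omega^1}) do not introduce torsion phenomena that break the argument from the smooth case; I would handle this by working \'etale-locally where $\Omega^{1,\log}_{R_S^+}$ is free of rank $d$ and reducing to the matrix computation over $R_S^+$.
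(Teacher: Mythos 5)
Your overall strategy matches the paper's: reduce to a local statement about Higgs modules, characterize Hitchin-smallness via the characteristic polynomial, and deduce Item (2) from Item (1) using the $\mathrm{HTlog}$ compatibility of Lemma \ref{lem:compatible with HTlog}. The paper's local converse is concrete: because the $\theta_i$ commute and have eigenvalues in $(\zeta_p-1)\frakm_C R_S^+$, there is a single $X\in\GL_r(\overline{\Frac(R_S)})$ with $X\theta_i X^{-1}\in(\zeta_p-1)\frakm_C\Mat_r(R_S^+)$ simultaneously for all $i$ --- this simultaneous conjugation is the content of your phrase ``produce an $R_S^+$-lattice on which $\theta/(\zeta_p-1)$ acts topologically nilpotently,'' and it would be better to spell it out than to defer wholesale to \cite{AHLB-small}.

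Two points in your Item (2) argument need repair. First, your parenthetical claiming that Definition \ref{dfn:Hitchin small v-bundle} is ``phrased so that $\calM$ is Hitchin-small iff $\rho_{\widetilde\frakX_S}(\calM)$ is'' misattributes a theorem as a definition: Definition \ref{dfn:Hitchin small v-bundle} is intrinsic (étale-locally a Hitchin-small $\Gamma$-representation), and the matching of Hitchin-small $v$-bundles with Hitchin-small Higgs bundles under $\rho_{\widetilde\frakX_S}$ is the content of Theorem \ref{thm:rational Simpson} and Corollary \ref{cor:rational Simpson}. Second, and related, your biconditional chain is circular in one direction: writing $(\calH,\theta)=\rho_{\widetilde\frakX_S}(\calM)$ presupposes $\calM$ is Hitchin-small, which is exactly what the converse is trying to prove. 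The correct route is: given $\calM$ of rank $r$ with $\widetilde h(\calM)\in\calA_r^{\Hsmall}(S)$, the class $\mathrm{HTlog}(c(\calM))$ is that of a Higgs bundle whose $h$-image lies in $\calA_r^{\Hsmall}(S)$, hence is Hitchin-small by Item (1); by Corollary \ref{cor:rational Simpson} it comes from a Hitchin-small $v$-bundle $\calM'$, by Lemma \ref{lem:compatible with HTlog} one has $\mathrm{HTlog}(c(\calM'))=\mathrm{HTlog}(c(\calM))$, and since $\mathrm{HTlog}$ is an isomorphism $c(\calM)=c(\calM')$, so $\calM$ and $\calM'$ are étale-locally isomorphic; Hitchin-smallness being an étale-local property, $\calM$ is Hitchin-small.
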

   \begin{proof}
       It suffcies to prove Item (1) while Item (2) follows by Lemma \ref{lem:compatible with HTlog}. For any Hitchin-small Higgs bundle $(\calH,\theta)\in \HIG^{\Hsmall}(\frakX_S,\calO_{X_S})$ of rank $r$, we regard it as a Hitchin-small rational Higgs bundle on $\frakX_{S,\et}$. It follows from Definition \ref{dfn:Hitchin small rational Higgs bundle} that as $S$-point of $\HIG_r(X,\calO_X)$, 
       \[(\calH,\theta)\in \HIG_r(X,\calO_X)^{\Hsmall}(S).\]
       It remains to show for any Higgs bundle $(\calH,\theta)\in \HIG_r(X,\calO_X)^{\Hsmall}(S)$ (again viewed as a rational Higgs bundle in $\HIG(\frakX_S,\calO_{\frakX_S}[\frac{1}{p}])$), \'etale locally on $\frakX_{S,\et}$, it is of the form $(\calH^+,\theta)[\frac{1}{p}]$ for some Hitchin-small integral Higgs bundle $(\calH^+,\theta)$. To do so, we may assume $\frakX_S = \Spf(R_S^+)$ is small semi-stable such that $(\calH,\theta)$ is induced by a Higgs module $(H,\theta = \sum_{i=1}^d\theta_i\otimes\frac{e_i}{\xi_K})$ of rank $r$ over $R_S$ (such that $\theta_i\in\Mat_r(R_S)$ with respect to a fixed $R_S$-basis of $H$). As $(\calH,\theta)\in \HIG_r(X,\calO_X)^{\Hsmall}(S)$, for any $1\leq i\leq d$, the $\theta_i$ has eigenvalues in $(\zeta_p-1)\frakm_CR_S^+$. As $\theta_i$'s commute with each others, by standard linear algebra, there exists a matrix $X\in\GL_r(\overline{\Frac(R_S)})$ such that $\theta_i^{\prime}:=X\theta_iX^{-1}\in (\zeta_p-1)\frakm_C\Mat_r(R_S^+)$ for all $i$, where $\overline{\Frac(R_S)}$ denotes the algebraic closure of the fractional field of $R_S$. Thus $X$ induces an isomorphism 
       \[(H,\theta)\cong (H^{\prime},\theta^{\prime} = \sum_{i=1}^d\theta_i^{\prime}\otimes\frac{e_i}{\xi_K})\]
       for some Hitchin-small Higgs module $(H^{\prime},\theta^{\prime})$ over $R_S$.
   \end{proof}

  Now, we are able to give the following equivalence of stacks, generalizing the previous work of \cite[Th. 1.1]{AHLB-small}.
  \begin{thm}\label{thm:stacky Simpson}
      Let $\frakX$ be liftable a semi-stable formal scheme over $\calO_C$ with a fixed lifting $\widetilde \frakX$ over $\bfA_{2,K}$. Then for any $r\geq 0$, there exists an equivalence of stacks
      \[\rho_{\widetilde \frakX}:\rL\rS_r(X,\OX)^{\Hsmall}\xrightarrow{\simeq}\HIG_r(X,\calO_X)^{\Hsmall}.\]
  \end{thm}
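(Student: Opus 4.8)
The plan is to deduce the theorem from the $S$-pointwise rational Simpson correspondence already established together with the geometric description of Hitchin-smallness. Recall that $\rL\rS_r(X,\OX)$ and $\HIG_r(X,\calO_X)$ are small $v$-stacks by \cite[Th.~1.4]{Heu-Moduli}, and $\calA_r^{\Hsmall}\subset\calA_r$ is a $v$-subsheaf; hence the Hitchin-small loci $\rL\rS_r(X,\OX)^{\Hsmall}=\rL\rS_r(X,\OX)\times_{\calA_r}\calA_r^{\Hsmall}$ and $\HIG_r(X,\calO_X)^{\Hsmall}$ are again $v$-stacks. Since a morphism between $v$-stacks is an equivalence as soon as it induces an equivalence of groupoids on $S$-sections for every $S\in\Perfd$, it suffices to produce, naturally in $S=\Spa(A,A^+)\in\Perfd$, an equivalence
\[\rho_{\widetilde\frakX}(S)\colon \rL\rS_r(X,\OX)^{\Hsmall}(S)\xrightarrow{\ \simeq\ }\HIG_r(X,\calO_X)^{\Hsmall}(S).\]

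First I would fix $S$. By Proposition~\ref{prop:geometric interpretion of Hitchin-small} the groupoid $\rL\rS_r(X,\OX)^{\Hsmall}(S)$ is the underlying groupoid of the category $\rL\rS^{\Hsmall}_r(\frakX_S,\widehat\calO_{X_S})$ of rank-$r$ Hitchin-small $v$-bundles on $X_{S,v}$, and likewise $\HIG_r(X,\calO_X)^{\Hsmall}(S)$ is the underlying groupoid of $\HIG^{\Hsmall}_r(\frakX_S,\calO_{X_S})$. Now Corollary~\ref{cor:rational Simpson}, obtained from Theorem~\ref{thm:rational Simpson} and Lemma~\ref{lem:Hitchin-small vs twisted Hitchin-small} via the period sheaf $(\calO\widehat\bC_{\pd,S},\Theta)$ attached to the canonical base-change $\widetilde\frakX_S$ of the fixed lifting $\widetilde\frakX$, gives a rank-preserving equivalence $\rho_{\widetilde\frakX_S}\colon\rL\rS^{\Hsmall}(\frakX_S,\widehat\calO_{X_S})\xrightarrow{\simeq}\HIG^{\Hsmall}(\frakX_S,\calO_{X_S})$; restricting to rank $r$ and passing to underlying groupoids yields $\rho_{\widetilde\frakX}(S)$. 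Naturality in $S$ is inherited from the functoriality in $S$ of the constructions: the integral Faltings' extension $\calE^+_{\widetilde\frakX_S}$, the period sheaf $(\calO\widehat\bC_{\pd,S}^?,\Theta)$ and $P^?_{\infty,S}$ are all produced intrinsically from $\widetilde\frakX_S$ (charts and coordinates are used only to establish their properties, not to define them), and $\widetilde\frakX_S$ is by definition the base-change of $\widetilde\frakX$; consequently the functors of Theorem~\ref{thm:rational Simpson}(1),(2) commute with pullback along any $S'\to S$.

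It remains to check that $\rho_{\widetilde\frakX}$ is compatible with the Hitchin fibrations, which is needed so that it identifies the fiber products over $\calA_r^{\Hsmall}$ and not merely the individual $S$-sections. By Lemma~\ref{lem:compatible with HTlog} the equivalence $\rho_{\widetilde\frakX_S}$ fits into a commuting square with $\mathrm{HTlog}$, and since $\widetilde h=h\circ\mathrm{HTlog}$ while $h$ is the characteristic-polynomial map, this yields $h\circ\rho_{\widetilde\frakX}=\widetilde h$ as morphisms to $\calA_r$. Hence $\rho_{\widetilde\frakX}$ carries $\widetilde h^{-1}(\calA_r^{\Hsmall})=\rL\rS_r(X,\OX)^{\Hsmall}$ isomorphically onto $h^{-1}(\calA_r^{\Hsmall})=\HIG_r(X,\calO_X)^{\Hsmall}$, as desired.

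The one point requiring care is the naturality claim in the second paragraph: a priori the period-sheaf machinery of \S\ref{sec:period sheaves}--\S\ref{sec:local Simpson} is phrased in terms of auxiliary charts, so one must make sure the resulting equivalence does not secretly depend on such choices. I expect this to be the main (though purely formal) obstacle, resolved as indicated by observing that all the relevant sheaves and functors are defined directly from $\widetilde\frakX_S$ via cotangent-complex constructions, with the local charts entering only in proofs; granting this, no additional computation is needed and the theorem follows by assembling Corollary~\ref{cor:rational Simpson}, Proposition~\ref{prop:geometric interpretion of Hitchin-small} and Lemma~\ref{lem:compatible with HTlog}.
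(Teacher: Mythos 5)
Your proposal is correct and follows essentially the same route as the paper: reduce via Proposition~\ref{prop:geometric interpretion of Hitchin-small} to producing an $S$-functorial, rank-preserving equivalence of the categories of Hitchin-small $v$-bundles and Hitchin-small Higgs bundles, and then invoke Corollary~\ref{cor:rational Simpson}. The only difference is that you add an explicit check, via Lemma~\ref{lem:compatible with HTlog}, that the equivalence is compatible with the Hitchin fibrations; this is a reasonable precaution, but it is already subsumed in the way Proposition~\ref{prop:geometric interpretion of Hitchin-small} identifies the $S$-sections of the Hitchin-small loci with the respective categories (and that proposition itself rests on Lemma~\ref{lem:compatible with HTlog}), so no separate verification is needed beyond what the paper records.
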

  \begin{proof}
      For our purpose, by Proposition \ref{prop:geometric interpretion of Hitchin-small}, it is enough to assign to each $S\in \Perfd$ a rank-preserving equivalence of categories
      \[\rho_{\widetilde \frakX_S}:\rL\rS^{\Hsmall}(X_S,\widehat \calO_{X_S})\simeq \HIG^{\Hsmall}(X_S,\calO_{X_S})\]
      which is functorial in $S$. But this follows from Corollary \ref{cor:rational Simpson} directly.
  \end{proof}

\end{document}